\documentclass[11pt]{amsart}



\usepackage{subcaption}
\captionsetup[subfigure]{labelfont=rm,labelformat=parens}
\usepackage{subfiles}
\usepackage{comment}
\usepackage{float}
\usepackage{marginnote}
\usepackage{tabu}
\usepackage{euscript}
\usepackage{mathdots}
\usepackage[dvipsnames]{xcolor}
\usepackage{graphicx}			
\usepackage{amssymb}
\usepackage{mathrsfs}
\usepackage{amsthm}
\usepackage{amsmath}
\usepackage{stmaryrd}
\usepackage{tikz}
\usepackage{tikz-cd}
\usepackage{accents}
\usepackage{upgreek}
\usepackage{enumerate}
\usepackage{bm}
\usepackage{mathtools}
\usetikzlibrary{patterns}
\usepackage[all]{xy}
\usepackage{caption}
\usepackage{url}
\usepackage{float}
\usepackage{colonequals}
\usepackage{bbm}
\usepackage{longtable}
\usepackage[full]{textcomp}
\usepackage[cal=cm]{mathalfa}
\usepackage{xparse}
\usepackage{cite}
\usepackage{enumitem}

\newcommand{\cev}[1]{\reflectbox{\ensuremath{\vec{\reflectbox{\ensuremath{#1}}}}}}


\usetikzlibrary{calc}
\usetikzlibrary{fadings}
\usetikzlibrary{decorations.pathmorphing}
\usetikzlibrary{decorations.pathreplacing}
\usepackage{tikz,tikz-cd,tikz-3dplot}
\usepackage{pgfplots}
\usetikzlibrary{arrows,shadows,positioning, calc, decorations.markings, 
	hobby,quotes,angles,decorations.pathreplacing,intersections,shapes}
\usepgflibrary{shapes.geometric}
\usetikzlibrary{fillbetween,backgrounds}




\linespread{1}


\setlength{\parskip}{4pt}

\usepackage[margin=0.9in]{geometry}

\setcounter{tocdepth}{1}
\setcounter{secnumdepth}{5}

\tikzset{
	commutative diagrams/.cd, 
	arrow style=tikz, 
	diagrams={>=stealth}
}
\tikzset{
	arrow/.pic={\path[tips,every arrow/.try,->,>=#1] (0,0) -- +(0,4pt);},
	pics/arrow/.default={triangle 90}
}
\tikzset{->-/.style={decoration={
			markings,
			mark=at position .6 with {\arrow{latex}}},postaction={decorate}}
}
\tikzset{
	c/.style={every coordinate/.try}
}



\theoremstyle{theorem}

\newenvironment{customthm}[1]
{\innercustomthm}
{\endinnercustomthm}

\theoremstyle{theorem}

\theoremstyle{theorem}

\makeatletter
\def\@tocline#1#2#3#4#5#6#7{\relax
	\ifnum #1>\c@tocdepth 
	\else
	\par \addpenalty\@secpenalty\addvspace{#2}%
	\begingroup \hyphenpenalty\@M
	\@ifempty{#4}{%
		\@tempdima\csname r@tocindent\number#1\endcsname\relax
	}{%
		\@tempdima#4\relax
	}%
	\parindent\z@ \leftskip#3\relax \advance\leftskip\@tempdima\relax
	\rightskip\@pnumwidth plus4em \parfillskip-\@pnumwidth
	#5\leavevmode\hskip-\@tempdima
	\ifcase #1
	\or\or \hskip 1em \or \hskip 2em \else \hskip 3em \fi%
	#6\nobreak\relax
	\dotfill\hbox to\@pnumwidth{\@tocpagenum{#7}}\par
	\nobreak
	\endgroup
	\fi}
\makeatother

\newcounter{marginnote}
\setcounter{marginnote}{0}

\DeclareMathAlphabet{\mathpzc}{OT1}{pzc}{m}{it}

\usepackage[backref=page]{hyperref}
\hypersetup{
  colorlinks   = true,          
  urlcolor     = olive,          
  linkcolor    = olive,          
  citecolor   = olive             
}

\usepackage{cleveref}

\theoremstyle{theorem}
\newtheorem{theorem}{Theorem}[section]

\newtheorem{conjecture}[theorem]{Conjecture}
\newtheorem{corollary}[theorem]{Corollary}
\newtheorem{lemma}[theorem]{Lemma}
\newtheorem{proposition}[theorem]{Proposition}

\theoremstyle{definition}

\newtheorem{remark}[theorem]{Remark}

\newtheorem*{runningexample*}{Running example}

\newtheorem*{aside*}{Aside}

\newtheorem{definition}[theorem]{Definition}

\newtheorem{notation}[theorem]{Notation}
\newtheorem{proposition-definition}[theorem]{Proposition-Definition}

\newcommand{\MapsRefinedEvals}{\Mbar_{g,s,\upbeta}(S)^\prime}
\newcommand{\xdashleftrightarrow}[2][]{\ext@arrow 3359\leftrightarrowfill@@{#1}{#2}}

\DeclareMathOperator{\ev}{ev}

\newcommand{\id}{\mathrm{Id}}

\newcommand{\GW}{\mathsf{GW}}

\newcommand{\RR}{\mathbb{R}}

\newcommand{\op}[1]{\operatorname{#1}}
\newcommand{\ol}[1]{\overline{#1}}

\newcommand{\bcd}{\begin{center}\begin{tikzcd}}
		\newcommand{\ecd}{\end{tikzcd}\end{center}}

\newcommand{\Aaff}{\mathbb{A}}
\newcommand{\Aone}{\mathbb{A}^{\! 1}}
\newcommand{\Azero}{\mathbb{A}^{\! 0}}

\newcommand{\bfc}{\mathbf{c}}
\newcommand{\PP}{\mathbb{P}}
\newcommand{\OO}{\mathcal{O}}
\newcommand{\N}{\mathbb{N}}
\newcommand{\Z}{\mathbb{Z}}
\newcommand{\Q}{\mathbb{Q}}
\newcommand{\A}{\mathbb{A}}
\newcommand{\R}{\mathbb{R}}

\newcommand{\virt}{\op{virt}}

\newcommand{\Speck}{\operatorname{Spec}\kfield}
\newcommand{\kfield}{\Bbbk}

\newcommand{\Ocal}{\mathcal{O}}

\newcommand{\Dcal}{\mathcal{D}}
\newcommand{\Rcal}{\mathcal{R}}
\newcommand{\Xcal}{\mathcal{X}}

\newcommand{\Mbar}{\ol{M}}
\newcommand{\Nbar}{\ol{N}}

\newcommand{\FF}{\mathbb{F}}

\newcommand{\calO}{\mathcal{O}}

\newcommand{\pt}{\mathrm{pt}}

\newcommand{\scrD}{\EuScript{D}}
\newcommand{\scrE}{\EuScript{E}}

\newcommand{\scrL}{\EuScript{L}}

\newcommand{\scrS}{\EuScript{S}}

\newcommand{\scrY}{\EuScript{Y}}
\newcommand{\scrZ}{\EuScript{Z}}

\newcommand{\tropf}{\mathsf{f}}
\newcommand{\ftrop}{\mathsf{f}}

\newcommand{\frgt}{\upvarsigma}


\crefname{equation}{eq.}{eqs.}
\crefname{eqnarray}{eq.}{eqs.}
\crefname{conjecture}{conjecture}{conjectures}
\crefname{lemma}{lemma}{lemmas}
\crefname{theorem}{theorem}{theorems}
\crefname{claim}{claim}{claims}
\crefname{remark}{remark}{remarks}
\crefname{proposition}{proposition}{propositions}
\crefname{section}{section}{sections}
\crefname{appendix}{appendix}{appendices}
\crefname{corollary}{corollary}{corollaries}
\crefname{figure}{figure}{figures}
\crefname{table}{table}{tables}
\crefname{example}{example}{examples}
\crefname{assumption}{assumption}{assumptions}
\crefname{definition}{definition}{definitions}
\crefname{innercustomthm}{theorem}{theorems}
\crefname{innercustomcor}{corollary}{corollaries}
\crefname{innercustomconj}{conjecture}{conjectures}

\setlist[enumerate,1]{label=(\roman*),itemsep=0.9ex}
\setlist[itemize]{itemsep=0.9ex}


\begin{document}
	
	\title{Gromov--Witten theory of bicyclic pairs}
	\author{Michel van Garrel, Navid Nabijou, Yannik Schuler}
	
	\begin{abstract} A bicyclic pair is a smooth surface equipped with a pair of smooth divisors intersecting in two reduced points. Resolutions of self-nodal curves constitute an important special case. We investigate the logarithmic Gromov--Witten theory of bicyclic pairs. We establish correspondences with local Gromov--Witten theory and open Gromov--Witten theory in all genera, a correspondence with orbifold Gromov--Witten theory in genus zero, and correspondences between all-genus refined Gopakumar--Vafa invariants and refined quiver Donaldson--Thomas invariants. For self-nodal curves in $\PP(1,1,r)$ we obtain closed formulae for the genus zero invariants and relate these to the invariants of local curves. We also establish a conceptual relationship between invariants relative a self-nodal plane cubic and invariants relative a smooth plane cubic. The technical heart of the paper is a qualitatively new analysis of the degeneration formula for stable logarithmic maps, involving a tight intertwining of tropical and intersection-theoretic vanishing arguments.
	\end{abstract}
	
	\vspace{-2.5cm}
	
	\maketitle
	\tableofcontents
	
	\vspace{-1cm}
	
\section*{Introduction}

\noindent Consider a self-nodal plane cubic $D \subseteq \PP^2$. We study curves in $\PP^2$ with prescribed tangency orders along $D$, via the framework of logarithmic Gromov--Witten theory. We work in the more general setting of bicyclic pairs.

A \textbf{bicyclic pair} is a pair $(S \, | \, D+E)$ consisting of a smooth surface $S$ and a pair of smooth divisors $D,E \subseteq S$ intersecting in two reduced points:
\[
\begin{tikzpicture}

	\draw (-1,1) to[out=210,in=40] (-1.75,0.5) to[out=220,in=90] (-2,0) to[out=270,in=140] (-1.75,-0.5) to[out=320,in=150] (-1,-1);
	\draw (-2,0) node[left]{$D$};
	
	\draw (-2,1) to[out=330,in=140] (-1.25,0.5) to[out=320,in=90] (-1,0) to[out=270,in=40] (-1.25,-0.5) to[out=220,in=30] (-2,-1);
	\draw (-1,0) node[right]{$E$};
	
\end{tikzpicture}
\]
These arise in two distinct contexts:	
\begin{enumerate}
\item \textbf{Looijenga pairs} with two boundary components \cite{Looijenga}. In this case $D+E \in |\!-\!\!K_S|$ with $D$ and $E$ both rational. When $D$ and $E$ are nef there is a simple classification \cite[Table~1]{BBvG2}. The general classification proceeds via the minimal model program, see \cite[Lemma~3.2]{FriedmanMiranda} or \cite[Section~2]{FriedmanAnticanonical}.
\item \textbf{Self-nodal pairs.} Consider a surface containing an irreducible curve with a single nodal singularity. Let $S$ be the blowup of the surface at the node, $D$ the strict transform of the irreducible curve, and $E$ the exceptional divisor. Then $(S\, | \, D+E)$ is a bicyclic pair. Note that $E$ is not nef.
\end{enumerate}
We study the Gromov--Witten theory of bicyclic pairs; by \cite{AbramovichWiseBirational} this includes the Gromov--Witten theory of self-nodal pairs. We investigate connections to local, open, and orbifold geometries, obtain closed formulae in important special cases, and probe the behaviour under smoothing of the divisor. The main theories we consider are:
\[
\begin{tikzcd}
\fbox{$\left(S\, |\, D+E\right)$} \ar[rr,leftrightarrow,"\text{\Cref{sec: bicyclic pairs}}"] & & \fbox{$\big(\OO_S(-D)\, | \, \hat{E}\big)$\footnotemark} \ar[rr,leftrightarrow,"\text{\Cref{sec: nef pairs}}"] \ar[d,leftrightarrow, "\text{\Cref{sec: toric pairs}}"] & & \fbox{$\OO_S(-D) \oplus \OO_S(-E)$} \\
& & \fbox{$\OO_S(-D)|_{S \setminus E}$} \ar[rr,rightarrow,"\text{\Cref{sec: nodal cubic}}"] & & \fbox{$\OO_{\PP^1}(r)\oplus\OO_{\PP^1}(-r-2)$}
\end{tikzcd}
\]
\footnotetext{\label{footnote: vb notation}For $\uppi \colon V \to S$ a vector bundle and $E \subseteq S$ a divisor, we will write $\hat{E}\coloneqq \pi^{-1}(E)$.}
We work in several different settings. The precise hypotheses are stated clearly at the start of each section. Globally, the paper proceeds from most general to most specific. Our motivating example is the resolution of a self-nodal plane cubic (see \Cref{sec: self-nodal curve introduction}).


\subsection{Logarithmic-local correspondence (Sections~\ref{sec: setup}--\ref{sec: degeneration formula analysis})} Our first main result establishes an all-genus correspondence between the Gromov--Witten theories of the following pairs:
\[
(S \, | \, D+E) \leftrightarrow (\OO_S(-D) \, | \, \hat{E}).
\]
If $D\!+\!E \in |\!-\!K_S|$ then the right-hand side is a logarithmically Calabi--Yau threefold.

The proof proceeds via degeneration to the normal cone of the divisor $D \subseteq S$. This is a well-established technique \cite{vGGR,TsengYouHigherGenus,BFGW} but severe complications arise due to the non-trivial logarithmic structure on the general fibre.

Consider a bicyclic pair $(S\, | \, D+E)$ and fix a curve class $\upbeta \in A_1(S;\Z)$. We assume:
\begin{itemize}
\item $D \cong \PP^1$ and $D^2 \geq 0$.
\item $D \cdot \upbeta > 0$ and $E \cdot \upbeta \geq 0$.\footnote{The case $E \cdot \upbeta=0$ can occur, e.g. for resolutions of self-nodal pairs. In this case, the spaces of stable logarithmic maps to $(S\, |\, D+E)$ and $(S|D)$ have the same virtual dimension, but their logarithmic Gromov--Witten invariants typically differ. See \Cref{rmk: E times beta can be zero}.}	
\end{itemize}
Fix a genus $g$ and tangency data $\bfc$ with respect to $E$. From $\bfc$ we build tangency data $\hat{\bfc}$ with respect to $D+E$ by appending an additional marked point with tangency $D \cdot \upbeta$ along $D$.

Finally let $\upgamma$ be a collection of $D$-avoidant insertions (\Cref{def: E disjoint insertions}) of the correct codimension. This permits markings with no insertions, as well as descendants of evaluation classes disjoint from~$D$.

\begin{customthm}{A}[\Cref{thm: local-log}] \label{thm: local-log introduction} We have the following equality of generating functions:		
\[ \dfrac{(-1)^{D \cdot \upbeta -1}}{2 \sin\left( \frac{D \cdot \upbeta}{2} \hbar \right)}\left( \sum_{g \geq 0} \GW_{g,\hat{\bfc},\upbeta}(S\, | \, D+E) \langle (-1)^g \uplambda_g \upgamma \rangle \cdot \hbar^{2g-1} \right) = \sum_{g \geq 0} \GW_{g,\bfc,\upbeta}( \OO_S(-D)\, |\, \hat{E})\langle \upgamma \rangle \cdot \hbar^{2g-2}.\]
\end{customthm}

The proof proceeds via the degeneration formula for stable logarithmic maps \cite{ACGSDecomposition,RangExpansions} applied to the degeneration to the normal cone of $D \subseteq S$. The general fibre has non-trivial logarithmic structure corresponding to $E$, which greatly complicates the analysis.

We develop combinatorial techniques for constraining the shapes of rigid tropical types, and intersection theoretic techniques for establishing the vanishing of certain contributions (see \Cref{sec: degeneration formula analysis}). A novel feature is the close intertwining of these modes, and we switch frequently back and forth between tropical and intersection theoretic arguments. We expect this to serve as a useful blueprint for future calculations.

Having established \Cref{thm: local-log introduction}, we explore two proximate results in genus zero.

\subsection{Nef pairs (\Cref{sec: nef pairs})} Combining \Cref{thm: local-log introduction} with \cite[Theorem~1.1]{vGGR} we obtain:
\begin{customthm}{B}[\Cref{thm: nef correspondence}] \label{thm: nef correspondence introduction} Suppose that $E^2 \geq 0$ and $E \cdot \upbeta > 0$ and let $\bfc=(E \cdot \upbeta)$ be maximal tangency contact data to $E$. Then we have:
\[	\GW_{0,\hat{\bfc},\upbeta}(S\, | \, D+E)\langle \upgamma \rangle = (-1)^{(D+E)\cdot \upbeta} (D\cdot\upbeta)(E\cdot\upbeta) \cdot \GW_{0,\bfc,\upbeta}(\OO_S(-D) \oplus \OO_S(-E))\langle \upgamma \rangle.\]	
\end{customthm}
This gives another instance of the numerical logarithmic-local correspondence for normal crossings pairs \cite{BBvG1,BBvG2,BBvG3,TsengYouMirror}. See \Cref{rmk: orbifold correspondence strong} for the importance of $D$-avoidant insertions.

\subsection{Root stacks and self-nodal pairs (\Cref{sec: exceptional divisors})}
We next impose that $E \cdot \upbeta=0$. This includes the case of self-nodal pairs discussed above. In this setting we provide an alternative proof of \Cref{thm: local-log introduction}, by passing through the logarithmic-orbifold \cite{RootLog} and orbifold-local \cite{BNTY} correspondences (\Cref{thm: exceptional divisors correspondence}). This allows us to remove the assumptions that $D$ is rational and $\upgamma$ is $D$-avoidant. The key intermediate result is:

\begin{customthm}{C}[\Cref{prop: log orb coincide}] \label{thm: log orbifold introduction} Suppose that $E \cdot \upbeta=0$. Then there is an equality between logarithmic and orbifold Gromov--Witten invariants:
	\[ \GW^{\mathsf{log}}_{0,\hat{\bfc},\upbeta}(S\, | \, D+E)\langle \upgamma \rangle = \GW^{\mathsf{orb}}_{0,\hat{\bfc},\upbeta}(S\, | \, D+E)\langle \upgamma \rangle.\]
	This holds without assuming that $D \cong \PP^1$ or that $\upgamma$ is $D$-avoidant.
\end{customthm}
This result is proved using \cite[Theorem~X]{RootLog}. The main technical step is to strongly constrain the shapes of tropical types of maps to $(S\, |\, D+E)$.

\subsection{Toric and open geometries (Sections~\ref{sec: setup toric}--\ref{sec: localisation calculation})} \label{sec: toric introduction} We next specialise to toric Calabi--Yau pairs. We assume:
\begin{itemize}
\item $S$ is toric.
\item $E$ is a toric hypersurface.
\item $D+E \in |\!-\!\!K_S|$.
\item $E \cdot \upbeta=0$.	
\end{itemize}
We do not require that $D$ is toric. In this setting, we obtain a logarithmic-open correspondence:
\begin{customthm}{D}[\Cref{thm: local open correspondence}] \label{thm: local open introduction} For all $g\geq 0$ we have:
		\[	\GW_{g,0,\upbeta} (\OO_S(-D)|\hat{E}) = \GW_{g,0,\upiota^\star \upbeta}^T(\OO_S(-D)|_{S \setminus E})	\]
where the Gromov--Witten invariant on the right-hand side is defined by localising with respect to the action of the Calabi--Yau torus $T$ (see \Cref{sec: open invariants}).
\end{customthm}
The proof proceeds by localisation on both sides. The difficult step is to establish the vanishing of certain contributions, by isolating a weight zero piece of the obstruction bundle. For this it is crucial that we localise with respect to the Calabi--Yau torus $T$.

The target $\OO_S(-D)|_{S \setminus E}$ is a toric Calabi--Yau threefold and hence its invariants can be computed using the topological vertex formalism \cite{LiLiuLiuZhouTopVert}. \Cref{thm: local open introduction} combined with \Cref{thm: local-log introduction} thus provides a new means to efficiently compute the all-genus logarithmic invariants of $(S\, |\, D+E)$, which is otherwise a highly tedious endeavour.

\subsection{Applications to GV, BPS, and quiver DT invariants (\Cref{sec: GV BPS DT})} \label{sec: applications introduction} Applying \Cref{thm: local open introduction}, we obtain correspondences
\[
\begin{tikzcd}
\fbox{$\mathrm{BPS}\left(S\, |\, D+E\right)$} \ar[rrrr,leftrightarrow,"\text{\Cref{cor: refined}}"] \ar[rrd,leftrightarrow,"\text{\cite{BousseauQuantumTropicalVertex}}" {xshift=-20pt,yshift=-17pt}] &&&& \fbox{$\mathrm{GV}\left(\OO_S(-D)|_{S \setminus E}\right)$} \ar[dll,leftrightarrow,"\text{\Cref{cor:divisibility-quiver}}"] \\ 
&& \fbox{$\mathrm{DT}(Q)$} && 
\end{tikzcd}
\]
where $Q$ is a quiver built from the pair $(S\,|\,D+E)$. \Cref{cor:divisibility-quiver} in particular establishes two surprising facts. First, the space of quiver representations is virtually a projective bundle, which is not the case geometrically. Second, $Q$ differs from the standard quiver associated to the Calabi--Yau threefold $\OO_S(-D)|_{S \setminus E}$ (although $Q$ does arise in the theory of exponential networks on the mirror curve, see \Cref{sec: prospects networks}).

\subsection{Self-nodal curves: calculations (\Cref{sec: scattering})} \label{sec: self-nodal curve introduction} We now specialise to our motivating example. Consider $S_r \colonequals \PP(1,1,r)$ and let $D_r \in |\!-\!\!K_{S_r}|$ be an irreducible curve with a single nodal singularity at the singular point of $S_r$. We consider degree $d$ curves in the pair $(S_r|D_r)$ which meet $D_r$ in a single point of maximal tangency order $d(r+2)$.
\begin{customthm}{E}[\Cref{thm: nodal cubic invariants}] \label{thm: nodal cubic introduction}
	We have:
	\[ \GW_{0,(d(r+2)),d}(S_r|D_r) =  \dfrac{r+2}{d^2} {(r+1)^2 d-1 \choose d-1}. \] 
\end{customthm}
\begin{remark} The numerator $(r+2)$ is the $d=1$ invariant. The remaining factors strongly resemble the multiple cover formula of \cite[Proposition~6.1]{GPS}, however for a curve of tangency order $(r+1)^2+1$.\end{remark}

\Cref{thm: nodal cubic introduction} is proved via the Gromov--Witten/quiver correspondence \cite{BousseauQuantumTropicalVertex}. Passing to the resolution by the Hirzebruch surface $\mathbb{F}_r \to S_r$ and combining Theorems~\ref{thm: local open introduction}~and~\ref{thm: nodal cubic introduction} we immediately obtain the following formula, already known in the physics literature \cite[Equation~(4.53)]{CaporasoGriguoloMarinoPasquettiSeminara}:

\begin{customthm}{F}[\Cref{thm: local P1 invariants}] \label{thm: local P1 introduction}
	We have:
	\[ \GW^T_{0,0,d}\big(\Ocal_{\PP^1}(r)\oplus\Ocal_{\PP^1}(-r-2)\big) = \dfrac{(-1)^{rd - 1}}{d^3} {(r+1)^2 d-1 \choose d-1}. \] 
\end{customthm}

\subsection{Self-nodal curves: smoothings (\Cref{sec: degenerating hypersurfaces})}
In the final section we focus on the case $r=1$. We compare the Gromov--Witten invariants of $(\PP^2|D)$ and $(\PP^2|E)$ where $D$ and $E$ are nodal and smooth plane cubics. Experimentally we observe that the former are always smaller than the latter. We provide a conceptual explanation for this defect, via the enumerative geometry of degenerating hypersurfaces. As in \cite{BarrottNabijou} we degenerate both $D$ and $E$ to the toric boundary $\Delta \subseteq \PP^2$ and consider the invariants of the logarithmically singular central fibre.

\begin{customthm}{G}[\Cref{thm: nodal cubic invariants are contributions}] \label{thm: nodal cubic invariants are contributions introduction} The invariants of $(\PP^2|D)$ are precisely the central fibre contributions to the invariants of $(\PP^2|E)$ arising from multiple covers of a single coordinate line.
\end{customthm}
Finally (\Cref{thm: BN conjecture holds}) we apply \Cref{thm: nodal cubic invariants are contributions introduction} to settle a conjecture in \cite{BarrottNabijou}.

\begin{remark} Taken together, Theorems~\ref{thm: nodal cubic introduction}, \ref{thm: local P1 introduction}, \ref{thm: nodal cubic invariants are contributions introduction} relate the Gromov--Witten invariants arising from two non-standard obstruction theories on the space of stable maps to $\PP^1$: the local geometry and the degenerated hypersurface.\end{remark}

\subsection{Context} \label{sec: context}

\subsubsection{Enumerative correspondences}

The present paper fits into the broader body of work on logarithmic-local-open-quiver correspondences \cite{TakahashiMirror,GathmannMirror,vGWZ,vGGR,CvGKTlocalBPS,CvGKTBPS,CvGKTDegenerateContributions,BBvG1,BBvG2,BBvG3,MaxContacts,BFGW,LiuYu,YuBPS,FanWu,BousseauQuiver,BW,BriniSchulerQuasiTameLooijenga,WangGS,AL23}. By \cite{BNTY} this is intimately connected to the logarithmic-orbifold correspondence, another area of intense study \cite{Cadman,CadmanChen,AbramovichCadmanWise,TsengYouHigherGenus,RootLog,BNR24}. This area enjoys close connections to Mirror Symmetry \cite{BousseauScatt,BousseauTakahashi,LiuYu2,FanTsengYouMirror,TsengYouMirror,YouMM,Graefnitz,Grae,GRZ,YouLG,ShafiQuasimaps}.

In \cite{MaxContacts} it is shown that the na\"ive logarithmic-local correspondence fails for normal crossings pairs, and a corrected form is established. It is then observed that in many situations, the insertions cap trivially with the correction terms, collapsing the corrected correspondence back to the na\"ive correspondence. The results of this paper provide another instance of this phenomenon.

\subsubsection{Logarithmic Fanos} \label{sec:comp-log-Fano}

If $(S\, |\, D+E)$ is Looijenga then $(S\,|\,D)$ is a logarithmically Fano surface. When in addition $D$ is ample of virtual genus zero, \cite[Theorem~1.7]{BW} relates the all genus Gromov--Witten invariants of $(S\,|\,D)$ and $\OO_S(-D)$ by applying the more general \cite[Theorem~1.1]{BFGW}. 

Our techniques can be used to recover \cite[Theorem~1.7]{BW}. Following \cite[Proposition~3.1]{BousseauQuiver} we pass to a deformation and identify the invariants of $(S\, |\, D)$ with the invariants of $(S\, |\, D+E)$ with tangency orders $(1,\ldots,1)$ along $E$. Similarly we identify the invariants of $\OO_S(-D)$ with the invariants of $(\OO_S(-D)\, |\, \hat{E})$. The correspondence \cite[Theorem~1.7]{BW} then follows from \Cref{thm: local-log introduction}.

For a del Pezzo surface with smooth anticanonical divisor, the divisibility statement analogous to \Cref{cor: refined} was conjectured in \cite[Conjecture~1.2]{CvGKTlocalBPS} and proven for the projective plane in \cite{GW24}. The relationship to counts of 1-dimensional sheaves on local del Pezzo surfaces was conjectured in \cite[Conjecture~0.3]{BousseauQuantumTropicalVertex}.

\subsubsection{Mirror Symmetry}

Following \cite{GHK}, the mirror family to $(S_r|D_r)$ is built by constructing their toric models as in \Cref{sec: scattering}. As the discrete Legendre transform \cite{GrossSiebertlogdegI} of the toric model is itself, it thus follows that $(S_r|D_r)$ is isomorphic to a generic member of its mirror family (and not just diffeomorphic as is expected for hyperkähler surfaces).

Moreover, the scattering diagrams of $(S_r|D_r)$ form the $(r+2)$-local scattering diagrams \cite{GPS,BousseauScatt} for the mirror symmetry construction of \cite{GHK}. Hence finding an effective description of the genus zero logarithmic Gromov--Witten invariants of $(S_r|D_r)$ has potential applications in the explicit description of mirror families to logarithmically Calabi--Yau surfaces, an often intractable problem.

\subsection{Prospects}

\subsubsection{Logarithmic-open}
Combining Theorems~\ref{thm: local-log introduction}~and~\labelcref{thm: local open introduction} we obtain a logarithmic-open correspondence for two-component Looijenga pairs \cite[Conjecture~1.3]{BBvG2} in the setting where the curve class pairs trivially with one of the components. If the curve class pairs non-trivially with both components, we still expect \Cref{thm: local-log introduction} to lead to progress on the logarithmic-open correspondence. We intend to investigate this in future work.

\subsubsection{Higher dimensions}
We restrict to surface geometries in this paper. This ensures a simplified balancing condition at codimension-$2$ strata, used crucially in \Cref{sec: third vanishing} to eliminate the contributions of rigid tropical types which are not star-shaped. The resulting \Cref{thm: third vanishing} strongly resembles \cite[Theorem~1.1]{BFGW}. Intriguingly, however, the latter result holds in all dimensions.

We speculate that the arguments of \Cref{sec: third vanishing} may be refined to produce a higher-dimensional analogue of \Cref{thm: third vanishing}, leading to a higher-dimensional analogue of \Cref{thm: local-log introduction}. Continuing in this vein, we speculate that the same arguments may be adapted to treat pairs $(S\, | \, D+E)$ such that the intersection $D \cap E$ consists of three or more smooth connected components.

\subsubsection{Scattering}
\label{sec:intro-scatt}
\Cref{thm: nodal cubic introduction} computes the genus zero Gromov--Witten invariants of $\PP(1,1,r)$ relative to a self-nodal anticanonical divisor. We achieve this by equating these invariants with the Donaldson--Thomas invariants of the $(r+2)$-Kronecker quiver with dimension vector $(d,d)$. The latter are encoded in the wall-crossing function of the central ray of a local scattering diagram, with incoming ray directions $\uprho_1,\uprho_2$ satisfying $|\uprho_1 \wedge \uprho_2|=r+2$. We then use \cite[Equation (1.4)]{GP} proven by Reineke \cite{Reineke} for $\ell_1=\ell_2$ to arrive at \Cref{thm: nodal cubic introduction}, from which we deduce \Cref{thm: local P1 introduction}. 

It may be possible to reverse this logic, proving \Cref{thm: local P1 introduction} first via a direct analysis of the local invariants (as in \cite{CaporasoGriguoloMarinoPasquettiSeminara}) and using this to deduce \Cref{thm: nodal cubic introduction} and hence \cite[Equation (1.4)]{GP}. Speculatively, it may also be possible to study the case $\ell_1\neq\ell_2$, by extending the arguments of \Cref{sec: nodal cubic} to weighted projective planes of the form $\PP(1,a,b)$.

Parts of \Cref{sec: nodal cubic} extend readily to higher genus using Bousseau's quantum scattering \cite{BousseauQuantumTropicalVertex}. An analysis of the central ray of the quantum scattering diagram of the Kronecker quivers, via a correspondence with the all-genus Gromov--Witten generating function of local $\mathbb{P}^1$, is an attractive prospect.

\subsubsection{Stable Lagrangians and spectral networks} \label{sec:A-branes} \label{sec: prospects networks}

The correspondence between logarithmic BPS and open Gopakumar--Vafa invariants (\Cref{sec: applications introduction}) has a heuristic explanation arising from mirror symmetry, as an identity of counts of curves in the open geometry with counts of stable Lagrangians in the mirror geometry. This is analogous to the heuristics for the local projective plane described in \cite[Section~8]{BousseauTakahashi} and elaborated in \cite[Section~4.1]{ABQuiver}, and with the general formulation given in \cite{BousseauHoloFloer}.

This expectation is supported by the analysis of the moduli space of certain Lagrangians in the mirror geometry \cite{BLRAbranes,BLRfoliations} and the link to exponential networks \cite{BLRExpI,BLRExpII,ESWExp,BLRExplore,GLPY} and more generally spectral networks \cite{GMNfourdim,GMNwallcrossing,BridgelandSmith}. In this context, the link to the representation theory of Kronecker quivers is established in \cite[Section~2.2]{BLRfoliations}.

This suggests that there may exist a correspondence between the scattering diagram of $(S\,|\,D+E)$ and the spectral networks of the mirror family. It would be interesting to pursue this line of enquiry.

\subsection{Assumptions}
Theorems \ref{thm: local-log introduction}--\ref{thm: local open introduction} concern the Gromov--Witten theory of bicyclic pairs $(S\,|\,D+E)$ with curve class $\upbeta$ and genus $g$ satisfying varying assumptions. To first approximation, our assumptions get more restrictive as the paper progresses and our results become more specific. For the convenience of the reader, below we summarise our assumptions together with sections in which they apply:
\begin{center}
	\def\arraystretch{1.25}
\begin{tabular}{|l|l|l|l|}
	\hline
	Section & Assumptions & Insertions & Main results \\ \hline \hline
	\ref{sec: setup}--\ref{sec: degeneration formula analysis} & $D\cong \PP^1$, $D^2\geq 0$, $D\cdot \upbeta >0$, $E\cdot \upbeta \geq 0$ & $D$-avoidant & \Cref{thm: local-log introduction} \\ \hline
	\ref{sec: nef pairs} & $D \cong \PP^1$, $D^2 \geq 0$, $D \cdot \upbeta > 0$, $E^2 \geq 0$, $E \cdot \upbeta > 0$ & $D$-avoidant & \Cref{thm: nef correspondence introduction} \\ \hline
	\ref{sec: exceptional divisors} & $D^2\geq 0$, $D \cdot \upbeta >0$, $E\cdot \upbeta = 0$, $g=0$ & arbitrary & \Cref{thm: log orbifold introduction} \\ \hline
	\ref{sec: toric pairs} & $S$ toric, $E$ toric hypersurface, $D+E\in|-K_S|$, $E\cdot \upbeta = 0$ & none &  \Cref{thm: local open introduction}\\ \hline
\end{tabular}
\end{center}
In \Cref{sec: nodal cubic} we then apply our results to self-nodal pairs. More precisely, \Cref{sec: scattering} (\Cref{thm: nodal cubic introduction} and \ref{thm: local P1 introduction}) concerns pairs $(S_r|D_r)$ where $S_r=\PP(1,1,r)$ and $D_r\in|-K_{S_r}|$ is an irreducible curve with a single nodal toric singularity at the singular point of the surface (or at one of the torus fixed points if $r=1$). Lastly, in \Cref{sec: degenerating hypersurfaces}, we further specialise to $r=1$ to prove \Cref{thm: nodal cubic invariants are contributions introduction}.

\subsection*{Acknowledgements} This project originated in joint discussions with Andrea~Brini, Pierrick~Bousseau, and Tim~Gr\"afnitz. We are grateful for their generosity and insights. We have benefited from conversations with Luca~Battistella, Francesca~Carocci, Cyril~Closset, Ben~Davison, Samuel~Johnston, Naoki~Koseki and Dhruv~Ranganathan. Special thanks are owed to Qaasim~Shafi and Longting~Wu, for extremely helpful clarifications at key points. We thank the anonymous referees for several valuable comments.
	
	Parts of this work were carried out during research visits at the Isaac Newton Institute, the University of Sheffield, the University of Birmingham, Imperial College London, and Queen Mary University of London. We are grateful to these institutions for hospitality and financial support.
		
\section{Bicyclic pairs}\label{sec: bicyclic pairs}
\noindent We introduce bicyclic pairs $(S\, |\, D+E)$. The main result of this section (\Cref{thm: local-log}) establishes a precise correspondence between the logarithmic Gromov--Witten theories of the pairs $(S\, |\, D+E)$ and~$(\OO_S(-D)\, |\, \hat{E})$. This occupies Sections~\ref{sec: setup}--\ref{sec: degeneration formula analysis}. Applications and variations are discussed in Sections~\ref{sec: nef pairs} and \ref{sec: exceptional divisors}.

\subsection{Setup and statement of correspondence} \label{sec: setup}
		
\begin{definition} A \textbf{bicyclic pair} $(S\, |\, D+E)$ consists of a smooth projective surface $S$ and smooth divisors $D,E \subseteq S$ such that $D$ and $E$ intersect in two reduced points, denoted $q_1$ and $q_2$. The normal crossings pair $(S\, | \, D+E)$ has tropicalisation:

\begin{equation}\label{eqn: tropicalisation}
		\begin{tikzpicture}[scale=0.8,baseline=(current  bounding  box.center)]
			\draw[fill=black] (0,0) circle[radius=2pt];
			
			\draw[->] (0,0) -- (3,0);
			\draw (3,0) node[right]{$D$};
			
			\draw[->] (0,0) -- (0,3);
			\draw (0,3) node[above]{$E$};
			
			\draw[->] (0,0) -- (-3,0);
			\draw (-3,0) node[left]{$D$};
			
			
			\draw (1.5,0) node{$\mid\mid$};
			\draw (-1.5,0) node{$\mid\mid$};
			
			\draw (1.5,1.5) node{$q_2$};
			\draw (-1.5,1.5) node{$q_1$};
		\end{tikzpicture}
\end{equation}
\end{definition}

\noindent Fix a bicyclic pair $(S\, |\, D+E)$ and a curve class $\upbeta \in A_1(S;\Z)$ such that:
\begin{itemize}
\item $D \cong \PP^1$ and $D^2 \geq 0$.
\item $D \cdot \upbeta > 0$ and $E \cdot \upbeta \geq 0$.	
\end{itemize}
We consider stable logarithmic maps to $(S \, | \, D+E)$ with genus $g$, class $\upbeta$ and markings $x,y_1, \ldots, y_s$ carrying the following tangency conditions:
\begin{itemize}
\item $x$ has maximal tangency $D \cdot \upbeta$ with respect to $D$.
\item $y_j$ has tangency $\upalpha_j \geq 0$ with respect to $E$ (with $\Sigma_{j=1}^s \upalpha_j = E \cdot \upbeta$).
\end{itemize}
We let $\hat{\bfc}$ denote the matrix of tangency data. The evaluation space $\mathrm{Ev}_j$ corresponding to the marking $y_j$ is defined as:
\begin{equation} \label{eqn: definition evaluation space} \mathrm{Ev}_j \colonequals \begin{cases} E \qquad & \text{if $\upalpha_j>0$} \\ S \qquad & \text{if $\upalpha_j = 0$}.\end{cases} \end{equation}
We restrict to the following class of insertions.
\begin{definition} \label{def: E disjoint insertions} A class $\upgamma_j \in A^\star(\mathrm{Ev}_j)$ is \textbf{$D$-disjoint} if there exists a regularly embedded subvariety $Z_j \subseteq \mathrm{Ev}_j$ such that $[Z_j] = \upgamma_j$ and $Z_j \cap D = \emptyset$. An assembly of insertions
\[ \upgamma = \prod_{j=1}^s \ev_{y_j}^\star(\upgamma_j) \psi_{y_j}^{k_j} \]
is \textbf{$D$-avoidant} if for each $j \in \{1,\ldots,s\}$ one of the following two conditions holds:
\begin{enumerate}
\item $\upgamma_j = \mathbbm{1}_{\mathrm{Ev}_j}$ and $k_j=0$.
\item $\upgamma_j$ is $D$-disjoint.
\end{enumerate}
This provides an enlargement of the stationary sector.
\end{definition}

With the above setup, we use the moduli space of stable logarithmic maps \cite{ChenLog,AbramovichChenLog,GrossSiebertLog} to define logarithmic Gromov--Witten invariants with a lambda class insertion
\begin{equation} \label{eqn: log invariant} \GW_{g,\hat{\bfc},\upbeta}(S\, | \, D+E)\langle (-1)^g \uplambda_g \upgamma \rangle \colonequals (-1)^g \uplambda_g \upgamma \cap [\Mbar_{g,\hat{\bfc},\upbeta}(S\, | \, D+E)]^{\virt} \in \Q. \end{equation}
We next consider the pair $(\OO_S(-D)\, |\, \hat{E})$ where $\hat{E}$ is the preimage of $E$ under the projection $\OO_S(-D) \rightarrow S$. We obtain tangency data $\bfc$ from $\hat{\bfc}$ by deleting the marking $x$, and define logarithmic Gromov--Witten invariants of the local target
\begin{equation} \label{eqn: local invariant} \GW_{g,\bfc,\upbeta}(\OO_S(-D)\, |\, \hat{E})\langle \upgamma \rangle \colonequals \upgamma \cap [\Mbar_{g,\bfc,\upbeta}(\OO_S(-D)\, |\, \hat{E})]^{\virt} \in \Q. \end{equation}
Since $D^2 \geq 0$ it follows that $D$ is nef, and then $D \cdot \upbeta > 0$ implies that $H^0(C,f^\star \OO_S(-D))=0$ for any stable map $f \colon C \to S$ of class $\upbeta$. The local theory of $\OO_S(-D)$ is thus well-defined. The main result of this section is a correspondence between the invariants \eqref{eqn: log invariant} and \eqref{eqn: local invariant}.
\begin{theorem}[\Cref{thm: local-log introduction}] \label{thm: local-log} We have the following equality of generating functions:	
\[ \dfrac{(-1)^{D \cdot \upbeta - 1}}{2 \sin\left( \frac{D \cdot \upbeta}{2} \hbar \right)} \sum_{g \geq 0} \GW_{g,\hat{\bfc},\upbeta}(S\, | \, D+E) \langle (-1)^g \uplambda_g \upgamma \rangle \hbar^{2g-1} = \sum_{g \geq 0} \GW_{g,\bfc,\upbeta}( \OO_S(-D)\, |\, \hat{E})\langle \upgamma \rangle \hbar^{2g-2}.\]
\end{theorem}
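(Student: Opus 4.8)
The plan is to degenerate $S$ to the normal cone of $D$, apply the logarithmic degeneration formula of \cite{ACGSDecomposition,RangExpansions}, and isolate the single tropical type that survives. Let $\mathfrak{X} \to \Aone$ be the degeneration to the normal cone of $D \subseteq S$: the general fibre is $(S\, |\, D+E)$ and the special fibre is a union $S \cup_D \PP(\OO_D \oplus \OO_D(-D)) =: S \cup_D \mathbb{P}_D$, with the divisor $E$ spreading out over $S$ and its intersection with $D$ sprouting into the $\mathbb{P}_D$ component. Crucially the general fibre carries a non-trivial logarithmic structure coming from $E$, so one is not in the classical relative situation and the degeneration formula genuinely sums over rigid decorated tropical types $\tau$, each contributing a product of virtual contributions of the pieces glued along edges. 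First I would set up this degeneration carefully, record the tropicalisation (the cone $\Sigma(\mathfrak{X})$ sits over the ray $\Rpos$, and each fibre-type tropical curve maps to it), and write down the formula
\[
\GW_{g,\hat{\bfc},\upbeta}(S\,|\,D+E)\langle(-1)^g\uplambda_g\upgamma\rangle = \sum_{\tau} \frac{m_\tau}{|\mathrm{Aut}(\tau)|}\,\mathrm{(contribution\ of\ }\tau\mathrm{)}.
\]
The $\uplambda_g$ insertion is pulled back from $\overline{\mathcal{M}}_g$ and so splits across the tropical vertices according to the splitting of the dual graph, which is what ultimately produces the $\sin$ factor via a Hodge-integral/Faber--Pandharipande type evaluation on the genus that piles up over $D$.

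The technical heart, exactly as flagged in the introduction, is the vanishing argument: one must show that all but one rigid tropical type contributes zero. I would organise this as three successive constraints (matching \Cref{sec: degeneration formula analysis} and the cited \Cref{thm: third vanishing}). First, a purely tropical/combinatorial step: using that $D\cdot\upbeta>0$ forces the marking $x$ to the $\mathbb{P}_D$-side, that $D^2\geq 0$ (hence $D$ nef) together with the balancing condition at vertices, and the constraint that $\upgamma$ is $D$-avoidant — so the $y_j$-markings and their insertions/descendants can be pushed off $D$ — I would cut down the possible shapes of $\tau$ to those with a distinguished "spine'' mapping into the $\mathbb{P}_D$ direction. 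Second, an intersection-theoretic vanishing: on components of the central fibre lying in the expansion $\mathbb{P}_D \cup (\text{bubbles})$, the $D$-disjointness of the insertions makes certain evaluation classes factor through a substack of excess dimension, or a piece of the obstruction bundle acquires a trivial quotient, killing the contribution. Third — and this is the step I expect to be the main obstacle — one has to rule out the "non-star-shaped'' rigid types: tropical curves with several vertices over the interior of the expansion, where neither the tropical balancing alone nor a single excess-dimension count suffices. The resolution is the tight back-and-forth the authors advertise: one extracts a partial tropical constraint, feeds it into an intersection-theoretic vanishing on the corresponding pieces, which in turn forces a further tropical degeneration, and iterates. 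The surface hypothesis ($\dim S = 2$) is what makes the balancing at codimension-$2$ strata simple enough for this loop to terminate.

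Once only the star-shaped type $\tau_0$ survives — a single vertex $v_0$ mapping to the $\mathbb{P}_D$-component (carrying the marking $x$ and all the accumulated genus), joined by edges to vertices in the $S$-component (carrying the class $\upbeta$, the $E$-tangencies and the insertions $\upgamma$) — the contribution factorises. The $S$-side assembles precisely into $\GW_{g,\bfc,\upbeta}(\OO_S(-D)\,|\,\hat{E})\langle\upgamma\rangle$: deformation to the normal cone identifies the relevant piece of the central-fibre invariant of $(S\,|\,D+E)$ relative the $\mathbb{P}_D$-direction with the local invariant of the total space $\OO_S(-D)$, still carrying the residual log structure from $E$. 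The $\mathbb{P}_D$-side collapses to a rubber/Hodge integral over $\overline{\mathcal{M}}_{g,1}$ with the $\uplambda_g$ insertion and a single maximal-contact marking of order $D\cdot\upbeta$; this is evaluated by the standard local-$\mathbb{P}^1$ / $\int \uplambda_g$ computation (cf. \cite{vGGR,BFGW}), yielding exactly the generating-series prefactor $(-1)^{D\cdot\upbeta-1}/\big(2\sin(\tfrac{D\cdot\upbeta}{2}\hbar)\big)$ after the usual $\hbar^{2g}$ bookkeeping from $\dim \overline{\mathcal{M}}_g$. Matching $\hbar$-exponents ($2g-1$ on the left, $2g-2$ on the right, with the prefactor supplying the shift) and summing over $g$ gives the claimed identity. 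I would close by checking the edge cases: $s=0$ (no $y$-markings), $E\cdot\upbeta = 0$ (where $y$-markings have trivial tangency, $\mathrm{Ev}_j = S$, and the argument is unaffected — consistent with the footnote and \Cref{rmk: E times beta can be zero}), and the well-definedness of the local theory, which is already guaranteed by $H^0(C,f^\star\OO_S(-D))=0$ noted above.
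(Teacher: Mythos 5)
Your plan degenerates the wrong object, and this creates a genuine gap at the decisive step. You apply the degeneration formula to the pair $(S\,|\,D+E)$ itself, writing $\GW_{g,\hat{\bfc},\upbeta}(S\,|\,D+E)\langle(-1)^g\uplambda_g\upgamma\rangle$ as a sum over rigid tropical types for the central fibre $S\cup_D P$. But then every vertex contribution is an invariant of a \emph{surface} component of that central fibre --- of $(S\,|\,D+E)$ again, or of $(P\,|\,D_0+D_\infty+E_1+E_2)$ --- and the local threefold $\OO_S(-D)$ never enters the picture. Your assertion that ``deformation to the normal cone identifies the relevant piece of the central-fibre invariant of $(S\,|\,D+E)$ relative the $\PP_D$-direction with the local invariant of the total space $\OO_S(-D)$'' is precisely the log-local exchange the theorem claims, so the argument is circular exactly where the local side is supposed to appear. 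Inserting $(-1)^g\uplambda_g$ by hand on the left and letting it ``split across the tropical vertices'' does not repair this: in the correct argument the lambda class is an output of an obstruction-theory comparison, not an input (and, as an aside, $\uplambda_g$ only splits as you describe over separating nodes).

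The paper instead degenerates the local geometry: over the degeneration $\scrS\to\Aone$ of $S$ to the normal cone of $D$ it takes the line bundle $\scrL$ (inverse of the strict transform of $D\times\Aone$) with the pulled-back log structure, so the general fibre is $(\OO_S(-D)\,|\,\hat E)$ and the central fibre glues $(S\times\Aone\,|\,\hat D+\hat E)$ to $(\OO_P(-D_\infty)\,|\,\hat D_0+\hat E_1+\hat E_2)$. The trivialisation of $\scrL_0$ over the $S$-component then does double duty: it powers the excess-bundle vanishing that forces weakly star-shaped types (your ``trivial quotient of the obstruction bundle'' has no analogue in a pure surface degeneration, since there is no line bundle around), and, via $\Mbar_{\uptau_i}=\Mbar_{\uptau_i}\lvert_0\times\Aone$ and a comparison of obstruction theories, it \emph{produces} the factor $(-1)^{g_i}\uplambda_{g_i}$ capped against $[\Mbar_{\uptau_i}(S\,|\,D+E)]^{\virt}$ --- this is where the lambda class in the statement comes from. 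The $P$-side vertex carries class $(D\cdot\upbeta)F$, restricts on fibres to $\OO_{\PP^1}(-1)$, and its Hodge-integral contribution is evaluated by the Bryan--Pandharipande local-curve formula, yielding the $(-1)^{D\cdot\upbeta-1}\big/\bigl(2\sin(\tfrac{D\cdot\upbeta}{2}\hbar)\bigr)$ prefactor. Your outline of the reduction steps (tropical balancing, $D$-avoidant insertions and dimension counts, rationality of $D$ for the final single-edge reduction) matches the paper's four reductions in spirit, but as set up, your degeneration can at best relate the left-hand side to itself glued to a $P$-piece; to make the right-hand side appear you must degenerate $(\OO_S(-D)\,|\,\hat E)$, not $(S\,|\,D+E)$.
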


\begin{remark} \label{rmk: E times beta can be zero} The case $E \cdot \upbeta=0$ is possible, and in fact one of the most interesting (see \Cref{sec: nodal cubic}). In this case there are no markings tangent to $E$, but we emphasise that the spaces
\[ \Mbar_{g,\bfc,\upbeta}(S\, | \, D+E) \qquad \text{and} \qquad \Mbar_{g,\hat{\bfc},\upbeta}(S|D)\]
are \emph{not} the same. The difference is clearly visible for $E \subseteq \FF_1$ the $(-1)$-curve and $\upbeta$ a curve class pulled back along the morphism $\FF_1 \to \PP^2$ contracting $E$. The moduli space
\[ \Mbar_{0,n,\upbeta}(\FF_1)\]
has a large number of excess components. On the other hand the moduli space
\[ \Mbar_{0,(0,\ldots,0),\upbeta}(\FF_1|E)\]
is irreducible of the expected dimension. Although the markings carry no tangency, the logarithmic structure imposes tangency conditions at the nodes, which cut down the excess components to produce a space of the expected dimension. The Gromov--Witten invariants also differ, with the theory of $(\FF_1|E)$ being essentially equivalent to the theory of $\PP^2$.
\end{remark}

\subsection{Target degeneration}  \label{sec: degeneration target} The proof of \Cref{thm: local-log} follows the degeneration argument of \cite{vGGR}. Because our degeneration has logarithmic structure on the general fibre, much greater care is required when enumerating rigid tropical types and performing gluing. This accounts for the significantly more involved proof. Despite this, the shape of the final formula is relatively simple (\Cref{thm: local-log}), because we are able to strongly constrain the rigid tropical types which contribute.

Consider the degeneration of $S$ to the normal cone of $D$ as illustrated in Figure~\ref{fig:degeneration}. This is a family
	\begin{equation} \label{eqn: family deformation to normal cone} \scrS \to \Aone\end{equation}
	with general fibre $S$. Let $P$ denote the projective completion of the normal bundle of $D \subseteq S$. The central fibre $\scrS_0$ of \eqref{eqn: family deformation to normal cone} is obtained by gluing $S$ and $P$ along the divisors $D \subseteq S$ and the zero section $D_0 \subseteq P$. We write $D_0 \subseteq \scrS_0$ for the gluing divisor.
	
	Let $\scrE \subseteq \scrS$ denote the strict transform of $E \times \Aone$. This intersects the component $S$ of the central fibre in $E \subseteq S$, and the component $P$ of the central fibre in the union of the two fibres $E_1,E_2 \subseteq P$ of the $\PP^1$-bundle $P \to D$ over the points $\{q_1,q_2\} = D \cap E$. We equip the total space $\scrS$ with the divisorial logarithmic structure corresponding to $\scrS_0+\scrE = S + P + \scrE$.
	
	Now take the strict transform of $D \times \Aone$ under the blowup $\scrS \to S \times \Aone$ and let $\scrL$ be the inverse of the corresponding line bundle on $\scrS$. There is a flat morphism
	\[ \scrL \to \scrS \]
	which we use to pull back the logarithmic structure on $\scrS$. On the general fibre, the resulting logarithmic scheme is
	\[ (\OO_S(-D)\, |\, \hat{E}).\]
	On the central fibre we have $\scrL|_S=\OO_S$ and $\scrL|_P = \OO_P(-D_\infty)$. The central fibre $\scrL_0$ is therefore obtained by gluing
	\[ (S \times \Aone\, |\, \hat{D} + \hat{E}) \qquad \text{and} \qquad (\OO_P(-D_\infty)\, |\, \hat{D}_0+\hat{E}_1+\hat{E}_2)\]
	along the divisors $\hat{D} = D \times \Aone \subseteq S \times \Aone$ and $\hat{D}_0 = D_0 \times \Aone \subseteq \OO_P(-D_\infty)$. (See Footnote~\ref{footnote: vb notation} regarding our notation for divisors on the total space of a line bundle.)
	
		\begin{figure}[H]
		\centering
		\begin{tikzpicture}
			\node at (-0.5,1) {$\scrS\,$:};
			\node at (-0.5,3.5) {$\scrL\,$:};
			\draw (0,0) -- (3,0) -- (4,2) -- (1,2) -- (0,0);
			\draw[purple] (3.125,0.25) .. controls (3.125,0.25) and (0.4,0.2) .. (0.8,1)  .. controls (1.2,1.8) and (3.875,1.75) .. (3.875,1.75);
			\node at (2,1) {$S$};
			\node[purple] at (0.8,0.3) {$E$};
			\node at (2,3.5) {$\Ocal_S(-D)$};
			\draw[-stealth] (2,3.2) -- (2,2.5);
			\node at (4.2,0.8) {\LARGE$\rightsquigarrow$};
			\draw (5,1) -- (8,-1) -- (9,1) -- (6,3) -- (5,1);
			\draw[purple] (8,-1) -- (9,1);
			\draw (9,1) -- (11,3) -- (10,1) -- (8,-1);
			\draw[purple] (8.125,-0.75) .. controls (8.125,-0.75) and ($(5.1,1.2) + (0.21,-0.14)$) .. ($(5.35,1.9) + (0.21,-0.14)$)  .. controls ($(5.9,2.8) + (0.21,-0.14)$) and (8.875,0.75) .. (8.875,0.75);
			\draw[purple] (8.125,-0.75) -- (10.125,1.25);
			\draw[purple] (8.875,0.75) -- (10.875,2.75);
			\node at (7.3,0.8) {$S$};
			\node at (7.3,3.5) {$\Ocal_S$};
			\draw[-stealth] (7.3,3.2) -- (7.3,2.5);
			\node[purple] at (5.5,1) {$E$};
			\node[purple] at (9.5,0) {$E_1$};
			\node[purple] at (10.1,2.6) {$E_2$};
			\node at (9.5,1) {$P$};
			\node at (9.5,3.5) {$\Ocal_P(-D_\infty)$};
			\draw[-stealth] (9.5,3.2) -- (9.5,2.5);
			\node[purple] at (8.3,0.25) {$D_0$};
			\node at (11,1.9) {$D_\infty$};
			\node[purple] at (8.5,-0.9) {$q_1$};
			\node[purple] at (8.125,-0.75) {$\bullet$};
			\node[purple] at (9.05,0.55) {$q_2$};
			\node[purple] at (8.875,0.75) {$\bullet$};
		\end{tikzpicture}
		\caption{The degeneration $\scrS \to \Aone$ and the line bundle $\scrL$ on $\scrS$. The logarithmic structure is indicated in purple.}
		\label{fig:degeneration}
	\end{figure}

	\subsection{Tropical types and balancing} \label{sec: tropical types and balancing} The logarithmic morphism $\scrS \to \Aone$ induces a morphism of tropicalisations
	\[ \Sigma(\scrS) \to \Sigma(\Aone) = \R_{\geq 0}. \]
The fibre over $1 \in \R_{\geq 0}$ is a polyhedral complex which we denote $\Sigma$:
	\begin{equation}\label{eqn: central fibre tropicalisation}
		\begin{tikzpicture}[scale=0.8, baseline=(current  bounding  box.center)]
			\draw[fill=black] (0,0) circle[radius=2pt];
			\draw (0,0) node[below]{$S$};
			
			\draw[->] (0,0) -- (3,0);
			\draw (3,0) node[right]{$E$};
			
			\draw (0,0) -- (0,3);
			\draw[->] (0,0) -- (0,1);
			\draw[->] (0,3) -- (0,2);
			\draw (0,1.5) node[right]{$D_0$};
			
			\draw[fill=black] (0,3) circle[radius=2pt];
			\draw (0,3) node[above]{$P$};
			\draw[->] (0,3) -- (3,3);
			\draw (3,3) node[right]{$E_2$};
			
			\draw[->] (0,3) -- (-3,3);
			\draw (-3,3) node[left]{$E_1$};
			
			\draw[->] (0,0) -- (-3,0);
			\draw (-3,0) node[left]{$E$};
			
			
			\draw (2.5,0) node{$\mid\mid$};
			\draw (-2.5,0) node{$\mid\mid$};
			
			\draw (-2,1.5) node{$q_1$};
			\draw (2,1.5) node{$q_2$};
		\end{tikzpicture}
	\end{equation}

\begin{notation}
We use the notation $S,P,D_0,E,E_1,E_2,q_1,q_2$ to refer both to the strata of $\scrS_0$ and to the corresponding polyhedra in $\Sigma$.
\end{notation}

The moduli space of stable logarithmic maps to the central fibre $\scrL_0$ decomposes into virtual irreducible components indexed by rigid tropical types of maps to $\Sigma$.	
	\begin{definition}[{\! \cite[Definition~2.23]{ACGSDecomposition}}] A \textbf{tropical type of map to $\Sigma$} consists of:
		\begin{enumerate}
			\item \textbf{Source graph.} A finite graph $\Gamma$ with vertices $V(\Gamma)$, finite edges $E(\Gamma)$, unbounded legs $L(\Gamma)$, and a genus assignment $g \colon V(\Gamma) \to \N$.
			\item \textbf{Polyhedra assignments.} An inclusion-preserving function $\upsigma \colon V(\Gamma) \sqcup E(\Gamma) \sqcup L(\Gamma) \to \Sigma$. We often write $\ftrop(w) \in \upsigma$ instead of $\upsigma_w=\upsigma$.
			\item \textbf{Curve classes.} A curve class $\upbeta_v \in A_1(\scrS_0(\upsigma_v))$ for every $v \in V(\Gamma)$. Here $\scrS_0(\upsigma_v) \subseteq \scrS_0$ is the closed stratum corresponding to the polyhedron $\upsigma_v \in \Sigma$.
			\item \textbf{Slopes.} Vectors $m_{\vec{e}} \in N(\upsigma_e)$ for every oriented edge $\vec{e} \in \vec{E}(\Gamma)\sqcup L(\Gamma)$, satisfying $m_{\vec{e}} = -m_{\cev{e}}$. Here $N(\upsigma_e)$ is the lattice associated to the polyhedron $\upsigma_e$.
		\end{enumerate}
	\end{definition}
A tropical type has an associated tropical moduli space, parametrising choices of edge lengths and vertex positions. A tropical type is \textbf{rigid} if its tropical moduli space is a point. See \cite[Sections~2.5 and 3.2]{ACGSDecomposition} for details.

The above data is required to be balanced at each vertex. For vertices in $q_1$ and $q_2$ this means that the sum of outgoing slopes is zero. For vertices in $S$ and $P$ it is the usual balancing condition for logarithmic maps to the normal crossings pairs $(S \, | \, D+E)$ and $(P\, |\, D_0+E_1+E_2)$ as in \cite[Proposition~1.15]{GrossSiebertLog}. We now explain balancing for vertices in $E,D_0,E_1$, and $E_2$.

	\subsubsection{Vertices on $E$} \label{sec: balancing on E} First, let $v$ be a vertex of $\Gamma$ with $\mathsf{f}(v)\in E$. Then local to $v$ the tropical target $\Sigma$ has the following structure:
	
	\[
	\begin{tikzpicture}
		
		\draw [<->] (0,-1.5) -- (0,1.5);
		
		\draw (0,-1.5) node[below]{$D$};
		\draw (1,-1) node{$q_2$};
		
		\draw (0,1.5) node[above]{$D$};
		\draw (1,1) node{$q_1$};
		
		\draw [->] (0,0) -- (1.5,0);
		\draw (1.5,0) node[right]{$E$};
		
		\draw[color=purple,fill=purple] (1,0) circle[radius=2pt];
		\draw [purple] (1,0) node[below]{$v$};
		
		\draw (0,-1.2) node{$=$};
		\draw (0,1.2) node{$=$};
		
	\end{tikzpicture}
	\]
	For $i \in \{1,2\}$ let $m_D^i \in \N$ denote the sum of vertical slopes of outgoing edges which enter $q_i$. Letting $m_D \in \Z$ denote the sum of vertical slopes of all outgoing edges, we have
	\[ m_D = m_D^1 + m_D^2.\]
	On the other hand, let $m_E \in \Z$ denote the total sum of horizontal slopes of all outgoing edges. The curve class $\upbeta_v \in A_1(E)$ must be of the form
	\[ \upbeta_v = kE \]
	for some $k \in \N$. We have $D \cdot \upbeta_v = kDE = 2k$ and $E \cdot \upbeta_v = kE^2$ where $E^2 \in \Z$ is the self-intersection inside $S$. The balancing condition therefore gives
	\[ m_D = 2k, \qquad m_E = kE^2.\]
	However, there is a stronger constraint. Since $f_v \colon C_v \to E$ is a degree $k$ cover, the tangency orders $m_D^1,m_D^2$ give the ramification profiles of $f_v$ over the two intersection points $q_1,q_2$. Therefore we must have
	\[ m_D^1 = m_D^2 = k.\]
	Geometrically, this means that when we lay the tropicalisation \eqref{eqn: central fibre tropicalisation} flat along the ``spine'' $E$, the sum of outgoing vertical slopes is zero.
	
	\subsubsection{Vertices on $D_0$} \label{sec: balancing on D=D0} This is similar to the previous case. Local to $v$ the tropical target $\Sigma$ has the following structure:
	\[
	\begin{tikzpicture}[scale=0.8]
		\draw[fill=black] (0,0) circle[radius=2pt];
		\draw (0,0) node[below]{$S$};
		
		\draw[->] (0,0) -- (3,0);
		\draw (3,0) node[right]{$E$};
		
		\draw (0,0) -- (0,3);
		
		\draw[color=purple,fill=purple] (0,2) circle[radius=2pt];
		\draw[color=purple] (0,2) node[right]{$v$};
		
		\draw[red,->] (0.25,0.25) -- (1,0.25);
		\draw[red] (1,0.25) node[right]{\tiny$E$};
		\draw[red,->] (0.25,0.25) -- (0.25,1);
		\draw[red] (0.25,1) node[above]{\tiny$P$};
		
		\draw[red,->] (-0.25,0.25) -- (-1,0.25);
		\draw[red] (-1,0.25) node[left]{\tiny$E$};
		\draw[red,->] (-0.25,0.25) -- (-0.25,1);
		\draw[red] (-0.25,1) node[above]{\tiny$P$};

		
		\draw[fill=black] (0,3) circle[radius=2pt];
		\draw (0,3) node[above]{$P$};
		
		\draw[->] (0,3) -- (3,3);
		\draw (3,3) node[right]{$E_2$};
		
		\draw[->] (0,3) -- (-3,3);
		\draw (-3,3) node[left]{$E_1$};
		
		\draw[->] (0,0) -- (-3,0);
		\draw (-3,0) node[left]{$E$};
		
		
		\draw (2.5,0) node{$\mid\mid$};
		\draw (-2.5,0) node{$\mid\mid$};
		
		\draw (-2,1.5) node{$q_1$};
		\draw (2,1.5) node{$q_2$};
	\end{tikzpicture}
	\]
	We choose coordinates for the adjacent polyhedra $q_1,q_2$ as indicated in red. For $i \in \{1,2\}$ we let $m_E^i \in \N$ denote the sum of horizontal slopes of outgoing edges which enter $q_i$. Letting $m_E \in \Z$ denote the sum of horizontal slopes of all outgoing edges, we have
	\[ m_E = m_E^1 + m_E^2.\]
	On the other hand, let $m_P \in \Z$ denote the sum of vertical slopes of all outgoing edges (note that if instead we choose coordinates corresponding to $S,E$ then we have $m_S=-m_P$). The curve class $\upbeta_v \in A_1(D_0)$ necessarily takes the form
	\[ \upbeta_v = kD_0 \]
	for some $k \in \N$. We have $E \cdot \upbeta_v = 2k$ and $\deg f_v^\star N^\vee_{D_0|P} = \deg f_v^\star N_{D|S} = kD^2$ where $D^2 \geq 0$ is the self-intersection inside $S$. The balancing condition is therefore
	\[ m_E = 2k, \qquad m_P = kD^2.\]
	As in the previous case, we have the stronger constraint
	\[ m_E^1 = m_E^2 = k.\]
	This means that when we lay the tropicalisation \eqref{eqn: central fibre tropicalisation} flat along the spine $D_0$, the sum of outgoing horizontal slopes is zero.
	
	\subsubsection{Vertices on $E_1,E_2$} \label{sec: balancing on E1 E2} This is the simplest case. We restrict to $E_1$ without loss of generality. Local to $v$ there is a single maximal polyhedron $q_1$, coordinatised by $E_1,D_0$:
	\[
	\begin{tikzpicture}[scale=0.8]
		\draw[->] (0,3) -- (0,1);
		\draw (0,1) node[below]{$D_0$};
		
		\draw[fill=black] (0,3) circle[radius=2pt];
		\draw (0,3) node[above]{$P$};
		
		\draw[->] (0,3) -- (-3,3);
		\draw (-3,3) node[left]{$E_1$};
		
		\draw[color=purple,fill=purple] (-1.5,3) circle[radius=2pt];
		\draw[purple] (-1.5,3) node[above]{$v$};
		
		\draw (-1.5,2) node{$q_1$};
		
	\end{tikzpicture}	
	\]
	We let $m_{E_1} \in \Z$ and $m_{D_0} \in \N$ denote, respectively, the sums of horizontal and vertical outgoing slopes. The curve class is $\upbeta_v = kE_1$ giving $E_1 \cdot \upbeta_v=0$ and $D_0 \cdot \upbeta_v = k$. The balancing condition gives:
	\[ m_{E_1} = 0, \qquad m_{D_0} = k.\]
	Unlike the previous cases there is no stronger constraint, as there is only one adjacent polyhedron.
	
	\subsection{Degeneration formula analysis}\label{sec: degeneration formula analysis} The blowup morphism $p \colon \scrS \to S \times \Aone$ satisfies $p^{-1}(E \times \Aone) = \scrE$. Restricting to the central fibre, we obtain a logarithmic morphism $\scrS_0 \to (S|E)$. Combined with the line bundle projection $\scrL_0 \to \scrS_0$ this induces a pushforward morphism
	\[ \uprho \colon \Mbar_{g,\mathbf{c},\upbeta}(\scrL_0) \rightarrow \Mbar_{g,\mathbf{c},\upbeta}(S|E).\]
	Moreover, let $\frgt$ denote the morphism forgetting the logarithmic structures
	\[ \frgt : \Mbar_{g,\mathbf{c},\upbeta}(S|E) \rightarrow \MapsRefinedEvals \]
	where the codomain is the moduli space of stable maps to $S$ with \textbf{refined evaluations}:
	\[ \MapsRefinedEvals \colonequals \Mbar_{g,s,\upbeta}(S) \times_{S^s} \Pi_{j=1}^s \mathrm{Ev}_j.\]
	The conservation of number principle \cite[Theorem~1.1]{ACGSDecomposition} and the decomposition theorem \cite[Theorem~1.2]{ACGSDecomposition} give the following identity in the Chow homology of $\Mbar_{g,\mathbf{c},\upbeta}(S|E)$:
\[
		[\Mbar_{g,\mathbf{c},\upbeta}(\calO_S(-D)|\hat{E})]^{\virt} = \sum_{\uptau} \frac{m_{\uptau}}{|\mathrm{Aut}(\uptau)|} \uprho_\star  \upiota_\star [\Mbar_\uptau]^{\virt}.
\]
The sum runs over rigid tropical types $\uptau$ of tropical stable maps to $\Sigma$ of type $(g,\bfc,\upbeta)$. Here $m_{\uptau}$ is the smallest integer such that scaling $\Sigma$ by $m_\uptau$ produces a tropical stable map with integral vertex positions and edge lengths, and  $\upiota \colon \Mbar_\uptau \hookrightarrow \Mbar_{g,\mathbf{c},\upbeta}(\scrL_0)$ is the inclusion of the virtual irreducible component. Capping with the $D$-disjoint insertions $\upgamma$ and pushing forward along $\frgt$ we obtain:
\begin{equation}\label{eqn: decomposition formula} 
		\frgt_\star \big(\upgamma \cap [\Mbar_{g,\mathbf{c},\upbeta}(\calO_S(-D)|\hat{E})]^{\virt}\big) = \sum_{\uptau} \frac{m_{\uptau}}{|\mathrm{Aut}(\uptau)|}  \frgt_\star \big(\upgamma \cap \uprho_\star \upiota_\star [\Mbar_\uptau]^{\virt}\big).	\end{equation}

We now use the degeneration formula as formulated in \cite[Section~6]{RangExpansions} to describe the terms in the sum. Let $\Gamma$ denote the source graph of the tropical type $\uptau$. There is a subdivision of the product
	\[ {\bigtimes_{v \in V(\Gamma)}} \Mbar_v \to {\prod_{v\in V(\Gamma)}} \Mbar_v \]
	and for each edge $e \in E(\Gamma)$ a smooth universal divisor $\hat{\Dcal}_e \to {\bigtimes_v} \Mbar_v$ supporting an evaluation section for each flag $(v \in e)$. We consider the universal doubled divisor
	\[ \hat{\Dcal}_e^{\{2\}} \colonequals \hat{\Dcal}_e \times_{{\bigtimes_v}\Mbar_v} \hat{\Dcal}_e. \]
	There is a universal diagonal $\Delta \colon \hat{\Dcal}_e \hookrightarrow \hat{\Dcal}_e^{\{2\}}$ which is a regular embedding since $\hat{\Dcal}_e \to {\bigtimes_v} \Mbar_v$ is smooth. We obtain a diagram
\begin{equation} \label{eqn: original gluing diagram universal divisor}
	\begin{tikzcd}
	\Mbar_\uptau \ar[r,"\upnu"] & \Nbar_{\uptau} \ar[r] \ar[d] \ar[rd,phantom,"\square"] & {\bigtimes_v} \Mbar_v \ar[d] \\
	& {\prod_e} \hat{\Dcal}_e \ar[r,hook,"\Delta"] & {\prod_e} \hat{\Dcal}_e^{\{2\}}		
	\end{tikzcd}
\end{equation}
	with an equality of classes in the Chow homology of $\Nbar_{\uptau}$
	\[ \upnu_\star [\Mbar_\uptau]^{\virt} = c_\uptau \cdot \Delta^! [\textstyle{\bigtimes_v} \Mbar_v]^{\virt}\]
	where $c_\uptau\in \Q$ is an appropriate combinatorial gluing factor. There is a gluing morphism $\uptheta: \Nbar_{\uptau} \rightarrow \MapsRefinedEvals$ which forgets the logarithmic structures, glues the domain curves indexed by the vertices of $\Gamma$ along the markings as specified by the edges of $\Gamma$, composes the map with the morphism to $S$ and stabilises the domain. This morphism makes the following diagram commute:
\begin{equation}
	\label{eq: guling morphism diagram}
	\begin{tikzcd}
		\Mbar_\uptau \arrow[d, "\upiota"] \arrow[r, "\upnu"] & \Nbar_{\uptau} \arrow[d,"\uptheta"]  \\
		\Mbar_{g,\mathbf{c},\upbeta} (\scrL_0) \ar[r,"\frgt \circ \uprho"] & \MapsRefinedEvals.
	\end{tikzcd}
\end{equation}
We thus rewrite the terms appearing on the right-hand side of equation \eqref{eqn: decomposition formula} as
\begin{equation}
	\label{eq: term degeneration formula rewritten}
	\frgt_\star \big(\upgamma \cap \uprho_\star \upiota_\star [\Mbar_\uptau]^{\virt}\big) = c_\uptau \cdot (\upgamma \cap \uptheta_\star \Delta^! [\textstyle{\bigtimes_v} \Mbar_v]^{\virt}).
\end{equation}

\subsubsection{Reduction outline} We will prove \Cref{thm: local-log} by analysing the contributions of rigid tropical types $\uptau$ to the degeneration formula \eqref{eqn: decomposition formula}. For the rest of \Cref{sec: degeneration formula analysis} we fix a rigid tropical type $\uptau$ whose contribution to \eqref{eqn: decomposition formula} is non-trivial:
\[ \frgt_\star \big(\upgamma \cap \uprho_\star \upiota_\star [\Mbar_\uptau]^{\virt}\big) \neq 0.\]
 We will show that the shape of $\uptau$ is tightly constrained. We proceed via four reductions: 
 \begin{itemize}
 \item In \Cref{sec: first vanishing} we show that $\uptau$ is weakly star-shaped (\Cref{thm: first vanishing}). As we will see the reason for the vanishing of all other rigid tropical types is that the line bundle $\scrL_0$ trivialises over the component $S \subset\scrS_0$.
 \item In \Cref{sec: second vanishing} we show that $\uptau$ has markings confined to $S$ (\Cref{thm: second vanishing}). This step crucially uses the assumption that all insertions are $D$-avoidant and that their codimension equals the virtual dimension of the moduli problem. Moreover, we require a fact about the balancing at $E$ which is specific to our geometric setup (see \Cref{rmk: balancing at E}).
 \item In \Cref{sec: third vanishing} we show that $\uptau$ is star-shaped (\Cref{thm: third vanishing}). This step is purely combinatorial and constrains the shape of $\uptau$ via a careful analysis of tropical balancing. The latter is of course sensitive to the geometric setup.
 \item In \Cref{sec: fourth vanishing} we show that $\uptau$ has a single edge (\Cref{thm: fourth vanishing}). To prove the final reduction we again crucially require the codimension of our insertions to coincide with the virtual dimension of the moduli problem. The assumption that $D$ is rational is only used once in \Cref{prop: only one edge} for solely technical reasons.
 \end{itemize}
 Finally in \Cref{sec: evaluating the integrals} we calculate the contributions of the remaining $\uptau$, arriving at \Cref{thm: local-log}.

\begin{remark}
As in \cite{BFGW} we obtain a reduction to star-shaped graphs (\Cref{thm: third vanishing}). In fact we reduce further to single edge graphs (\Cref{thm: fourth vanishing}), using crucially the fact that $D$ is rational.
\end{remark}
 
\subsubsection{First reduction: weakly star-shaped graphs} \label{sec: first vanishing}
	
	\begin{proposition}[First reduction] \label{thm: first vanishing} The tropical type $\uptau$ is \emph{weakly star-shaped} in the following sense. The source graph $\Gamma$ decomposes into subgraphs
		\[
		\begin{tikzpicture}
			
			\draw[dashed] (0,0) circle[radius=15pt];
			\draw (0,0) node{$\Gamma_0$};
			
			\draw (-0.375,-0.375) -- (-1.25,-1.55);
			\draw[dashed] (-1.5,-2) circle[radius=15pt];
			\draw (-0.75,-0.85) node[left]{$e_1$};
			\draw (-1.5,-2) node{$\Gamma_1$};
			
			\draw (0.375,-0.375) -- (1.25,-1.55);
			\draw[dashed] (1.5,-2) circle[radius=15pt];
			\draw (0.75,-0.85) node[right]{$e_m$};
			\draw (1.5,-2) node{$\Gamma_m$};
			
			\draw (0,-2) node{$\cdots$};
		\end{tikzpicture}
		\]
		with the vertices of $\Gamma_0$ mapping to $P,E_1,E_2$ and the vertices of $\Gamma_1,\ldots,\Gamma_m$ mapping to $S,E,D_0,q_1,q_2$.
	\end{proposition}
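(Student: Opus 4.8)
The plan is to show that the line bundle $\scrL_0$ trivialises over the component $S \subseteq \scrS_0$, forcing the Euler-class obstruction contribution from any vertex mapping into $S$ (or the adjacent codimension-one strata $D_0$, $E$, $q_1$, $q_2$) to localise the class to a suitably small locus, and that the only way to avoid immediate vanishing is for the ``core'' of the graph — the part supporting the nontrivial normal bundle — to sit over $P, E_1, E_2$. Concretely, I would first recall that $\scrL|_S = \OO_S$ while $\scrL|_P = \OO_P(-D_\infty)$, so for a tropical type $\uptau$ the obstruction theory defining $[\Mbar_\uptau]^{\virt}$ pulls apart, vertex by vertex, into a contribution from $H^1(C_v, f_v^\star \scrL_0|_{\scrS_0(\upsigma_v)})$ twisted appropriately by the logarithmic structure. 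For a vertex $v$ with $\ftrop(v)$ lying in $S$ or in one of $E, D_0, q_1, q_2$ (the strata of $\scrS_0$ contained in the $S$-side), the relevant line bundle is $\OO$ (possibly with a logarithmic twist along $E$ that does not change its triviality on the relevant component), and in particular it has a trivial summand; the key point is then that this trivial summand produces a factor that, after integration, forces the contribution to vanish unless the part of $\Gamma$ over the $S$-side is rigidified into a disjoint union of trees hanging off the $P$-side.

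Second, I would make the combinatorial core of the statement precise: define $\Gamma_0$ to be the subgraph spanned by vertices $v$ with $\ftrop(v) \in \{P, E_1, E_2\}$, and for each connected component of $\Gamma \setminus \Gamma_0$ examine how it attaches to $\Gamma_0$. The claim is that each such component is attached by a single edge $e_i$. To see this, suppose some component $\Gamma'$ of the $S$-side attaches via two or more edges to $\Gamma_0$, so that $\Gamma_0 \cup \Gamma'$ contains a cycle. Then by the rigidity of $\uptau$ (its tropical moduli space is a point) together with the balancing conditions recalled in \Cref{sec: balancing on E}--\Cref{sec: balancing on E1 E2} — in particular the strengthened constraints $m_D^1 = m_D^2$ on $E$ and $m_E^1 = m_E^2$ on $D_0$ — I would derive a contradiction, or else show that such a cycle forces extra moduli in the tropical type, contradicting rigidity. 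The precise mechanism is that a cycle through the $S$-side would have to ``wrap'' consistently around both $q_1$ and $q_2$, but the laying-flat observations at the end of \Cref{sec: balancing on E} and \Cref{sec: balancing on D=D0} say the outgoing slopes along each spine sum to zero, so the cycle closes up only at the cost of an unconstrained length parameter.

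Third, I would combine the two inputs: a tropical type whose $S$-side is not a disjoint union of trees-hanging-by-one-edge is either non-rigid (excluded) or has vanishing virtual contribution (because the line bundle trivialises on $S$ and the extra trivial summand in the obstruction bundle kills the integral, exactly as in the degeneration arguments of \cite{vGGR,BFGW}). Hence for our fixed $\uptau$ with nonvanishing contribution, $\Gamma = \Gamma_0 \sqcup \Gamma_1 \sqcup \cdots \sqcup \Gamma_m$ with each $\Gamma_i$ ($i \geq 1$) joined to $\Gamma_0$ by exactly one edge $e_i$, and with the vertices of $\Gamma_i$ mapping into the $S$-side strata $S, E, D_0, q_1, q_2$. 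This is precisely the weakly star-shaped conclusion.

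The main obstacle I anticipate is the interface between the intersection-theoretic vanishing and the tropical/combinatorial rigidity argument: one must be careful that ``trivial summand of the obstruction bundle'' is set up correctly relative to the logarithmic twists coming from $\scrE$ and relative to the gluing/diagonal contributions in \eqref{eqn: original gluing diagram universal divisor}, so that the vanishing really does apply to every $\uptau$ with a cycle or with a multiply-attached $S$-side component, and not merely to some of them. In other words, the delicate step is to verify that the Euler class of the weight-zero (trivial) part of the obstruction theory genuinely annihilates $\uptheta_\star \Delta^! [\bigtimes_v \Mbar_v]^{\virt}$ whenever the $S$-side is ``too connected'', using that a positive-dimensional family of domain curves maps to the fixed component $S$ along which $\scrL_0$ is trivial. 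I expect this to be where the bulk of the work in \Cref{sec: first vanishing} lies, with the purely tropical part (ruling out cycles via balancing and rigidity) being comparatively short.
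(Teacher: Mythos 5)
You have the right starting ingredient (the triviality of $\scrL_0$ over the $S$-side, parallel to \cite[Lemma~3.1]{vGGR}), but the actual vanishing mechanism is missing, and the one you propose in its place does not match what is needed. The paper's proof of \Cref{thm: first vanishing} is purely intersection-theoretic and rests on two facts you never establish: (i) for each edge $e$ connecting a maximal connected $S$-side subgraph $\Gamma'$ to the rest of $\Gamma$, the adjacent component $C_e$ lies over $P$ and has positive intersection with $D_0$, so (using $D^2 \geq 0$) the pullback of $\scrL_0|_P = \OO_P(-D_\infty)$ to $C_e$ has negative degree and $f|_{C_e}$ factors through the zero section of $\scrL_0$ at the node; (ii) since $\scrL_0|_S = \OO_S$, the connected subcurve $C'$ corresponding to $\Gamma'$ maps to a constant section of $S \times \Aone$, so its evaluations at all connecting nodes share one common $\Aone$-coordinate $t$. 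These two facts refine the gluing diagram \eqref{eqn: original gluing diagram universal divisor}: the product of universal divisors splits off a factor $\Aaff^{E'}$, the evaluation from $\bigtimes_v \Mbar_v$ lands in the locus $\{(0,t,\ldots,0,t)\} \cong \Aone$ inside $\Aaff^{2E'}$, and intersecting with the diagonal $\Aaff^{E'} \subseteq \Aaff^{2E'}$ forces $t=0$; if $|E'| \geq 2$ this is an excess intersection whose excess bundle is trivial of rank $|E'|-1$, so the excess class and hence $\Delta^![\bigtimes_v \Mbar_v]^{\virt}$ vanish. Your framing of the vanishing as coming from a trivial summand of the vertexwise $H^1$-obstruction spaces over $S$ is a different mechanism, and it is not clear it can be made to work: what kills the contribution is the over-determined gluing in the fibre direction of $\scrL_0$ (zero section on the $P$-side versus a single constant section on the $S$-side), not an Euler class of vertex obstruction bundles.

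Your alternative tropical route is also flawed in two ways. First, an $S$-side component attached by two or more edges need not create a cycle: the attaching edges can run to different connected pieces of the $P$-side (a chain of the form $P$-piece, $S$-piece, $P$-piece), so ruling out cycles would not exclude all the offending configurations; the connectedness of $\Gamma_0$ in the conclusion is itself a consequence of $|E'|=1$ together with connectedness of $\Gamma$, not an input. Second, rigidity does not forbid cycles: rigid tropical types with loops in the source graph exist in general, and the laying-flat observations of \Cref{sec: balancing on E} and \Cref{sec: balancing on D=D0} do not by themselves produce an unconstrained length parameter. In the paper, balancing and rigidity enter only in the later reductions (\Cref{sec: second vanishing} and \Cref{sec: third vanishing}); the first reduction is carried entirely by the excess-intersection argument sketched above.
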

	
\begin{proof}
	Let $\Gamma^\prime \subseteq \Gamma$ be a maximal connected subgraph contained in the union of the following strata
		\begin{equation} \label{eqn: bottom two third strata} S,E, D_0,q_1,q_2 .\end{equation}
		Let $E^\prime \subseteq E(\Gamma)$ denote the set of edges connecting $\Gamma^\prime$ to the rest of $\Gamma$. It is sufficient to show that $|E^\prime|=1$. The following argument parallels \cite[Lemma~3.1]{vGGR}.
		
The line bundle $\scrL_0$ is trivial when restricted to each of the strata in \eqref{eqn: bottom two third strata}. It follows from the definition of $\Gamma^\prime$ that for each $e \in E^\prime$ the corresponding universal divisor $\hat{\Dcal}_e$ decomposes as
		\[ \hat{\Dcal}_e = \Dcal_e \times \Aone \]
		where $\Dcal_e$ is the universal divisor for the compact degeneration $\scrS_0$ (since $\scrL_0 \to \scrS_0$ is flat and strict, all expansions of $\scrL_0$ are pulled back from expansions of $\scrS_0$). We define
		\[	\Dcal \colonequals \prod_{e \in E(\Gamma) \setminus E^\prime} \hat{\Dcal}_e \times \prod_{e \in E^\prime} \Dcal_e \]
and note that the product of universal divisors appearing in \eqref{eqn: original gluing diagram universal divisor} decomposes as $\Pi_{e \in E(\Gamma)} \hat{\Dcal}_e = \Dcal \times \Aaff^{\! E^\prime}$. We now examine evaluations at the nodes corresponding to edges $e \in E^\prime$.

Given $e \in E^\prime$ the corresponding node has an adjacent irreducible component $C_e$ which is mapped to $P$ and has positive intersection with the divisor $D_0$. Since $D^2 \geq 0$ in $S$ it follows that the pullback of the line bundle $\scrL_0|_P = \OO_P(-D_\infty)$ to this component has negative degree. We conclude that evaluation of $f|_{C_e}$ at the given node factors through the zero section of $\scrL_0$.

On the other hand, the subcurve $C^\prime$ corresponding to $\Gamma^\prime$ is connected and maps to $S$. Since $\scrL_0|_S = \OO_S$ it follows that this subcurve is mapped to a constant section of the bundle $S \times \Aone \to S$. Therefore the evaluations of $f|_{C^\prime}$ at the nodes corresponding to $e \in E^\prime$ all coincide. 

Taken together, we obtain the following cartesian diagram extending \eqref{eqn: original gluing diagram universal divisor}:
		\bcd
		\Nbar_{\uptau} \ar[r] \ar[d] \ar[rd,phantom,"\square",start anchor=center,end anchor=center] & {\bigtimes_v} \Mbar_v \ar[d] \\
		\Dcal \times \Azero \ar[r,hook] \ar[d] \ar[rd,phantom,"\square",start anchor=center,end anchor=center] & \Dcal^{\{2\}} \times \Aone \ar[d] \\
		\Dcal \times \Aaff^{\! E^\prime} \ar[r,hook,"\Delta"] & \Dcal^{\{2\}} \times \Aaff^{\! 2E^\prime}.
		\ecd
		If $|E^\prime| \geq 2$ the excess bundle is a trivial vector bundle of positive rank. Consequently the excess class \cite[Theorem~6.3]{FultonBig} vanishes, and we obtain
		\[ \Delta^![\textstyle{\bigtimes_v}\Mbar_v]^{\virt}=0.\]
		From \eqref{eq: term degeneration formula rewritten} we conclude that $|E^\prime|=1$.\end{proof}
	\subsubsection{Second reduction: markings confined to $S$}\label{sec: second vanishing} The next step is to constrain the shape of the subgraphs $\Gamma_i$ and to confine the unbounded legs.
	\begin{proposition}[Second reduction] \label{thm: second vanishing} Consider a weakly star-shaped tropical type as in \Cref{thm: first vanishing}. Then:
	\begin{enumerate}
	\item The subgraph $\Gamma_i$ is a tree for $i \in \{1,\ldots,m\}$.
	\item Every leaf vertex $v \in V(\Gamma_i)$ satisfies $\ftrop(v) \in S$.
	\item Every marking leg $l \in L(\Gamma)$ is attached to a leaf vertex $v \in V(\Gamma_i)$ for some $i \in \{1,\ldots,m\}$. 
	\end{enumerate}
	\end{proposition}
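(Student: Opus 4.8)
The plan is to exploit, once again, the triviality of $\scrL_0$ over the $S$-component, now combined with a dimension count that forces all insertions — and hence all markings carrying insertions — into the subgraphs $\Gamma_1,\ldots,\Gamma_m$, and in fact onto their $S$-leaves. First I would establish (i) and (ii) simultaneously by a cycle-counting argument: if $\Gamma_i$ contained a cycle, or a leaf vertex $v$ with $\ftrop(v)\notin S$, then the subcurve $C_i$ would map with positive degree into one of the strata $E,D_0,E_1,E_2$, over which $\scrL_0$ again restricts to a negative-degree line bundle on some component. As in the proof of \Cref{thm: first vanishing}, this produces extra excess from the gluing diagram \eqref{eqn: original gluing diagram universal divisor}: each independent cycle contributes a factor of $\Aone$ to the excess bundle, and a non-$S$ leaf forces the corresponding node evaluation through the zero section while its neighbour in $S$ is constant, adding another trivial excess factor. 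In either case the excess class \cite[Theorem~6.3]{FultonBig} vanishes and $\Delta^![\bigtimes_v\Mbar_v]^\virt=0$, contradicting non-triviality of the contribution via \eqref{eq: term degeneration formula rewritten}.

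The heart of the argument is (iii), and here I would switch to the intersection-theoretic side. The key input is that the total codimension of $\upgamma$ equals the virtual dimension of $\Mbar_{g,\bfc,\upbeta}(\OO_S(-D)|\hat E)$. Recall that each insertion $\ev_{y_j}^\star\upgamma_j\psi_{y_j}^{k_j}$ is $D$-avoidant: either it is trivial with $k_j=0$, or $\upgamma_j=[Z_j]$ for a regularly embedded $Z_j\subseteq\mathrm{Ev}_j$ disjoint from $D$. Suppose a marking leg $l$ with a nontrivial insertion were attached to a vertex $v$ with $\ftrop(v)$ in $P$, $E_1$, $E_2$, or in the ``spine'' strata $E$, $D_0$, $q_1$, $q_2$ — i.e. to the central subgraph $\Gamma_0$ or to a non-leaf vertex of some $\Gamma_i$. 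Using the balancing facts from \Cref{sec: tropical types and balancing} (in particular the refined balancing at $E$ recorded in \Cref{rmk: balancing at E}, which pins the ramification profiles $m_D^1=m_D^2$), the corresponding evaluation section on $\Nbar_\uptau$ lands in a stratum whose image in $S$ is contained in $D$ (or in a proper subvariety thereof); pulling back $[Z_j]$, which is supported away from $D$, then kills the class. More precisely, I would feed this into \eqref{eq: term degeneration formula rewritten}: $\uptheta_\star\Delta^![\bigtimes_v\Mbar_v]^\virt$ is represented by a cycle supported on the locus where the $y_j$-evaluation maps into $D\subseteq S$, so $\upgamma\cap(\cdots)=0$. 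Hence every nontrivial insertion is forced onto a vertex mapping to $S$ lying in some $\Gamma_i$, and a short further argument using stability of contracted components shows it must be a leaf vertex.

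To finish (iii) I also need to handle marking legs with \emph{trivial} insertions (the $\upgamma_j=\mathbbm 1$, $k_j=0$ case). Here the argument is the dimensional one: the virtual dimension of $\Mbar_\uptau$ equals $\vdim\Mbar_{g,\bfc,\upbeta}(\OO_S(-D)|\hat E)$ minus a non-negative correction coming from the gluing, and by the codimension hypothesis $\upgamma$ must cut this down to a point-count; any ``wasted'' trivial marking on a spine or $\Gamma_0$ vertex either increases the dimension of the contributing stratum beyond what $\upgamma$ can compensate, or else can be absorbed by a forgetful map which — combined with (i),(ii) — places it on an $S$-leaf. The main obstacle I anticipate is making the stratification statement in the previous paragraph precise: one must argue carefully that, for a weakly star-shaped type, the image in $S$ of the $y_j$-evaluation locus on $\Nbar_\uptau$ genuinely lies inside $D$ whenever $v$ is not an $S$-leaf of some $\Gamma_i$, and that this survives the $!$-pullback along $\Delta$ and the pushforward along $\uptheta$ without picking up contributions from other components. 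This is exactly the ``tight intertwining of tropical and intersection-theoretic vanishing'' flagged in the introduction, and I expect it to require the explicit local balancing pictures of \Cref{sec: balancing on E}--\Cref{sec: balancing on E1 E2} to identify which node evaluations are forced into $D$.
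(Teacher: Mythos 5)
Your overall architecture (excess-intersection vanishing for the tree statement, $D$-avoidance to kill non-trivial insertions away from the leaves, a dimension count for the rest) is close to the paper's, and your treatment of (i) and of the non-trivial-insertion part of (iii) is essentially the argument used there. But there is a genuine gap in (ii). Your mechanism is that a leaf $v$ with $\ftrop(v)\notin S$ would force the subcurve to map with positive degree into one of $E,D_0,E_1,E_2$, ``over which $\scrL_0$ again restricts to a negative-degree line bundle''. This is false for the strata that actually matter: $\scrL_0|_S=\OO_S$ is trivial on $E$, and $\scrL_0|_P=\OO_P(-D_\infty)$ is trivial on the zero section $D_0$ (the fibres $E_1,E_2$ belong to $\Gamma_0$, not to the $\Gamma_i$). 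So no excess-bundle or zero-section argument is available to exclude, say, a leaf whose component is a multiple cover of $E$ or a vertex sitting on $D_0$ or $q_1,q_2$. The paper excludes these configurations by purely tropical arguments: orienting each tree $\Gamma_i$ by the flow from $e_i$ and showing every edge over $q_1,q_2,D_0$ has negative vertical slope (rigidity plus balancing), then ruling out vertices on $E$ using the refined balancing $m_D^1=m_D^2=k$, which exists precisely because $D\cdot E=2$. That last input is unavoidable: as \Cref{rmk: balancing at E} notes, the statement of (ii) fails when $D$ and $E$ meet in one point, so any proof, like yours, that never invokes the two-intersection-point structure cannot be complete.

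There is a second, smaller gap in (iii) for markings with \emph{trivial} insertion. Your two suggested outs do not work as stated: a trivial marking on $\uptau_0$ does not by itself inflate the dimension of the contributing stratum, and it cannot be ``absorbed by a forgetful map'' --- one must show its presence forces the whole contribution to vanish. The paper's mechanism is to split $\Gamma$ at the edges incoming to the leaves, use $\Mbar_{\uptau_i}=\Mbar_{\uptau_i}\lvert_0\times\Aone$ (triviality of $\scrL_0$ over $S$) to produce the $(-1)^{g_i}\uplambda_{g_i}$ factors and drop one dimension per leaf edge, and then count: the product of leaf-side cycles capped with the insertions has dimension $\uppi_\star\upbeta_0\cdot(K_S+D+E)-s_0$, which equals $-s_0$ by adjunction because $\uppi_\star\upbeta_0$ is a multiple of the \emph{rational} curve $D$. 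This is one of the two places where $D\cong\PP^1$ enters, and your proposal never uses it; without that adjunction computation the negative-dimension (hence vanishing) conclusion for $s_0>0$ does not follow.
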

	
	\begin{proof} Combine Propositions \ref{prop: Gamma_i tree}, \ref{lem: all leaves map to S}, and \ref{prop: v0 no markings} below. \end{proof}
	
	
	The proof proceeds by a sequence of intermediate reductions. We fix a weakly star-shaped tropical type $\uptau$ as in \Cref{thm: first vanishing}.
	
	\begin{notation}\label{notation: vertices e_i} For each $i \in \{1,\ldots,m\}$ we let $v_i \in V(\Gamma_i)$ and $w_i \in V(\Gamma_0)$ denote the endpoints of $e_i$.\end{notation}
	
	\begin{proposition} \label{prop: Gamma_i tree} Each of the graphs $\Gamma_1,\ldots,\Gamma_m$ is a tree.
	\end{proposition}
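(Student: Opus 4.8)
The plan is to run the excess-intersection argument of \Cref{thm: first vanishing} once more, this time bookkeeping the internal edges of a single block $\Gamma_i$ in addition to the edge $e_i$ attaching it to $\Gamma_0$. Throughout I fix $i \in \{1,\ldots,m\}$ and write $C_i = \bigcup_{v \in V(\Gamma_i)} C_v$ for the associated subcurve of the domain; recall that $\Gamma_i$ is connected.

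The proof of \Cref{thm: first vanishing} already supplies the inputs. The line bundle $\scrL_0$ restricts trivially to each stratum in \eqref{eqn: bottom two third strata}; the subcurve $C_i$ is connected and maps into these strata, hence to a single \emph{constant} section of $\scrL_0$; and this section is the zero section, since the $\Gamma_0$-side component of $e_i$ maps to $P$ with negative-degree pullback of $\scrL_0|_P = \OO_P(-D_\infty)$ and therefore evaluates into the zero section at $e_i$. Moreover one checks, as in the proof of \Cref{thm: first vanishing}, that $\upsigma_{e_i}$ and every $\upsigma_e$ with $e \in E(\Gamma_i)$ lie among $S,E,D_0,q_1,q_2$, over which $\scrL_0$ is trivial. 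Consequently, for every $e \in E(\Gamma_i) \sqcup \{e_i\}$ the universal divisor splits as $\hat{\Dcal}_e = \Dcal_e \times \Aone$ with $\Dcal_e$ the universal divisor of $\scrS_0$, and each of the two evaluation sections attached to $e$ has vanishing $\Aone$-coordinate.

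Now feed this into \eqref{eqn: original gluing diagram universal divisor}. Separating off the $\Aone$-factors indexed by $E(\Gamma_i) \sqcup \{e_i\}$ one has $\prod_e \hat{\Dcal}_e = \Dcal \times \Aaff^{E(\Gamma_i) \sqcup \{e_i\}}$ for the appropriate $\Dcal$. Since a proper connected curve mapping to a trivially-bundled stratum is constant in the fibre direction, each $\Mbar_v$ with $v \in V(\Gamma_i)$ carries an $\Aone$-factor recording that constant, say $t_v$. Exactly as in the proof of \Cref{thm: first vanishing}, one assembles a cartesian ladder in which the formal gluing diagonal $\Delta$ has codimension $\dim \Dcal + |E(\Gamma_i)| + 1$, while the genuine fibre product $\Nbar_\uptau$ has codimension only $\dim \Dcal + |V(\Gamma_i)|$ in $\bigtimes_v \Mbar_v$: the gluing in the $\Dcal$-directions is the honest one for $\scrS_0$, and in the $\Aone$-directions the constraints are the relations $t_v = t_w$ along the edges of $\Gamma_i$ together with $t_{v_i} = 0$, which — $\Gamma_i$ being connected — pin down all $|V(\Gamma_i)|$ constants to zero. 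Hence the excess bundle is trivial of rank $|E(\Gamma_i)| + 1 - |V(\Gamma_i)| = b_1(\Gamma_i)$. If $\Gamma_i$ were not a tree we would have $b_1(\Gamma_i) \geq 1$, so by \cite[Theorem~6.3]{FultonBig} the excess class, hence $\Delta^![\bigtimes_v \Mbar_v]^{\virt}$, vanishes; by \eqref{eq: term degeneration formula rewritten} this forces $\frgt_\star(\upgamma \cap \uprho_\star \upiota_\star [\Mbar_\uptau]^{\virt}) = 0$, contrary to our standing hypothesis on $\uptau$. Therefore $\Gamma_i$ is a tree.

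The delicate point is the decoupling inside the cartesian ladder: one must verify that the gluing in the $\Dcal$-directions (which has its expected codimension, being the gluing for the compact degeneration $\scrS_0$) and the gluing in the $\Aone$-directions genuinely do not interact, so that the excess is exactly the rank-$b_1(\Gamma_i)$ trivial bundle produced by the over-determined fibre constants, with no contribution or cancellation coming from $\scrS_0$. As in \Cref{thm: first vanishing}, this rests on $\scrL_0 \to \scrS_0$ being flat and strict — so that all expansions of $\scrL_0$ are pulled back from those of $\scrS_0$ — and on the constancy of each $f|_{C_v}$ in the fibre direction.
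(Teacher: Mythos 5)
Your argument is correct, and it reaches the conclusion by a genuinely different bookkeeping than the paper's, although the underlying mechanism — a trivial excess bundle produced by over-determined gluing constraints in the $\Aone$-directions, once one knows that $\scrL_0$ is trivial on all strata met by $\Gamma_i$ and that the $w_i$-side evaluation at $e_i$ is forced into the zero section — is the same one driving \Cref{thm: first vanishing}. The paper instead argues locally and inductively: it traverses the vertices of $\Gamma_i$, and at a vertex admitting a non-separating adjacent edge it splits just \emph{two} edges $\tilde{e}_1,\tilde{e}_2$, shows the four relevant evaluations land in the locus $(0,t,0,t)$ (the zeros propagated from $w_i$ along the unsplit separating edges, the common $t$ from the coincidence of the subgraphs behind $\tilde{e}_1$ and $\tilde{e}_2$), and extracts a rank-one trivial excess bundle; the induction over vertices then shows every edge is separating. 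Your version splits everything at once, as the decomposition formula already does in \eqref{eqn: original gluing diagram universal divisor}, and identifies the excess rank with $|E(\Gamma_i)|+1-|V(\Gamma_i)|=b_1(\Gamma_i)$, so all non-trees die in one stroke; this avoids the induction and makes the statement "excess rank equals the first Betti number" transparent, at the cost of having to verify globally that the $\Aone$-constraints ($t_v=t_w$ along edges, $t_{v_i}=0$) have rank exactly $|V(\Gamma_i)|$ (connectedness of $\Gamma_i$ plus the grounding edge $e_i$) and that $\hat{\Dcal}_e=\Dcal_e\times\Aone$ for \emph{every} edge of $\Gamma_i$, not only the connecting one — both of which you address, and the latter of which the paper's proof also needs for $\tilde{e}_1,\tilde{e}_2$. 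Two phrasings to tighten: the sentence in your second paragraph claiming that "each of the two evaluation sections attached to $e$ has vanishing $\Aone$-coordinate" is only true after gluing — on $\bigtimes_v\Mbar_v$ these coordinates are the free constants $t_v$, exactly as your third paragraph then uses them — and "codimension $\dim\Dcal+\cdots$" should be the relative codimension of the diagonal (respectively of the intermediate regular embedding through which the evaluation factors), since the codimension of $\Nbar_\uptau$ itself is not meaningful; neither slip affects the argument.
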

	\begin{proof} 
		Recall that an edge of a connected graph is \textbf{separating} if deleting it produces a graph with two connected components, and that a graph is a tree if and only if every edge is separating.
		
		Fix $i \in \{1,\ldots,m\}$. We will prove that every edge of $\Gamma_i$ is separating by inducting on the vertices. At each step we prove that the edges adjacent to the current vertex are separating. We pass to the next step by traversing along all adjacent edges, excluding the edge we arrived by. The starting point is the vertex $v_i$ which is connected to $w_i$ along the separating edge $e_i$. When we arrive at a new vertex, there is by induction a path of separating edges connecting it to $w_i$.
		
		Suppose that we arrive at a vertex $v \in V(\Gamma)$ via a separating edge $\tilde{e}$. Let $\tilde{e}_1,\ldots,\tilde{e}_r$ be the other adjacent edges and suppose for a contradiction that $\tilde{e}_1$ is not separating. Since $\tilde{e}$ is separating, the subgraph behind $\tilde{e}_1$ must coincide (without loss of generality) with the subgraph behind $\tilde{e}_2$.
		
		Split the graph $\Gamma$ at the edges $\tilde{e}_1,\tilde{e}_2$. This produces a new combinatorial type $\uptau_{12}$ with four open half-edges corresponding to the previously closed edges $\tilde{e}_1,\tilde{e}_2$. For $i \in \{1,2\}$ let $\hat{\Dcal}_i$ denote the corresponding universal divisor. As in \Cref{sec: first vanishing} this decomposes as $\hat{\Dcal}_{i} = \Dcal_i \times \Aone$ and we have a diagram
		\bcd
		\Mbar_\uptau \ar[r,"\upnu"] & \Nbar_{\uptau} \ar[r] \ar[d] \ar[rd,phantom,"\square"] & \Mbar_{\uptau_{12}} \ar[d] \\
		& \prod_{i=1}^2 (\Dcal_i \times \Aone) \ar[r,hook,"\Delta"] & \prod_{i=1}^2 ( \Dcal_i^{\{2\}} \times \Aaff^{\! 2}).
		\ecd
		We now show that the composite $\Mbar_{\uptau_{12}} \to \Pi_{i=1}^2 ( \Dcal_i^{\{2\}} \times \Aaff^{\! 2}) \to \A^{4}$ 	factors through the linear subspace:
		\begin{align*}
			\upepsilon \colon \Aone & \hookrightarrow \Aaff^{\! 4} \\
			t & \mapsto (0,t,0,t).
		\end{align*}
		The inductive argument connects the current vertex $v$ to the vertex $w_i$ via a path of separating edges, which are hence distinct from $\tilde{e}_1$ and $\tilde{e}_2$. The line bundle $f^\star \scrL_0|_{C_{w_i}}$ is negative and so $f|_{C_{w_i}}$ factors through the zero section of $\scrL_0$. Since $C_{w_i}$ is connected to $C_v$ through nodes which are not split in $\uptau_{12}$ it follows that $f|_{C_v}$ also factors through the zero section of $\scrL_0$. This explains the two zero entries. On the other hand, the two $t$ entries occur because the subgraphs behind $\tilde{e}_1$ and $\tilde{e}_2$ coincide. We thus obtain
		\bcd
		\Mbar_\uptau \ar[r,"\upnu"] & \Nbar_{\uptau} \ar[r] \ar[d] \ar[rd,phantom,"\square",start anchor=center,end anchor=center] & \Mbar_{\uptau_{12}} \ar[d] \\
		& \left(\prod_{i=1}^2 \Dcal_i \right) \times \Azero \ar[r,hook] \ar[d] \ar[rd,phantom,"\square",start anchor=center,end anchor=center] & \left( \prod_{i=1}^2 \Dcal_i^{\{2\}} \right) \times \Aone \ar[d,hook,"\id \times \upepsilon"] \\
		& \left( \prod_{i=1}^2 \Dcal_i \right) \times \Aaff^{\! 2} \ar[r,hook,"\Delta"] & \left( \prod_{i=1}^2 \Dcal_i^{\{2\}} \right) \times \Aaff^{\! 4}.
		\ecd
		Since the codimensions of the lower two horizontal arrows differ, the excess bundle 
		has rank one. Moreover, we see that this bundle must be trivial as it is obtained by pulling back the excess bundle on \smash{$\Azero = \Aaff^{\! 2} \times_{\Aaff^4} \Aone$} along the projection $\smash{(\prod_{i=1}^2 \Dcal_i ) \times \Azero \rightarrow \Azero}$. Hence, the excess class vanishes and, as in the proof of \Cref{thm: first vanishing}, it follows that the contribution of $\uptau$ vanishes. We conclude that the edges $\tilde{e}_1,\ldots,\tilde{e}_r$ adjacent to $v$ are all separating, and this completes the induction step.
	\end{proof}
	
	The arguments now shift from intersection theory to tropical geometry. The background developed in \Cref{sec: tropical types and balancing} is essential. 
	
	By \Cref{prop: Gamma_i tree}, each $\Gamma_i$ is a tree equipped with a root vertex $v_i$. This defines a canonical flow starting at $v_i$. From now on we orient each edge of $\Gamma_i$ according to this flow. The edge $e_i$ is also oriented from $w_i$ to $v_i$. In this way, every vertex of $\Gamma_i$ has a unique incoming edge.
		
		A leaf of $\Gamma_i$ is a vertex adjacent to a single finite edge. Notice that $v \in V(\Gamma_i)$ is a leaf if and only it does not support any outgoing finite edges. Enumerate the leaf vertices in $V(\Gamma_1) \sqcup \ldots \sqcup V(\Gamma_m)$ as
		\[ \ol{v}_1,\ldots,\ol{v}_{\ell} \]
		and write $\ol{e}_j$ for the edge incoming at $\ol{v}_j$.
	
	\begin{lemma} \label{lem: negative vertical slope} Let $e \in E(\Gamma_i) \cup \{ e_i\}$, oriented as above. Suppose that $\ftrop(e)$ is contained in $q_1$,$q_2$, or $D_0$. Then the vertical slope of $e$ is negative.
	\end{lemma}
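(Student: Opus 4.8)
The plan is to prove the lemma by a vertical--balancing summation over subtrees of $\Gamma_i$, after first handling the edge $e_i$ by a direct height comparison. To set this up, consider the continuous piecewise--linear function $h\colon\Sigma\to[0,1]$ which is $0$ on the strata $S$ and $E$, equals arc-length along $D_0$ measured from $S$, takes the value $1$ on $P$, $E_1$, $E_2$, and on $q_1,q_2$ is the linear coordinate in the $D_0$-direction; for an oriented edge $\vec e$ the vertical slope is precisely the rate of change of $h$ along $\vec e$. Every vertex of $\Gamma_i$ maps to one of $S,E,D_0,q_1,q_2$, and a vertex mapping to $D_0$ or $q_i$ lies in the relative interior of that cell, so $h<1$ on all of $\Gamma_i$; on the other hand $w_i$ maps to $P$, $E_1$ or $E_2$, so $h(w_i)=1$. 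Hence $e_i$ runs from height $1$ strictly downward, cannot be horizontal, and therefore has negative vertical slope; this settles the case $e=e_i$.

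For $e\in E(\Gamma_i)$ I would argue by contradiction: suppose $e^{\ast}=(u\to u')$, with $\ftrop(e^{\ast})$ contained in $q_1$, $q_2$ or $D_0$, has non-negative vertical slope. Let $T\subseteq\Gamma_i$ be the subtree hanging below $u'$ under the flow, so that $e^{\ast}$ is the unique edge of $\Gamma$ with exactly one endpoint in $T$. Summing the $D_0$-component of the balancing condition over all $v\in V(T)$, the internal edges of $T$ cancel in pairs, the marking legs contribute $0$ --- the markings $y_j$ attached to $\Gamma_i$-vertices point in the $E$-direction or are contracted, and the $D$-tangent marking $x$ is absent from the local theory --- and the only surviving term is the outgoing slope of $e^{\ast}$ at $u'$, whose vertical component is minus the vertical slope of $e^{\ast}$, hence $\leq 0$. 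But by the balancing analysis of \Cref{sec: tropical types and balancing} the $D_0$-component of the curve-class contribution at each $v\in V(T)$ is non-negative --- it equals $D\cdot\upbeta_v$ at an $S$-vertex, $kD^2$ at a $D_0$-vertex, $0$ at a $q_1$- or $q_2$-vertex, and $2k$ at an $E$-vertex --- since $D^2\geq 0$ and $D$, being an irreducible curve, is then nef. Thus the sum is simultaneously $\leq 0$ and $\geq 0$, so it vanishes and the vertical slope of $e^{\ast}$ equals $0$; when $\ftrop(e^{\ast})=D_0$ this already contradicts $D_0$ being a vertical edge.

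The one remaining possibility --- $e^{\ast}$ a horizontal edge with $\ftrop(e^{\ast})\in\{q_1,q_2\}$ --- I would rule out using rigidity of $\uptau$. Let $T^{+}$ be the maximal connected subgraph through $e^{\ast}$ built from horizontal edges lying at the common height $h(u')\in(0,1)$; each vertex of $T^{+}$ lies in the relative interior of $q_1$ (or $q_2$), hence carries curve class $0$ and is free to move in that two-dimensional cell, and every edge and leg internal to $T^{+}$ is horizontal, whereas the finitely many edges joining $T^{+}$ to the rest of $\Gamma$ are non-horizontal. Translating all vertices of $T^{+}$ upward by a small amount keeps the internal edges and legs horizontal and merely rescales the lengths of the boundary edges, preserving their slopes and positivity; this is a one-parameter deformation of the tropical type, contradicting rigidity. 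Hence no such $e^{\ast}$ exists, and every edge with image in $q_1$, $q_2$ or $D_0$ has strictly negative vertical slope. I expect this final rigidity step to be the main obstacle in a detailed write-up: one must verify that no further incidence --- in particular nothing in $\Gamma_0$ or the other $\Gamma_j$ --- pins the height of $T^{+}$, so that the vertical translation is indeed an admissible deformation.
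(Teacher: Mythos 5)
Your treatment of $e_i$ via the height function and, more importantly, your summation of the vertical component of the balancing condition over the downstream subtree $T$ are correct, and the latter is a clean, more global alternative to the paper's inductive path-chasing for ruling out positive vertical slope: internal edges cancel, the legs are horizontal, and the vertex contributions $D\cdot\upbeta_v$, $kD^2$, $2k$, $0$ are non-negative, so the vertical slope of $e^{\ast}$ is $\leq 0$. (Applied to the subtree below each downstream edge, your sum even shows that once $e^{\ast}$ is horizontal, \emph{every} edge downstream of it is horizontal --- a structural fact the paper proves separately and which you will need, but do not record.) The genuine problems are in your disposal of the residual zero-slope case, which is exactly where the paper's proof has to invoke rigidity.

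First, when $\ftrop(e^{\ast})\subseteq D_0$ a vanishing vertical slope does not ``contradict $D_0$ being a vertical edge'': it makes $e^{\ast}$ a contracted edge (zero slope vector), which is consistent with balancing and is excluded only because a contracted bounded edge has a free length, violating rigidity --- a one-line fix, but the step as written is a non sequitur. More seriously, the vertical translation of $T^{+}$ is not in general an admissible deformation. A boundary edge of $T^{+}$ is only known to be non-horizontal; if its slope has a nonzero horizontal component as well (for instance the incoming edge at $u$, or any mixed-slope edge of $\Gamma_i$ meeting $T^{+}$), then the edge constraint $\mathrm{pos}(b)-\mathrm{pos}(a)=\ell\, m$ forces any relative displacement of its endpoints to be proportional to $m$: a purely vertical shift of the $T^{+}$-endpoint cannot be absorbed by ``rescaling the length'' unless $m$ is purely vertical, which you have not shown and which fails in general. (Also, $T^{+}$ may contain vertices on $D_0$, so ``interior of $q_1$, curve class $0$'' is not automatic, though that part is repairable.) Your closing caveat worries about incidences in $\Gamma_0$ or other $\Gamma_j$ pinning the height, but the obstruction is already internal to $\Gamma_i$. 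The paper avoids it by first establishing that the entire subgraph behind a horizontal edge is horizontal and mapped to $q_1$, $q_2$, $D_0$, and then using a \emph{horizontal} deformation --- sliding a single suitable vertex along the horizontal line through it, which only changes the lengths of its collinear adjacent edges and meets no mixed-slope constraint. To complete your argument you should do the same: use your subtree summation to get the all-horizontal downstream structure, then exhibit a horizontal slide (or translate the downstream subtree along the direction of $e^{\ast}$, after checking no vertex is pinned to $D_0$), rather than a vertical translation of $T^{+}$.
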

	
	\begin{proof} Suppose first that $e$ has positive vertical slope. Since $\Gamma_i$ has only a single incoming edge and no outgoing edges, the edge $e$ leads to another vertex of $\Gamma_i$, which must be contained in $q_1,q_2$, or $D_0$. Since the marking legs have zero vertical slope, the balancing condition ensures that there is a finite outgoing edge with positive vertical slope (see in particular \Cref{sec: balancing on D=D0}). Continuing in this way, we produce a path in $\Gamma_i$ consistent with the flow and with positive vertical slope along each edge. This path continues indefinitely, a contradiction.
		
		It remains to consider the case where $e$ has zero vertical slope. Let $\Gamma_e$ denote the subgraph of $\Gamma_i$ behind the oriented edge $e$. By the previous paragraph, no edge of $\Gamma_e$ has positive vertical slope. Following the flow, we see by induction and balancing that no edge of $\Gamma_e$ has negative vertical slope either. Therefore every edge of $\Gamma_e$ has zero vertical slope, and every vertex and edge is mapped to $q_1,q_2$, or $D_0$.
		
		If $e$ has zero horizontal slope then it is contracted by the tropical map $\ftrop$, in which case the tropical type is not rigid. If $e$ has non-zero horizontal slope, we may traverse $\Gamma_e$ using the balancing condition. It is easy to see that eventually we arrive at a vertex $v$ in $q_1$ or $q_2$ around which the image of $\Gamma_e$ takes the following form:
		\[
		\begin{tikzpicture}[scale=0.8]
			
			
			\draw[fill=black] (-8,0) circle[radius=2pt];
			\draw (-8,0) node[below]{$S$};
			
			\draw[->] (-8,0) -- (-5,0);
			\draw (-5,0) node[right]{$E$};
			
			\draw (-8,0) -- (-8,3);
			\draw (-8,1.5) node[right]{$D_0$};
			
			\draw[fill=black] (-8,3) circle[radius=2pt];
			\draw (-8,2.7) node[right]{$P$};
			
			\draw[->] (-8,3) -- (-5,3);
			\draw (-5,3) node[right]{$E_2$};
			
			\draw[->] (-8,3) -- (-11,3);
			\draw (-11,3) node[left]{$E_1$};
			
			\draw[->] (-8,0) -- (-11,0);
			\draw (-11,0) node[left]{$E$};
			
			\draw (-5.5,0) node{$\mid\mid$};
			\draw (-10.5,0) node{$\mid\mid$};
			
			\draw[purple] (-9.5,1.5) -- (-11.5,1.5);
			\draw[purple, dashed] (-11.5,1.5) -- (-12.25,1.5);
			\draw[purple, dashed] (-9.5,1.5) -- (-8.75,1.5);
			\draw[purple,fill=purple] (-10.5,1.5) circle[radius=2pt];
			\draw[purple] (-10.5,1.5) node[below]{$v$};


		\end{tikzpicture}
		\]
		Varying the position of $v$ horizontally we see that the tropical type is not rigid.
	\end{proof}

	\begin{lemma} \label{lem: no vertex on E} There is no vertex $v \in V(\Gamma_i)$ with $\ftrop(v) \in E$.
	\end{lemma}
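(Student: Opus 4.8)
The plan is to argue by contradiction: assume $v\in V(\Gamma_i)$ has $\ftrop(v)\in E$, with unique incoming edge $e_{\mathrm{in}}$ in the flow orientation. By the balancing analysis of \Cref{sec: balancing on E} we have $\upbeta_v=kE$, and since the edges emanating from $v$ into $q_i$ record the ramification profile of the degree-$k$ cover $f_v\colon C_v\to E$ over $q_i$, the refinement of the balancing condition established in \Cref{sec: balancing on E} gives $m_D^1=m_D^2=k$, where $m_D^i$ denotes the total vertical slope of the edges emanating from $v$ into the polyhedron $q_i$.

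The first step is to force $k=0$. Any outgoing edge $e$ of $v$ with $\ftrop(e)$ equal to $q_1$ or $q_2$ has strictly negative vertical slope in the flow orientation by \Cref{lem: negative vertical slope}; but in that orientation such an edge points away from $v$, and it runs from the point $v\in E$ into the polyhedron $q_i$, along which the vertical coordinate is non-negative, so its vertical slope must be $\geq 0$ --- a contradiction. Hence no outgoing edge of $v$ enters $q_1\cup q_2$, and the only edge at $v$ contributing to $m_D^1$ or $m_D^2$ is $e_{\mathrm{in}}$; since $e_{\mathrm{in}}$ maps into a single polyhedron it meets at most one of $q_1,q_2$, so one of $m_D^1,m_D^2$ vanishes and therefore $k=0$. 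Once $k=0$, the identity $m_D^i=0$ also prevents $e_{\mathrm{in}}$ from mapping into $q_1$ or $q_2$ (that would contribute a positive vertical slope to some $m_D^i$), so every edge incident to $v$ maps into the polyhedron $E$, and moreover $m_E=kE^2=0$.

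The remaining case --- a contracted vertex lying on $E$ --- I would dispose of by contradicting rigidity. Running the previous step at every vertex mapping to $E$ shows that the connected component $T$ of $v$ in the subgraph of $\Gamma_i$ spanned by such vertices consists of vertices of trivial class, all of whose incident edges map into $E$, and that every vertex of $\Gamma_i$ adjacent to $T$ maps to $S$; in particular $v_i$ itself cannot lie on $E$, for then $e_i$ would map into $E$ and force $w_i$ onto $E$ or $S$, whereas $w_i$ lies on $P$, $E_1$, or $E_2$. Adjoining to $T$ its neighbouring $S$-vertices thus produces a subtree $T^{+}$ of $\Gamma_i$ mapping into the closure of the one-dimensional polyhedron $E$ (a half-line bounded by the point $S$), whose topmost vertex maps to $S$ and is pinned there. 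The legs carried by vertices of $T$ have zero slope --- a component mapping into $E$ carries no positive tangency with $E$ --- and those carried by the neighbouring $S$-vertices impose nothing, since those vertices map to the point $S$ regardless; hence the only constraints on the tropical positions are that the vertices of $T^{+}$ mapping to $S$ sit at that point. A dimension count then shows that, after imposing these constraints, the lengths of the bounded edges of $T^{+}$ sweep out a family of tropical maps of type $\uptau$ of dimension $|V(T)|\geq 1$, contradicting the rigidity of $\uptau$.

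The hard part is precisely this contracted case $k=0$: no balancing argument alone removes it, and one must combine rigidity with the observation that the polyhedron $E$ is one-dimensional and bounded by a point. I note that, unlike the later reduction to single-edge graphs, the argument here never uses the hypothesis $D\cong\PP^1$.
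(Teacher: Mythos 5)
Your argument is correct and follows essentially the same route as the paper: balancing at $E$ (the constraint $m_D^1=m_D^2=k$) combined with \Cref{lem: negative vertical slope} forces every edge at $v$ to have zero vertical slope, and rigidity then kills the remaining horizontal/contracted configuration --- the paper compresses your $T$/$T^{+}$ sliding analysis into ``vary the vertex along $E$, as in the proof of \Cref{lem: negative vertical slope}.'' The one wobble is the parenthetical claim that a contracted component mapping into $E$ cannot carry positive tangency with $E$ (tropically such a vertex may well support horizontal legs of positive $E$-slope), but this is harmless: unbounded legs never constrain vertex positions, so your non-rigidity count goes through unchanged.
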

	
	\begin{proof} Suppose for a contradiction that such a vertex $v$ exists. If it has an adjacent edge with non-zero vertical slope, then by balancing (\Cref{sec: balancing on E}) it has at least two adjacent edges with non-zero vertical slope. At most one of these can be the incoming edge, and so at least one is outgoing, i.e. an edge oriented according to the flow with positive vertical slope. This contradicts \Cref{lem: negative vertical slope}. We conclude that all edges adjacent to $v$ have zero vertical slope. It follows immediately that the tropical type is not rigid, as in the proof of \Cref{lem: negative vertical slope}.
	\end{proof}
	
	\begin{remark}
	\label{rmk: balancing at E}
	Since we will resort to the above reduction lemma several times in our subsequent analysis, let us stress that its proof crucially uses the fact that by balancing there can never be a vertex mapping to $E$ adjacent to a single edge with non-zero vertical slope. In our case at hand this is ensured by the fact that $E$ is a curve intersecting $D$ in \textit{at least} two points. One can see that the statement of \Cref{lem: no vertex on E} is generally wrong if $D$ and $E$ intersect in only one point. A counterexample can be found when considering $\mathbb{P}^2$ relative to two lines.
	\end{remark}
	
	\begin{lemma}
		\label{lem: all leaves map to S}
		A vertex $v\in V (\Gamma_i)$ is a leaf if and only if $\ftrop(v) \in S$.
	\end{lemma}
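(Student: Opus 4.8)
\emph{Proof strategy.} The plan is to combine the rooted-tree structure of $\Gamma_i$ from \Cref{prop: Gamma_i tree}, the sign constraint of \Cref{lem: negative vertical slope}, the exclusion of $E$-vertices from \Cref{lem: no vertex on E}, and tropical rigidity together with the balancing analysis of \Cref{sec: tropical types and balancing}.

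First I would show that every finite edge of $\Gamma_i$ maps into one of $q_1,q_2,D_0$: an edge with $\ftrop(e)=S$ is a contracted bounded edge, impossible in a rigid type, and an edge with $\ftrop(e)\subseteq E$ has both endpoints mapping to $S$ or $E$, hence (by \Cref{lem: no vertex on E}) to the single vertex $S$, so is again contracted. Thus \Cref{lem: negative vertical slope} applies to all edges of $\Gamma_i$: in the flow orientation every edge has strictly negative vertical slope. Introducing the height function $h$ on $\Sigma$ — the coordinate along $D_0$, normalised by $h(S)=0$, $h(P)=1$ — we have $h\geq 0$ on every stratum meeting $\Gamma_i$, with $h=0$ only on $S$ and $E$; since $E$-vertices are excluded, a vertex of $\Gamma_i$ has $h=0$ if and only if it maps to $S$. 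Marking legs carry zero vertical slope, so $h$ strictly decreases along each flow-oriented edge. The implication ``$\ftrop(v)\in S\Rightarrow v$ is a leaf'' follows at once, since an outgoing edge at such a $v$ would give its far endpoint, still a vertex of $\Gamma_i$, negative height.

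For the converse, assume $v$ is a leaf with $\ftrop(v)\notin S$; by weak star-shapedness and \Cref{lem: no vertex on E}, $\ftrop(v)$ lies in the relative interior of $q_1$, $q_2$, or $D_0$. If $\ftrop(v)$ is interior to a two-dimensional cell, or to $D_0$ with the incoming edge also mapping to $D_0$, then varying the length of the unique edge at $v$ slides $\ftrop(v)$ within the interior of the ambient cell through an interval of values, contradicting rigidity exactly as in the last line of the proof of \Cref{lem: negative vertical slope}. The essential case is $\ftrop(v)\in D_0$ with the incoming edge mapping to $q_1$ (say). Here $C_v$ maps to $D_0\cong\PP^1$ with $\upbeta_v=kD_0$; the incoming edge, oriented away from $v$, has strictly positive vertical slope by \Cref{lem: negative vertical slope}, so $m_P=kD^2>0$, forcing $k\geq 1$. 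Then the balancing relation $m_E^1=m_E^2=k$ from \Cref{sec: balancing on D=D0}, together with the fact that the incoming edge enters $q_1$ rather than $q_2$, forces a positive-tangency marking leg to be attached at $v$ and to point into $q_2$. Such a marking $y_j$ has $\mathrm{Ev}_j=E$ and maps to the point $q_2\in D$, contradicting the $D$-avoidance of $\upgamma$ when $\upgamma_j$ is $D$-disjoint; the residual possibility $\upgamma_j=\mathbbm{1}_{\mathrm{Ev}_j}$ is excluded by the dimension balance (codimension of $\upgamma$ equals the virtual dimension), the mechanism used throughout the second reduction. In every case one obtains a contradiction, so $\ftrop(v)\in S$.

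I expect this last case to be the main obstacle, as it is the unique point where purely tropical and rigidity arguments do not close and one must invoke both the geometric content of balancing at $D_0$ and the hypothesis on the insertions; a minor technical point to watch is the contracted component ($k=0$) mapping to the interior of $D_0$, where the contradiction instead comes straight from $m_P>0$.
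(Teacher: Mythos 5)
Most of your argument runs parallel to the paper's: the direction ``$\ftrop(v)\in S\Rightarrow v$ is a leaf'' is exactly the paper's (an outgoing finite edge would have to map to $q_1$, $q_2$ or $D_0$ and hence have positive vertical slope against \Cref{lem: negative vertical slope}; your height function repackages this), for a leaf in the interior of $q_1$ or $q_2$ the paper uses vertical balancing (the unique finite edge has nonzero vertical slope while legs have none) where you use a sliding/rigidity argument, and both are fine, and for a leaf on $D_0$ with vertical incoming edge both you and the paper invoke rigidity. The real divergence is the case you yourself flag as essential: a leaf $v$ with $\ftrop(v)\in D_0$ whose incoming edge has nonzero horizontal slope. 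The paper disposes of this purely tropically in one line, asserting that the balancing $m_E^1=m_E^2=k$ of \Cref{sec: balancing on D=D0} would then force an outgoing finite edge, contradicting leafness; you instead allow the $q_2$-side of this balancing to be carried by marking legs of positive tangency to $E$ attached at $v$, and then try to kill such configurations through the insertions.

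It is in that replacement that your proposal has a genuine gap. $D$-avoidance (\Cref{def: E disjoint insertions}) explicitly permits markings with $\upalpha_j>0$, $\upgamma_j=\mathbbm{1}_{\mathrm{Ev}_j}$ and $k_j=0$; for such a marking your evaluation argument (the marked point evaluates at $q_2\in D$, missing the $D$-disjoint cycle) says nothing, and the sentence ``excluded by the dimension balance, the mechanism used throughout the second reduction'' is not a proof. The dimension count of \Cref{prop: v0 no markings} is carried out only after splitting $\Gamma$ at edges incoming to leaves that map to $S$ and uses the virtual dimension of moduli of maps to $(S\,|\,D+E)$ at those leaves; it presupposes the very statement you are proving and does not address a positive-tangency leg with trivial insertion attached to a vertex over $D_0$. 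So in the crucial case no contradiction is actually reached. (Two smaller points: even in the $D$-disjoint subcase, emptiness of the constrained locus should be converted into vanishing of the contribution via the refined Gysin/splitting bookkeeping as in \Cref{prop: v0 no markings}, not just asserted set-theoretically; and if you prefer the paper's route, you should argue, as it does, that the required contribution to $m_E^2$ must come from an outgoing finite edge, which is incompatible with $v$ being a leaf -- whereas if you insist that legs may carry this balancing, you owe a genuine rigidity or vanishing argument for the trivial-insertion case, which is currently missing.)
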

	
	\begin{proof} Suppose $v\in V (\Gamma_i)$ is a leaf. We already argued in \Cref{lem: no vertex on E} that $\ftrop(v) \notin E$. By balancing $v$ cannot map to $q_1$ or $q_2$ since by \Cref{lem: negative vertical slope} we only have incoming edges with strictly positive vertical slope. Suppose $\ftrop(v) \in D_0$. Then the horizontal slope of the incoming edge $e$ must be zero, since otherwise there would be at least one outgoing edge by balancing. We conclude that both $v$ and $e$ are contained in $D_0$ which, however, contradicts the assumption that $\uptau$ is rigid. The only remaining possibility is $\ftrop(v) \in S$.
		
		For the opposite direction, suppose $v\in V(\Gamma_i)$ maps to $S$ but has at least one outgoing finite edge. This edge cannot be contracted (due to rigidity) or flow to a vertex in $E$ (due to \Cref{lem: no vertex on E}). Therefore it must map to $q_1,q_2,$ or $D_0$. In particular it has positive vertical slope, contradicting \Cref{lem: negative vertical slope}.
	\end{proof}
	
	We now switch from tropical geometry back to intersection theory. We will show that all marking legs are adjacent to vertices mapping to $S$. From now on we focus on the entire graph $\Gamma$ rather than the subgraph $\Gamma_i$.
	
	Split $\Gamma$ at the edges $\overline{e}_1,\ldots,\overline{e}_\ell$ incoming to the leaves in $V(\Gamma_1) \sqcup \ldots \sqcup V(\Gamma_m)$. These edges are separating, and we obtain split tropical types:
	\[ \uptau_0,\uptau_1,\ldots,\uptau_\ell.\]
	Each of the types $\uptau_1,\ldots,\uptau_\ell$ constitutes a single vertex supporting a single finite half-edge and a collection of marking legs:
	\[
	\begin{tikzpicture}
		
		\draw[dashed] (0,0) circle[radius=15pt];
		\draw (0,0) node{$\bullet$};
		\draw (0,0.265) node{$\uptau_0$};
		
		\draw (0,0) -- (-1.5,-2);
		\draw[dashed] (-1.5,-2) circle[radius=15pt];
		\draw (-0.75,-0.85) node[left]{$\ol{e}_1$};
		\draw (-1.5,-2) node{$\bullet$};
		\draw (-1.5,-2.265) node{$\uptau_1$};
		
		\draw (0,0) -- (1.5,-2);
		\draw[dashed] (1.5,-2) circle[radius=15pt];
		\draw (0.75,-0.85) node[right]{$\ol{e}_\ell$};
		\draw (1.5,-2) node{$\bullet$};
		\draw (1.5,-2.265) node{$\uptau_\ell$};
		
		\draw (0,-2) node{$\cdots$};
	\end{tikzpicture}
	\]
	
	Passing to a subdivision of $\Sigma$ we may assume that each edge $\overline{e}_i$ is contained in a one dimensional polyhedron (whose corresponding divisor we denote $D_{\overline{e}_i}$) and that the vertices adjacent to $\overline{e}_i$ map to zero-dimensional polyhedral. 
	
	The arguments of \cite[Sections~6.5.2--6.5.3]{RangExpansions} then apply: since the gluing divisors inherit logarithmic structures with one dimensional tropicalisations, all their logarithmic modifications are trivial and hence the gluing takes place over the unexpanded diagonal. The following is an immediate consequence.
	\begin{lemma}\label{lem:fibre product unexpanded divisor} There is a map $\upnu \colon \Mbar_\uptau \to \Nbar_{\uptau}$ with target the fibre product (in the category of schemes) over the unexpanded diagonal 
		\begin{equation*}
			\begin{tikzcd}
				\Mbar_\uptau \arrow[r, "\upnu"] & \Nbar_\uptau \arrow[r] \arrow[d] \ar[rd,phantom,"\square"] & \Mbar_{\uptau_0} \times \prod_{i=1}^{\ell} \Mbar_{\uptau_i} \arrow[d] \\
				& \prod_{i=1}^{\ell} \hat{D}_{\overline{e}_i} \arrow[r,"\Delta",hook] & \prod_{i=1}^{\ell} \hat{D}_{\overline{e}_i}^2
			\end{tikzcd}
		\end{equation*}
	and an equality of classes in the Chow homology of $\Nbar_{\uptau}$:
	\[ \upnu_\star [\Mbar_\uptau]^{\virt} = c_\uptau \cdot \Delta^! \left( [\Mbar_{\uptau_0}]^{\virt} \times \prod_{i=1}^{\ell} [\Mbar_{\uptau_i}]^{\virt}\right).\]
	\end{lemma}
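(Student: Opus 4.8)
The plan is to specialise the general gluing description underlying the degeneration formula --- as recalled around \eqref{eqn: original gluing diagram universal divisor}, \eqref{eq: guling morphism diagram} and \eqref{eq: term degeneration formula rewritten}, following \cite[Section~6]{RangExpansions} --- to the splitting of $\uptau$ along the separating edges $\overline{e}_1,\ldots,\overline{e}_\ell$, and to observe that, with the subdivision of $\Sigma$ already fixed, the gluing data collapse to the unexpanded (scheme-theoretic) diagonal. Concretely, splitting a rigid tropical type along a set of separating edges produces split types --- here $\uptau_0,\uptau_1,\ldots,\uptau_\ell$ --- and \cite[Section~6]{RangExpansions} provides a canonical morphism $\upnu$ from $\Mbar_\uptau$ to a fibre product of the $\Mbar_{\uptau_j}$ over the gluing divisors attached to the split edges, together with a virtual identity of the form $\upnu_\star[\Mbar_\uptau]^\virt = c_\uptau\cdot\Delta^!\big([\Mbar_{\uptau_0}]^\virt\times\prod_{i=1}^\ell[\Mbar_{\uptau_i}]^\virt\big)$, in which $c_\uptau$ collects the combinatorial gluing factors along $\overline{e}_1,\ldots,\overline{e}_\ell$. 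In general, however, the morphism $\Delta$ need not be the ordinary diagonal: the gluing node may degenerate into an expansion of the gluing divisor, so that $\Delta$ is the diagonal of a logarithmic (fs) fibre product --- equivalently one must pass to a subdivision of the relevant tropical moduli --- and $\Delta^!$ is the corresponding virtual pullback.

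The key point is that this phenomenon does not occur for the edges $\overline{e}_i$. After the subdivision of $\Sigma$ fixed above, each $\overline{e}_i$ is contained in a one-dimensional polyhedron and its endpoints lie in zero-dimensional polyhedra; consequently the associated gluing divisor $\hat{D}_{\overline{e}_i}\subseteq\scrS_0$ (pulled back to $\scrL_0$, and likewise in any expansion) carries the logarithmic structure inherited from a codimension-one stratum sitting over a ray of $\Sigma$, so that its tropicalisation is one-dimensional. I would then invoke \cite[Sections~6.5.2--6.5.3]{RangExpansions}: a gluing divisor with one-dimensional tropicalisation admits no nontrivial logarithmic modifications, so its space of expansions is a point. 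Hence the tropical moduli attached to the split edges is trivial, $\Nbar_\uptau$ is the ordinary scheme-theoretic fibre product displayed in the statement, and $\Delta$ is the honest diagonal $\prod_{i=1}^\ell\hat{D}_{\overline{e}_i}\hookrightarrow\prod_{i=1}^\ell\hat{D}_{\overline{e}_i}^2$; this is a regular embedding because each $\hat{D}_{\overline{e}_i}$ is smooth, exactly as in the discussion following \eqref{eqn: original gluing diagram universal divisor}. The morphism $\upnu$ is then the canonical map induced by the splitting morphism $\Mbar_\uptau\to\Mbar_{\uptau_0}\times\prod_{i=1}^\ell\Mbar_{\uptau_i}$ together with the evaluations at the split nodes, and the virtual identity is the specialisation of the general formula once the expansions have been removed.

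The step I expect to be the main obstacle is the reorganisation of the virtual classes. The general gluing formula of \cite[Section~6]{RangExpansions} naturally produces $\Delta^!$ of the (subdivided) product $\prod_{v\in V(\Gamma)}\Mbar_v$ of \emph{all} vertex moduli spaces, whereas the statement asks for $\Delta^!\big([\Mbar_{\uptau_0}]^\virt\times\prod_{i=1}^\ell[\Mbar_{\uptau_i}]^\virt\big)$. Matching the two requires the compatibility of splitting with iteration: one first glues the $\Mbar_v$ along the edges internal to $\uptau_0$ (the one-vertex types $\uptau_1,\ldots,\uptau_\ell$ having none) to reconstitute $[\Mbar_{\uptau_0}]^\virt$ and the factors $[\Mbar_{\uptau_i}]^\virt$, after which only the gluing along $\overline{e}_1,\ldots,\overline{e}_\ell$ remains; the combinatorial factors produced at each stage reassemble into $c_\uptau$. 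One must also record that passing to the chosen subdivision of $\Sigma$ leaves the invariants unchanged, which is birational invariance of logarithmic Gromov--Witten theory \cite{AbramovichWiseBirational}. With these bookkeeping points settled the lemma follows directly from \cite[Section~6]{RangExpansions}.
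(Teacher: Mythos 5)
Your proposal follows essentially the same route as the paper: pass to a subdivision of $\Sigma$ so that each $\overline{e}_i$ lies in a one-dimensional polyhedron with endpoints in zero-dimensional ones, then invoke \cite[Sections~6.5.2--6.5.3]{RangExpansions} to see that the gluing divisors admit no nontrivial logarithmic modifications, so the gluing takes place over the unexpanded diagonal and the virtual identity specialises accordingly. Your extra bookkeeping about regrouping the vertex moduli into $\Mbar_{\uptau_0}$ and the $\Mbar_{\uptau_i}$, and about the subdivision, is consistent with how the paper (implicitly) handles these points, so there is no gap.
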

	
	As in the proof of \Cref{thm: first vanishing} we have $\hat{D}_{\overline{e}_i} = D_{\overline{e}_i} \times \Aone$ for all edges incoming to a leaf, and the evaluation morphism
	\[
	\begin{tikzcd}
	\Mbar_{\uptau_0} \ar[r, "\ev_0"] & \prod_{i=1}^{\ell} \big(D_{\overline{e}_i} \times \Aone\big)
	\end{tikzcd}
	\]
	factors through the codimension-$\ell$ subvariety:
	\[
	\begin{tikzcd}
		\Mbar_{\uptau_0} \ar[r,"\ol{\ev}_0"] \ar[rr,bend left=25pt,"\ev_0"] & \prod_{i=1}^{\ell} D_{\overline{e}_i} \ar[r,hook,"\upepsilon"] & \prod_{i=1}^{\ell} \big(D_{\overline{e}_i} \times \Aone\big).
	\end{tikzcd}
	\]
	For $i\in\{1,\ldots,\ell\}$ the moduli space $\Mbar_{\uptau_i}$ parametrises logarithmic maps to some subdivision of $(S \times \Aone \, | \, \hat{D}+\hat{E})$ of type $\uptau_i$. There is a closed embedding
	\[ \begin{tikzcd} \Mbar_{\uptau_i}\lvert_{0} \ar[r,hook] & \Mbar_{\uptau_i} \end{tikzcd}\]
	parametrising logarithmic maps which factor through the zero section. Letting $\updelta$ denote the diagonal $\prod_{i=1}^{\ell} D_{\overline{e}_i}  \hookrightarrow \prod_{i=1}^{\ell} D_{\overline{e}_i}^2$ we obtain the following diagram
	\begin{equation}\label{eqn: big cartesian diagram fourth vanishing}
		\begin{tikzcd}
			\Nbar_{\uptau} \arrow[r] \arrow[d] \ar[rd,phantom,"\square",start anchor=center,end anchor=center] & \Mbar_{\uptau_0} \times \prod_{i=1}^{\ell} \Mbar_{\uptau_i}\lvert_{0}  \arrow[r] \arrow[d] \ar[rd,phantom,"\square",start anchor=center,end anchor=center] & \Mbar_{\uptau_0} \times \prod_{i=1}^{\ell} \Mbar_{\uptau_i} \arrow[d, "\ol{\ev}_0 \times \Pi_{i=1}^\ell \ev_i"] \\
			\prod_{i=1}^{\ell} D_{\overline{e}_i}  \arrow[r, "\updelta"] \arrow[d]  & \prod_{i=1}^{\ell} D_{\overline{e}_i} \times \prod_{i=1}^{\ell} D_{\overline{e}_i}  \arrow[r, "\id \times \upepsilon"] \ar[d,phantom,"\square" right] & \prod_{i=1}^{\ell} D_{\overline{e}_i} \times \prod_{i=1}^{\ell} (D_{\overline{e}_i} \times \Aaff^{\!1}) \arrow[d, "\upepsilon \times \id"] \\
			\prod_{i=1}^{\ell} (D_{\overline{e}_i} \times \Aaff^{\!1}) \arrow[rr, "\Delta"] & \phantom{.} & \prod_{i=1}^{\ell} (D_{\overline{e}_i} \times \Aaff^{\!1})^2.
		\end{tikzcd}
	\end{equation}
	The following explains the appearance of lambda classes in \Cref{thm: local-log}.
	\begin{lemma}\label{prop: simplification of contribution}
		We have
		\begin{equation*}
			\label{eq:simplification term decomposition formula}
			\upnu_\star [\Mbar_\uptau]^{\virt} = c_\uptau \cdot \updelta^! \left( [\Mbar_{\uptau_0}]^{\virt} \times \prod_{i=1}^{\ell} (-1)^{g_i} \uplambda_{g_i} \cap [\Mbar_{\uptau_i}\lvert_{0}]^{\virt} \right).
		\end{equation*}
	\end{lemma}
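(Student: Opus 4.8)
The plan is to deduce the claimed formula from \Cref{lem:fibre product unexpanded divisor} together with the cartesian diagram \eqref{eqn: big cartesian diagram fourth vanishing}; the only genuinely new ingredient will be a virtual class comparison between $\Mbar_{\uptau_i}$ and $\Mbar_{\uptau_i}|_0$ that produces the top Chern class of the dual Hodge bundle. First I would invoke the factorisation $\hat D_{\bar e_i} = D_{\bar e_i}\times\Aaff^{\!1}$ from the proof of \Cref{thm: first vanishing}, the $\Aaff^{\!1}$-factor being the fibre direction of $\scrL_0$ (trivial over every stratum that a leaf and its neighbourhood meet). Under this factorisation the diagonal $\Delta$ of \Cref{lem:fibre product unexpanded divisor} splits; since $\ev_0$ factors through the zero section $\upepsilon\colon\prod_i D_{\bar e_i}\hookrightarrow\prod_i\hat D_{\bar e_i}$ (as recorded before \eqref{eqn: big cartesian diagram fourth vanishing}) while $\Mbar_{\uptau_i}|_0\hookrightarrow\Mbar_{\uptau_i}$ is exactly the locus where the $\Aaff^{\!1}$-component of $\ev_i$ vanishes, diagram \eqref{eqn: big cartesian diagram fourth vanishing} rewrites $\Delta^!$ as $\updelta^!$ precomposed with the Gysin pullback that cuts $\prod_i\Mbar_{\uptau_i}$ down to $\prod_i\Mbar_{\uptau_i}|_0$ along the zero sections of the $\Aaff^{\!1}$-factors. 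Everything thus reduces to identifying, for each $i$, the restriction of $[\Mbar_{\uptau_i}]^{\virt}$ to $\Mbar_{\uptau_i}|_0$ along that zero section.

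The crucial computation is that this restriction equals $(-1)^{g_i}\uplambda_{g_i}\cap[\Mbar_{\uptau_i}|_0]^{\virt}$, and I would obtain it by comparing obstruction theories. Since the source curve $C_{\bar v_i}$ is connected and $\scrL_0|_S=\OO_S$ is trivial, the map in the fibre direction is a constant, so $\Mbar_{\uptau_i}\cong\Mbar_{\uptau_i}|_0\times\Aaff^{\!1}$ with the extra factor carrying trivial logarithmic structure. Passing from logarithmic maps to (an expansion of) $S$ to logarithmic maps to (an expansion of) $\OO_S$ modifies the deformation--obstruction complex by $R\uppi_\star\OO_{C}$, whose $H^0$-term accounts for the smooth $\Aaff^{\!1}$-direction while $H^1=\EE^\vee$ is a genuine rank-$g_i$ obstruction bundle. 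Hence $[\Mbar_{\uptau_i}]^{\virt}=e(\EE^\vee)\cap\big([\Mbar_{\uptau_i}|_0]^{\virt}\times[\Aaff^{\!1}]\big)$ with $e(\EE^\vee)=c_{g_i}(\EE^\vee)=(-1)^{g_i}\uplambda_{g_i}$. Slicing the $\Aaff^{\!1}$-factor by its zero section and feeding the outcome back through $\updelta^!$ and \Cref{lem:fibre product unexpanded divisor} gives the stated identity, with the combinatorial gluing factor $c_\uptau$ unchanged.

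The step I expect to require the most care is the obstruction-theory comparison in the logarithmic setting: one must verify that the fibre direction of $\scrL_0$ genuinely splits off both the moduli stack and its perfect obstruction theory --- so that the only change is the addition of the Hodge obstruction $\EE^\vee$ --- and that no expansion or logarithmic modification in the $S$-direction interferes. This rests on the triviality of the logarithmic structure along the $\Aaff^{\!1}$-factor and, as already used in \Cref{lem:fibre product unexpanded divisor}, on the fact that the gluing divisors here have one-dimensional tropicalisations, so the pertinent logarithmic modifications are trivial and the fibre products may be formed in the category of schemes. The remaining bookkeeping --- the splitting of $\Delta^!$ and tracking which $\Aaff^{\!1}$-components vanish --- is formal.
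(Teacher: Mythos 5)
Your proposal is correct and follows essentially the same route as the paper: the cartesian diagram \eqref{eqn: big cartesian diagram fourth vanishing} is used to factor $\Delta^!$ as $\updelta^!$ followed by the zero-section Gysin pullback, and then the splitting $\Mbar_{\uptau_i}=\Mbar_{\uptau_i}\lvert_0\times\Aone$ (from $\scrL_0|_S=\OO_S$) together with a comparison of obstruction theories yields the factor $(-1)^{g_i}\uplambda_{g_i}$. The only difference is that you spell out what the paper calls a ``direct comparison of obstruction theories'' (the extra piece $\mathbf{R}\uppi_\star\OO_C$, with $H^0$ giving the $\Aone$-direction and $H^1=\EE^\vee$ the excess obstruction), which is a faithful elaboration rather than a different argument.
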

	
	\begin{proof} From \eqref{eqn: big cartesian diagram fourth vanishing} we obtain
		\[	\Delta^! \left( [\Mbar_{\uptau_0}]^{\virt} \times \prod_{i=1}^{\ell} [\Mbar_{\uptau_i}]^{\virt} \right) = \updelta^! (\id \times \upepsilon)^! \left([\Mbar_{\uptau_0}]^{\virt} \times  \prod_{i=1}^{\ell} [\Mbar_{\uptau_i}]^{\virt} \right) = \updelta^! \left( [\Mbar_{\uptau_0}]^{\virt} \times \upepsilon^! \left( \prod_{i=1}^{\ell} [\Mbar_{\uptau_i}]^{\virt} \right)\right)\]
		where the first equality follows from \cite[Theorems 6.2(c) and 6.5]{FultonBig} and the second equality follows from \cite[Example~6.5.2]{FultonBig}. Since $\scrL_0$ trivialises over $S \subseteq \scrS_0$, we have
		\[ \Mbar_{\uptau_i} = \Mbar_{\uptau_i}\lvert_{0} \times \Aone\] 
		for all $i\in\{1,\ldots,\ell\}$. A direct comparison of obstruction theories then produces
		\[ \upepsilon^! \left( \prod_{i=1}^{\ell} [\Mbar_{\uptau_i}]^{\virt}\right) = \prod_{i=1}^{\ell} (-1)^{g_i} \uplambda_{g_i} \cap [\Mbar_{\uptau_i}\lvert_{0}]^{\virt}\]
		which we combine with \Cref{lem:fibre product unexpanded divisor} to obtain the result.
	\end{proof}
	
	We now show that $\uptau_0$ supports no marking legs, thus completing the proof of \Cref{thm: second vanishing}. Recall that the insertions
	\[ \upgamma = \prod_{j=1}^s \ev_{y_j}^\star (\upgamma_j) \psi_{y_j}^{k_j} \]
	are $D$-avoidant (\Cref{def: E disjoint insertions}). The following vanishing result involves a dimension count, for which it is crucial that the codimension of $\upgamma$ coincides with the virtual dimension of $\Mbar_{g,\bfc,\upbeta}(\OO_S(-D)\, |\, \hat{E})$.
	
	For $i \in \{0,1,\ldots,\ell\}$ let $J(i) \subseteq \{1,\ldots,s\}$ denote the set indexing those markings $y_j$ supported at $\uptau_i$ for which $\upgamma_j\neq \mathbbm{1}_{\mathrm{Ev}_j}$. Define the class
	\[ \upgamma_{J(i)} \coloneqq \prod_{j\in J(i)} \ev_{y_j}^\star (\upgamma_j) \psi_{y_j}^{k_j} \]
	so that $\upgamma=\Pi_{i=0}^\ell \upgamma_{J(i)}$. We continue to write $\uptheta$ for the gluing morphism
	\begin{equation*}
		\Nbar_{\uptau} \rightarrow \MapsRefinedEvals
	\end{equation*}
	making the diagram \eqref{eq: guling morphism diagram} commute.	
	
	\begin{proposition} \label{prop: v0 no markings} Let $\uptau$ be a weakly star-shaped tropical type as in \Cref{thm: first vanishing}. The cycle
		\[ \frgt_\star \big(\upgamma \cap \uprho_\star \upiota_\star [\Mbar_\uptau]^{\virt}\big) \]
		vanishes unless $\uptau_0$ carries no markings. In this case, we have the following equality in $A_0(\MapsRefinedEvals)$:
		\begin{equation}
			\label{eq:term decomposition formula simplification}
			\frgt_\star \big(\upgamma \cap \uprho_\star \upiota_\star [\Mbar_\uptau]^{\virt}\big) = c_\uptau \cdot \uptheta_\star \updelta^! \left( [\Mbar_{\uptau_0}]^{\virt} \times \prod_{i=1}^\ell (-1)^{g_i} \uplambda_{g_i} \upgamma_{J(i)} \cap [\Mbar_{\uptau_i}\lvert_{0}]^{\virt} \right).
		\end{equation}
	\end{proposition}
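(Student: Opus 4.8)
The plan is to start from \Cref{prop: simplification of contribution}, push the resulting identity forward along the gluing morphism $\uptheta$, and then annihilate markings on $\uptau_0$ in two stages: first those carrying a $D$-disjoint insertion, for which the evaluation is trapped on $D$, and then the trivially decorated ones, by a dimension count.

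\smallskip

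\noindent\emph{Reduction to the formula.} By the commuting square \eqref{eq: guling morphism diagram} we have $\frgt\circ\uprho\circ\upiota=\uptheta\circ\upnu$. Applying $\uptheta_\star$ to \Cref{prop: simplification of contribution} and sliding $\upgamma$ across $\uptheta$ via the projection formula gives
\[
\frgt_\star\big(\upgamma\cap\uprho_\star\upiota_\star[\Mbar_\uptau]^{\virt}\big)
= c_\uptau\cdot\uptheta_\star\Big(\uptheta^\star\upgamma\cap\updelta^!\big([\Mbar_{\uptau_0}]^{\virt}\times\textstyle\prod_{i=1}^{\ell}(-1)^{g_i}\uplambda_{g_i}\cap[\Mbar_{\uptau_i}\lvert_0]^{\virt}\big)\Big).
\]
Each $\ev_{y_j}$ and each $\psi_{y_j}$ factors through the moduli factor carrying $y_j$, and the gluing at $\overline{e}_i$ leaves the leaf vertex $\overline{v}_i$ untouched; hence the portion of $\uptheta^\star\upgamma$ attached to markings on $\uptau_i$ ($i\geq 1$) commutes inside the refined Gysin pullback and is absorbed into $[\Mbar_{\uptau_i}\lvert_0]^{\virt}$, yielding $(-1)^{g_i}\uplambda_{g_i}\upgamma_{J(i)}\cap[\Mbar_{\uptau_i}\lvert_0]^{\virt}$. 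This is exactly \eqref{eq:term decomposition formula simplification} provided $\uptau_0$ carries no markings, so it remains to prove that a marking on $\uptau_0$ kills the term.

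\smallskip

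\noindent\emph{Where a $\uptau_0$-marking lands.} By \Cref{lem: all leaves map to S} and \Cref{lem: no vertex on E} the leaf vertices of $\Gamma_1,\ldots,\Gamma_m$ are precisely its $S$-vertices, so \emph{every} vertex of $\uptau_0$ maps to one of $P,E_1,E_2,D_0,q_1,q_2$. Moreover $\uptheta$ factors through the contraction $\scrS_0\to S$ obtained by restricting the blowup $\scrS\to S\times\Aone$ to the central fibre: this sends $P$ and $D_0$ onto $D$ and collapses $E_1,E_2$ onto the two points of $D\cap E$. Hence for any marking $y_j$ on $\uptau_0$ the evaluation $\ev_{y_j}$ takes values in $D$ on the image of $\uptheta$ — and in $D\cap E=\{q_1,q_2\}$ when $\upalpha_j>0$. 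If $\upgamma_j$ is $D$-disjoint, realised by a regularly embedded $Z_j$ with $Z_j\cap D=\emptyset$, then $\uptheta^\star\ev_{y_j}^\star\upgamma_j=\uptheta^\star[\ev_{y_j}^{-1}(Z_j)]=0$ and the contribution vanishes.

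\smallskip

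\noindent\emph{Trivially decorated markings and the obstacle.} It remains to treat a $\uptau_0$-marking with $\upgamma_j=\mathbbm{1}_{\mathrm{Ev}_j}$ and $k_j=0$, where the previous step offers no cancellation. If $\upalpha_j=0$ then $y_j$ may be forgotten, $\upgamma$ is pulled back along the forgetful morphism, and the usual dimension drop (its fibres are curves) already forces $\frgt_\star(\upgamma\cap\uprho_\star\upiota_\star[\Mbar_\uptau]^{\virt})=0$; so we may assume $\upalpha_j>0$. In that case $y_j$ meets $E_1$ or $E_2$, which $\scrS_0\to S$ collapses to a point, so the position of $y_j$ records a positive-dimensional family of logarithmic maps that $\uptheta$ forgets. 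Consequently $\uptheta$ is not generically finite on the support of $\updelta^!(\cdots)$, whence
\[
\dim\uptheta_\star\updelta^!(\cdots)<\dim\updelta^!(\cdots)=\vdim\Mbar_{g,\bfc,\upbeta}\big(\OO_S(-D)\, |\, \hat{E}\big),
\]
the final equality being a consequence of the decomposition theorem. Since $\upgamma$ has codimension equal to this virtual dimension, capping it against a cycle of strictly smaller dimension gives zero, and combining with the first paragraph produces \eqref{eq:term decomposition formula simplification}. I expect this last step to be the main obstacle: one must check carefully, over the possible positions of a trivially decorated marking on $\uptau_0$, that $\uptheta$ genuinely collapses a direction and that the ensuing estimate beats $\vdim\Mbar_{g,\bfc,\upbeta}(\OO_S(-D)\, |\, \hat{E})$ — which is exactly the point where the hypothesis $\operatorname{codim}\upgamma=\vdim\Mbar_{g,\bfc,\upbeta}(\OO_S(-D)\, |\, \hat{E})$ enters.
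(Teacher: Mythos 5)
Your first two steps essentially track the paper: reducing \eqref{eq:term decomposition formula simplification} to the vanishing statement via \Cref{prop: simplification of contribution} and the projection formula, and eliminating $\uptau_0$-markings carrying a $D$-disjoint insertion by observing (via Lemmas~\ref{lem: no vertex on E} and~\ref{lem: all leaves map to S}) that all evaluations from $\uptau_0$ land in $D$. The paper phrases this last point as emptiness of the fibre product $\Mbar_{\uptau_0}\times_{\mathrm{Ev}_{J(0)}}Z_{J(0)}$, i.e.\ $\upeta_0^![\Mbar_{\uptau_0}]^{\virt}=0$, which is the clean way to make your ``$\uptheta^\star\ev_{y_j}^\star\upgamma_j=0$'' precise; up to that rephrasing these parts are fine.

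The genuine gap is your treatment of trivially decorated markings on $\uptau_0$ with $\upalpha_j>0$. The inference ``$\uptheta$ is not generically finite on the support, hence $\dim\uptheta_\star\updelta^!(\cdots)<\dim\updelta^!(\cdots)$'' confuses the dimension of a cycle class with the dimension of its support: proper pushforward preserves the grading, and a virtual class of dimension $n$ is typically supported on a locus of strictly larger dimension, so positive-dimensional fibres of $\uptheta$ over (part of) the support yield no conclusion; what you would need is $\dim\uptheta(\operatorname{supp})<n$, which you never establish. Moreover the geometric premise itself can fail: a marking with $\upalpha_j>0$ on $\uptau_0$ may sit on a component that is a multisection of $P\to D$ meeting $E_1$, and then nothing near $y_j$ is contracted by $\uptheta$, so $\uptheta$ can be generically finite there. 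Tellingly, your argument never uses $D\cong\PP^1$, whereas this is precisely where the paper needs it: after the $D$-disjoint step it runs a dimension count showing that the class $\prod_{i=1}^{\ell}(-1)^{g_i}\uplambda_{g_i}\upgamma_{J(i)}\cap[\Mbar_{\uptau_i}\lvert_{0}]^{\virt}$ has dimension $\uppi_\star\upbeta_0\cdot(K_S+D+E)-s_0$, and the first term vanishes because $\uppi_\star\upbeta_0$ is a multiple of $D$, together with adjunction, $D\cdot E=2$ and $g_D=0$; hence the class lives in negative dimension and vanishes whenever $s_0>0$. For $D$ of higher genus the count gives $2kg_D-s_0$, and the vanishing you assert would be in doubt. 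Your side remark on $\upalpha_j=0$ markings (forgetting $y_j$) is also only sketched — one would have to compare $[\Mbar_\uptau]^{\virt}$ with the virtual class after forgetting the marking — but it is moot, since the paper's single dimension count disposes of all trivially decorated markings on $\uptau_0$, with or without tangency, at once.
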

	
	\begin{proof} It is sufficient to prove the vanishing result. We consider separately the case of markings $y_j$ with trivial and non-trivial insertions. First suppose that $\uptau_0$ carries a marking $y_j$ with $\upgamma_j \neq \mathbbm{1}_{\mathrm{Ev}_j}$. For every marking $y_j$ with $\upgamma_j \neq \mathbbm{1}_{\mathrm{Ev}_j}$, choose a regularly embedded subvariety $Z_j \subseteq \mathrm{Ev}_j$ with $[Z_j] = \upgamma_j$ and $Z_j \cap D = \emptyset$. Let
		\begin{equation} \label{eqn: strict transform of Zi} \scrZ_j \hookrightarrow \scrS \end{equation}
		denote the strict transform of $Z_j \times \Aone \hookrightarrow S \times \Aone$. Since $Z_j \cap D = \emptyset$ it follows that on the central fibre the inclusion \eqref{eqn: strict transform of Zi} factors through the irreducible component $S$. In fact:
		\[ Z_j \hookrightarrow \mathrm{Ev}_j \hookrightarrow S \hookrightarrow \scrS_0.\]
		Consider $J \colonequals \bigsqcup_{i=0}^\ell J(i)$ the set indexing all markings with non-trivial insertion. Take $Z \colonequals \Pi_{j\in J} Z_j$ and $\mathrm{Ev} \colonequals \Pi_{j\in J} \mathrm{Ev}_j$. We combine the above morphisms into a regular embedding
		\[ \upeta_Z \colon Z \hookrightarrow \mathrm{Ev}.\]
		Similarly, for $i \in \{0,\ldots,\ell\}$ we consider the corresponding inclusion
		\[ \upeta_i \colon Z_{J(i)} \colonequals \Pi_{j \in J(i)} Z_j \hookrightarrow \Pi_{j\in J(i)} \mathrm{Ev}_j \equalscolon \mathrm{Ev}_{J(i)}.\]
		We perform the pullback along $\upeta_Z$ to produce a closed substack with constrained evaluation:
		\[
		\begin{tikzcd}
			\Nbar_{\uptau}|_Z \ar[r,hook] \ar[d] \ar[rd,phantom,"\square",start anchor=center,end anchor=center] & \Nbar_{\uptau} \ar[d] \\
			Z \ar[r,hook,"\upeta_Z"] & \mathrm{Ev}.
		\end{tikzcd}
		\]
		Applying $\upeta_Z^!$ to \Cref{eq:simplification term decomposition formula} we obtain
		\begin{align} \label{eqn: term degeneration formula distributed insertions} \upeta_Z^! \upnu_\star [\Mbar_\uptau]^{\virt} & = c_\uptau \cdot \updelta^! \upeta_Z^! \left( [\Mbar_{\uptau_0}]^{\virt} \times \prod_{i=1}^{\ell} (-1)^{g_i} \uplambda_{g_i} \cap [\Mbar_{\uptau_i}\lvert_{0}]^{\virt} \right) \nonumber \\
			& = c_\uptau \cdot \updelta^! \left( \upeta_0^! [\Mbar_{\uptau_0}]^{\virt} \times \prod_{i=1}^{\ell} (-1)^{g_i} \uplambda_{g_i} \cap \upeta_i^! [\Mbar_{\uptau_i}\lvert_{0}]^{\virt} \right)
		\end{align}
		where the first equality follows from \cite[Theorem~6.2(a) and 6.4]{FultonBig} and the second from \cite[Example~6.5.2 and Proposition~6.3]{FultonBig}.
		
		Combining Lemmas~\ref{lem: no vertex on E}~and~\ref{lem: all leaves map to S}, we see that for every vertex $v$ of the graph of $\uptau_0$, the restriction $f|_{C_v}$ factors through $P$. It follows that if a marking $y_j$ belongs to $\uptau_0$ the associated evaluation map factors through $D \subseteq S$. On the other hand $Z_j \cap D = \emptyset$ for all $j\in J$. We conclude that if $J(0)\neq \emptyset$ then the fibre product
		\[ \Mbar_{\uptau_0} \times_{\mathrm{Ev}_{J(0)}} Z_{J(0)} \]
		is empty. Therefore $\upeta_0^! [\Mbar_{\uptau_0}]^{\virt} = 0$ and by \eqref{eqn: term degeneration formula distributed insertions} the contribution vanishes. This is the only point in the paper where we use the assumption of $D$-avoidant insertions.
		
		We conclude that $\uptau_0$ only contains markings $y_j$ with $\upgamma_j=\mathbbm{1}_{\mathrm{Ev}_j}$ and $k_j=0$. Capping \eqref{eqn: term degeneration formula distributed insertions} with the psi classes appearing in the insertions $\upgamma$ produces:
		\begin{equation*}
			\upgamma \cap \upnu_\star [\Mbar_\uptau]^{\virt} = c_\uptau \cdot \updelta^! \left( [\Mbar_{\uptau_0}]^{\virt} \times \prod_{i=1}^{\ell} (-1)^{g_i} \uplambda_{g_i} \upgamma_{J(i)} \cap [\Mbar_{\uptau_i}\lvert_{0}]^{\virt} \right).
		\end{equation*}
		We now perform a dimension count. For $i \in \{0,1,\ldots,\ell\}$ let $s_i$ denote the number of markings $y_j$ contained in $\uptau_i$. Then for $i \in \{1,\ldots,\ell\}$ the virtual dimension of $\Mbar_{\uptau_i}\lvert_{0}$ is $-\upbeta_i \cdot (K_S + D + E) + g_i + s_i$. It follows that the class
		\begin{equation}
			\label{eq:vVerticesCycle}
			\prod_{i=1}^{\ell} (-1)^{g_i} \uplambda_{g_i} \upgamma_{J(i)} \cap [\Mbar_{\uptau_i}\lvert_{0}]^{\virt}
		\end{equation}
		has dimension:
		\begin{align} \label{eqn: dimension count for cycle in fourth vanishing proof}
			& \sum_{i=1}^{\ell} \left( - \upbeta_i \cdot (K_S + D + E) + s_i - \Sigma_{j\in J(i)} (k_j + \mathrm{codim}(Z_j,\mathrm{Ev}_j)) \right) \nonumber \\
			= \ & \uppi_\star\upbeta_0  \cdot (K_S + D + E) - s_0 + \left( - \upbeta \cdot (K_S + D + E) + s - \Sigma_{i=0}^{\ell} \Sigma_{j\in J(i)} (k_j + \mathrm{codim}(Z_j,\mathrm{Ev}_j)) \right) \nonumber \\
			= \ & \uppi_\star\upbeta_0 \cdot (K_S + D + E) - s_0.
		\end{align}
		Here $\uppi_\star\upbeta_0 \in A_1(S;\Z)$ is the projection along $\uppi:P \rightarrow D_0=D \hookrightarrow S$ of the curve class attached to $\uptau_0$. The first equality follows from
		\[ \upbeta = \uppi_\star\upbeta_0 + \sum_{i=1}^{\ell} \upbeta_i, \qquad s= \sum_{i=0}^{\ell} s_i\]
		and the fact that by the previous arguments $k_j + \mathrm{codim}(Z_j,\mathrm{Ev}_j)=0$ for all $j \in J(0)$. The second equality holds because of the assumption that the codimension of $\upgamma$ is equal to the virtual dimension of $\Mbar_{g,\bfc,\upbeta}(\OO_S(-D)\, |\, \hat{E})$.
		
		Note that $\uppi_\star\upbeta_0 $ is necessarily a multiple of $D$, say $\uppi_\star\upbeta_0=k D$ for some $k\geq 0$. By adjunction and $D \cdot E = 2$ we obtain
		\[ \uppi_\star\upbeta_0 \cdot (K_S + D + E) = k D \cdot (K_S + D) + 2k = 2 k g_D = 0. \]
		(This is the first of two points in the argument where we use the assumption that $D$ is rational.) By \eqref{eqn: dimension count for cycle in fourth vanishing proof} this implies that the cycle \eqref{eq:vVerticesCycle} has dimension $-s_0$. The contribution therefore vanishes unless $s_0=0$.
	\end{proof}
	
	\subsubsection{Third reduction: star-shaped graphs}\label{sec: third vanishing} Using tropical arguments we now constrain the shape of the subgraphs $\Gamma_i$ further. 
	
	\begin{proposition}[Third reduction] \label{thm: third vanishing} For $i \in \{0,1,\ldots,m\}$ the subgraph $\Gamma_i$ consists of a single vertex $v_i$. We have $\ftrop(v_0) \in P$ and $\ftrop(v_1),\ldots,\ftrop(v_m) \in S$. The tropical type $\uptau$ therefore takes the following form:
		\[
		\begin{tikzpicture}

			\draw[ ->] (-14,0) -- (-13,0.3);
			\draw[ ->] (-14,0) -- (-13,-0.3);
			\draw[] (-13,0.1) node{$\vdots$};
			\draw[ ->] (-16,0) -- (-17,0.3);
			\draw[ ->] (-16,0) -- (-17,-0.3);
			\draw[] (-17,0.1) node{$\vdots$};
			
			\draw[fill=black] (-15,3) circle[radius=2pt];
			\draw[] (-15,3) node[above]{$v_0$};
			\draw[] (-15,3) -- (-16,0);
			\draw[fill=black] (-16,0) circle[radius=2pt];
			\draw[] (-16,0) node[below]{$v_1$};
			\draw[] (-15,3) -- (-14,0);
			\draw[fill=black] (-14,0) circle[radius=2pt];
			\draw[] (-14,0) node[below]{$v_m$};
			\draw[] (-15,1) node{$\cdots$};

			\draw[->] (-13,1.5) -- (-12,1.5);
			\draw[] (-12.5,1.5) node[above]{$\ftrop$};
			
			\draw[fill=black] (-8,0) circle[radius=2pt];
			\draw (-8,0) node[below]{$S$};
			
			\draw[black,->] (-8,0) -- (-5,0);
			\draw (-5,0) node[right]{$E$};
			
			\draw (-8,0) -- (-8,3);
			\draw (-8,1.5) node[right]{$D_0$};
			
			\draw[fill=black] (-8,3) circle[radius=2pt];
			\draw (-8,2.7) node[right]{$P$};
			
			\draw[black,->] (-8,3) -- (-5,3);
			\draw (-5,3) node[right]{$E_2$};
			
			\draw[black,->] (-8,3) -- (-11,3);
			\draw (-11,3) node[left]{$E_1$};
			
			\draw[black,->] (-8,0) -- (-11,0);
			\draw (-11,0) node[left]{$E$};
			
			\draw[black] (-5.5,0) node{$\mid\mid$};
			\draw[black] (-10.5,0) node{$\mid\mid$};
			
			\draw[black,fill=black] (-8,3) circle[radius=2pt];
			\draw[black,bend left] (-8,3) -- (-8,0);
			\draw[black,fill=black] (-8,0) circle[radius=2pt];
		\end{tikzpicture}
		\]
	\end{proposition}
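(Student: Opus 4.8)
The plan is to argue purely tropically, showing that a rigid $\uptau$ with non-trivial contribution cannot have any vertex mapping into one of the positive-dimensional strata $D_0,q_1,q_2,E_1,E_2$; the displayed shape then follows at once. Throughout I use the output of the second reduction: each $\Gamma_i$ with $1\leq i\leq m$ is a tree rooted at $v_i$ (with $e_i$ oriented from $w_i\in V(\Gamma_0)$ into $v_i$), its leaves are exactly the vertices mapping to $S$ (\Cref{lem: all leaves map to S}), no vertex maps to $E$ (\Cref{lem: no vertex on E}), all marking legs sit on the $S$-leaves, and $\uptau_0$ carries no markings. I also record an elementary consequence: a contracted edge is forbidden by rigidity, and the only edge whose image is the ray $E$ would have both endpoints mapping to $S$ and hence be contracted; therefore every edge of $\Gamma_i\cup\{e_i\}$ maps into $D_0$, $q_1$ or $q_2$, so by \Cref{lem: negative vertical slope} its vertical slope in the flow direction is strictly negative. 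Writing $h$ for the ``$D_0$-height'' on $\Sigma$ (with $h\equiv 0$ on $S$ and on the ray $E$, and $h\equiv 1$ on $P,E_1,E_2$), this says $h\circ\ftrop$ is strictly decreasing along every edge, so the leaves sit at the bottom of each $\Gamma_i$.

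The heart of the matter is a sliding argument. Suppose some vertex of $\Gamma_1\sqcup\cdots\sqcup\Gamma_m$ maps to $D_0$, $q_1$ or $q_2$; among all such, pick one, $v$, which is \emph{deepest}, meaning all of its children map to $S$. Since leaves map to $S$, the vertex $v$ has at least one child, and every child is a leaf, hence maps to the single point $S$. Thus all edges from $v$ to its children are straight segments terminating at the same point, so their slopes --- fixed integral vectors pointing from $\ftrop(v)$ to $S$ --- are all positive multiples of one primitive vector $u$, and $\ftrop(v)$ lies on the ray through $S$ in direction $-u$. If $\ftrop(v)$ lies in $q_1$ or $q_2$ then $\upbeta_v=0$, and balancing (sum of outgoing slopes is zero, there being no marking legs at $v$) forces the slope of the incoming edge $e$, viewed as an outgoing flag at $v$, to be $-\sum_j m_j$, again a multiple of $u$. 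If $\ftrop(v)$ lies in $D_0$ then the children edges run along the $D_0$-segment and so are vertical, whence the refined balancing $m_E^1=m_E^2=k$ (with $\upbeta_v=kD_0$) forces $e$ to be vertical and $k=0$, and one concludes by the same argument. In either case the point $S$, the point $\ftrop(v)$, and the parent of $v$ are collinear, so one may translate $\ftrop(v)$ along this ray, rescaling only $\ell_e$ and the children edge-lengths while leaving all other positions, slopes, lengths, and balancing conditions (which depend only on slopes) untouched. This produces a one-parameter family of tropical maps of the same combinatorial type, contradicting rigidity. Hence no such vertex exists; combined with \Cref{lem: no vertex on E}, every vertex of $\Gamma_i$ maps to $S$, and since an $S$--$S$ edge is contracted we get $\Gamma_i=\{v_i\}$ with $\ftrop(v_i)\in S$.

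It remains to treat $\Gamma_0$, whose vertices map to $P,E_1,E_2$, which by the previous step is attached to the rest of $\Gamma$ only through the edges $e_1,\ldots,e_m$ landing on $v_1,\ldots,v_m\in S$, and which carries no markings. An edge internal to $\Gamma_0$ is either $P$--$P$ (contracted, hence forbidden) or runs along one of the rays $E_1,E_2$, so it suffices to exclude vertices on $E_1,E_2$ and superfluous $P$-vertices. Here the subtlety flagged in \Cref{rmk: balancing at E} appears: since $E_1$ (resp.\ $E_2$) meets the rest of the boundary of $P$ only in the single point $q_1$ (resp.\ $q_2$), balancing alone does not exclude a vertex on $E_1$ the way \Cref{lem: no vertex on E} excludes one on $E$. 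Instead one argues by rigidity: a vertex $v$ on $E_1$ has $\upbeta_v=kE_1$ with $k\geq 1$ (a contracted $E_1$-vertex has all slopes zero, violating rigidity), so by balancing $m_{D_0}=k>0$ and $v$ must support an edge with positive $D_0$-slope, which necessarily leaves $\Gamma_0$ and is therefore one of the $e_i$. Using that such a connecting edge pins the position of $v$ along $E_1$ (its slope is a prescribed integral vector) together with the balancing relations along $E_1,E_2$ and at $P$, one again extracts a sliding deformation --- of the outermost vertex on an $E_i$-ray, or of a redundant $P$-vertex --- contradicting rigidity, and so $\Gamma_0=\{v_0\}$ with $\ftrop(v_0)\in P$. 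Together with $\Gamma_i=\{v_i\}$ this gives the displayed shape of $\uptau$. I expect this last step to be the main obstacle: unlike the reductions for the $\Gamma_i$, where \Cref{lem: no vertex on E} does the heavy lifting, on $\Gamma_0$ the fibre components $E_1,E_2$ are not excluded by balancing, so the rigidity/sliding argument must be run with care, keeping track of how the connecting edges constrain positions and how the negativity of $\scrL_0|_P=\OO_P(-D_\infty)$ (as in \Cref{sec: first vanishing}) forces the relevant maps through the zero section.
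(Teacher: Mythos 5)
Your treatment of the subgraphs $\Gamma_1,\ldots,\Gamma_m$ is correct and takes a genuinely different route from the paper: where the paper first proves \Cref{lem: hard step} and the case analysis of \Cref{lem: v0 on P} and only then deduces \Cref{lem: Gamma_i a vertex on S}, you run a single ``deepest vertex'' sliding argument. It works: a deepest non-$S$ vertex $v$ of $\Gamma_i$ has all children at the point $S$, its curve class is trivial if $\ftrop(v)\in q_1,q_2$ (the corresponding stratum is a point), and if $\ftrop(v)\in D_0$ the refined balancing $m_E^1=m_E^2$ forces the incoming edge to be vertical; in either case all edges at $v$ are collinear through $S$, so $v$ can be translated along that line, contradicting rigidity. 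This is independent of where the parent sits, so your argument does not need the paper's ordering (which pins $\ftrop(w_i)\in P$ first precisely because its proof that $\ftrop(v_i)\in S$ uses it).

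The genuine gap is the $\Gamma_0$ statement, which you flag yourself but do not close. You never exclude vertices of $\Gamma_0$ on $E_1$ or $E_2$ (the content of \Cref{lem: v0 on P}): the promised ``sliding deformation of the outermost vertex on an $E_i$-ray'' is not carried out, and the one concrete assertion made on the way --- that a class-zero vertex on $E_1$ has all adjacent slopes zero --- does not follow from balancing, which only says the sums $m_{E_1}$ and $m_{D_0}$ vanish (the vertex could carry two horizontal edges of opposite slope). In fact, once your first half is in place the step closes by pure balancing, no sliding and no geometry needed: every edge leaving $\Gamma_0$ now terminates at one of the $S$-vertices $v_i$, so at a vertex in the interior of $E_1$ such a connecting edge has strictly negative slope in the $E_1$-direction (it cannot be vertical, since a vertical edge from the interior of $E_1$ lands in the interior of the ray $E$, not at $S$), while internal edges of $\Gamma_0$ at that vertex run along $E_1$ and there are no marking legs on $\Gamma_0$ by \Cref{thm: second vanishing}; at the vertex of maximal $E_1$-coordinate every outgoing slope in the $E_1$-direction is then $\leq 0$ and at least one is $<0$ (zero-slope $E_1$-edges are contracted, hence forbidden by rigidity), violating $m_{E_1}=0$ from \Cref{sec: balancing on E1 E2}. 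With $E_1,E_2$-vertices gone, all of $\Gamma_0$ maps to $P$ and internal edges are contracted, giving $\Gamma_0=\{v_0\}$ as in \Cref{lem: Gamma_0 a vertex on P}. Your closing worry that the negativity of $\scrL_0|_P$ might have to enter here is misplaced: the third reduction is purely tropical, and no intersection-theoretic input is required.
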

	
\begin{proof} Combine Lemmas~\ref{lem: Gamma_i a vertex on S} and \ref{lem: Gamma_0 a vertex on P} below. \end{proof}
	
	We prove the statement in several steps.
	
	\begin{lemma} \label{lem: hard step} Let $v \in V(\Gamma_i)$ be such that $\ftrop(v)$ belongs to $q_1$ or $q_2$. Then the incoming edge at $v$ cannot have positive horizontal slope.
	\end{lemma}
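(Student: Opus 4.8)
The plan is to argue by contradiction, producing an infinite path inside the finite tree $\Gamma_i$ by exploiting the local shape of the two-dimensional cell $q_1 \subseteq \Sigma$, together with \Cref{lem: no vertex on E} and \Cref{lem: negative vertical slope}.

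First I would fix the geometry. Assume without loss of generality $\ftrop(v) \in q_1$. Recall from \Cref{sec: tropical types and balancing} that the cell $q_1 \subseteq \Sigma$ is a half-strip: the bounded edge $D_0$ forms one side, while the $E$-ray out of $S$ and the $E_1$-ray out of $P$ form the two remaining, ``parallel'', sides, meeting $D_0$ at the corners $S$ and $P$. I would introduce coordinates $(u,t)$ on $q_1$ with $u$ the horizontal ($E$-)coordinate, vanishing precisely on $D_0$ (hence also at $S$ and $P$), and $t$ the vertical ($D_0$-)coordinate, equal to $0$ along the $E$-ray and to its maximum along the $E_1$-ray. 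Any vertex mapping to $q_1$ corresponds to a zero-dimensional stratum of $\scrS_0$, so its curve class is trivial and the balancing condition there reads: the outgoing edge slopes sum to zero in $N(q_1) \cong \Z^2$. Moreover, by \Cref{thm: second vanishing} such a vertex carries no marking leg, and by \Cref{lem: all leaves map to S} it is not a leaf; in particular $v$ is not a leaf.

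The heart of the argument is the following local step: \emph{if $w \in V(\Gamma_i)$ has $\ftrop(w)\in q_1$ and the incoming edge at $w$ has positive horizontal slope, then $w$ has an outgoing edge $e$ which again has positive horizontal slope and whose far endpoint lies in $q_1$.} To see this, balancing at $w$ forces some outgoing edge $e$ to have positive horizontal slope; since $\ftrop(e) = q_1$, \Cref{lem: negative vertical slope} gives $e$ negative vertical slope. Hence along $e$, away from $w$, the coordinate $u$ strictly increases --- and stays positive, as $w$ lies in the relative interior of $q_1$ --- while $t$ strictly decreases. The far endpoint of $e$ lies in $\Gamma_i$, so its polyhedron is one of $S, E, D_0, q_1, q_2$, and is at the same time a face of $q_1$. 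Now the corners $S$, $P$ and the edge $D_0$ are excluded because $u$ remains positive; the $E_1$-ray is excluded because $t$ remains below its maximum; the $E$-ray is excluded by \Cref{lem: no vertex on E}; and $q_2$ is not a face of $q_1$. The only surviving option is $q_1$, which proves the local step.

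To conclude, suppose for contradiction that the incoming edge at $v$ has positive horizontal slope. Applying the local step with $w = v$ yields an outgoing edge $e$ of positive horizontal slope whose far endpoint $v_1$ lies in $q_1$; viewing $e$ as the incoming edge of $v_1$, its horizontal slope is still positive, so the local step applies again with $w = v_1$, and so on. This produces an infinite path in $\Gamma_i$ compatible with the flow orientation, contradicting the finiteness of the tree $\Gamma_i$ (\Cref{prop: Gamma_i tree}). The only genuinely non-formal ingredient is the face-exclusion step; the remainder is bookkeeping of signs and finiteness of the tree.
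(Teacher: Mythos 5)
Your argument is correct and follows essentially the same route as the paper: balancing at the $q_1$-vertex produces a finite outgoing edge of positive horizontal slope (no marking legs there by \Cref{thm: second vanishing}), which by \Cref{lem: negative vertical slope} has negative vertical slope, and iterating while excluding the boundary faces forces a contradiction via \Cref{lem: no vertex on E}. The only cosmetic difference is that you place the final contradiction on the finiteness of the tree, whereas the paper lets the finite path terminate at a vertex on $E$ and contradicts \Cref{lem: no vertex on E} there; the content is identical.
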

	
	\begin{proof} 
		Suppose without loss of generality that $\ftrop(v) \in q_1$, and suppose for a contradiction that the incoming edge has positive horizontal slope. By balancing, there must exist an edge outgoing from $v$ with positive horizontal slope. By \Cref{thm: second vanishing} this edge must be finite, and by \Cref{lem: negative vertical slope} it must have negative vertical slope. Follow this edge to the next vertex. If the vertex also belongs to $q_1$ we repeat the argument. Eventually, we obtain a vertex $w$ with $\ftrop(w) \in E$. This contradicts \Cref{lem: no vertex on E}.
	\end{proof}

	Recall that for $i\in\{1,\ldots,m\}$ we denote by $e_i$ the edge connecting $\Gamma_i$ and $\Gamma_0$ and write $v_i \in V(\Gamma_i)$ and $w_i \in V(\Gamma_0)$ for its endpoints (\Cref{notation: vertices e_i}).

\begin{lemma} \label{lem: v0 on P} We have $\ftrop(w_i) \in P$.
\end{lemma}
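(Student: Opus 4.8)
The plan is to argue by contradiction. Since $w_i\in V(\Gamma_0)$ we have $\ftrop(w_i)\in\{P,E_1,E_2\}$, and by the evident left–right symmetry of the picture \eqref{eqn: central fibre tropicalisation} it suffices to rule out $\ftrop(w_i)=E_1$. The first step is to pin down $e_i$ and $v_i$. As $\upsigma$ is inclusion-preserving, $\ftrop(e_i)$ must be a polyhedron having $E_1$ as a face, so $\ftrop(e_i)\in\{E_1,q_1\}$; the value $E_1$ is impossible because it would force $\ftrop(v_i)$ to be a face of $E_1$, i.e. $\ftrop(v_i)=P$, contradicting $\ftrop(v_i)\in\{S,E,D_0,q_1,q_2\}$. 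Hence $\ftrop(e_i)=q_1$, and then \Cref{lem: no vertex on E} gives $\ftrop(v_i)\in\{S,D_0,q_1\}$. Throughout I would work in the two natural coordinates on the strip $q_1$: the coordinate running along $D_0$ from $S$ to $P$ (the ``vertical'' direction of \eqref{eqn: central fibre tropicalisation}) and the coordinate running along the ray $E_1$ away from $P$ (the ``horizontal'' direction); thus $E_1$ is the locus where the first coordinate is maximal, $D_0$ is where the second vanishes, and $w_i$ sits at strictly positive $E_1$-height.

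Next I would analyse the outermost vertex. Let $w^\star\in V(\Gamma_0)$ be a vertex mapping into $E_1$ with maximal $E_1$-height (non-empty since $w_i$ qualifies; a cluster of such vertices joined by contracted edges may be treated as one). I claim every edge at $w^\star$ has zero $E_1$-slope. An edge at $w^\star$ with strictly positive $E_1$-slope would end at a vertex strictly further out along $E_1$: that vertex cannot lie in $\Gamma_0$ (maximality of $w^\star$), cannot lie on $E$ (\Cref{lem: no vertex on E}), and cannot lie in the interior of $q_1$ inside some $\Gamma_j$ — for then the edge would be a connecting edge $e_j$ arriving at a $q_1$-vertex with strictly positive $E_1$-slope, forbidden by \Cref{lem: hard step} (and no $\Gamma_j$ has vertices on $E_1$ itself). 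So all edges at $w^\star$ have non-positive $E_1$-slope; since the balancing condition at a vertex on $E_1$ (\Cref{sec: balancing on E1 E2}) forces their sum to vanish, each is zero. Consequently every edge at $w^\star$ points in the pure $D_0$-direction and lands at a vertex on the interior of $q_1$ at the same positive $E_1$-height; as $\Gamma_0$ has no interior-$q_1$ vertices, that vertex lies in some $\Gamma_j$, so each edge at $w^\star$ is a connecting edge $e_j$. Fix one such $e_j$, with endpoint $v_j\in V(\Gamma_j)$: then $\ftrop(v_j)$ is the interior of $q_1$, $v_j$ sits at $E_1$-height $h\coloneqq E_1\text{-height}(w^\star)>0$, and the incoming flow-edge $e_j$ at $v_j$ points in the pure $D_0$-direction.

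Finally I would run a descent in the tree $\Gamma_j$. At a vertex of $\Gamma_j$ mapping to the interior of $q_1$ whose incoming flow-edge points in the pure $D_0$-direction, the curve class is zero (point stratum), there are no marking legs (\Cref{thm: second vanishing}), and stability together with balancing forces at least one outgoing edge. The balancing relation ``sum of outgoing slopes is zero'', combined with \Cref{lem: hard step} (no edge may arrive at a $q_1$-vertex with positive $E_1$-slope) and the fact that an edge running to $D_0$ or to $S$ would drop the $E_1$-height strictly to $0$, forces every outgoing edge to have zero $E_1$-slope and to point in the pure $D_0$-direction, landing at a further interior-$q_1$ vertex of $\Gamma_j$ at height $h$; here one uses that $\Gamma_j$ connects to $\Gamma_0$ only through $e_j$, that $\Gamma_0$ has no interior-$q_1$ vertices, and \Cref{lem: no vertex on E}. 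Iterating produces an infinite path inside $\Gamma_j$ confined to the slice of $E_1$-height $h$, which is absurd: $\Gamma_j$ is a finite tree, and following any such path one reaches a leaf, which by \Cref{lem: all leaves map to S} lies on $S$, hence at $E_1$-height $0\neq h$. This contradiction rules out $\ftrop(w_i)=E_1$, and symmetrically $\ftrop(w_i)=E_2$, leaving $\ftrop(w_i)\in P$.

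I expect the bookkeeping of the last two paragraphs to be the main obstacle: one must rule out, uniformly, every way an edge at $w^\star$ or at a descendant junction might escape upward out of the slab of fixed $E_1$-height, which requires matching each balancing/vanishing input to the correct coordinate direction and carefully handling the zero-curve-class junction vertices (and, as a minor point, contracted edges among the $E_1$-vertices).
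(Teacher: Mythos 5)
Your argument is correct and is essentially the paper's proof reorganised: it runs on exactly the same inputs (balancing at vertices on $E_1$, \Cref{lem: no vertex on E}, \Cref{lem: hard step}, \Cref{thm: second vanishing}, \Cref{lem: all leaves map to S}) and the same two moves, namely forcing an edge to dive purely vertically from $E_1$ into $q_1$ and then descending inside the tree $\Gamma_j$ to a contradiction, where the paper instead performs the case analysis (A)--(D) at $w_i$ with an outward chase along $E_1$ and ends its descent by arriving at a vertex on $E$, while you shortcut the chase by taking the outermost $E_1$-vertex and end via finiteness of the tree and the location of its leaves. The only detail worth making explicit is that $w^\star$, like every vertex of $\Gamma_0$, supports no marking legs (again by \Cref{thm: second vanishing}); this is needed when you apply balancing at $w^\star$ to conclude that all its adjacent edges have zero horizontal slope.
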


\begin{proof} It is equivalent to show that $\ftrop(w_i)$ does not belong to $E_1$ or $E_2$. Lemmas~\ref{lem: no vertex on E} and \ref{lem: hard step} eliminate many cases. The only ones which remain are
	\[
	\begin{tikzpicture}[scale=0.8]
		
		
		\draw[fill=black] (-8,0) circle[radius=2pt];
		
		\draw (-8,-1) node{(A)};
		
		\draw[->] (-8,0) -- (-5,0);
		\draw (-5,0) node[right]{$E$};
		
		\draw (-8,0) -- (-8,3);
		
		\draw[fill=black] (-8,3) circle[radius=2pt];
		
		\draw[->] (-8,3) -- (-5,3);
		\draw (-5,3) node[right]{$E_2$};
		
		\draw[->] (-8,3) -- (-11,3);
		\draw (-11,3) node[left]{$E_1$};
		
		\draw[->] (-8,0) -- (-11,0);
		\draw (-11,0) node[left]{$E$};
		
		\draw (-5.5,0) node{$\mid\mid$};
		\draw (-10.5,0) node{$\mid\mid$};
		
		\draw[purple,fill=purple] (-9.5,3) circle[radius=2pt];
		\draw[purple] (-9.5,3) node[above]{$w_i$};
		\draw[purple] (-9.5,3) -- (-8,0);
		\draw[purple,fill=purple] (-8,0) circle[radius=2pt];
		\draw[purple] (-8,0) node[below]{$v_i$};
		\draw[purple] (-9.05,1.5) node{$e_i$};

		
		\draw[fill=black] (0,0) circle[radius=2pt];
		\draw (0,-1) node{(B)};
		
		\draw[->] (0,0) -- (3,0);
		\draw (3,0) node[right]{$E$};
		
		\draw (0,0) -- (0,3);
		
		\draw[fill=black] (0,3) circle[radius=2pt];
		
		\draw[->] (0,3) -- (3,3);
		\draw (3,3) node[right]{$E_2$};
		
		\draw[->] (0,3) -- (-3,3);
		\draw (-3,3) node[left]{$E_1$};
		
		\draw[->] (0,0) -- (-3,0);
		\draw (-3,0) node[left]{$E$};
		
		
		\draw (2.5,0) node{$\mid\mid$};
		\draw (-2.5,0) node{$\mid\mid$};
		
		\draw[purple,fill=purple] (-2,3) circle[radius=2pt];
		\draw[purple] (-2,3) node[above]{$w_i$};
		\draw[purple] (-2,3) -- (0,1);
		\draw[purple,fill=purple] (0,1) circle[radius=2pt];
		\draw[purple] (0,1) node[right]{$v_i$};
		\draw[purple] (-1.2,1.85) node{$e_i$};
	\end{tikzpicture}
	\] \,
	\[
	\begin{tikzpicture}[scale=0.8]
		
		
		\draw[fill=black] (-8,0) circle[radius=2pt];
		
		\draw (-8,-1) node{(C)};
		
		\draw[->] (-8,0) -- (-5,0);
		\draw (-5,0) node[right]{$E$};
		
		\draw (-8,0) -- (-8,3);
		
		\draw[fill=black] (-8,3) circle[radius=2pt];
		
		\draw[->] (-8,3) -- (-5,3);
		\draw (-5,3) node[right]{$E_2$};
		
		\draw[->] (-8,3) -- (-11,3);
		\draw (-11,3) node[left]{$E_1$};
		
		\draw[->] (-8,0) -- (-11,0);
		\draw (-11,0) node[left]{$E$};
		
		\draw (-5.5,0) node{$\mid\mid$};
		\draw (-10.5,0) node{$\mid\mid$};
		
		\draw[purple,fill=purple] (-9.5,3) circle[radius=2pt];
		\draw[purple] (-9.5,3) node[above]{$w_i$};
		\draw[purple] (-9.5,3) -- (-8.5,1.5);
		\draw[purple,fill=purple] (-8.5,1.5) circle[radius=2pt];
		\draw[purple] (-8.5,1.5) node[below]{$v_i$};
		\draw[purple] (-9.25,2.2) node{$e_i$};

		
		\draw[fill=black] (0,0) circle[radius=2pt];
		\draw (0,-1) node{(D)};
		
		\draw[->] (0,0) -- (3,0);
		\draw (3,0) node[right]{$E$};
		
		\draw (0,0) -- (0,3);
			
		\draw[fill=black] (0,3) circle[radius=2pt];
				
		\draw[->] (0,3) -- (3,3);
		\draw (3,3) node[right]{$E_2$};
		
		\draw[->] (0,3) -- (-3,3);
		\draw (-3,3) node[left]{$E_1$};
		
		\draw[->] (0,0) -- (-3,0);
		\draw (-3,0) node[left]{$E$};
				
		\draw (2.5,0) node{$\mid\mid$};
		\draw (-2.5,0) node{$\mid\mid$};
		
		\draw[purple,fill=purple] (-2,3) circle[radius=2pt];
		\draw[purple] (-2,3) node[above]{$w_i$};
		\draw[purple] (-2,3) -- (-2,1);
		\draw[purple,fill=purple] (-2,1) circle[radius=2pt];
		\draw[purple] (-2,1) node[right]{$v_i$};
		\draw[purple] (-1.9,2) node[left]{$e_i$};
		
	\end{tikzpicture}
	\]
	along with the matching cases for $E_2$. We deal with cases (A), (B), (C) together. In each case balancing at $w_i$ (\Cref{sec: balancing on E1 E2}) ensures that there exists a finite outgoing edge with positive horizontal slope (note that $w_i$ cannot support any marking legs by \Cref{thm: second vanishing}). If this edge has zero vertical slope, we follow it to the next vertex and repeat the argument. Eventually, we arrive at a vertex on $E_1$ supporting an outgoing edge with positive horizontal slope and nonzero vertical slope. This leaves $\Gamma_0$ and enters one of the other subgraphs $\Gamma_j$. But Lemmas~\ref{lem: no vertex on E} and \ref{lem: hard step} preclude this.
	
	It remains to consider (D). By \Cref{lem: negative vertical slope}, \Cref{lem: hard step}, and balancing, all edges outgoing from $v_i$ must have zero horizontal slope and negative vertical slope. Inducting along the path, we eventually arrive at a vertex on $E$, contradicting \Cref{lem: no vertex on E}.
\end{proof}

\begin{lemma} \label{lem: Gamma_i a vertex on S} For $i \in \{1,\ldots,m\}$ the graph $\Gamma_i$ consists only of the vertex $v_i$, with $\ftrop(v_i) \in S$.
\end{lemma}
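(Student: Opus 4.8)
The plan is to prove directly that $\ftrop(v_i) \in S$. Granting this, the lemma follows at once: by \Cref{lem: all leaves map to S} every vertex of $\Gamma_i$ mapping to $S$ is a leaf, so $v_i$ supports no outgoing finite edge and $\Gamma_i$ collapses to the single vertex $v_i$. By \Cref{lem: no vertex on E} we already know $\ftrop(v_i) \notin E$, and by \Cref{lem: v0 on P} the incoming edge $e_i$ of $\Gamma_i$ emanates from the vertex $w_i$, which maps to the polyhedron $P$. So it remains to exclude $\ftrop(v_i) \in q_1 \cup q_2$ and $\ftrop(v_i) \in D_0$, working in the local coordinates on $q_1, q_2$ fixed in \Cref{sec: balancing on D=D0}.

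I would first dispose of $\ftrop(v_i) \in q_1$ (the case $q_2$ is symmetric). Since $w_i$ sits at horizontal coordinate zero in $q_1$ while $v_i$ lies in the relative interior of $q_1$, the edge $e_i$ oriented from $w_i$ to $v_i$ has strictly positive horizontal slope; as $e_i$ is by definition the incoming edge at $v_i$, this contradicts \Cref{lem: hard step}.

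The remaining case is $\ftrop(v_i) \in D_0$. By \Cref{lem: all leaves map to S} the vertex $v_i$ is not a leaf, so it supports an outgoing finite edge; by \Cref{thm: second vanishing} and \Cref{lem: negative vertical slope} every outgoing edge of $v_i$ is finite, has negative vertical slope, and has image contained in $q_1$, $q_2$, or $D_0$. If an outgoing edge $e'$ has image in $q_1$ (resp. $q_2$), then it leaves $v_i$ with strictly positive horizontal slope, and since it then moves into the interior of $q_1$ (resp. $q_2$) and away from $D_0$, its far endpoint $w' \in V(\Gamma_i)$ (which lies in $\Gamma_i$ because $\Gamma_i$ is joined to the rest of $\Gamma$ only through $e_i$) can only map to $q_1$ (resp. $q_2$) or $E$. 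The former is excluded by \Cref{lem: hard step}, since $e'$ would then be an incoming edge at a vertex on $q_1$ with positive horizontal slope; the latter is excluded by \Cref{lem: no vertex on E}. Hence every outgoing edge of $v_i$ has image in $D_0$; running this argument inductively along the forward flow of $v_i$ shows that the flow stays inside $D_0$ until it terminates, necessarily at leaves, which map to $S$ by \Cref{lem: all leaves map to S}. We thereby obtain a chain of edges lying in the one-dimensional polyhedron $D_0$ that joins the fixed vertex $w_i$ (at $P$), through $v_i$, to a fixed vertex at $S$, with $v_i$ in the relative interior of $D_0$. Sliding $v_i$ along $D_0$ and rescaling the lengths of its adjacent edges, keeping all other vertices fixed, is a one-parameter deformation of the tropical stable map, contradicting the rigidity of $\uptau$.

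The step I expect to be the main obstacle is this last configuration: it is not killed by any excess-bundle or dimension argument, and one genuinely has to appeal to rigidity of $\uptau$ together with an explicit description of the tropical moduli space near a chain confined to $D_0$. Everything else is bookkeeping — principally keeping the sign conventions for horizontal and vertical slopes from \Cref{sec: balancing on D=D0} and the orientation of $e_i$ consistent — and follows the by-now-standard pattern of this section: follow the flow of $\Gamma_i$ until one of the earlier reduction lemmas is contradicted.
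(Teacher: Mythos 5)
Your proof is correct and follows essentially the same route as the paper's: exclude $E$, $q_1$, $q_2$ using \Cref{lem: no vertex on E}, \Cref{lem: hard step} and \Cref{lem: v0 on P}, dispose of the $D_0$ case by a rigidity argument, and then conclude that $v_i$ supports no outgoing edges. The only differences are cosmetic: you spell out the sliding deformation in the $D_0$ case (where the paper simply asserts non-rigidity from $\Gamma_i$ being a tree), and you conclude the collapse to a single vertex by citing \Cref{lem: all leaves map to S}, whereas the paper re-derives that step directly from \Cref{lem: negative vertical slope} and \Cref{lem: no vertex on E}.
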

\begin{proof} We first show $\ftrop(v_i) \in S$. By \Cref{lem: no vertex on E} we have $\ftrop(v_i) \not\in E$. On the other hand \Cref{lem: hard step} and \ref{lem: v0 on P} together imply that $\ftrop(v_i) \not\in q_1$ or $q_2$. It remains to consider the case $\ftrop(v_i) \in D_0$. By \Cref{lem: v0 on P} we have $\ftrop(w_i) \in P$. Since $\Gamma_i$ is a tree, it follows in this case that the corresponding tropical type is not rigid. We conclude that $\ftrop(v_i) \in S$. 
	
	By \Cref{lem: negative vertical slope} the vertex $v_i$ has no outgoing edges with positive vertical slope. By \Cref{lem: no vertex on E} it then has no outgoing edges with positive horizontal slope. It follows that the only outgoing edges have slope zero in both directions. These do not exist because the tropical type is rigid.
\end{proof}

\begin{lemma} \label{lem: Gamma_0 a vertex on P} $\Gamma_0$ consists only of a single vertex $w_0$, with $\ftrop(w_0) \in P$.
\end{lemma}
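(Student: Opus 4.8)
The plan is to prove the statement in two moves: first that every vertex of $\Gamma_0$ maps to $P$, and then that $\Gamma_0$ carries no edges. The second move is immediate once the first is in hand: if all vertices of $\Gamma_0$ map to the zero-dimensional polyhedron $P$, then each edge of $\Gamma_0$ has both endpoints at the point $P$ and is therefore contracted by $\ftrop$, contradicting rigidity exactly as in the proof of \Cref{lem: negative vertical slope}. Since $\Gamma_0$ is connected --- being obtained from the connected graph $\Gamma$ by deleting the pendant subgraphs $\Gamma_1,\ldots,\Gamma_m$ --- it follows that $\Gamma_0$ is a single vertex $w_0$, with $\ftrop(w_0)\in P$ by \Cref{lem: v0 on P}.

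So the real content is to rule out a vertex $u\in V(\Gamma_0)$ with $\ftrop(u)\in E_1$ (the case $\ftrop(u)\in E_2$ being symmetric). Since $\ftrop(u)\neq P$, the vertex $u$ is none of the endpoints $w_i$ of the edges $e_i$, hence supports no $e_i$ and, by \Cref{thm: second vanishing}, no marking legs --- every edge at $u$ is internal to $\Gamma_0$. First I would show that every such edge lies along the ray $E_1$. Since $q_1$ is the only maximal cell of $\Sigma$ adjacent to $E_1$, an edge leaving $u$ that is not along $E_1$ must enter the interior of $q_1$; and because at this stage of the argument every vertex of $\Gamma$ maps to one of $P,E_1,E_2,S$, such an edge can only terminate at a vertex mapping to $S$, that is, at some $v_j$ (\Cref{lem: Gamma_i a vertex on S}). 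But $v_j$ already carries its unique edge $e_j$, and the new edge is not $e_j$ since its $\Gamma_0$-endpoint $u$ does not map to $P$; this is a contradiction. The same reasoning applies to every vertex of $\Gamma_0$ mapping to $E_1$, so all their edges run along $E_1$.

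Then I would take an $E_1$-vertex $u^\star$ of $\Gamma_0$ whose image lies furthest from $P$ along the ray $E_1$. If $u^\star$ has no incident edge, then $\Gamma_0=\{u^\star\}$, its position along $E_1$ is unconstrained, and $\uptau$ is not rigid. Otherwise, by maximality and the absence of legs every edge at $u^\star$ runs along $E_1$ towards $P$; the balancing condition of \Cref{sec: balancing on E1 E2} then forces the total outgoing $E_1$-slope at $u^\star$ to equal $E_1\cdot\upbeta_{u^\star}=0$, which is possible only if every edge at $u^\star$ is contracted --- and a contracted edge again obstructs rigidity. Either way we reach a contradiction, so $\Gamma_0$ has no vertex mapping to $E_1$ or $E_2$, and the two moves above finish the proof.

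The main obstacle will be this ``no vertex on $E_1$ or $E_2$'' step. Unlike the subgraphs $\Gamma_i$, the graph $\Gamma_0$ is not known to be a tree and has no root or flow, so the monotone path-chasing used earlier is not directly available; the argument instead leans on the fact that $E_1$ borders only the single cell $q_1$, which confines edges leaving an $E_1$-vertex to the ray $E_1$ unless they descend all the way to the surface component $S$ --- and that escape route is already blocked by \Cref{lem: Gamma_i a vertex on S}.
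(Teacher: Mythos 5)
Your proof is correct and takes essentially the same route as the paper: the paper's (very terse) argument likewise rules out vertices of $\Gamma_0$ on $E_1$ or $E_2$ by a rigidity violation, and then notes that more than one vertex over $P$ would force contracted edges, hence non-rigidity. Your additional work --- confining all edges at an $E_1$-vertex to the ray $E_1$ using the single-connecting-edge property of each $\Gamma_j$ together with \Cref{lem: v0 on P}, and then applying the balancing condition $m_{E_1}=E_1\cdot\upbeta_v=0$ at the $E_1$-vertex farthest from $P$ to produce a contracted edge --- is a correct substantiation of the non-rigidity claim that the paper leaves implicit.
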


\begin{proof} By \Cref{lem: Gamma_i a vertex on S} we know that each $\Gamma_i$ consists of a single vertex $v_i$ with $\ftrop(v_i) \in S$ and that $v_i$ is connected to $\Gamma_0$ by an edge $e_i$ with $\ftrop(e_i) \subseteq D_0$. From this we see that if $\Gamma_0$ contains any vertices along $E_1$ or $E_2$ then the tropical type is not rigid. On the other hand if $\Gamma_0$ contains more than one vertex over $P$ then it must contain contracted edges, and again the tropical type is not rigid.
\end{proof}

\subsubsection{Fourth reduction: single-edge graphs} \label{sec: fourth vanishing}

By \Cref{thm: third vanishing} we know that the tropical type $\uptau$ is star-shaped, with a vertex $v_0$ mapping to $P$ and vertices $v_1,\ldots,v_m$ mapping to $S$ and supporting all the marking legs. We now establish the fourth and final reduction:

\begin{proposition}[Fourth reduction] \label{thm: fourth vanishing} Let $\uptau$ be a star-shaped tropical type, as in \Cref{thm: third vanishing}. Then $m=1$ and $v_1$ contains all of the markings:
	\[
		\begin{tikzpicture}
			\draw[ ->] (-14,0) -- (-13,0.3);
			\draw[ ->] (-14,0) -- (-13,-0.3);
			\draw[] (-13,0.1) node{$\vdots$};
			
			\draw[fill=black] (-14,3) circle[radius=2pt];
			\draw[] (-14,3) node[above]{$v_0$};
			\draw[] (-14,3) -- (-14,0);
			\draw[fill=black] (-14,0) circle[radius=2pt];
			\draw[] (-14,0) node[below]{$v_1$};
				
			\draw[->] (-13,1.5) -- (-12,1.5);
			\draw[] (-12.5,1.5) node[above]{$\ftrop$};
			
			\draw[fill=black] (-8,0) circle[radius=2pt];
			\draw (-8,0) node[below]{$S$};
			
			\draw[black,->] (-8,0) -- (-5,0);
			\draw (-5,0) node[right]{$E$};
			
			\draw (-8,0) -- (-8,3);
			\draw (-8,1.5) node[right]{$D_0$};
			
			\draw[fill=black] (-8,3) circle[radius=2pt];
			\draw (-8,2.7) node[right]{$P$};
			
			\draw[black,->] (-8,3) -- (-5,3);
			\draw (-5,3) node[right]{$E_2$};
			
			\draw[black,->] (-8,3) -- (-11,3);
			\draw (-11,3) node[left]{$E_1$};
			
			\draw[black,->] (-8,0) -- (-11,0);
			\draw (-11,0) node[left]{$E$};
			
			\draw[black] (-5.5,0) node{$\mid\mid$};
			\draw[black] (-10.5,0) node{$\mid\mid$};
			
			\draw[black,fill=black] (-8,3) circle[radius=2pt];
			\draw[black,bend left] (-8,3) -- (-8,0);
			\draw[black,fill=black] (-8,0) circle[radius=2pt];
		\end{tikzpicture}
		\]
\end{proposition}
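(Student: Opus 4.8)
The plan is to reduce $m$ to $1$ by a dimension count, after first showing that the component over $P$ is constrained to be a multiple cover of a single fibre. First I would pin down the curve class $\upbeta_0$ at $v_0$: by \Cref{thm: third vanishing} the subgraph $\Gamma_0$ is the single vertex $v_0$, so $v_0$ supports no marking leg and no edge with nonzero slope in the $E_1$- or $E_2$-direction, whence balancing at $v_0$ gives $\upbeta_0\cdot E_1=\upbeta_0\cdot E_2=0$. Since $E_1,E_2$ are fibres of the $\PP^1$-bundle $\uppi\colon P\to D$, this forces $\upbeta_0$ to be a multiple of the fibre class $[F]$, and matching $\upbeta_0\cdot D_0$ against $\sum_i d_i$ (where $d_i=D\cdot\upbeta_i$ is the slope of $e_i$) identifies it as $\upbeta_0=(D\cdot\upbeta)[F]$. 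In particular $\uppi_\star\upbeta_0=0$, so $\upbeta=\sum_{i=1}^m\upbeta_i$, and by \Cref{thm: second vanishing} all $s$ markings lie on $v_1,\dots,v_m$.

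Next I would observe that a stable map to $P$ of class $(D\cdot\upbeta)[F]$ is contracted by $\uppi$ (its composite with $\uppi$ has degree $[F]^2=0$), hence factors through a single fibre $F_p\cong\PP^1$ with $p\notin\{q_1,q_2\}$; thus the $m$ gluing nodes at $v_0$ are all sent to the one point $F_p\cap D_0$. Choosing the subdivision of \Cref{sec: degeneration formula analysis} so that every $D_{\overline e_i}$ equals $D_0$, the evaluation at the $m$ nodes $\ev_{v_0}\colon\Mbar_{\uptau_0}\to\prod_{i=1}^m D_{\overline e_i}=(D_0)^m$ therefore factors as $\ev_{v_0}=\Delta_{\mathrm{sm}}\circ\overline{\ev}_0$ through the small diagonal $\Delta_{\mathrm{sm}}\colon D_0\hookrightarrow(D_0)^m$.

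With this in hand I would run the vanishing. Because $\ev_{v_0}$ factors through $\Delta_{\mathrm{sm}}$, the refined Gysin pullback $\updelta^!$ of \eqref{eq:term decomposition formula simplification} splits, by the compatibilities of \cite[Theorems~6.2 and 6.5]{FultonBig}, as $\updelta^!=\Delta_{D_0}^!\circ\Delta_{\mathrm{sm}}^!$, with $\Delta_{\mathrm{sm}}^!$ taken along $(\ev_{\overline e_1},\dots,\ev_{\overline e_m})\colon\prod_i\Mbar_{\uptau_i}\lvert_0\to(D_0)^m$ and $\Delta_{D_0}\colon D_0\hookrightarrow D_0\times D_0$ the ordinary diagonal matching $\overline{\ev}_0$ against the common node value. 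Exactly as in the proof of \Cref{prop: v0 no markings}---summing the virtual dimensions $-\upbeta_i\cdot(K_S+D+E)+g_i+s_i$, subtracting the codimensions of the $\uplambda_{g_i}$ and of the $\upgamma_{J(i)}$, and using $\sum\upbeta_i=\upbeta$, $\sum s_i=s$ together with the standing hypothesis $\operatorname{codim}(\upgamma)=\vdim\Mbar_{g,\bfc,\upbeta}(\OO_S(-D)\lvert\hat E)=-\upbeta\cdot(K_S+D+E)+s$---the cycle $\prod_{i=1}^m(-1)^{g_i}\uplambda_{g_i}\upgamma_{J(i)}\cap[\Mbar_{\uptau_i}\lvert_0]^{\virt}$ has dimension $0$. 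Since $\Delta_{\mathrm{sm}}$ has codimension $m-1$, its refined pullback of this cycle lies in homological degree $-(m-1)$, hence vanishes for $m\geq 2$, and so does the whole contribution of $\uptau$. (Morally: the $m$-dimensional class $(\overline{\ev}_0)_\star[\Mbar_{\uptau_0}]^{\virt}$ lives on $D_0\cong\PP^1$ and so dies for $m\geq 2$.) For $m=1$ one has $\upbeta_1=\upbeta$, and $v_1$ carries all markings by \Cref{thm: second vanishing}.

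The hard part will be the bookkeeping rather than any new idea: verifying that $\ev_{v_0}$ genuinely lands in the small diagonal of the correct product of universal gluing divisors (cleanest once all $D_{\overline e_i}=D_0$), and that refined Gysin functoriality legitimately splits $\updelta^!$ through the factored evaluation. The one indispensable input is the equality $\operatorname{codim}(\upgamma)=\vdim$, which forces the relevant dimension to be exactly $0$ rather than merely non-negative---the same hypothesis already used in \Cref{prop: v0 no markings}. A minor technical point in \Cref{prop: only one edge} additionally invokes $D\cong\PP^1$.
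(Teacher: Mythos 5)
Your proposal is correct, and it reaches the reduction by a slightly different mechanism than the paper's proof (\Cref{prop: only one edge}). The shared backbone is identical: balancing forces $\upbeta_0=(D\cdot\upbeta)F$, so the node evaluations from $\Mbar_{\uptau_0}$ factor through the small diagonal $D_0\hookrightarrow\prod_{i=1}^m D_0$, and the hypothesis $\operatorname{codim}(\upgamma)=\vdim$ makes $\prod_{i=1}^m(-1)^{g_i}\uplambda_{g_i}\upgamma_{J(i)}\cap[\Mbar_{\uptau_i}(S\,|\,D+E)]^{\virt}$ a cycle of dimension exactly $0$. Where you diverge is the vanishing step: you re-factor the gluing Gysin map through $D_0\times(D_0)^m$ (Fulton 6.2(c), since the pulled-back embedding is again regular of codimension $m$) and split it as an ordinary diagonal composed with the codimension-$(m-1)$ small diagonal acting only on the $\prod_i\Mbar_{\uptau_i}$ factor, so the dimension-$0$ cycle is pushed into degree $1-m$ and dies for $m\geq 2$; this is the same style of Gysin bookkeeping the paper already carries out in \Cref{prop: simplification of contribution}, so it goes through. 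The paper instead pushes $[\Mbar_{\uptau_0}]^{\virt}$ forward along $\uppsi$ to $\Mbar_{g_0,m}\times D_0$, uses $D_0\cong\PP^1$ to write the pushforward as $A\times[D_0]+B\times[\pt]$, and then counts dimensions on the $D_0$ factor. Your route buys a cleaner vanishing that does not invoke the rationality of $D_0$ at all (you correctly note $D\cong\PP^1$ only enters \Cref{prop: only one edge} elsewhere); the paper's route buys the extra structure needed immediately afterwards, namely the identification of the class $A$ with $(-1)^{g_0}\uplambda_{g_0}\cap C_{g_0,D\cdot\upbeta}$ in the $m=1$ case, which feeds into \Cref{thm: local-log cycle} but is not part of the statement you were asked to prove. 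Two harmless inaccuracies: the parenthetical ``$p\notin\{q_1,q_2\}$'' is neither needed nor true on all boundary strata of $\Mbar_{\uptau_0}$ (the small-diagonal factoring holds regardless of which fibre the image lies in), and the justification ``degree $[F]^2=0$'' should read that $\uppi_\star F=0$ forces the composite $\uppi\circ f$ to be constant; likewise your closing ``moral'' remark about $(\ol{\ev}_0)_\star[\Mbar_{\uptau_0}]^{\virt}$ is essentially the paper's actual argument and would need the $\uppsi_\star$ device to be made rigorous, but your main argument does not rely on it.
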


	\begin{proof} This follows from \Cref{prop: only one edge} below.	
	\end{proof}

%

The proof necessitates a careful analysis of the gluing appearing in the degeneration formula, similar to \Cref{sec: second vanishing}. We first note that if $\uptau$ is as in \Cref{thm: third vanishing} then there is no need to pass to a subdivision of the target, since all finite edges map to $D_0$ and all vertices map to $S$ or $P$. Consequently the identity \eqref{eq:term decomposition formula simplification} simplifies to\footnote{There is a typo in the statement of \cite[Lemma~6.4.4]{RangExpansions}: the gluing factor on the right-hand side must be divided by $m_\uptau$. We thank Dhruv~Ranganathan for confirming this. This typo is corrected in \cite{MRLogCurveSheaves}. The above identity is consistent with \cite[Theorem~1.1]{GrossGluing}, \cite[Corollary~7.10.4]{ChenDegeneration}, and {\cite[(1.4)]{KLR}}.}
\begin{equation}
	\label{eqn:single term degeneration formula}
	\frgt_\star \big(\upgamma \cap \uprho_\star \upiota_\star [\Mbar_\uptau]^{\virt} \big) = \left( \dfrac{\prod_{i=1}^m m_i}{\operatorname{lcm}(m_i)} \right) \uptheta_\star \updelta^! \left( [\Mbar_{\uptau_0}]^{\virt} \times \prod_{i=1}^m (-1)^{g_i} \uplambda_{g_i} \upgamma_{J(i)} \cap [\Mbar_{\uptau_i}(S\,|\,D+E)]^{\virt} \right).
\end{equation}
Note that the graph $\Gamma_0$ underlying $\uptau_0$ consists of a single vertex $v_0$ supporting no marking legs. The nodes $q_1,\ldots,q_m$ at this vertex have tangency zero with respect to $E_1$ and $E_2$, and the balancing condition determines the associated curve class $\upbeta_0$ as
\[ \upbeta_0 = (D \cdot\upbeta) \, F \]
where $F \in A_1(P)$ is the class of a fibre. Consequently the evaluation morphism $\Mbar_{\uptau_0} \to \prod_{i=1}^m D_0$ factors through the small diagonal $D_0 \hookrightarrow \Pi_{i=1}^m D_0$. Let
\[ \uppsi : \Mbar_{\uptau_0} \to \Mbar_{g_0,m} \times D_0 \]
denote the morphism remembering only the stabilised source curve and the evaluation. We obtain a diagram
\begin{equation*}
	\begin{tikzcd}
		\MapsRefinedEvals & \Nbar_{\uptau} \arrow[d] \arrow[r] \arrow[l, swap, "\uptheta"] \ar[rd,phantom,"\square",start anchor=center,end anchor=center] & \Mbar_{\uptau_0} \times \prod_{i=1}^m \Mbar_{\uptau_i}(S \, | \, D+E) \arrow[d, "\uppsi \times \id"] \\
		& \Mbar_{g_0,m}\times \ol{L}_\uptau \arrow[d] \arrow[r] \arrow[ul,"\upphi"] \ar[rd,phantom,"\square",start anchor=center,end anchor=center] & \Mbar_{g_0,m}\times D_0 \times \prod_{i=1}^m \Mbar_{\uptau_i}(S \, | \, D+E) \arrow[d]\\
		& \ol{L}_\uptau \arrow[r] \arrow[d] \ar[rd,phantom,"\square",start anchor=center,end anchor=center] &  D_0 \times \prod_{i=1}^m \Mbar_{\uptau_i}(S \, | \, D+E) \arrow[d]\\
		& \prod_{i=1}^m D_0 \arrow[r, "\updelta"] & \prod_{i=1}^m D_0^2
	\end{tikzcd}
\end{equation*}
in which $\ol{L}_\uptau$ is defined via the bottom cartesian square.\footnote{If $g_0=0$ and $m<3$ we adopt the convention $\Mbar_{0,1}=\Mbar_{0,2}=\Speck$.} It parametrises logarithmic maps $f_i \colon C_i \to (S\, |\, D+E)$ for $i \in \{1,\ldots,m\}$, such that $f_1(q_1) = \ldots = f_m(q_m)$. The gluing morphism $\upphi$ is defined similarly to $\uptheta$, see \eqref{eq: guling morphism diagram}.

For $g \geq 0$ and $d > 0$ consider the moduli space
\[ \Mbar_{g,(d),d}(\OO_{\PP^1}(-1)\, |\, \hat{0}) \]
of stable logarithmic maps with maximal tangency at a single marking along the fibre over $0\in\PP^1$, and consider the pushforward of its virtual fundamental class to the moduli space of curves:
\[ C_{g,d} \colonequals \uppi_\star [\Mbar_{g,(d),d}(\OO_{\PP^1}(-1)\, |\, \hat{0})]^{\virt} \in A_{g}(\Mbar_{g,1}). \]

\begin{proposition}	\label{prop: only one edge} Let $\uptau$ be a star-shaped tropical type as in \Cref{thm: third vanishing}. The contribution $\frgt_\star (\upgamma \cap \uprho_\star \upiota_\star [\Mbar_\uptau]^{\virt})$ vanishes unless $m=1$. In this case $\ol{L}_\uptau = \Mbar_{\uptau_1}(S\, |\, D+E)$ and we have
\begin{equation} \label{eqn: prop only one edge}	\frgt_\star \big(\upgamma \cap \uprho_\star \upiota_\star [\Mbar_\uptau]^{\virt}\big) = (-1)^g \upphi_\star \left( \left(\uplambda_{g_0} \cap C_{g_0,D \cdot \upbeta} \right) \times \left(\uplambda_{g_1} \upgamma \cap [\Mbar_{\uptau_1} (S \, | \, D+E)]^{\virt} \right)\right).\end{equation}
\end{proposition}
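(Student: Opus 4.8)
The plan is to dissect the single surviving term \eqref{eqn:single term degeneration formula} by combining the geometry of the central moduli space $\Mbar_{\uptau_0}$ with the dimension constraint on $\upgamma$. Write $\delta\colonequals D\cdot\upbeta$. Recall that $\uptau$ is star-shaped with central vertex $v_0$ mapping to $P$ and outer vertices $v_1,\dots,v_m$ mapping to $S$, and that balancing forces $\upbeta_0=\delta F$ with $F$ the class of a fibre of $P\to D_0$. Since $\uppi_\star F=0$, every stable logarithmic map parametrised by $\Mbar_{\uptau_0}$ is contracted by the projection $Y\to D_0$, where $Y=\OO_P(-D_\infty)$ is the $v_0$-component of $\scrL_0$; thus the source curve $C_0$ maps into a single fibre $Y_p\cong\OO_{\PP^1}(-1)$ over a point $p\in D_0$, and the $m$ nodes $q_1,\dots,q_m$ all evaluate to $p$. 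This is the geometric origin of the fact, recorded before the statement, that $\Mbar_{\uptau_0}\to\prod D_0$ factors through the small diagonal.

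First I would analyse $[\Mbar_{\uptau_0}]^{\virt}$ and extract a lambda class. Along the contracted locus the source maps into the zero section $F_p\subseteq Y_p\subseteq Y$, where $T_Y^{\log}|_{F_p}$ splits as $\OO_{\PP^1}(1)\oplus\OO_{\PP^1}\oplus\OO_{\PP^1}(-1)$: the log tangent direction of $F_p$, the normal direction $N_{F_p/P}\cong\OO$, and the line-bundle direction $N_{P/Y}=\OO_P(-D_\infty)|_{F_p}$. The outer two summands reassemble into the fibrewise obstruction theory of the local-$\PP^1$ moduli space $\Mbar_{g_0,(m_i),\delta}(\OO_{\PP^1}(-1)\,|\,\hat{0})$; the trivial middle summand contributes the trivial bundle $R^0\uppi_\star\OO_{C_0}$ — accounting for the one-parameter family of fibres $Y_p$, i.e. a factor $D_0$ — together with $R^1\uppi_\star\OO_{C_0}=\EE^\vee$, the dual Hodge bundle, whose Euler class is $(-1)^{g_0}\uplambda_{g_0}$. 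Pushing forward along $\uppsi$ this yields $\uppsi_\star[\Mbar_{\uptau_0}]^{\virt}=\upalpha\times[D_0]$ for a class $\upalpha$ supported on $\Mbar_{g_0,m}$, which when $m=1$ equals $(-1)^{g_0}\uplambda_{g_0}\cap C_{g_0,D\cdot\upbeta}$ by the definition of $C_{g,d}$. (This is also the one point where the argument makes technical use of $D\cong\PP^1$.) Setting up this splitting cleanly, keeping track of the nontrivial $\OO_{\PP^1}(-1)$-bundle structure over $D_0$ and fitting it into the cartesian diagram preceding the statement, is where I expect the real work to lie.

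Granting this, the vanishing for $m\geq 2$ is a dimension count. By the computation in the proof of \Cref{prop: v0 no markings}, specialised to the star-shaped case (where $\uppi_\star\upbeta_0=0$ and $s_0=0$), the class $\prod_{i=1}^m(-1)^{g_i}\uplambda_{g_i}\upgamma_{J(i)}\cap[\Mbar_{\uptau_i}(S\, |\, D+E)]^{\virt}$ has dimension $0$; this is the step that again requires $\operatorname{codim}\upgamma=\vdim\Mbar_{g,\bfc,\upbeta}(\OO_S(-D)\,|\,\hat{E})$. Tracing the cartesian squares preceding the statement, the term \eqref{eqn:single term degeneration formula} is $\upphi_\star$ of a class on $\Mbar_{g_0,m}\times\ol L_\uptau$ whose $\ol L_\uptau$-component is obtained from $[D_0]\times\prod_{i=1}^m\big((-1)^{g_i}\uplambda_{g_i}\upgamma_{J(i)}\cap[\Mbar_{\uptau_i}(S\, |\, D+E)]^{\virt}\big)$ — a class of dimension $1$ — by the Gysin map $\updelta^!$ for the codimension-$m$ regular embedding $\updelta\colon\prod_i D_0\hookrightarrow\prod_i D_0^2$. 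Hence that component lies in $A_{1-m}(\ol L_\uptau)$, which is the zero group once $m\geq 2$, and the whole contribution vanishes. (Heuristically: the single parameter $p\in D_0$ cannot simultaneously absorb the $m$ gluing constraints $\ev_{q_i}=p$.)

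When $m=1$ the embedding $\updelta\colon D_0\hookrightarrow D_0^2$ has codimension $1$, and $\updelta^!$ merely identifies $\ol L_\uptau$ with $\Mbar_{\uptau_1}(S\, |\, D+E)$ — the point $p$ being read off from $\ev_{q_1}$ — carrying the class $(-1)^{g_1}\uplambda_{g_1}\upgamma\cap[\Mbar_{\uptau_1}(S\, |\, D+E)]^{\virt}$ (note $\upgamma_{J(1)}=\upgamma$, since every marking lies on $\uptau_1$). The combinatorial factor in \eqref{eqn:single term degeneration formula} is $m_1/m_1=1$. Combining this with $\uppsi_\star[\Mbar_{\uptau_0}]^{\virt}=(-1)^{g_0}\uplambda_{g_0}\cap(C_{g_0,D\cdot\upbeta}\times[D_0])$ from the first step, and collecting the signs $(-1)^{g_0}(-1)^{g_1}=(-1)^g$, yields exactly \eqref{eqn: prop only one edge}.
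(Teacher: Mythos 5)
Your outer scaffolding matches the paper's proof: the dimension count (with $\uppi_\star\upbeta_0=0$ and $s_0=0$, the class $\prod_{i=1}^m(-1)^{g_i}\uplambda_{g_i}\upgamma_{J(i)}\cap[\Mbar_{\uptau_i}(S\,|\,D+E)]^{\virt}$ has dimension $0$, so $\updelta^!$ of $[D_0]\times(\cdot)$ lands in $A_{1-m}(\ol{L}_\uptau)$ and dies for $m\geq 2$), the observation $\upgamma_{J(1)}=\upgamma$, and the gluing factor $m_1/\operatorname{lcm}(m_1)=1$ are all exactly as in the paper. The problem is the step you yourself defer as ``where the real work lies'': the claim that $\uppsi_\star[\Mbar_{\uptau_0}]^{\virt}=\upalpha\times[D_0]$ with $\upalpha=(-1)^{g_0}\uplambda_{g_0}\cap C_{g_0,D\cdot\upbeta}$ when $m=1$. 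This is the actual content of the proposition, and your sketched splitting of the obstruction theory does not yet prove it: $P\to D_0$ and $\scrL_0|_P$ are in general nontrivially twisted (for $D^2>0$), so $\Mbar_{\uptau_0}$ is not a product with $D_0$ and the ``trivial middle summand'' does not literally split off a factor $[D_0]$; moreover you silently assume the pushforward has no component of the form $B\times[\pt]$, which a priori it may have, and your dimension count only disposes of the $[D_0]$-component.

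The paper's proof is designed precisely to avoid this global analysis. Since $D_0\cong\PP^1$, the exterior product map $A_\star(\Mbar_{g_0,m})\otimes A_\star(D_0)\to A_\star(\Mbar_{g_0,m}\times D_0)$ is surjective — this, and not any obstruction-theoretic splitting, is where the rationality of $D$ enters in this proposition — so one can write $\uppsi_\star[\Mbar_{\uptau_0}]^{\virt}=(A\times[D_0])+(B\times[\pt])$ with no geometric input. The $B\times[\pt]$ term is killed for \emph{every} $m\geq 1$ by the same dimension count (it lands in $A_{-m}(\ol{L}_\uptau)$), the $A\times[D_0]$ term forces $m=1$, and $A$ is then computed by a single Gysin restriction $\upxi^!$ to a point $\upxi\in D_0$: over a fixed point of $D_0$ the comparison with $[\Mbar_{g_0,(D\cdot\upbeta),D\cdot\upbeta}(\OO_{\PP^1}(-1)\,|\,\hat{0})]^{\virt}$ and the appearance of $(-1)^{g_0}\uplambda_{g_0}$ are purely fibrewise statements (plus invariance of lambda classes under stabilisation), so the twisting of the family over $D_0$ never has to be confronted. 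To complete your argument you should either import this K\"unneth-plus-point-restriction device, or replace your sketch by an honest relative obstruction-theory argument over $D_0$; as written, the central identity is asserted rather than proved.
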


\begin{proof} Using the compatibility of proper push forward and outer product \cite[Proposition~1.10]{FultonBig}, the identity \eqref{eqn:single term degeneration formula} simplifies to
	\begin{equation} \label{eq:simplifyDecTerm1}
		\frgt_\star \big(\upgamma \cap \uprho_\star \upiota_\star [\Mbar_\uptau]^{\virt}\big)  = \left( \dfrac{\prod_{i=1}^m m_i}{\operatorname{lcm}(m_i)} \right) \upphi_\star \updelta^! \left( \uppsi_\star [\Mbar_{\uptau_0}]^{\virt} \times \prod_{i=1}^m (-1)^{g_i} \uplambda_{g_i} \upgamma_{J(i)} \cap [\Mbar_{\uptau_i}(S \, | \, D+E)]^{\virt} \right).
	\end{equation}
Since $D_0$ is a rational curve, \cite[Example~1.10.2]{FultonBig} ensures that the outer product morphism
	\[ A_\star(\Mbar_{g_0,m}) \otimes A_\star(D_0) \to A_\star(\Mbar_{g_0,m}\times D_0)\]
	is surjective (this is the second and final point in the argument where we use the assumption that $D$ is rational). We may thus write
	\begin{equation} \label{eqn: product decomposition fourth vanishing}
		\uppsi_\star [\Mbar_{\uptau_0}]^{\virt} = (A \times [D_0]) + (B \times [\pt])
	\end{equation}
	where $A,B \in A_\star(\Mbar_{g_0,m})$ and $[\pt]$ is the generator of $A_0(D_0)$. Starting with the second term, we have by \cite[Example~6.5.2]{FultonBig}:
\[ \updelta^! \left( B \! \times \! [\pt] \times \! \prod_{i=1}^m (-1)^{g_i} \uplambda_{g_i}  \upgamma_{J(i)} \cap [\Mbar_{\uptau_i}(S \, | \, D+E)]^{\virt} \right) = B \times \updelta^! \left( [\pt] \! \times \! \prod_{i=1}^m (-1)^{g_i} \uplambda_{g_i}  \upgamma_{J(i)} \cap [\Mbar_{\uptau_i}(S \, | \, D+E)]^{\virt} \right).\]
A dimension count similar to \eqref{eqn: dimension count for cycle in fourth vanishing proof} then shows that
	\begin{equation*}
		\updelta^! \left( [\pt] \times \prod_{i=1}^m (-1)^{g_i} \uplambda_{g_i}  \upgamma_{J(i)} \cap [\Mbar_{\uptau_i}(S \, | \, D+E)]^{\virt} \right) \in A_{-m} ( \ol{L}_\uptau ).
	\end{equation*}
	We always have $m \geq 1$ and hence this contribution always vanishes. Considering the second term of \eqref{eqn: product decomposition fourth vanishing}, a similar analysis shows that
	\begin{equation*}
		\updelta^!  \left( [D_0] \times \prod_{i=1}^m (-1)^{g_i} \uplambda_{g_i} \upgamma_{J(i)} \cap [\Mbar_{\uptau_i}(S \, | \, D+E)]^{\virt} \right) \in A_{1-m} ( \ol{L}_\uptau )
	\end{equation*}
	and so the contribution vanishes unless $m = 1$. This proves the vanishing statement.
	
	Now suppose that $m=1$.  In this case the gluing factor is $(\Pi_{i=1}^m m_i)/\operatorname{lcm}(m_i)=1$. We obtain:
	\begin{equation} \label{eq:simplifyDecTerm2}
		\frgt_\star \big(\upgamma \cap \uprho_\star \upiota_\star [\Mbar_\uptau]^{\virt}\big) = \upphi_\star  \left( A \times \updelta^! \left([D_0] \times  (-1)^{g_1} \uplambda_{g_1} \upgamma \cap [\Mbar_{\uptau_1}(S \, | \, D+E)]^{\virt} \right)\right).
	\end{equation}
It remains to determine the cycle $A$. Recall that $\uppsi_\star [\Mbar_{\uptau_0}]^{\virt} = (A \times [D_0]) + (B \times [\pt])$. Choose a closed point $\upxi \colon \Speck \hookrightarrow D_0$ and form the fibre product:
	\begin{equation*}
		\begin{tikzcd}
			\Mbar_{g_0,(D \cdot \upbeta), D \cdot \upbeta} (\calO_{\mathbb{P}^1}(-1) \,|\, \hat{0} ) \arrow[r,hook] \arrow[d,"\uppi"] \ar[rd,phantom,"\square",start anchor=center,end anchor=center] & \Mbar_{\uptau_0} \arrow[d,"\uppsi"] \\
			\Mbar_{g_0,1} \arrow[r, hook] \arrow[d] \ar[rd,phantom,"\square",start anchor=center,end anchor=center] &  \Mbar_{g_0,1} \times D_0  \arrow[d] \\
			\Speck \arrow[r, hook, "\upxi"] & D_0.
		\end{tikzcd}
	\end{equation*}
We then obtain
\begin{align*}	\label{eq:aplhacalc}
A & = \upxi^! \uppsi_\star [\Mbar_{\uptau_0}]^{\virt}	 = \uppi_\star \upxi^! [\Mbar_{\uptau_0}]^{\virt} = \uppi_\star \left( (-1)^{g_0} \uplambda_{g_0} \cap [\Mbar_{g_0,(D \cdot \upbeta), D \cdot \upbeta} (\calO_{\mathbb{P}^1}(-1) \,|\, \hat{0} )]^{\virt} \right) = (-1)^{g_0} \uplambda_{g_0} \cap C_{g_0,D \cdot \upbeta}
\end{align*}
where in the final step we use the fact that lambda classes are preserved by pullback along stabilisation morphisms. The identity \eqref{eq:simplifyDecTerm2} then immediately implies \eqref{eqn: prop only one edge}.
\end{proof}

We now combine \Cref{prop: only one edge} with \eqref{eqn: decomposition formula}. For single-edge star-shaped tropical types we have $m_\uptau = D\cdot \upbeta$ and $|\operatorname{Aut}(\uptau)|=1$. We therefore obtain:
\begin{theorem}
	\label{thm: local-log cycle} The following identity holds in $A_0(\MapsRefinedEvals)$
	\begin{equation*}
		\frgt_\star\big(\upgamma \cap [\Mbar_{g,\mathbf{c},\upbeta}(\calO_S(-D)|\hat{E})]^{\virt}\big) = (-1)^g (D \cdot \upbeta) \!\! \sum_{g_1 + g_2 = g} \!\! \upphi_\star \left( (\uplambda_{g_{1}} \cap C_{g_{1},D \cdot \upbeta}) \times (\uplambda_{g_2} \upgamma \cap [\Mbar_{g_2,\hat{\bfc},\upbeta}(S \, | \, D+E)]^{\virt}) \right)
	\end{equation*}
	where the tangency data $\hat{\bfc}$ is obtained from $\bfc$ by introducing one additional marked point with maximal tangency along $D$ (see \Cref{sec: setup}).
\end{theorem}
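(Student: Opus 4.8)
The plan is to obtain \Cref{thm: local-log cycle} by feeding the four reductions of this subsection into the decomposition formula \eqref{eqn: decomposition formula}. First I would recall that the right-hand side of \eqref{eqn: decomposition formula} is a sum over rigid tropical types $\uptau$ of tropical stable maps to $\Sigma$ of type $(g,\bfc,\upbeta)$, each contributing $\tfrac{m_\uptau}{|\mathrm{Aut}(\uptau)|}\,\frgt_\star\big(\upgamma\cap\uprho_\star\upiota_\star[\Mbar_\uptau]^{\virt}\big)$. By \Cref{thm: first vanishing}, \Cref{thm: second vanishing}, \Cref{thm: third vanishing} and \Cref{thm: fourth vanishing}, the only types with non-vanishing contribution are the single-edge star-shaped ones: a vertex $v_0$ with $\ftrop(v_0)\in P$, a vertex $v_1$ with $\ftrop(v_1)\in S$ carrying all the marking legs, and a single finite edge $e_1$ with $\ftrop(e_1)\subseteq D_0$.

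Next I would pin down the discrete data of such a type. Balancing at $v_0$ forces the curve class there to be $\upbeta_0=(D\cdot\upbeta)F$ (the computation recorded just before \Cref{prop: only one edge}), whose pushforward along $\uppi\colon P\to D_0=D$ vanishes; hence the class at $v_1$ is $\upbeta$. The single node $q_1$ at $v_0$ carries the entire intersection $\upbeta_0\cdot D_0=D\cdot\upbeta$, so the slope of $e_1$ equals $D\cdot\upbeta$; since the bounded edge of $\Sigma$ over $D_0$ has integral length one, making the edge length of $e_1$ integral requires scaling $\Sigma$ by $D\cdot\upbeta$, so $m_\uptau=D\cdot\upbeta$, while $|\mathrm{Aut}(\uptau)|=1$ because the two vertices lie over distinct strata and there is a single edge. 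Finally, splitting off the half-edge of $e_1$ at $v_1$ converts it into a marking of contact order $D\cdot\upbeta$ along $D$, i.e.\ the maximal-tangency marking $x$; thus $\uptau_1$ is the tropical type underlying $\Mbar_{g_1,\hat\bfc,\upbeta}(S\,|\,D+E)$ and $\ol{L}_\uptau=\Mbar_{\uptau_1}(S\,|\,D+E)=\Mbar_{g_1,\hat\bfc,\upbeta}(S\,|\,D+E)$. In particular, for each splitting $g=g_0+g_1$ there is exactly one contributing type.

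It then remains to substitute \eqref{eqn: prop only one edge} of \Cref{prop: only one edge} into \eqref{eqn: decomposition formula}: the contributing type for $(g_0,g_1)$ comes with prefactor $m_\uptau/|\mathrm{Aut}(\uptau)|=D\cdot\upbeta$ and contributes $(-1)^g\,\upphi_\star\big((\uplambda_{g_0}\cap C_{g_0,D\cdot\upbeta})\times(\uplambda_{g_1}\upgamma\cap[\Mbar_{g_1,\hat\bfc,\upbeta}(S\,|\,D+E)]^{\virt})\big)$. Summing over $g_0+g_1=g$ and pulling the scalar $(-1)^g(D\cdot\upbeta)$ out of the sum yields the asserted identity in $A_0(\MapsRefinedEvals)$ after relabelling $(g_0,g_1)$ as $(g_1,g_2)$. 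Since the heavy lifting has already been carried out in the preceding lemmas and propositions, the only genuinely delicate point in this final assembly is the identification of the discrete data of the single-edge star-shaped type — the class $(D\cdot\upbeta)F$ on $v_0$, the edge slope $D\cdot\upbeta$, the resulting value $m_\uptau=D\cdot\upbeta$, and the recognition that gluing off $e_1$ realises $\uptau_1$'s moduli space as $\Mbar_{g_1,\hat\bfc,\upbeta}(S\,|\,D+E)$ with $x$ playing the role of the glued half-edge. Once these combinatorial and geometric identifications are confirmed, the formula follows at once.
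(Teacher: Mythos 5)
Your proposal is correct and follows essentially the same route as the paper: the paper's own proof of this theorem is simply to substitute \Cref{prop: only one edge} into the decomposition formula \eqref{eqn: decomposition formula}, noting that for single-edge star-shaped types $m_\uptau = D\cdot\upbeta$ and $|\mathrm{Aut}(\uptau)|=1$, and to sum over genus splittings. Your extra bookkeeping (the class $(D\cdot\upbeta)F$ at $v_0$, the edge slope, the uniqueness of the contributing type for each splitting $g_0+g_1=g$, and the identification of $\uptau_1$'s moduli space with $\Mbar_{g_1,\hat{\bfc},\upbeta}(S\,|\,D+E)$) just makes explicit what the paper leaves implicit.
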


\subsubsection{Evaluating the integrals} \label{sec: evaluating the integrals} Finally, we use the cycle-theoretic \Cref{thm: local-log cycle} to deduce the numerical \Cref{thm: local-log}.

\begin{proof}[Proof of \Cref{thm: local-log}]
Push the formula in \Cref{thm: local-log cycle} forward to a point, and form the generating function by summing over $g$. We obtain:
\[ \begin{split}
	\sum_{g \geq 0} \GW_{g,\bfc,\upbeta}( \OO_S(-D)\, |\, \hat{E})\langle \upgamma \rangle \cdot \hbar^{2g-2} = (D \cdot \upbeta) &\left( \sum_{g_1 \geq 0} \GW_{g_1,(D\cdot \upbeta),D\cdot \upbeta}(\Ocal_{\mathbb{P}^1}(-1)\, | \, \hat{0}) \langle (-1)^{g_1} \uplambda_{g_1}\rangle \cdot \hbar^{2g_1-1} \right)\\
	 & \left( \sum_{g_2 \geq 0} \GW_{g_2,\hat{\bfc},\upbeta}(S\, | \, D+E) \langle (-1)^{g_2} \uplambda_{g_2} \upgamma \rangle \cdot \hbar^{2g_2-1} \right).
\end{split}\]
The \namecref{thm: local-log cycle} follows immediately from \cite[Lemma~6.3]{BryanPandharipandeLocalCurves} which gives:
\[
\sum_{g_1 \geq 0} \GW_{g_1,(D \cdot \upbeta),D \cdot \upbeta}(\Ocal_{\mathbb{P}^1}(-1)\, | \, \hat{0}) \langle (-1)^{g_1} \uplambda_{g_1}\rangle \cdot \hbar^{2g_1-1} = \left( \dfrac{1}{D \cdot \upbeta} \right) \dfrac{(-1)^{D \cdot \upbeta+1}}{2 \sin\left( \frac{D \cdot \upbeta}{2} \hbar \right)}.\qedhere
\]
\end{proof}

\subsection{Nef pairs} \label{sec: nef pairs} In this section, we adopt the setup of \Cref{sec: setup} and impose the following additional assumptions:
\begin{itemize}
\item $E \cdot \upbeta > 0$.
\item $E^2 \geq 0$.	
\end{itemize}
This includes nef Looijenga pairs as studied in \cite{BBvG2}. The assumptions ensure that the local theory of $\OO_S(-E)$ is well-defined, see \Cref{sec: setup} for the analogous argument for $D$.

We consider stable logarithmic maps with two markings of maximal tangency to $D$ and $E$ and possibly additional interior markings. Denote this contact data by $\hat{\bfc}$ and by $\bfc$ the result of deleting the two tangency markings. We obtain the following correspondence in genus zero:
\begin{theorem}[\Cref{thm: nef correspondence introduction}] \label{thm: nef correspondence} The following identity holds between the genus zero Gromov--Witten invariants with $D$-avoidant insertions $\upgamma$:
\[	\GW_{0,\hat{\bfc},\upbeta}(S\, | \, D+E)\langle \upgamma \rangle = (-1)^{(D+E)\cdot \upbeta} (D\cdot\upbeta)(E\cdot\upbeta) \cdot \GW_{0,c,\upbeta}(\OO_S(-D) \oplus \OO_S(-E))\langle \upgamma \rangle.\]\end{theorem}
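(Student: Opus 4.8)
The plan is to obtain \Cref{thm: nef correspondence} as a two-step composition: first specialise \Cref{thm: local-log} to genus zero, then apply the genus-zero logarithmic-local correspondence of \cite[Theorem~1.1]{vGGR} to the intermediate pair $(\OO_S(-D)\,|\,\hat E)$. For the first step I would extract the $\hbar^{-2}$-coefficient from the generating-function identity of \Cref{thm: local-log}. Since $\uplambda_0=1$ and $\bigl(2\sin(\tfrac{D\cdot\upbeta}{2}\hbar)\bigr)^{-1}=(D\cdot\upbeta)^{-1}\hbar^{-1}+O(\hbar)$, both sides are Laurent series beginning in degree $-2$, and matching leading coefficients gives
\[ \GW_{0,\hat\bfc,\upbeta}(S\,|\,D+E)\langle\upgamma\rangle = (-1)^{D\cdot\upbeta-1}(D\cdot\upbeta)\cdot\GW_{0,\bfc^\prime,\upbeta}(\OO_S(-D)\,|\,\hat E)\langle\upgamma\rangle, \]
where $\bfc^\prime$ is $\hat\bfc$ with the maximal-tangency marking along $D$ removed; note that $\bfc^\prime$ still records the maximal-tangency marking along $\hat E$ (of order $\hat E\cdot\upbeta=E\cdot\upbeta$), and that the hypotheses $E^2\geq 0$, $E\cdot\upbeta>0$ guarantee the local theory of $\OO_S(-E)$ is well-defined, exactly as in \Cref{sec: setup} for $D$.

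For the second step I would apply \cite[Theorem~1.1]{vGGR} to the smooth pair $(\OO_S(-D)\,|\,\hat E)$, where $\hat E=\pi^{-1}(E)$ for the projection $\pi\colon\OO_S(-D)\to S$. The divisor $\hat E$ is smooth and nef (as $E^2\geq 0$ makes $E$ nef in $S$ and $\pi$ is a line-bundle fibration), meets $\upbeta$ positively, and carries a single marking of maximal tangency; its conormal bundle is $\OO_{\OO_S(-D)}(-\hat E)=\pi^\star\OO_S(-E)$, whose total space over $\OO_S(-D)$ is the rank-two bundle $\OO_S(-D)\oplus\OO_S(-E)$ over $S$. The only mild subtlety is that $\OO_S(-D)$ is non-compact; this is harmless because every stable logarithmic map of class $\upbeta$ to $\OO_S(-D)$ (or to $\OO_S(-D)\oplus\OO_S(-E)$) has image on the zero section $S$, since $f^\star\OO_S(-D)$ has negative degree, so the relevant moduli spaces are proper and \cite{vGGR} applies --- equivalently, one runs the degeneration-to-the-normal-cone argument there $\Gm$-equivariantly for the fibrewise scaling and passes to the non-equivariant limit, carrying $\OO_S(-D)$ along as a convex spectator twist. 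The $D$-avoidant insertions $\upgamma$ are pulled back from $S$, hence transfer unchanged, and the outcome is
\[ \GW_{0,\bfc^\prime,\upbeta}(\OO_S(-D)\,|\,\hat E)\langle\upgamma\rangle = (-1)^{E\cdot\upbeta+1}(E\cdot\upbeta)\cdot\GW_{0,\bfc,\upbeta}(\OO_S(-D)\oplus\OO_S(-E))\langle\upgamma\rangle, \]
with $\bfc$ obtained from $\bfc^\prime$ by deleting the $\hat E$-marking --- i.e. $\bfc$ is $\hat\bfc$ with both maximal-tangency markings removed, as in the statement.

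Composing the two displayed identities and simplifying the sign via $(-1)^{D\cdot\upbeta-1}(-1)^{E\cdot\upbeta+1}=(-1)^{(D+E)\cdot\upbeta}$, with multiplicities multiplying to $(D\cdot\upbeta)(E\cdot\upbeta)$, yields \Cref{thm: nef correspondence} exactly. I expect the only genuine obstacle to be bookkeeping-level rather than conceptual: one must check that \cite[Theorem~1.1]{vGGR}, as phrased for projective targets, really applies to $(\OO_S(-D)\,|\,\hat E)$ with the convex twist by $\OO_S(-D)$ intact --- that the normal-cone degeneration and the attendant lambda-class identities commute with the spectator bundle --- and that $\upgamma$ meets the insertion hypotheses of both \Cref{thm: local-log} and \cite{vGGR} (it does, being a $D$-avoidant class pulled back from $S$). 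Everything else reduces to the genus-zero specialisation of \Cref{thm: local-log} and elementary sign tracking.
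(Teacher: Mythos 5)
Your proposal is correct and follows essentially the same route as the paper: the paper's proof is precisely "take \Cref{thm: local-log} in genus zero and apply \cite[Theorem~1.1]{vGGR} to $(\OO_S(-D)\,|\,\hat E)$", with the signs and multiplicities composing exactly as you compute. Your additional remarks on the non-compactness of $\OO_S(-D)$ and the transfer of $D$-avoidant insertions are sensible glosses on points the paper leaves implicit.
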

\begin{proof} Take \Cref{thm: local-log} in genus zero and apply the logarithmic-local correspondence for smooth pairs \cite[Theorem~1.1]{vGGR}.
\end{proof}

This extends \cite[Theorem~5.2]{BBvG2}, which gives the above result when $(S \, | \, D+E)$ is logarithmically Calabi--Yau and has stationary insertions supported at a single interior marking.

\begin{remark} \label{rmk: nef pairs higher genus} A higher genus analogue of \Cref{thm: nef correspondence} may be obtained by combining \Cref{thm: local-log} with \cite[Theorem~2.7]{BFGW}. The graph sum simplifies, since the logarithmic invariants carry an insertion of $\lambda_g^2$ which vanishes unless $g=0$. The resulting correspondence will thus compare the genus zero invariants of $(S\, |\, D+E)$ to the higher genus invariants of the local geometry. We leave a detailed analysis to future work.
\end{remark}

\subsection{Root stacks and self-nodal pairs} \label{sec: exceptional divisors} In this section we adopt the setup of \Cref{sec: setup} with the following modifications, which apply only to this section:
\begin{itemize}
\item \textbf{Stricter:} $g=0$ and $E \cdot \upbeta=0$.
\item \textbf{Looser:} $D$ is no longer required to be rational, insertions $\upgamma$ are no longer required to avoid $D$.
\end{itemize}
Since $E \cdot \upbeta=0$ there are no markings with tangency along $E$. A case of particular interest is resolutions of irreducible self-nodal curves, where $\upbeta$ is a curve class pulled back along the blowup.
 
In this setting, we establish the analogue of \Cref{thm: local-log}. 
\begin{theorem} \label{thm: exceptional divisors correspondence} The following identity holds between the genus zero Gromov--Witten invariants:
\[ \GW_{0,\hat{\bfc},\upbeta}(S\, | \, D+E)\langle \upgamma \rangle = (-1)^{D \cdot \upbeta +1}(D \cdot \upbeta) \cdot \GW_{0,\bfc,\upbeta}(\OO_S(-D)\, |\, \hat{E})\langle \upgamma \rangle.\]
\end{theorem}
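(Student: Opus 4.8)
The plan is to bypass \Cref{thm: local-log} — whose proof is responsible for the hypotheses that $D\cong\PP^1$ and that $\upgamma$ be $D$-avoidant — and instead assemble the identity from two results that do not require them: the logarithmic--orbifold correspondence of \cite{RootLog}, packaged for our situation as \Cref{thm: log orbifold introduction} (equivalently \Cref{prop: log orb coincide}), and the orbifold--local correspondence of \cite{BNTY}. Concretely, the proof will be the chain
\[ \GW_{0,\hat{\bfc},\upbeta}(S\,|\,D+E)\langle\upgamma\rangle \;=\; \GW^{\mathsf{orb}}_{0,\hat{\bfc},\upbeta}(S\,|\,D+E)\langle\upgamma\rangle \;=\; (-1)^{D\cdot\upbeta+1}(D\cdot\upbeta)\cdot\GW_{0,\bfc,\upbeta}(\OO_S(-D)\,|\,\hat{E})\langle\upgamma\rangle, \]
in which the first equality is exactly \Cref{thm: log orbifold introduction}, which holds without assuming $D\cong\PP^1$ or that $\upgamma$ is $D$-avoidant (this is where the hypotheses $g=0$ and $E\cdot\upbeta=0$ enter), and the second is the genus-zero orbifold--local correspondence applied to the divisor $D$.

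For the second equality I would take the orbifold target to be the root stack of $S$ along $D$ of order $r\gg0$ — so that the invariants have stabilised and a marking of contact order exactly $D\cdot\upbeta$ is available — carrying along the preimage $\hat{E}$ of $E$ as a passive divisorial boundary; this is precisely the datum whose orbifold invariants appear in \Cref{thm: log orbifold introduction}, since $E\cdot\upbeta=0$ forces all $\upalpha_j=0$ and imposes no tangency along $E$. The marking $x$ is then an orbifold marking over $D$ carrying no insertion, and the markings $y_j$ are untwisted and carry $\upgamma$. Degenerating to the normal cone of $D$ and comparing obstruction theories along the lines of \cite{BNTY} produces the stated relation, with sign $(-1)^{D\cdot\upbeta+1}$ and factor $D\cdot\upbeta$: these are the genus-zero leading coefficients of the same local-curve generating function that governs \Cref{thm: local-log}, namely the $\hbar^{2g-2}$-term of \cite[Lemma~6.3]{BryanPandharipandeLocalCurves} evaluated at $g=0$.

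The main obstacle is making the second step legitimate in the presence of the residual divisor $\hat{E}$: one needs \cite{BNTY} in a form that allows a passive boundary component along which the curve class pairs trivially. I expect this to be automatic — no component of a genus-zero stable map of class $\upbeta$ interacts with $\hat{E}$ except through nodal matching, which is identical on the orbifold and the local sides, so the degeneration-to-the-normal-cone comparison in \cite{BNTY} carries $\hat{E}$ along verbatim; should the published statement of \cite{BNTY} not include such spectators, the fallback is to rerun that comparison here, which is considerably lighter than the analysis of \Cref{sec: degeneration formula analysis} because the divisor being degenerated, $D$, does not interact with the curve-theoretically active boundary. Once this is in place the remaining bookkeeping is routine: every non-$D$-avoidant class in $\upgamma$ is supported on an untwisted marking $y_j$, so no age shift or twisted evaluation needs to be reconciled, and the orbifold and logarithmic evaluations at the $y_j$ both reduce to the ordinary evaluation to $S$.
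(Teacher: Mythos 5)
Your route is essentially the paper's own: the proof of \Cref{thm: exceptional divisors correspondence} is exactly the chain through \Cref{prop: log orb coincide} (logarithmic $=$ orbifold for $(S\,|\,D+E)$, with no rationality or $D$-avoidance hypotheses), then the orbifold logarithmic-local correspondence \cite[Theorem~1.2]{BNTY} producing the factor $(-1)^{D\cdot\upbeta+1}(D\cdot\upbeta)$, then \cite[Theorem~1.1]{AbramovichCadmanWise} to pass back from orbifold to logarithmic invariants of $(\OO_S(-D)\,|\,\hat{E})$. The ``spectator divisor'' issue you flag is handled in the paper not by rerunning the degeneration-to-the-normal-cone comparison with a passive boundary, but by applying \cite{BNTY} with the ambient target already rooted along $E$ (the paper notes only $\upbeta$-nefness of $D$ is needed there), so the $E$-direction appears identically on both sides of the $D$-comparison; and the final identification at $\hat{E}$ is the genus-zero theorem of Abramovich--Cadman--Wise rather than a consequence of the markings being untwisted, a point worth stating since (cf.\ \Cref{rmk: E times beta can be zero}) the $E$-structure genuinely changes the moduli space even when $E\cdot\upbeta=0$.
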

Instead of using the degeneration formula, the result is proved via the enumerative geometry of root stacks. We pass through the following correspondences of genus zero Gromov--Witten theories:
\[
\begin{tikzcd}
\mathsf{Log}(S\, |\, D+E) \ar[r,<->,dashed,"\text{\cite{RootLog}}"] & \mathsf{Orb}(S\, |\, D+E) \ar[r,<->,"\text{\cite{BNTY}}"] & \mathsf{Orb}(\OO_S(-D)\, |\, \hat{E}) \ar[r,<->,"\text{\cite{AbramovichCadmanWise}}"] & \mathsf{Log}(\OO_S(-D)\, |\, \hat{E}).
\end{tikzcd}
\]
The second correspondence follows from \cite[Theorem~1.2]{BNTY} (the result is stated for $D$ nef, but in fact $\upbeta$-nef is all that is required in the proof). The third correspondence follows from \cite[Theorem~1.1]{AbramovichCadmanWise}. Only the first correspondence requires further justification. \Cref{thm: exceptional divisors correspondence} thus reduces to the following:

\begin{proposition}[\Cref{thm: log orbifold introduction}] \label{prop: log orb coincide} There is an equality of logarithmic and orbifold Gromov--Witten invariants:
	\[ \GW^{\mathsf{log}}_{0,\hat{\bfc},\upbeta}(S\, | \, D+E)\langle \upgamma \rangle = \GW^{\mathsf{orb}}_{0,\hat{\bfc},\upbeta}(S\, | \, D+E)\langle \upgamma \rangle.\]
\end{proposition}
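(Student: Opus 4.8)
The plan is to prove \Cref{prop: log orb coincide} by reducing, via \cite[Theorem~X]{RootLog}, the desired equality of logarithmic and orbifold invariants to a purely combinatorial statement about the rigid tropical types that contribute to the (shared) virtual decomposition, and then to establish that statement through a direct analysis of genus-zero tropical stable maps to the tropicalisation $\Sigma(S\,|\,D+E)$ of \eqref{eqn: tropicalisation}. Recall that $\Sigma(S\,|\,D+E)$ is obtained by gluing two copies of $\R_{\geq 0}^2$ along their boundary rays, labelled $D$ and $E$: the origin corresponds to the open stratum $S$, the two rays to $D$ and $E$, and the two maximal cones to the points $q_1,q_2$ of $D\cap E$. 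We work with genus $g=0$, a class $\upbeta$ satisfying $E\cdot\upbeta=0$ and $D\cdot\upbeta>0$, and the single marking $x$ of maximal tangency $D\cdot\upbeta$ along $D$, together with interior markings carrying the insertions $\upgamma$.

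The technical heart, as flagged in the introduction, is the tropical classification, and here I would first exploit the hypothesis $E\cdot\upbeta=0$. Writing $\upbeta=\sum_v\upbeta_v$ over the vertices of a source graph $\Gamma$, each $\upbeta_v$ is realised by an honest component and hence contributes non-negatively to $E\cdot\upbeta$ (for a vertex over $S$ this is just $E\cdot f_\star[C_v]\geq 0$, since $C_v\not\subseteq E$; for vertices over $D$, $E$, or their bubbles one checks the analogous positivity using that $E|_D=q_1+q_2$ and the expanded-degeneration structure). Since the total is $0$, every contribution vanishes: there is no nonconstant component meeting $E$, and no leg points in the $E$-direction. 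As $\Sigma$ lies in the closed positive cones, every edge or leg at a vertex over $S$ has non-negative $E$-coordinate, and these sum to $E\cdot\upbeta_v=0$, so each such edge points purely in the $D$-direction. One then propagates this, using balancing together with rigidity — in the spirit of \Cref{lem: no vertex on E} and \Cref{lem: negative vertical slope}, but adapted to the geometry of $\Sigma(S\,|\,D+E)$ — to rule out vertices mapping to the interior of the $E$-ray or to $q_1,q_2$, to rule out nonconstant curve classes over $D$ (again by $E|_D=q_1+q_2$ and $E\cdot\upbeta=0$), and finally to exclude contracted vertices over the deeper cones by the usual stability-plus-rigidity argument (a contracted gluing vertex over a positive-dimensional cone, or an edge between two vertices over $S$, always produces a free edge length or vertex position). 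The upshot is that the only rigid tropical type contributing is the trivial one: a single vertex over $S$ supporting the $D$-leg of weight $D\cdot\upbeta$ and all the interior markings.

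With this classification in hand, \cite[Theorem~X]{RootLog} applies directly: on the trivial type the logarithmic and orbifold moduli problems are identified with matching obstruction theories, and there are no further contributions, so $\GW^{\mathsf{log}}_{0,\hat{\bfc},\upbeta}(S\,|\,D+E)\langle\upgamma\rangle=\GW^{\mathsf{orb}}_{0,\hat{\bfc},\upbeta}(S\,|\,D+E)\langle\upgamma\rangle$. Note that this argument never invokes a dimension count against $\upgamma$, nor rationality of $D$: the constraints are driven entirely by $E\cdot\upbeta=0$ and the genus-zero and rigidity hypotheses, which is precisely why the \textbf{looser} assumptions of this section are permissible.

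I expect the main obstacle to be the tropical classification itself, and within it the exclusion of types that "explore" the codimension-two cones $q_1,q_2$ and the $E$-bubbles, where the balancing condition is weakest. The resolution should parallel the non-star-shaped reductions of \Cref{sec: degeneration formula analysis}: one uses crucially that $E$ meets $D$ in \emph{at least} two points (cf. \Cref{rmk: balancing at E}), so that a tropical curve forced up the $E$-direction cannot be balanced without violating rigidity or genus zero. Care is also needed to reconcile this with \Cref{rmk: E times beta can be zero}: the non-triviality there lives entirely in the logarithmic structure at nodes and does not produce new rigid tropical types, so it does not obstruct the argument.
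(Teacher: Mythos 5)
There is a genuine gap, and it sits at the very first step of your tropical classification. You argue that each vertex class $\upbeta_v$ contributes non-negatively to $E\cdot\upbeta$, so that $E\cdot\upbeta=0$ forces every contribution to vanish and hence forces curves (and tropical types) away from $E$, $q_1$, $q_2$. This positivity is false precisely in the cases this section is designed for: for a vertex whose component maps into $E$ with degree $k$ one has $E\cdot\upbeta_v=kE^2$, and nothing in the hypotheses of \Cref{sec: exceptional divisors} makes $E^2\geq 0$ — on the contrary, for self-nodal pairs $E$ is an exceptional $(-1)$-curve, so $E^2=-1$ and such vertices contribute \emph{negatively}. Consequently $E\cdot\upbeta=0$ does not preclude components inside $E$ (balanced against components crossing $E$), e.g.\ on $\FF_1$ the pulled-back line class decomposes as $(\ell-e)+e$, and stable logarithmic maps with a component covering $E$ certainly occur (cf.\ \Cref{rmk: E times beta can be zero}, whose content is that such loci are cut down, not that they are empty). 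So your claimed classification — that the only rigid tropical type is a single vertex over $S$ carrying the $D$-leg — is unjustified, and the whole reduction collapses with it.

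There is also a mismatch with how \cite[Theorem~X]{RootLog} is actually invoked: its hypothesis is not ``only the trivial rigid type contributes'' but \emph{slope-sensitivity}, i.e.\ that no naive tropical type has an oriented edge lying over a maximal cone ($q_1$ or $q_2$) whose slope lies in the strictly positive quadrant. This is a much weaker combinatorial statement — it happily tolerates vertices on $E$, $D$, $q_1$, $q_2$ — and it is what the paper verifies. The paper's argument is the following genus-zero connectivity trick: if such an edge $\vec e_1$ over $q_1$ existed, the maximal subgraph $\Gamma(\vec e_1)$ supported over $q_1,q_2,D$ would, by balancing and $D^2\geq 0$, have to contain the unique marking of positive tangency with $D$; it would also emit an edge in the $E$-direction, which (since $E\cdot\upbeta=0$ gives no $E$-tangent legs) ends at a vertex $v_0$ on $E$, and balancing at $E$ (using that $D\cap E$ has two points, cf.\ \Cref{rmk: balancing at E}) produces a second edge $\vec e_2$ over $q_2$ whose subgraph $\Gamma(\vec e_2)$ again must contain that same $D$-marking — impossible, since deleting $v_0$ separates the two subgraphs in a genus-zero graph. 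If you want to salvage your write-up, you should replace the ``only the trivial type'' claim by a proof of slope-sensitivity along these lines; the positivity argument you give cannot be repaired, because the statement it aims at is false whenever $E^2<0$.
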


\begin{proof} Let $\Sigma$ be the tropicalisation of $(S \, | \, D+E)$. A kaleidoscopic double cover is depicted in Figure~\ref{fig: tropicalisation of bicyclic pair}. Given a tropical type of map to $\Sigma$, the balancing conditions of Sections~\ref{sec: balancing on E}~and~\ref{sec: balancing on D=D0} apply verbatim.
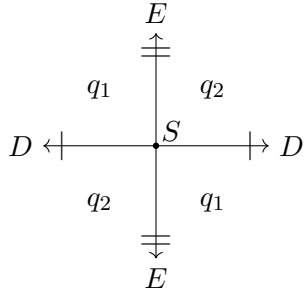
\begin{figure}[h]
	\begin{tikzpicture}[scale=0.5,baseline=(current  bounding  box.center)]
			\draw[fill=black] (0,0) circle[radius=2pt];
			\draw (0.4,0.4) node{$S$};
			
			\draw[->] (0,0) -- (3,0);
			\draw (3,0) node[right]{$D$};
			
			\draw[->] (0,0) -- (0,3);
			\draw (0,3) node[above]{$E$};
			
			\draw[->] (0,0) -- (-3,0);
			\draw (-3,0) node[left]{$D$};
			
			\draw[->] (0,0) -- (0,-3);
			\draw (0,-3) node[below]{$E$};
				
			\draw (2.5,0) node{$\mid$};
			\draw (-2.5,0) node{$\mid$};
			
			\draw (0,2.5) node[rotate=90]{$\mid\mid$};
			\draw (0,-2.5) node[rotate=90]{$\mid\mid$};
			
			\draw (1.5,1.5) node{$q_2$};
			\draw (-1.5,1.5) node{$q_1$};
			\draw (1.5,-1.5) node{$q_1$};
			\draw (-1.5,-1.5) node{$q_2$};
		\end{tikzpicture}
		\caption{Representing $\Sigma$ as a quotient of a double cover.}
		\label{fig: tropicalisation of bicyclic pair}
\end{figure}

By \cite[Theorem~X]{RootLog} it is sufficient to show that $(S\, |\, D+E)$ is slope-sensitive with respect to the given numerical data \cite[Section~4.1]{RootLog}. Fix a naive type of tropical map to $\Sigma$ as in \cite[Section~3]{RootLog}. We must show that there is no oriented edge $\vec{e}$ of the source graph $\Gamma$ such that the associated cone $\upsigma_e \in \Sigma$ is maximal and the slope $m_{\vec{e}} \in N_{\upsigma_e}$ belongs to the positive quadrant.

	We begin with a useful construction. Given an oriented edge $\vec{e} \in \vec{E}(\Gamma)$ terminating at a vertex $v \in V(\Gamma)$ with $\upsigma_e \in \{q_1,q_2\}$ and $\upsigma_v \in \{q_1,q_2,D\}$, we let
	\[ \Gamma(\vec{e}\,) \subseteq \Gamma \]
	denote the maximal connected subgraph which contains $\cev{e}$ as an outgoing half-edge and is such that all vertices and half-edges have associated cones $q_1,q_2$, or $D$. This means that all outgoing half-edges besides $\cev{e}$ are either unbounded marking legs, or finite edges terminating at $E$ or $S$ (the vertical dividing line in Figure~\ref{fig: tropicalisation of bicyclic pair}).
	
	Now suppose for a contradiction that there exists an oriented edge $\vec{e}_1 \in \vec{E}(\Gamma)$ such that $\upsigma_{e_1} = q_1$ and $m_{\vec{e}_1} \in N_{q_1}$ belongs to the positive quadrant. The assumption on the slope ensures that the subgraph $\Gamma(\vec{e}_1)$ is well-defined. We claim that $\Gamma(\vec{e}_1)$ contains the (unique) marking leg with positive tangency to $D$. Since $D^2 \geq 0$ it follows by balancing (Section~\ref{sec: balancing on D=D0}) that at every vertex of $\Gamma(\vec{e}_1 )$ the sum of the outgoing slopes in the $D$-direction is non-negative. Summing over all the vertices, we see that the sum of outgoing slopes from $\Gamma(\vec{e}_1 )$ in the $D$-direction is non-negative. The slope of $\cev{e}_1 $ in the $D$-direction is negative, hence there exists an outgoing edge whose slope in the $D$-direction is positive. Such an edge cannot terminate at $E$ or $S$ and so, by the definition of $\Gamma(\vec{e}_1 )$, it must be the marking leg with positive tangency to $D$.
	
	A similar argument shows that $\Gamma(\vec{e}_1 )$ also has an outgoing edge with positive slope in the $E$-direction. Since $E \cdot \upbeta=0$ there are no marking legs with tangency to $E$, and so this must be a finite edge terminating at a vertex $v_0$ with $\upsigma_{v_0} = E$. By balancing (Section~\ref{sec: balancing on E}) we see that $v_0$ supports an outgoing edge $\vec{e}_2$ with $\upsigma_{e_2} = q_2$.
	
	The same argument as above now shows that $\Gamma(\vec{e}_2 )$ also contains the marking leg with positive tangency to $D$. However, $\Gamma(\vec{e}_1)$ and $\Gamma(\vec{e}_2 )$ are disjoint: deleting the vertex $v_0$ separates them, since $\Gamma$ has genus zero.
\end{proof}

\begin{remark} \label{rmk: orbifold correspondence strong} While restricted to genus zero, \Cref{thm: exceptional divisors correspondence} is strong in that it establishes an equality of virtual fundamental classes. This contrasts with \Cref{thm: local-log} which also establishes an equality of Chow classes, but only after capping with suitable insertions. In the latter case, we expect that even in genus zero, the counterexamples of \cite[Sections~1~and~3.7]{MaxContacts} can be adapted to produce pathological insertions (such as naked psi classes) violating the correspondence.
\end{remark}

\section{Toric and open geometries}\label{sec: toric pairs}
\noindent In this section we restrict to toric targets. The main result (\Cref{thm: local open correspondence}) equates the Gromov--Witten theories of the local and open geometries:
\[ (\OO_S(-D)\, |\, \hat{E}) \leftrightarrow \OO_S(-D)|_{S\setminus E}.\]
The proof proceeds via torus localisation. For the open geometry, the computation is controlled by the topological vertex \cite{LiLiuLiuZhouTopVert}. The difficult step is to show that the contributions of certain localisation graphs vanish.

\subsection{Setup} \label{sec: setup toric} We retain the setup of \Cref{sec: setup} and introduce the following additional assumptions:
\begin{itemize}
	\item $S$ is a toric surface.
	\item $E$ is a toric hypersurface.
	\item $D+E \in |\!-\!\!K_S|$.
	\item $E \cdot \upbeta=0$.
\end{itemize}
We do not require that $D$ is toric. An important example is the resolution of an irreducible self-nodal cubic in the plane (Section~\ref{sec: nodal cubic}).

With these assumptions, the enumerative setup of Section~\ref{sec: setup} specialises. There is a single marked point $x$ with tangency $D \cdot \upbeta$ along $D$, and no marked points with tangency along $E$. The space of stable logarithmic maps has virtual dimension $g$ and we consider the Gromov--Witten invariant with a lambda class and no additional insertions:
\[ \GW_{g,(D \cdot \upbeta,0),\upbeta}(S\, | \, D+E)\langle (-1)^g \uplambda_g \rangle \colonequals (-1)^g \uplambda_g \cap [\Mbar_{g,(D \cdot \upbeta,0),\upbeta}(S\, |\, D+E)]^{\virt} \in \Q.\]
The space of stable logarithmic maps to the local target $(\OO_S(-D)\, | \, \hat{E})$ has virtual dimension zero and we consider the Gromov--Witten invariant with no insertions:
\[ \GW_{g,0,\upbeta}(\OO_S(-D)\, | \, \hat{E}) \colonequals [\Mbar_{g,0,\upbeta}(\OO_S(-D)\, |\, \hat{E})]^{\virt} \in \Q.\]
Theorem~\ref{thm: local-log} furnishes a correspondence between these invariants. In this section we relate the latter to the invariants of the open target $\OO_S(-D)|_{S \setminus E}$.

\subsection{Open invariants} \label{sec: open invariants} We establish conventions for toric geometry. We consider fans $\Sigma$ not contained in a proper linear subspace of the ambient lattice; these correspond to toric varieties with no torus factors. We write $\Sigma(k)$ for the set of $k$-dimensional cones. Letting $n$ denote the dimension of the ambient lattice, we introduce notation for closed toric strata appearing in critical dimensions:
\begin{itemize}
	\item For $\uprho \in \Sigma(1)$ we denote the corresponding toric hypersurface $D_\uprho$.
	\item For $\uptau \in \Sigma(n-1)$ we denote the corresponding toric curve $L_\uptau$.
	\item For $\upsigma \in \Sigma(n)$ we denote the corresponding torus-fixed point $P_\upsigma$.
\end{itemize}
Write $\Sigma_S$ for the fan of $S$ and $\uprho_E \in \Sigma_S(1)$ for the cone corresponding to $E$. Since $D+E \in |\!-\!\!K_S|$ we have the following identity in the class group of $S$:
\begin{equation} \label{eqn: D in terms of toric boundary} D = \sum_{\substack{\uprho \in \Sigma_S(1)\\ \uprho \neq \uprho_E}} D_\uprho.\end{equation}
Set $X \colonequals \OO_S(-D)$ and consider the open subvariety:
\begin{equation*} X^{\circ} \coloneqq \calO_{S}(-D)|_{S \setminus E}.\end{equation*}
Equip $X^\circ$ with the trivial logarithmic structure and $X$ with the logarithmic structure induced by $E$. The open embedding $\upiota \colon X^\circ \hookrightarrow X$ is strict.

Following the formalism of the topological vertex \cite{LiLiuLiuZhouTopVert} we define Gromov--Witten invariants of the open manifold $X^\circ$ by localising with respect to an appropriate torus.

\begin{definition}[\!{\cite[Section~3.1]{LiLiuLiuZhouTopVert}}]
Let $P \in X^\circ$ be a torus-fixed point. Consider the action of the three-dimensional dense torus on $\bigwedge\!\!{}^3\, T_P X^{\circ}$ and let $\upchi_P$ denote the associated character. The \textbf{Calabi--Yau torus} is denoted and defined
\[ T\coloneqq \operatorname{Ker}\upchi_P.\]
It is a two-dimensional subtorus of the dense torus. The definition is independent of the choice of $P$ because $X^\circ$ is Calabi--Yau. 
\end{definition}

While the moduli space of stable maps to $X^{\circ}$ is non-proper, its $T$-fixed locus is proper. This is used to define Gromov--Witten invariants, via localisation. Let $Q_T$ denote the localisation of $A^\star_T(\mathrm{pt})$ at the set of homogeneous elements of non-zero degree, and let $Q_{T,k}$ denote its $k$th graded piece.

\begin{definition} The \textbf{$T$-localised Gromov--Witten invariant} of $X^\circ$ is denoted and defined:
\[
	\GW_{g,0,\upiota^\star \upbeta}^T(X^{\circ}) \coloneqq \int_{[\Mbar_{g,0,\upiota^\star \upbeta}(X^{\circ})^T]^{\virt}_T} \frac{1}{e^{T}(N^{\virt})} \in Q_{T,0}.
\]
A priori this is a rational function in the equivariant weights, with numerator and denominator homogeneous polynomials of the same degree. However \cite[Theorem 4.8]{LiLiuLiuZhouTopVert} shows that the numerator and denominator are in fact constant, so that:
\[ 	\GW_{g,0,\upiota^\star \upbeta}^T(X^{\circ}) \in \Q.\]
For this it is crucial to restrict to the Calabi--Yau torus.
\end{definition}
The main result of this section is the following:
	\begin{theorem}[\Cref{thm: local open introduction}] \label{thm: local open correspondence} For all $g\geq 0$ we have:
		\begin{equation}
			\label{eq:log loc intermediate}
			\GW_{g,0,\upbeta} (X|\hat{E}) = \GW_{g,0,\upiota^\star \upbeta}^T(X^{\circ}).
		\end{equation}    
	\end{theorem}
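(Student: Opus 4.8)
The plan is to compute both sides by torus localisation with respect to the Calabi--Yau torus $T$ and to match fixed loci. The left-hand side is the logarithmic invariant of $(X\,|\,\hat E)$; since $S$ is toric and $E$ is a toric hypersurface, $X=\OO_S(-D)$ carries a $T$-action (the three-dimensional dense torus of the toric threefold $\OO_S(-\sum_{\uprho\neq\uprho_E}D_\uprho)$ restricted to the subtorus fixing the holomorphic volume form away from $\hat E$). Although $D$ need not be toric, the pair $(X\,|\,\hat E)$ is still acted on by $T$ because $\hat E$ is toric and $\OO_S(-D)\cong\OO_S(-\sum_{\uprho\neq\uprho_E}D_\uprho)$ by \eqref{eqn: D in terms of toric boundary}. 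Applying the virtual localisation formula of Graber--Pandharipande (in the logarithmic setting, via the fact that the logarithmic structure is pulled back from a toric one along a strict morphism) expresses $\GW_{g,0,\upbeta}(X\,|\,\hat E)$ as a sum over decorated graphs. The right-hand side is by definition the $T$-localised invariant of $X^\circ$, which is computed by the same localisation formalism --- this is exactly the topological vertex setup of \cite{LiLiuLiuZhouTopVert}.

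\textbf{Matching fixed loci.} The key geometric input is that a $T$-fixed stable logarithmic map to $(X\,|\,\hat E)$ of class $\upbeta$ with $E\cdot\upbeta=0$ has image entirely contained in $X^\circ=X\setminus\hat E$. Indeed $T$-fixed maps are supported on the $1$-skeleton of the toric threefold, i.e. on toric curves; a toric curve meeting $\hat E$ would force $E\cdot\upbeta>0$ (the compact toric curves meeting $\hat E$ are precisely those with positive $E$-degree, because $E$ is nef away from... in fact one uses $E\cdot\upbeta=0$ directly to rule out components mapping into $\hat E$ or across it). Since the marked point $x$ carries tangency $D\cdot\upbeta$ along $D$ but $D$ is disjoint from $\hat E$ in the relevant chart, the tangency condition along $\hat E$ is vacuous on each fixed locus, and the logarithmic structure contributes nothing: a $T$-fixed logarithmic map with image in $X\setminus\hat E$ is the same data as a $T$-fixed ordinary stable map to $X^\circ$ together with a (unique, trivial) logarithmic enhancement. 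This identifies $\Mbar_{g,0,\upbeta}(X\,|\,\hat E)^T$ with $\Mbar_{g,0,\upiota^\star\upbeta}(X^\circ)^T$ as stacks, and compatibly identifies their virtual normal bundles --- the deformation-obstruction theory of the logarithmic moduli space agrees with the ordinary one once the image avoids $\hat E$, since the logarithmic tangent bundle $T_X(-\log\hat E)$ agrees with $T_X$ there.

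\textbf{The obstruction-bundle comparison.} Granting the identification of fixed loci, the two sides differ only in how the virtual normal bundle is weighted. On the left we have $\frac{1}{e^T(N^{\virt})}$ for the logarithmic theory; on the right the same expression for the open theory. These agree on every fixed locus \emph{provided} the $T$-fixed maps do not acquire extra infinitesimal automorphisms or obstructions from the (absent) boundary. The subtle point flagged in the paper --- ``isolating a weight zero piece of the obstruction bundle'' --- is that on a priori grounds the localisation sum on the logarithmic side also includes graphs whose edges or vertices push into $\hat E$, with possibly non-proper fixed loci or weight-zero directions in $N^{\virt}$; one must show these contribute zero. Here the Calabi--Yau condition is essential: the obstruction bundle $H^1(C,f^\star T_X(-\log\hat E))$ has a natural pairing with $H^0$ via Serre duality twisted by the canonical, and on any graph with a component straying toward $\hat E$ there is a weight-zero summand of $N^{\virt}$ forcing the Euler-class contribution to vanish. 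I expect \emph{this vanishing step} --- showing rigorously that only graphs supported in $X^\circ$ survive, by extracting the weight-zero piece of the virtual normal bundle for the stray graphs and using the Calabi--Yau balancing of weights --- to be the main obstacle; everything else is a bookkeeping comparison of localisation contributions. Once that is done, \eqref{eq:log loc intermediate} follows term by term over the common indexing set of localisation graphs, and the fact that both sides are actually in $\Q$ (rather than merely in $Q_{T,0}$) is then \cite[Theorem~4.8]{LiLiuLiuZhouTopVert} applied to the right-hand side.
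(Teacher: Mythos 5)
There is a genuine gap, and it sits exactly where the paper's introduction says the difficulty lies. Your ``Matching fixed loci'' step asserts that a $T$-fixed stable logarithmic map of class $\upbeta$ with $E \cdot \upbeta = 0$ has image contained in $X^\circ$, because ``a toric curve meeting $\hat{E}$ would force $E \cdot \upbeta > 0$.'' This is false: a component may multiply cover $E$ itself, and since in the relevant examples $E^2 < 0$ (e.g.\ $E$ the negative section of $\FF_r$), its contribution $d_e E^2$ to $E \cdot \upbeta$ is negative and can be cancelled by edges on the adjacent toric curves $L_1, L_2$. So there really are localisation graphs in $\Omega_{g,0,\upbeta}(X) \setminus \Omega_{g,0,\upiota^\star \upbeta}(X^\circ)$ with non-empty fixed loci, and the identification of fixed-point loci alone cannot prove the theorem. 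In fact the paper's \Cref{lem:ExistsExcComp} shows that any such stray graph must contain an edge $\tilde{e}$ mapped onto $E$ with positive degree, for precisely the reason you cite (edges only on $L_1,L_2$ would force $E\cdot\upbeta>0$).

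You do acknowledge in your final paragraph that stray graphs must be shown to contribute zero, but you explicitly defer this (``I expect this vanishing step \ldots to be the main obstacle''), and the mechanism you sketch (a Serre-duality pairing on $H^1(C, f^\star T_X(-\log \hat{E}))$) is not an argument. The actual proof first uses $D^2 \geq 0$, $D\cdot\upbeta>0$ to identify $\Mbar_{g,0,\upbeta}(X|\hat{E}) = \Mbar_{g,0,\upbeta}(S|E)$ with virtual class twisted by $e^T(\mathbf{R}^{1}\uppi_\star f^\star \OO_S(-D))$, and then isolates, for the edge $\tilde{e}$ covering $E$ with degree $d_{\tilde{e}}$, the summand $H^1(C_{\tilde{e}}, f^\star \OO_S(-D))$ via the normalisation sequence. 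Since $\deg \OO_S(-D)|_E = -2$, Riemann--Roch gives weights $-u_1 + j\,u_1/d_{\tilde{e}}$, $j = 1,\ldots,2d_{\tilde{e}}-1$, where the Calabi--Yau torus relation $c_1^T(T_{P_1}E) + c_1^T(\OO_S(-D)|_{P_1}) = 0$ (\Cref{lem:weightsTanSp}) is what makes both relevant weights equal to $\pm u_1$; the term $j = d_{\tilde{e}}$ is then a genuine weight-zero Chern root, killing the equivariant Euler class and hence the contribution. Without this computation (or some substitute for it), your argument establishes only \Cref{prop: local open localisation comparisons}, not the theorem. A lesser issue: localisation on spaces of stable logarithmic maps needs justification (the paper passes to Kim's expanded space to obtain an absolute obstruction theory), which your appeal to strictness does not supply.
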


\subsection{Localisation calculation} \label{sec: localisation calculation}

We prove \Cref{thm: local open correspondence} via virtual torus localisation \cite{GraberPandharipande} which expresses the left-hand side of \eqref{eq:log loc intermediate} as a sum of contributions from the $T$-fixed loci of the moduli stack. First, in \Cref{sec: comparison fixed loci} we identify the contributions of torus fixed points associated to stable maps factoring through $X^\circ \hookrightarrow X$ with the $T$-localised Gromov--Witten invariant of $X^\circ$. To establish equation \eqref{eq:log loc intermediate} it is therefore sufficient to show that all remaining fixed loci contribute trivially. We prove the vanishing in \Cref{sec: localisation vanishing of remaining contributions}. This last step crucially uses that we localised with respect to the Calabi--Yau torus $T$ (see the proof of \Cref{thm: local open correspondence}) and not with respect to the dense open torus which generally does not yield the required vanishing.

\subsubsection{Fixed loci} \label{sec:torusfixedlocus}

The action $T \curvearrowright X$ lifts to actions on $\Mbar_{g,0,\upbeta}(X)$ and $\Mbar_{g,0,\upbeta}(X|\hat{E})$ (in the latter case, this is because $T$ sends $\hat{E}$ to itself and hence lifts to a logarithmic action $T \curvearrowright (X|\hat{E})$).

Restricting from the dense torus of $X$ to the subtorus $T$ does not change the zero- and one-dimensional orbits in $X$. It follows that it also does not change the fixed locus in the moduli space of stable maps. Since $X$ is a toric variety, this fixed locus is well-understood, see e.g. \cite[Section~6]{SpielbergArxiv}, \cite[Section~5.2]{LiuLocalizationGW}, \cite[Section~4]{GraberPandharipande}, \cite[Section~9.2]{CoxKatz}, or \cite[Section~4]{BehrendLocalizationGW}.

Briefly, the fixed locus decomposes into a union of connected components indexed by \textbf{localisation graphs}. A localisation graph $\Gamma$ is a graph equipped with marking legs, degree labelings $d_e > 0$ for every edge $e \in E(\Gamma)$ and genus labelings $g_v \geq 0$ for every vertex $v \in V(\Gamma)$. Furthermore every vertex $v \in V(\Gamma)$ is assigned a cone $\upsigma(v) \in \Sigma_X(3)$ and every edge $e \in E(\Gamma)$ is assigned a cone $\upsigma(e) \in \Sigma_X(2)$. The corresponding connected component of the fixed locus is denoted
\[ F_\Gamma(X) \]
and generically parametrises stable maps with components $C_v$ contracted to torus-fixed points and components $C_e$ forming degree $d_e$ covers of toric curves, totally ramified over the torus-fixed points.

We let $\Omega_{g,0,\upbeta}(X)$ denote the set of localisation graphs, so that:
\[ \Mbar_{g,0,\upbeta}(X)^T = \bigsqcup_{\Gamma \in \Omega_{g,0,\upbeta}(X)} F_\Gamma(X).\]

\subsubsection{Comparison of fixed loci}\label{sec: comparison fixed loci} Consider the morphism forgetting the logarithmic structures:
\[ \Mbar_{g,0,\upbeta}(X|\hat{E}) \to \Mbar_{g,0,\upbeta}(X).\]
This is $T$-equivariant, and hence restricts to a morphism between $T$-fixed loci. For each $\Gamma \in \Omega_{g,0,\upbeta}(X)$ we define $F_\Gamma(X|\hat{E})$ via the fibre product
\begin{equation*}
	\begin{tikzcd}
		F_{\Gamma}(X|\hat{E}) \arrow[r,hookrightarrow] \arrow[d] \ar[rd,phantom,"\square"] & \Mbar_{g,0,\upbeta}(X|\hat{E})^T \arrow[d] \\
		F_{\Gamma}(X) \arrow[r,hookrightarrow] & \Mbar_{g,0,\upbeta}(X)^T
	\end{tikzcd}
\end{equation*}
and this produces a decomposition of $\Mbar_{g,0,\upbeta}(X|\hat{E})^T$ into clopen substacks:
\[ \Mbar_{g,0,\upbeta}(X|\hat{E})^T = \bigsqcup_{\Gamma \in \Omega_{g,0,\upbeta}(X)} F_\Gamma(X|\hat{E}). \]
Note that we do not claim that each $F_\Gamma(X|\hat{E})$ is connected, nor that $F_\Gamma(X|\hat{E}) \to F_\Gamma(X)$ is virtually birational. Virtual localisation \cite{GraberPandharipande} gives:\footnote{Virtual localisation for spaces of stable logarithmic maps presents conceptual difficulties, as the obstruction theory is defined over the Artin fan which is typically singular. Since the divisor $E \subseteq X$ is smooth, we circumvent these issues by passing to Kim's space of expanded logarithmic maps \cite{KimLog}, which has an absolute obstruction theory and arises as a logarithmic modification of the Abramovich--Chen--Gross--Siebert space (see e.g. \cite[Section~2.1]{BNRGenus1}). The arguments of this section are insensitive to the choice of birational model of the moduli space. See \cite{MolchoRoutis} for a treatment of localisation in this setting.}
\begin{equation}
	\GW_{g,0,\upbeta}(X|\hat{E}) = \sum_{\Gamma \in \Omega_{g,0,\upbeta}(X)} \int_{[F_\Gamma(X|\hat{E})]^{\virt}_T} \frac{1}{e^T(N^{\virt}_{F_\Gamma(X|\hat{E})})}.
\end{equation}
Turning to $X^\circ$ we note that there is an inclusion
\[ \Omega_{g,0,\upiota^\star \upbeta}(X^\circ) \subseteq \Omega_{g,0,\upbeta}(X) \]
consisting of localisation graphs which do not interact with cones in $\Sigma_X \setminus \Sigma_{X^\circ}$. Since the logarithmic structure on $(X|\hat{E})$ is trivial when restricted to $X^\circ$ it follows that for $\Gamma \in \Omega_{g,0,\upiota^\star \upbeta}(X^\circ)$ we have
\[  F_\Gamma(X|\hat{E}) = F_\Gamma(X) = F_\Gamma(X^\circ).\]
The perfect obstruction theories coincide when restricted to these loci, producing an identification of the induced virtual fundamental classes and virtual normal bundles. We conclude:
\begin{proposition}
	\label{prop: local open localisation comparisons}
	We have:
	\begin{equation*}
		\GW_{g,0,\upbeta} (X|\hat{E}) = \GW_{g,0,\upiota^\star \upbeta}^T(X^{\circ}) + \sum_{\Gamma \in \Omega_{g,0,\upbeta}(X) \setminus \Omega_{g,0,\upiota^\star \upbeta}(X^{\circ})} \int_{[F_\Gamma(X|\hat{E})]^{\virt}_T} \frac{1}{e^T(N^{\virt}_{F_\Gamma(X|\hat{E})})}.
	\end{equation*}
\end{proposition}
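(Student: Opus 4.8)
The plan is to deduce the displayed identity directly from virtual torus localisation, using the comparison of fixed loci set up just above. First I would apply virtual localisation \cite{GraberPandharipande} to $\Mbar_{g,0,\upbeta}(X|\hat{E})$ with respect to the Calabi--Yau torus $T$. To make sense of this I would pass to Kim's space of expanded logarithmic maps \cite{KimLog}, which carries an absolute perfect obstruction theory; since $\hat{E} \subseteq X$ is smooth this is a logarithmic modification of the Abramovich--Chen--Gross--Siebert space and the resulting invariant is unchanged. Localisation then expresses $\GW_{g,0,\upbeta}(X|\hat{E})$ as the sum, over $\Gamma \in \Omega_{g,0,\upbeta}(X)$, of the contributions $\int_{[F_\Gamma(X|\hat{E})]^{\virt}_T} 1/e^T(N^{\virt}_{F_\Gamma(X|\hat{E})})$, using the clopen decomposition $\Mbar_{g,0,\upbeta}(X|\hat{E})^T = \bigsqcup_\Gamma F_\Gamma(X|\hat{E})$ recorded above.

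Next I would split the index set along the inclusion $\Omega_{g,0,\upiota^\star\upbeta}(X^{\circ}) \subseteq \Omega_{g,0,\upbeta}(X)$. For an interior graph $\Gamma \in \Omega_{g,0,\upiota^\star\upbeta}(X^{\circ})$ -- one which never interacts with a cone of $\Sigma_X \setminus \Sigma_{X^{\circ}}$ -- the associated fixed stable maps avoid $\hat{E}$ entirely, so $F_\Gamma(X|\hat{E}) = F_\Gamma(X) = F_\Gamma(X^{\circ})$, and since the logarithmic structure of $(X|\hat{E})$ restricts trivially along the strict embedding $\upiota$, the perfect obstruction theories, virtual fundamental classes, and virtual normal bundles all agree over these loci. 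Collecting the corresponding contributions over $\Omega_{g,0,\upiota^\star\upbeta}(X^{\circ})$ therefore reassembles -- by the very definition of the $T$-localised invariant, and using that $\Mbar_{g,0,\upiota^\star\upbeta}(X^{\circ})^T = \bigsqcup_{\Gamma} F_\Gamma(X^{\circ})$ is proper so that the localisation integral is well-defined -- into $\GW^T_{g,0,\upiota^\star\upbeta}(X^{\circ})$. What remains is precisely the sum over $\Omega_{g,0,\upbeta}(X) \setminus \Omega_{g,0,\upiota^\star\upbeta}(X^{\circ})$ appearing in the statement, and the identity follows.

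The content here is really the bookkeeping in the second step rather than a genuine obstacle: one must check that enlarging the target from $X^{\circ}$ to $(X|\hat{E})$, and passing to Kim's expanded model, does not perturb the obstruction theory over the interior fixed loci. This is guaranteed by strictness of $\upiota$ together with the fact that expansions are only created along $\hat{E}$, which is disjoint from the image of those loci; I would phrase this as the equality $F_\Gamma(X|\hat{E}) = F_\Gamma(X^{\circ})$ with matching virtual data, as asserted above, and reduce everything to it. The genuinely hard step is not this proposition but the next one, namely showing that every term in the residual sum vanishes -- for which, as noted, it is essential that $T$ is the Calabi--Yau torus rather than the full dense torus.
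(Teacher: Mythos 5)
Your argument is correct and is essentially the paper's own proof: the paper likewise decomposes $\Mbar_{g,0,\upbeta}(X|\hat{E})^T$ into the clopen pieces $F_\Gamma(X|\hat{E})$ indexed by $\Omega_{g,0,\upbeta}(X)$, applies virtual localisation (with the same caveat of passing to Kim's expanded space since $\hat{E}$ is smooth), and identifies the contributions of graphs in $\Omega_{g,0,\upiota^\star\upbeta}(X^{\circ})$ with $\GW^T_{g,0,\upiota^\star\upbeta}(X^{\circ})$ using $F_\Gamma(X|\hat{E})=F_\Gamma(X)=F_\Gamma(X^{\circ})$ and the triviality of the logarithmic structure over $X^{\circ}$, leaving exactly the residual sum in the statement.
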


\subsubsection{Vanishing of remaining contributions} \label{sec: localisation vanishing of remaining contributions}
Fix a localisation graph $\Gamma \in \Omega_{g,0,\upbeta}(X) \setminus \Omega_{g,0,\upiota^\star \upbeta}(X^\circ)$. To prove \Cref{thm: local open correspondence} it remains to show
\[\int_{[F_\Gamma(X|\hat{E})]^{\virt}_T} \frac{1}{e^T(N^{\virt}_{F_\Gamma(X|\hat{E})})}=0.\]
This requires a detailed analysis of the shape of the localisation graph and its contribution.

\begin{notation} Local to $E \subseteq S$ the toric boundary takes the following form
\[
\begin{tikzpicture}

\draw (-1.5,0) -- (1.5,0);
\draw (0,0) node[above]{$E$};

\draw (-0.65,0.35) -- (-2,-1);
\draw (-1.9,-0.8) node[left]{$L_1$};
\draw[fill=black] (-1,0) circle[radius=2pt];
\draw (-1.1,0) node[above]{$P_1$};

\draw (0.65,0.35) -- (2,-1);
\draw (1.9,-0.8) node[right]{$L_2$};
\draw[fill=black] (1,0) circle[radius=2pt];
\draw (1.1,0) node[above]{$P_2$};
	
\end{tikzpicture}
\]
The zero section gives a closed embedding $S \hookrightarrow X$ as a union of toric boundary strata. Let
\[ \uptau_E,\uptau_1,\uptau_2 \in \Sigma_X(2)\]
denote the cones corresponding to the toric curves $E,L_1,L_2 \hookrightarrow S \hookrightarrow X$. Similarly let 
\[ \upsigma_1,\upsigma_2 \in \Sigma_X(3)\]
denote the cones corresponding to the torus-fixed points $P_1,P_2 \in S \hookrightarrow X$.
\end{notation}

\begin{lemma} \label{lem:ExistsExcComp} There exists an edge $\tilde{e} \in E(\Gamma)$ with $\upsigma(\tilde{e}) = \uptau_E$.
\end{lemma}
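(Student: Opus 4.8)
The plan is to translate the hypothesis $\Gamma\notin\Omega_{g,0,\upiota^\star\upbeta}(X^\circ)$ into concrete combinatorial information and then force the conclusion, using the constraint $E\cdot\upbeta=0$ together with the fact that an edge of a localisation graph must map onto a \emph{complete} toric curve. First I would unpack the fan of $X=\OO_S(-D)$. Being the total space of a line bundle over the toric surface $S$, it has a distinguished ray $\uprho_0\in\Sigma_X(1)$ whose divisor $D_{\uprho_0}$ is the zero section $S\hookrightarrow X$; every maximal cone of $\Sigma_X$ contains $\uprho_0$; and $X^\circ=X\setminus\hat{E}$ has fan $\Sigma_{X^{\circ}}$ equal to the set of cones of $\Sigma_X$ not having the ray $\uprho_{\hat{E}}$ of $\hat{E}$ as a face. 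From this one reads off $\Sigma_X(3)\setminus\Sigma_{X^{\circ}}(3)=\{\upsigma_1,\upsigma_2\}$, and that $\Sigma_X(2)\setminus\Sigma_{X^{\circ}}(2)$ consists of $\uptau_E$ together with the two cones $\langle\uprho_{\hat{E}},\uprho_{L_i}\rangle$, $i=1,2$, whose toric curves are the fibres of $\hat{E}\to E$ over $P_1,P_2$.

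The second step is to determine which $2$-cones can support an edge. Every $2$-cone of $\Sigma_X$ either contains $\uprho_0$ — in which case its toric curve maps isomorphically, under $\uppi\colon X\to S$, onto a toric curve $D_{\uprho}\subseteq S$ — or is the lift of a maximal cone of $\Sigma_S$, in which case its toric curve is a fibre of $\uppi$, hence isomorphic to $\Aone$ and non-complete. Since a complete curve admits no non-constant morphism to $\Aone$, the latter $2$-cones (in particular the two fibre cones above) never occur as $\upsigma(e)$. Thus every edge $e\in E(\Gamma)$ has $\upsigma(e)$ containing $\uprho_0$; writing $D_{\uprho_e}\subseteq S$ for the image of the corresponding edge-curve, the projection to $S$ of any stable map parametrised by $F_\Gamma(X)$ has class $\sum_{e\in E(\Gamma)}d_e\,[D_{\uprho_e}]$, which equals $\upbeta$.

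Combining these, $\Gamma\notin\Omega_{g,0,\upiota^\star\upbeta}(X^\circ)$ forces either some edge with cone $\uptau_E$ — in which case we are done — or some vertex $v$ with $\upsigma(v)=\upsigma_i$, say $\upsigma_1$ (the fixed point $P_1=E\cap L_1$). In the second case I would argue as follows. Since $\upbeta\neq0$ (because $D\cdot\upbeta>0$) and $\Gamma$ is connected, $v$ supports at least one edge, and the cone of any edge at $v$ is a $2$-face of $\upsigma_1$ containing $\uprho_0$, hence equal to $\uptau_E$ or $\uptau_1$. If none is $\uptau_E$ then every edge at $v$ maps onto $L_1$, and also no edge of $\Gamma$ whatsoever has cone $\uptau_E$, so $D_{\uprho_e}\neq E$ for every edge. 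Then every term of $E\cdot\upbeta=\sum_e d_e\,(E\cdot D_{\uprho_e})$ is the intersection number of two distinct irreducible curves and hence non-negative, while $E\cdot L_1>0$ since $L_1$ meets $E$ at $P_1$; this gives $E\cdot\upbeta>0$, contradicting $E\cdot\upbeta=0$. Hence $\Gamma$ has an edge with cone $\uptau_E$.

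I do not expect a deep obstacle: the real content is the bookkeeping in the second step — that only the ``horizontal'' $2$-cones through $\uprho_0$ can carry an edge, equivalently that the fibres of $\hat{E}\to E$ are affine lines — since this is exactly what makes the dichotomy in the third step close. One minor point worth flagging is that the (possibly negative) self-intersection $E^2$, which genuinely occurs for instance when $S=\FF_1$ and $E$ is the $(-1)$-curve, never enters the computation: the case that needs to be ruled out is precisely the one in which no edge maps onto $E$, so all the intersection numbers appearing are between distinct curves.
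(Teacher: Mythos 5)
Your proposal is correct and is essentially the paper's own argument: assume no edge has cone $\uptau_E$, deduce from $\Gamma\notin\Omega_{g,0,\upiota^\star\upbeta}(X^\circ)$ that some vertex lies over $\upsigma_1$ or $\upsigma_2$, observe that an adjacent edge must then cover $L_1$ or $L_2$ (the fibre cones being excluded since their toric curves are non-complete), and conclude $E\cdot\upbeta\geq d_{\tilde e}>0$, contradicting $E\cdot\upbeta=0$. The only difference is that you spell out the fan bookkeeping and the existence of an adjacent edge, which the paper leaves implicit.
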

\begin{proof}
	Suppose for a contradiction that $\upsigma(e) \neq \uptau_E$ for all $e \in E(\Gamma)$. Since $\Gamma \not\in \Omega_{g,0,\iota^\star \upbeta}(X^\circ)$ there exists a vertex $v \in V(\Gamma)$ with $\upsigma(v) \in \{\upsigma_1,\upsigma_2\}$. This vertex is adjacent to an edge $\tilde{e} \in E(\Gamma)$, and since $\upsigma(\tilde{e}) \neq \uptau_E$ we must have $\upsigma(\tilde{e}) \in \{ \uptau_1,\uptau_2\}$. We then find
	\[ E \cdot \upbeta = \sum_{e \in E(\Gamma)} d_e \left(E \cdot L_{\upsigma(e)}\right) = \sum_{\substack{e \in E(\Gamma)\\ \upsigma(e) \in \{\uptau_1,\uptau_2\}}} d_e \geq d_{\tilde{e}} > 0 \]
	which contradicts $E \cdot \upbeta=0$.
\end{proof}

\begin{lemma}
	\label{lem:weightsTanSp}
	The following relation holds in $A^1_T(\mathrm{pt})$: 
	\begin{equation*}
		c_1^T(T_{P_1} E) + c^{T}_1 \left(\calO_{S}(-D)|_{P_1} \right) = 0.
	\end{equation*}
\end{lemma}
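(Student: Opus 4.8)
The plan is to compute both equivariant first Chern classes directly from the toric structure of $S$ near the fixed point $P_1$, the identity then falling out of the Calabi--Yau relation $D+E\sim -K_S$. Since only the isomorphism class of $\calO_S(-D)$ enters the statement, I would first replace $D$ by the torus-invariant representative $D=\sum_{\uprho\neq\uprho_E}D_\uprho$ of \eqref{eqn: D in terms of toric boundary}, so that every line bundle in sight is toric. Writing $T_S$ for the dense torus of $S$, the point $P_1$ corresponds in $\Sigma_S$ to a smooth two-dimensional cone whose rays are $\uprho_E$ and a further ray $\uprho_1$ with $D_{\uprho_1}=L_1$; thus $E$ and $L_1$ are the two $T_S$-invariant curves through $P_1$, meeting transversally, and $T_{P_1}S=T_{P_1}E\oplus T_{P_1}L_1$ as $T_S$-representations. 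In particular $c_1^{T_S}(T_{P_1}S)=c_1^{T_S}(T_{P_1}E)+c_1^{T_S}(T_{P_1}L_1)$, and since $P_1\in E$, restricting the normal bundle identification $\calO_S(E)|_E\cong N_{E/S}$ gives $\calO_S(E)|_{P_1}=N_{E/S}|_{P_1}=T_{P_1}S/T_{P_1}E\cong T_{P_1}L_1$.

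Next I would rewrite $\calO_S(-D)$ using the canonical toric representative $K_S=-\sum_\uprho D_\uprho$: with the chosen $D$ one has the honest equality of torus-invariant divisors $-D=K_S+E$, hence a $T_S$-equivariant isomorphism $\calO_S(-D)\cong\calO_S(K_S)\otimes\calO_S(E)$. Restricting to $P_1$, using $\calO_S(K_S)|_{P_1}=\textstyle\bigwedge^2 (T_{P_1}S)^\vee$ and $\calO_S(E)|_{P_1}\cong T_{P_1}L_1$ from the previous step, I obtain
\[ c_1^{T_S}\!\left(\calO_S(-D)|_{P_1}\right)=-c_1^{T_S}(T_{P_1}S)+c_1^{T_S}(T_{P_1}L_1)=-c_1^{T_S}(T_{P_1}E), \]
which is the desired identity at the level of $T_S$-equivariant classes. (The same conclusion can of course be reached by an explicit weight count in toric coordinates at $P_1$; the conceptual version seems cleaner.)

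The step I expect to require the most care — and the one that genuinely uses the Calabi--Yau hypothesis — is transferring this from $T_S$ to the Calabi--Yau torus $T\subseteq T_X$, where $T_X$ is the dense torus of $X$. The subtlety is that the $T$-linearization of $\calO_S(-D)|_{P_1}$ occurring in the lemma is the one inherited from the total space $X$, which differs from the toric $T_S$-linearization used above by the character $\upchi_{\mathrm{fib}}$ by which $T_X$ scales the fibres of $X\to S$. Running the weight count of the second paragraph at a torus-fixed point $P$ of $X^{\circ}$ lying over some $p\in S\setminus E$ gives $T_PX^{\circ}=T_pS\oplus\calO_S(-D)|_p$ with $c_1^{T_X}(\calO_S(-D)|_p)=-c_1^{T_S}(T_pS)+\upchi_{\mathrm{fib}}$ (now $\calO_S(E)$ is $T_S$-equivariantly trivial near $p$, since $p\notin E$), whence $\upchi_P=c_1^{T_X}(\textstyle\bigwedge^3 T_PX^{\circ})=\upchi_{\mathrm{fib}}$. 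Therefore $T=\operatorname{Ker}\upchi_P$ acts trivially on the fibres of $X\to S$, the two linearizations of $\calO_S(-D)|_{P_1}$ agree after restriction to $T$, and hence $c_1^T(\calO_S(-D)|_{P_1})=c_1^{T_S}(\calO_S(-D)|_{P_1})|_T=-c_1^{T_S}(T_{P_1}E)|_T=-c_1^T(T_{P_1}E)$, which is the claim. The remaining ingredients — smoothness of the cone at $P_1$ and the elementary toric Cartier-data bookkeeping — are routine.
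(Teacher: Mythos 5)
Your argument is correct, but it takes a different route from the paper. The paper's proof stays entirely inside the Calabi--Yau torus $T$ and propagates weights along the toric skeleton of $X$ near $L_1\cup E$ (Figure~\ref{fig:toricskel}): it imposes $c_1^T\bigl(\bigwedge^3 T_{P_0}X\bigr)=0$ at the fixed point $P_0\in X^\circ$, solves for the fibre weight there, and then transports it from $P_0$ to $P_1$ along $L_1$ using $\deg N_{L_1|S}=a_1$ and $\deg \calO_S(-D)|_{L_1}=-a_1-1$ (the latter being where $D+E\in|-K_S|$ enters, via \eqref{eqn: D in terms of toric boundary}). You instead compute directly at $P_1$, using the equality of invariant divisors $-D=K_S+E$ to get the $T_S$-equivariant isomorphism $\calO_S(-D)|_{P_1}\cong\bigwedge^2 T_{P_1}^\vee S\otimes N_{E/S}|_{P_1}$, and then handle the passage to $T$ by showing that the geometric linearization inherited from $X$ and the canonical toric $T_S$-linearization differ by a single global character which is precisely $\upchi_P$, hence vanishes on $T=\operatorname{Ker}\upchi_P$. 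Your third paragraph is the step that needs care, and you treat it correctly: the statement of the lemma only makes sense for the equivariant structure coming from the toric model of $X$ (so your opening remark that ``only the isomorphism class of $\calO_S(-D)$ enters'' is not literally true, but your replacement of $D$ by the invariant representative is exactly how that structure is defined, so nothing is lost), and the phrase that $T$ ``acts trivially on the fibres'' should be read, as you immediately do, as the vanishing on $T$ of the difference character of the two linearizations. What each approach buys: the paper's skeleton computation is elementary and self-contained, needing only the two-fixed-point rule for weights on a line bundle over $\PP^1$; yours is more structural, isolating the adjunction-type identity at $P_1$ and making explicit that the Calabi--Yau torus is exactly the subtorus on which the induced linearization of $\calO_S(-D)$ agrees with the canonical toric one, which arguably explains more transparently why localising with respect to $T$ (rather than the full dense torus) is what makes the lemma true.
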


\begin{proof}
	\Cref{fig:toricskel} illustrates the toric skeleton of $X$ in a neighbourhood of $L_1 \cup E$. The horizontal edges index boundary curves contained in the zero section $S \hookrightarrow X$ while the vertical edges index fibres of the projection $X \to S$ over torus-fixed points. We define:
	\begin{equation*}
		u_1 \coloneqq c_1^T(T_{P_1} E), \qquad \qquad u_2\coloneqq  c_1^T(T_{P_1} L_1).
	\end{equation*}
We now calculate the weights of the $T$-action on $T_{P_0}L_0$ and $T_{P_0}L_1$. The standard theory of torus actions on projective lines gives:
	\begin{equation} \label{eqn: wt on L1} c_1^T(T_{P_0}L_1) = - c_1^T(T_{P_1}L_1) = -u_2. \end{equation}
Turning to $T_{P_0}L_0$ we have natural identifications:
\[ T_{P_0}L_0 = N_{L_1|S}|_{P_0}, \qquad T_{P_1}E = N_{L_1|S}|_{P_1}.\]
Let $a_1 \colonequals \deg N_{L_1|S}$ denote the self-intersection of the divisor $L_1 \subseteq S$. We then have:
\begin{equation} \label{eqn: wt on L0} c_1^T(T_{P_0}L_0) = c_1^T(N_{L_1|S}|_{P_0}) = c_1^T(N_{L_1|S}|_{P_1}) - a_1 c_1^T(T_{P_1}L_1) = u_1 - a_1u_2.\end{equation}
	From the definition of the Calabi--Yau torus and \eqref{eqn: wt on L1}, \eqref{eqn: wt on L0} we obtain
\[ 0 = c^{T}_1 \left(\textstyle\bigwedge^{\!3} T_{P_0} X \right) =  c^{T}_1 (T_{P_0} F_0 ) + (-u_2) + (u_1 - a_1 u_2) \]
from which we deduce:
\[ c^{T}_1 (T_{P_0} F_0) = -u_1 + (a_1+1) u_2.\]
By \eqref{eqn: D in terms of toric boundary} we have $\calO_{S}(-D)|_{L_1} \cong \calO_{\PP^1}(-a_1-1)$ from which we conclude
\[
		c^{T}_1 (T_{P_1} F_1) = c^{T}_1 (T_{P_0} F_0) + ( a_1 + 1) c_1^T (T_{P_0} L_1) = -u_1 = - c_1^T(T_{P_1}E)
\]
	which completes the proof.
\end{proof}

	\begin{figure}[t]
		\centering
		\begin{tikzpicture}[smooth, baseline={([yshift=-.5ex]current bounding box.center)}]%
			\draw[thick] (0,0) to (3,0);
			\draw[thick] (0,0) to (-3,0);
			\draw[thick] (0,0) to (0,1.5);
			\draw[thick,dashed] (0,1.5) to (0,2);
			\draw[thick] (3,0) to (5,0);
			\draw[thick] (3,0) to (3,1.5);
			\draw[thick,dashed] (3,1.5) to (3,2);
			\draw[thick] (-3,0) to (-3,1.5);
			\draw[thick,dashed] (-3,1.5) to (-3,2);
			\draw[thick] (-3,0) to (-5,0);
			\node[above] at (1.5,0) {$E$};
			\node[above] at (-1.5,0) {$L_1$};
			\node[above] at (-4,0) {$L_0$};
			\node[above] at (0,2) {$F_1$};
			\node[above] at (-3,2) {$F_0$};
			\node[above] at (3,2) {$F_2$};
			\node[below] at (0,0) {$P_1$};
			\node[below] at (3,0) {$P_2$};
			\node[below] at (-3,0) {$P_0$};
			\node at (0,0) {$\bullet$};
			\node at (3,0) {$\bullet$};
			\node at (-3,0) {$\bullet$};
			\node[below,purple] at (0.7,0.05) {$u_1$};
			\node[left,purple] at (0.1,1) {$-u_1$};
			\node[below,purple] at (-0.7,0.05) {$u_2$};
			\node[below,purple] at (2.4,0.1) {$-u_1$};
			\node[left,purple] at (3.1,1) {$u_1$};
			\node[below,purple] at (-2.3,0.1) {$-u_2$};
			\node[below,purple] at (-4.2,0.1) {$u_1 - a u_2$};
			\node[left,purple] at (-2.9,1) {$-u_1 + ( 1 + a) u_2$};
		\end{tikzpicture}%
		\caption{The toric skeleton of $X$ locally around $L_1 \cup E$, used in the proof of \Cref{lem:weightsTanSp}. Edges represent boundary curves and vertices represent torus-fixed points. The purple label at a flag $(P,L)$ records the weight $c^{T}_1(T_{P} L)$.}
		\label{fig:toricskel}
	\end{figure}
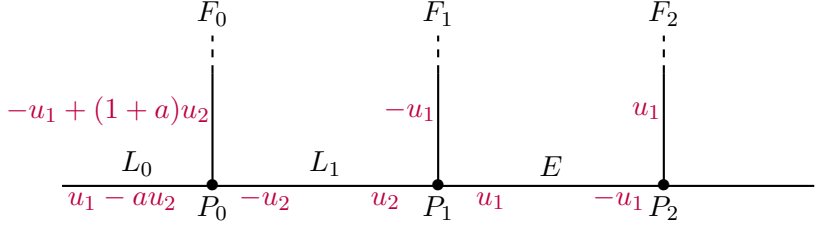

\begin{proof}[Proof of \Cref{thm: local open correspondence}] Since $D^2 \geq 0$ and $D \cdot \upbeta > 0$ it follows that
\[ \Mbar_{g,0,\upbeta}(X|\hat{E}) = \Mbar_{g,0,\upbeta}(S|E). \]
Since $(S|E) \hookrightarrow (X|\hat{E})$ is strict, there is a short exact sequence
\[ 0 \to T^{\log}_{S|E} \to T^{\log}_{X|E}\big\rvert_S \to N_{S|X} \to 0\]
and using $N_{S|X} = \OO_S(-D)$ we obtain:
\[ [\Mbar_{g,0,\upbeta}(X|\hat{E})]^{\virt}_T = e^T (\mathbf{R}^{\!1} \uppi_\star f^\star \OO_S(-D))\cap [\Mbar_{g,0,\upbeta}(S|E)]^{\virt}_T.\]
Fix a graph $\Gamma \in \Omega_{g,0,\upbeta}(X) \setminus \Omega_{g,0,\upiota^\star \upbeta}(X^\circ)$. By \Cref{prop: local open localisation comparisons} it suffices to show that the contribution of $\Gamma$ vanishes. We will prove that the $T$-equivariant vector bundle
\[ \mathbf{R}^{\!1} \uppi_\star f^\star \OO_S(-D)|_{F_\Gamma(X|\hat{E})} \]
has a weight zero summand in $K$-theory. This ensures that the $T$-equivariant Euler class vanishes, and the claim follows.

Perform the partial normalisation of the source curve at the nodes forced by the localisation graph $\Gamma$ (such nodes correspond to flags based at either a vertex of valency at least three, or a vertex of valency two which is the intersection of two bounded edges). The normalisation sequence produces a surjection:
\[  H^1(C,f^\star \OO_S(-D)) \twoheadrightarrow \bigoplus_{e \in E(\Gamma)} H^1(C_e,f^\star \OO_S(-D)).\]
It suffices to show that one summand of the codomain has vanishing equivariant Euler class. By \Cref{lem:ExistsExcComp} there exists an edge $\tilde{e} \in E(\Gamma)$ with $\upsigma(\tilde{e}) = \uptau_E$. Every point of $F_\Gamma(X|\hat{E})$ parametrises a stable logarithmic map whose underlying curve contains an irreducible component $C_{\tilde{e}}$ which maps to $E$ with positive degree $d_{\tilde{e}}$ and is totally ramified over the torus-fixed points. Using \Cref{lem:weightsTanSp} we write 
	\begin{equation*}
		u_1 = - c^{T}_1 \left(\calO_{S}(-D)|_{P_1}\right) = c_1^T(T_{P_1} E).
	\end{equation*}
A Riemann--Roch calculation (see e.g. \cite[Example~19]{LiuLocalizationGW}) then shows that:
	\begin{equation*}
		\mathrm{ch}_T \left(H^1(C_{\tilde{e}},f^\star \calO_{S}(-D)) \right) = \sum_{j=1}^{2 d_{\tilde{e}} - 1} \exp(-u_1 + j \tfrac{u_1}{d_{\tilde{e}}}).
	\end{equation*}
	Taking $j=d_{\tilde{e}}$ we see that $H^1(C_{\tilde{e}},f^\star \OO_S(-D))$ has a vanishing Chern root, and hence its equivariant Euler class vanishes as claimed.
\end{proof}

\subsection{Applications: GV, BPS, and quiver DT invariants} \label{sec: GV BPS DT} \label{sec: appli} \Cref{thm: local open correspondence} has several consequences for adjacent curve counting theories, which we now elaborate. 

\subsubsection{Logarithmic BPS and open Gopakumar--Vafa} We fix a saturated additive subset
\[ \Rcal \subseteq A_1(S;\Z) \]
consisting of effective curve classes and such that for every $\upbeta \in \Rcal$ we have $D \cdot \upbeta > 0$ and $E \cdot \upbeta = 0$ as in \Cref{sec: setup toric}. In particular, $0\notin \Rcal$.

We introduce the invariants of interest. By \cite[Lemma 8.4]{BousseauQuantumTropicalVertex} for each $\upbeta \in \Rcal$ there is a rational function
\[ \ol{\Omega}_\upbeta^{(S|D+E)}\big(q^{\frac{1}{2}}\big)\in\Q\big(q^{\pm\frac{1}{2}}\big) \]
such that after the change of variables $q=\mathrm{e}^{\mathrm{i}\hbar}$ we have:
\[
\ol{\Omega}^{(S|D+E)}_\upbeta\big(q^{\frac{1}{2}}\big)= (-1)^{D \cdot \upbeta -1} \left( 2 \sin\left( \frac{\hbar}{2} \right) \right) \sum_{g \geq 0} \GW_{g,(D\cdot\upbeta,0),\upbeta}(S\, | \, D+E) \langle (-1)^g \uplambda_g \rangle \, \hbar^{2g-1}.
\]
Moreover, if we consider the identity
\begin{equation} \label{eqn: relating Omega and Omegabar}
\ol{\Omega}^{(S|D+E)}_\upbeta\big(q^{\frac{1}{2}}\big) = \sum_{\substack{\ell \in \Z_{>0},\, \upbeta'\in\Rcal \\ \ell \upbeta^\prime = \upbeta}} \dfrac{1}{\ell} \, \dfrac{q^{\frac{1}{2}}-q^{-\frac{1}{2}}}{q^{\frac{\ell}{2}}-q^{-\frac{\ell}{2}}} \, \Omega_{\upbeta^\prime}^{(S|D+E)}\big(q^{\frac{\ell}{2}}\big)
\end{equation}
then the resulting implicitly-defined function $\Omega^{(S|D+E)}_{\upbeta}(q^{\frac{1}{2}})$ is a palindromic Laurent polynomial with integer coefficients \cite[Theorem 8.5]{BousseauQuantumTropicalVertex}:
\begin{equation} \label{eqn: refined BPS} \Omega^{(S|D+E)}_\upbeta\big(q^{\frac{1}{2}}\big)\in\Z\big[q^{\pm\frac{1}{2}}\big]. \end{equation}
Following \cite{BousseauQuantumTropicalVertex} we refer to this as the \textbf{refined BPS invariant} of $(S\,|\,D+E)$. The identity \eqref{eqn: relating Omega and Omegabar} is equivalent to:
\[
\sum_{\upbeta \in \Rcal} \ol{\Omega}^{(S|D+E)}_\upbeta\big(q^{\frac{1}{2}}\big) z^{\upbeta}= \sum_{\upbeta \in \mathcal{R}} \sum_{k\geq1} \dfrac{1}{k} \, \dfrac{q^{\frac{1}{2}}-q^{-\frac{1}{2}}}{q^{\frac{k}{2}}-q^{-\frac{k}{2}}} \, \Omega^{(S|D+E)}_{\upbeta}\big(q^{\frac{k}{2}}\big) \, z^{k\upbeta}.
\]

Turning to the open geometry $X^\circ = \OO_S(-D)|_{S \setminus E}$ the \textbf{Gopakumar--Vafa invariants} $n_{g,\upbeta}^T(X^\circ)$ are defined recursively as the unique (a priori) rational numbers that satisfy the following equation:
\begin{equation} \label{eqn: definition of GV}
\sum_{\substack{g \geq 0 \\ \upbeta \in \Rcal}} \, \GW_{g,\upbeta}^T(X^{\circ}) \, \hbar^{2g-2} \, z^\upbeta = \sum_{\substack{g \geq 0 \\ \upbeta \in \Rcal }} \sum_{k\geq 1} \dfrac{n_{g,\upbeta}^T(X^\circ)}{k} \, \left( 2 \sin\left( \frac{k\hbar}{2} \right) \right)^{2g-2} z^{k\upbeta}.
\end{equation}
Under the change of variables $q=\mathrm{e}^{\mathrm{i}\hbar}$ the right-hand side becomes:
\[ \sum_{\substack{g \geq 0 \\ \upbeta \in \Rcal}} \sum_{k \geq 1} \dfrac{n_{g,\upbeta}^T(X^\circ)}{k} \, (-1)^{g-1} \, \left( q^{\frac{k}{2}}-q^{-\frac{k}{2}} \right)^{2g-2} z^{k\upbeta}. \]

The Gopakumar--Vafa conjecture \cite{GopakumarVafaII} (proven for toric Calabi--Yau threefolds in \cite{KonishiGV} and in general in \cite{IonelParkerGV}) states that $n^T_{g,\upbeta}(X^\circ)\in\Z$, and that for fixed $\beta$ we have $n^T_{g,\upbeta}(X^\circ)=0$ for $g$ sufficiently large. Consider the generating function
\begin{equation} \label{eqn: generating function GV}
\Omega^{X^\circ}_\upbeta\!\big(q\big):=\sum_{g\geq0} \, n^T_{g,\upbeta}(X^\circ) \, (-1)^g \, \left( q^{\frac{1}{2}} - q^{-\frac{1}{2}} \right)^{2g}\in\Z\big[q^{\pm1}\big].
\end{equation}
Note that $\Omega^{X^\circ}_\upbeta \!(q)$ is a palindromic polynomial. Finally consider the quantum number:
\begin{equation}
[m]_q := \dfrac{q^{\frac{m}{2}}-q^{-\frac{m}{2}}}{q^{\frac{1}{2}} - q^{-\frac{1}{2}}} =  q^{\frac{m-1}{2}} + q^{\frac{m-3}{2}} + \cdots + q^{-\frac{m-1}{2}}.
\end{equation}
Our first application relates the refined BPS invariants \eqref{eqn: refined BPS} to the generating function for Gopakumar--Vafa invariants \eqref{eqn: generating function GV}.

\begin{corollary} \label{cor: refined} There is an equality of palindromic Laurent polynomials:
\[
\Omega^{(S|D+E)}_{\upbeta}\big(q^{\frac{1}{2}}\big)= [D \cdot \upbeta ]_q \, \Omega_\upbeta^{X^\circ}\!\big(q\big).
\]
In particular, $\Omega^{(S|D+E)}_{\upbeta}\big(q^{\frac{1}{2}}\big)$ is divisible by the quantum number $[D \cdot \upbeta]_q$.
\end{corollary}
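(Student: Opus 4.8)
The plan is to combine \Cref{thm: local-log} and \Cref{thm: local open correspondence} to express the refined BPS generating function of $(S\,|\,D+E)$ through the $T$-localised invariants of $X^\circ$, then to insert the Gopakumar--Vafa structure of $X^\circ$ and perform a M\"obius-type inversion against the defining multiple-cover formula for $\Omega^{(S|D+E)}_\upbeta$. First I would specialise \Cref{thm: local-log} to the toric setting of \Cref{sec: toric pairs} (no insertions), substitute the comparison $\GW_{g,0,\upbeta}(\OO_S(-D)\,|\,\hat E)=\GW^T_{g,\upbeta}(X^\circ)$ of \Cref{thm: local open correspondence} into its right-hand side, and use the definition of $\ol{\Omega}^{(S|D+E)}_\upbeta$ to eliminate the $\uplambda_g$-generating function. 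Translating the trigonometric prefactor via $q=\mathrm e^{\mathrm i\hbar}$ together with $q^{m/2}-q^{-m/2}=[m]_q\big(q^{1/2}-q^{-1/2}\big)$, I expect this to collapse to the per-class identity
\[
\sum_{g\geq0}\GW^T_{g,\upbeta}(X^\circ)\,\hbar^{2g-2}
\;=\;
\frac{-\,\ol{\Omega}^{(S|D+E)}_\upbeta\big(q^{\frac12}\big)}{[D\cdot\upbeta]_q\,\big(q^{\frac12}-q^{-\frac12}\big)^{2}},
\qquad \upbeta\in\Rcal .
\]
Here the two signs $(-1)^{D\cdot\upbeta-1}$ — one from \Cref{thm: local-log}, one from the definition of $\ol{\Omega}^{(S|D+E)}_\upbeta$ — cancel, which is why the right-hand side carries no $\upbeta$-dependent sign.

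Next I would multiply by $z^\upbeta$, sum over $\upbeta\in\Rcal$, substitute the $q$-form of the Gopakumar--Vafa expansion~\eqref{eqn: definition of GV} on the left (recognising the polynomials $\Omega^{X^\circ}_\upbeta$ of~\eqref{eqn: generating function GV}) and the multiple-cover formula~\eqref{eqn: relating Omega and Omegabar} for $\ol{\Omega}^{(S|D+E)}_\upbeta$ on the right, then clear the common factor $\big(q^{1/2}-q^{-1/2}\big)^{2}$. Comparing coefficients of $z^\upbeta$ this should yield, for each $\upbeta\in\Rcal$,
\[
\sum_{\ell\geq1}\frac{\Omega^{X^\circ}_{\upbeta/\ell}\big(q^{\ell}\big)}{\ell\,[\ell]_q^{2}}
\;=\;
\frac{1}{[D\cdot\upbeta]_q}\sum_{\ell\geq1}\frac{\Omega^{(S|D+E)}_{\upbeta/\ell}\big(q^{\ell/2}\big)}{\ell\,[\ell]_q} ,
\]
where (as in~\eqref{eqn: relating Omega and Omegabar}) a summand is present only when $\upbeta/\ell\in\Rcal$. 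I would then induct on the positive integer $D\cdot\upbeta$: for $\ell>1$ the class $\upbeta/\ell$ satisfies $D\cdot(\upbeta/\ell)<D\cdot\upbeta$, so by the inductive hypothesis $\Omega^{(S|D+E)}_{\upbeta/\ell}\big(q^{\ell/2}\big)=[D\cdot(\upbeta/\ell)]_{q^{\ell}}\,\Omega^{X^\circ}_{\upbeta/\ell}\big(q^{\ell}\big)$, and the elementary multiplicativity $[m]_{q^{\ell}}\,[\ell]_q=[m\ell]_q$ makes the $\ell>1$ summands on the two sides coincide and cancel. What remains is precisely the $\ell=1$ identity $\Omega^{(S|D+E)}_\upbeta\big(q^{1/2}\big)=[D\cdot\upbeta]_q\,\Omega^{X^\circ}_\upbeta(q)$, which is the assertion; the base case, where $D\cdot\upbeta$ is minimal and no $\ell>1$ term occurs, is the same computation.

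With \Cref{thm: local-log} and \Cref{thm: local open correspondence} in hand the argument is essentially formal, and the one genuinely delicate point is the bookkeeping in the last step: the log BPS invariants enter through a multiple-cover kernel with a single quantum integer $[\ell]_q$ whereas the open Gopakumar--Vafa invariants effectively enter with $[\ell]_q^{2}$, and reconciling the two both forces the factor $[D\cdot\upbeta]_q$ and relies on $[m]_{q^\ell}[\ell]_q=[m\ell]_q$ threaded through the induction. I would also verify the $q\leftrightarrow\hbar$ dictionary and the signs once with care, confirm that the $\ol{\Omega}^{(S|D+E)}_\upbeta$ entering the first step is literally the function produced by \cite[Lemma~8.4]{BousseauQuantumTropicalVertex}, and note that both sides of the final identity are palindromic Laurent polynomials — the left by~\eqref{eqn: refined BPS}, the right because $[D\cdot\upbeta]_q$ and $\Omega^{X^\circ}_\upbeta$ are — so the statement is an equality of palindromic Laurent polynomials as asserted. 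The divisibility clause is then immediate: $\Omega^{X^\circ}_\upbeta(q)\in\Z[q^{\pm1}]$ by the Gopakumar--Vafa integrality theorem for the toric Calabi--Yau threefold $X^\circ$, whence $[D\cdot\upbeta]_q$ divides $\Omega^{(S|D+E)}_\upbeta\big(q^{1/2}\big)$.
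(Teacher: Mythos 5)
Your proposal is correct, and it rests on exactly the same inputs as the paper: combine \Cref{thm: local-log} (with no insertions) and \Cref{thm: local open correspondence}, pass to the variable $q=\mathrm{e}^{\mathrm{i}\hbar}$, and play the definition of $\ol{\Omega}^{(S|D+E)}_\upbeta$ off against the Gopakumar--Vafa expansion of $X^\circ$; your intermediate per-class identity and the subsequent comparison of multiple-cover sums are precisely the coefficient-of-$z^\upbeta$ content of the paper's computation. The only difference is how the final inversion is packaged: the paper exponentiates both generating functions, recognises each side as a plethystic exponential, and concludes by taking the plethystic logarithm, whereas you stay at the level of a single class and cancel the $\ell\geq 2$ terms by strong induction on $D\cdot\upbeta$, using $[m]_{q^{\ell}}[\ell]_q=[m\ell]_q$. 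Both are valid; your route is slightly more hands-on (no plethystic formalism, at the cost of an induction and the substitution $q\mapsto q^{\ell}$ in the inductive hypothesis), while the paper's Exp/Log bookkeeping disposes of all classes at once. Your treatment of the sign cancellations, of the palindromicity of both sides, and of the divisibility clause via integrality and genus-boundedness of the $n^T_{g,\upbeta}(X^\circ)$ matches the paper's.
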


\begin{proof}
Taking the exponential of \eqref{eqn: definition of GV} and using our notation \eqref{eqn: generating function GV} we obtain
\[
\label{eq:exp-local-GW}
\exp\left(\sum_{\substack{g \geq 0 \\ \upbeta \in \Rcal}} \, \GW_{g,\upbeta}^T(X^{\circ}) \, \hbar^{2g-2} \, z^\upbeta \right)= {\rm Exp} \left( \sum_{\upbeta \in \Rcal} \dfrac{-1}{\left( q^{\frac{1}{2}} -q^{-\frac{1}{2}} \right)^2 } \, \Omega^{X^\circ}_\upbeta\!\big(q\big) \,  z^{\upbeta} \right)
\]
where ${\rm Exp}$ is the plethystic exponential:
\[
{\rm Exp}\big(f(q,z)\big)=\exp\left(\sum_{k\geq1}\frac{1}{k}f(q^k,z^k)\right).
\]

Combining \Cref{thm: local-log,thm: local open correspondence}, the left-hand side becomes:
\[
\begin{split}
\label{eq:comp-the-pleth}
\exp\left(\sum_{\substack{g \geq 0 \\ \upbeta \in \Rcal}} \, \GW_{g,\upbeta}^T(X^{\circ}) \, \hbar^{2g-2} \, z^\upbeta \right)
=&\exp\left( \sum_{\substack{g \geq 0 \\ \upbeta \in \Rcal}} \, \dfrac{(-1)^{D \cdot \upbeta -1}}{2 \sin\left( \frac{D \cdot \upbeta}{2} \hbar \right)} \, \GW_{g,(D\cdot\upbeta,0),\upbeta}(S\, | \, D+E) \langle (-1)^g \uplambda_g \rangle \, \hbar^{2g-1} z^{\upbeta} \right) \\
=& \exp\left(  \sum_{\upbeta \in \Rcal} \, \dfrac{-1}{\left(q^{\frac{D \cdot \upbeta}{2}}-q^{-\frac{D \cdot \upbeta}{2}}\right) \left(q^{\frac{1}{2}}-q^{-\frac{1}{2}}\right)} \, \ol{\Omega}^{(S|D+E)}_\upbeta\big(q^{\frac{1}{2}}\big) \, z^{\upbeta} \right) \\
=& \exp\left(\sum_{\upbeta \in \Rcal} \sum_{k \geq 1} \, \dfrac{1}{k} \, \dfrac{-1}{\left( q^{k\frac{D \cdot \upbeta}{2}}-q^{-k\frac{D \cdot \upbeta}{2}}\right) \left(q^{\frac{k}{2}}-q^{-\frac{k}{2}}\right)} \, \Omega^{(S|D+E)}_{\upbeta}\big(q^{\frac{k}{2}}\big) \, z^{k\upbeta} \right) \\
=& {\rm Exp}\left( \sum_{\upbeta \in \Rcal} \, \dfrac{-1}{\left(q^{\frac{D \cdot \upbeta}{2}}-q^{-\frac{D \cdot \upbeta}{2}}\right) \left(q^{\frac{1}{2}}-q^{-\frac{1}{2}}\right)} \, \Omega^{(S|D+E)}_{\upbeta}\big(q^{\frac{1}{2}}\big) \, z^{\upbeta} \right).
\end{split}
\]
Taking the plethsytic logarithm we obtain for all $\upbeta \in \Rcal$:
\[
\Omega_\upbeta^{(S|D+E)}\big(q^{\frac{1}{2}}\big) = \dfrac{q^{\frac{D \cdot \upbeta}{2}} - q^{-\frac{D \cdot \upbeta}{2}}}{q^{\frac{1}{2}}-q^{-\frac{1}{2}}} 
\, \Omega_\upbeta^{X^\circ}\!\big(q\big).\qedhere
\]
\end{proof}

\subsubsection{Quiver DT invariants}
\label{sec: quiver DT}
We follow the construction in \cite[Section~8.5]{BousseauQuantumTropicalVertex} to associate a quiver to the pair $(S | D+E)$ (see also \cite{ABQuiver,MeinRei,ReinekeWeist}). A concrete example is studied in \Cref{sec: scattering}.

First we use \cite[Proposition~1.3]{GHK} to pass to a toric model:
\begin{equation*}
\begin{tikzcd}
	& (\widetilde{S} \, | \, \widetilde{D}) \ar[dl, "\varphi", swap] \ar[dr, "\uppi"]& \\
	(S \, | \, D+E) & & (\ol{S} \, | \, \ol{D}).
\end{tikzcd}
\end{equation*}
Here, the morphisms $\varphi$ and $\uppi$ are logarithmic modifications where $\varphi$ is a sequence of blowups along zero dimensional strata and $\uppi$ is the blowup of $\smash{\ol{S}}$ at distinct smooth points of $\smash{\ol{D}}$. The pair $(\ol{S} | \ol{D})$ is a toric surface together with its toric boundary.

We now associate a quiver $Q$ to the toric model $\uppi$ as follows. The set of vertices is in bijection with the points $p_1,\ldots,p_n$ of $\smash{\ol{D}}$ which are blown up under $\uppi$. The number of arrows from the vertex corresponding to $p_i$ to the vertex corresponding to $p_j$ is taken to be $\max (\uprho_i \wedge \uprho_j, 0)$ where $\uprho_i$ and $\uprho_j$ are the primitive generators of the rays in the fan of $\smash{\ol{S}}$ corresponding to the toric divisors on which $p_i$ and $p_j$ lie respectively. Finally, given a curve class $\upbeta\in \Rcal$ on $S$ we fix a dimension vector $d(\upbeta)$ of $Q$ by declaring its entries to be the intersection numbers of $\upbeta$ with the pushforward of the exceptional loci $L_i = \pi^{-1}(p_i)$ under $\varphi$:
\begin{equation*}
	d(\beta) \coloneqq \big(\varphi_\star L_1 \cdot \upbeta , \ldots, \varphi_\star  L_n \cdot \upbeta \big).
\end{equation*}
We consider the associated moduli space 
\[ M_{d(\upbeta)}^{\uptheta-\text{ss}}(Q) \]
of $\uptheta$-semistable quiver representations, where $\uptheta$ is the so-called generic anti-attractor stability condition. 
We define the associated \textbf{refined quiver Donaldson--Thomas invariant} as the shifted Poincar\'e polynomial
\begin{align*}
\Omega^{Q}_{d(\upbeta)}\big(q^{\frac{1}{2}}\big)\colonequals & \big(- q^{-\frac{1}{2} }\big)^{\dim M_{d(\upbeta)}^{\uptheta-\text{ss}}(Q)} \sum_{j \geq 0} \dim H^{2j}\left(M_{d(\upbeta)}^{\uptheta-\text{ss}}(Q) ,\iota_{!\star}\Q\right) q^j 
\end{align*}
where $\upiota$ is the inclusion of the stable locus.

\begin{theorem}[\!{\cite[Theorem~8.13]{BousseauQuantumTropicalVertex}}]
\label{thm: quiver DT to log BPS}
If $Q$ is acyclic we have
\[ \Omega^{Q}_{d(\upbeta)}\big(q^{\frac{1}{2}}\big) = \Omega^{(S|D+E)}_{\upbeta}\big(q^{\frac{k}{2}}\big).\]
\end{theorem}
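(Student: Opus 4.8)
This statement is quoted from \cite[Theorem~8.13]{BousseauQuantumTropicalVertex}; for the reader's convenience here is a sketch of the argument.

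The plan is to compare two scattering diagrams living in the rank-two lattice $\Z^2$. On the Gromov--Witten side, one uses the toric model $\uppi\colon(\widetilde{S}\,|\,\widetilde{D})\to(\ol{S}\,|\,\ol{D})$ to realise the all-genus generating series of logarithmic invariants of $(S\,|\,D+E)$ -- equivalently, after the substitution $q=\mathrm{e}^{\mathrm{i}\hbar}$ and in the presence of the $\uplambda_g$-insertion, the refined function $\ol{\Omega}^{(S|D+E)}_\upbeta$ -- as the wall-crossing functions of a consistent \emph{quantum} scattering diagram. The incoming walls of this diagram are attached to the rays $\uprho_1,\dots,\uprho_n$ of the blown-up points $p_1,\dots,p_n$; the $i$-th initial wall function records the quantum multiple-cover contributions along the exceptional curve $L_i$, and the pairing between the walls $\uprho_i$ and $\uprho_j$ is $|\uprho_i\wedge\uprho_j|$. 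This is the quantum refinement, due to Bousseau, of the scattering interpretation of genus-zero logarithmic Gromov--Witten invariants \cite{GPS,BousseauScatt}.

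On the quiver side, for $Q$ acyclic the refined Donaldson--Thomas invariants $\Omega^{Q}_{d(\upbeta)}$ are identified with the wall-crossing function of the stability scattering diagram of $Q$ in the ray of direction $d(\upbeta)$. For a two-vertex quiver this is exactly the refined version of Reineke's formula \cite{Reineke,GP}; in general it follows from the Kontsevich--Soibelman wall-crossing formula together with the description of the stability scattering diagram of $Q$, whose incoming rays are indexed by the simple representations, whose initial functions are the rank-one contributions, and whose skew-symmetric pairing is the antisymmetrised Euler form -- which by the very definition of $Q$ equals $\pm\,\uprho_i\wedge\uprho_j$ on the $i$-th and $j$-th simples. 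The moduli space $M^{\uptheta-\mathrm{ss}}_{d(\upbeta)}(Q)$ and the intersection cohomology appearing in $\Omega^{Q}_{d(\upbeta)}$ are precisely what compute that wall function.

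Since a consistent scattering diagram is determined by its initial data, and the two sets of initial data agree by construction -- the arrow counts $\max(\uprho_i\wedge\uprho_j,0)$ and the dimension vector $d(\upbeta)=(\varphi_\star L_1\cdot\upbeta,\dots,\varphi_\star L_n\cdot\upbeta)$ on the quiver side match the pairings and the tropical multiplicities of the incoming walls on the geometric side -- the two scattering diagrams coincide, and reading off the wall function on the common ray in direction $d(\upbeta)$ yields the claimed identity. The main obstacle is the precise dictionary between the geometric and the quiver scattering diagrams in the refined setting: one must match the normalisation of the quantum parameter $q$ on the two sides (tracking the $(-q^{-1/2})$-shifts responsible for palindromicity), verify that acyclicity of $Q$ forces only finitely many walls through any ray so that the wall functions are Laurent polynomials, and check compatibility of the toric-model construction with Bousseau's quantum tropical vertex. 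Carrying this out in detail is precisely the content of the cited theorem.
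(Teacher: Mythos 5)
The paper does not prove this statement: it is imported verbatim from the cited reference, and the only substantive content the paper adds is the subsequent remark that Bousseau's Theorem~8.13 is literally an identity between the refined DT invariants of $Q$ and the refined BPS invariants of the toric-model pair $(\widetilde{S}\,|\,\widetilde{D})$, so that one must invoke birational invariance of logarithmic Gromov--Witten invariants \cite{AbramovichWiseBirational} to identify $\Omega^{(\widetilde{S}|\widetilde{D})}$ with $\Omega^{(S|D+E)}$. Your sketch of the cited argument is a fair outline of Bousseau's proof (quantum tropical vertex realising the refined log BPS functions as wall functions of a quantum scattering diagram with initial walls given by multiple-cover contributions of the exceptional curves; identification of the refined quiver DT invariants with the wall functions of the stability scattering diagram of the acyclic quiver; matching of initial data and consistency), but note that the step you phrase as ``uses the toric model to realise the all-genus generating series of logarithmic invariants of $(S\,|\,D+E)$'' is exactly where the birational-invariance input is hidden: $\varphi$ is a logarithmic modification, and without \cite{AbramovichWiseBirational} the scattering construction only sees $(\widetilde{S}\,|\,\widetilde{D})$, not $(S\,|\,D+E)$. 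Making that transfer explicit is the one point the paper itself insists on, and it should appear in any write-up; the rest of the heavy lifting (normalisation of $q$, refined Reineke/Kontsevich--Soibelman input, compatibility with the quantum tropical vertex) is, as you say, the content of the cited theorem and need not be reproved here.
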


\begin{remark}
	Strictly speaking \cite{BousseauQuantumTropicalVertex} only establishes an equality between the refined Donaldson--Thomas invariants of $Q$ and the refined BPS invariants of $\smash{(\widetilde{S} | \widetilde{D})}$. The statement of \Cref{thm: quiver DT to log BPS} then follows as a consequence of birational invariance of logarithmic Gromov--Witten invariants \cite{AbramovichWiseBirational} which identifies the refined BPS invariants of $(S|D+E)$ and $\smash{(\widetilde{S} | \widetilde{D})}$.
\end{remark}


Combining \Cref{thm: quiver DT to log BPS} with \Cref{cor: refined} we obtain:

\begin{corollary}
\label{cor:divisibility-quiver}
If $Q$ is acyclic we have
\[
\Omega^{Q}_{d(\upbeta)}\big(q^{\frac{1}{2}}\big) = [D \cdot \upbeta]_q \, \Omega_\upbeta^{X^\circ}\!\big(q\big).
\]
\end{corollary}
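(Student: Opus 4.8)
The plan is to chain the two results immediately preceding the statement, with no new input required. First I would invoke \Cref{thm: quiver DT to log BPS}: the hypothesis that $Q$ is acyclic is exactly what is needed, and it yields
\[
\Omega^{Q}_{d(\upbeta)}\big(q^{\frac{1}{2}}\big) = \Omega^{(S|D+E)}_{\upbeta}\big(q^{\frac{1}{2}}\big).
\]
As noted in the remark following that theorem, \cite{BousseauQuantumTropicalVertex} proves the corresponding statement for the toric model $(\widetilde S\,|\,\widetilde D)$, and one transports it to $(S\,|\,D+E)$ using birational invariance of logarithmic Gromov--Witten invariants \cite{AbramovichWiseBirational}; this has already been absorbed into the statement of \Cref{thm: quiver DT to log BPS}, so there is nothing further to check. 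Then I would apply \Cref{cor: refined}, which rewrites the right-hand side as $[D\cdot\upbeta]_q\,\Omega^{X^\circ}_\upbeta(q)$. Composing the two identities gives the claim.

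The only steps requiring attention are conventions and bookkeeping. One must confirm that the normalisations of the two polynomials being compared agree at both ends of the chain: $\Omega^{(S|D+E)}_\upbeta$ is the palindromic Laurent polynomial in $q^{\pm 1/2}$ implicitly defined by \eqref{eqn: relating Omega and Omegabar}, while $\Omega^{X^\circ}_\upbeta$ is the palindromic polynomial in $q^{\pm 1}$ of \eqref{eqn: generating function GV}; since \Cref{cor: refined} already relates precisely these two objects via the factor $[D\cdot\upbeta]_q$, no additional rescaling enters when it is composed with \Cref{thm: quiver DT to log BPS}. A consistency check, not needed for the proof, is that $[D\cdot\upbeta]_q\in\Z[q^{\pm\frac12}]$ and $\Omega^{X^\circ}_\upbeta(q)\in\Z[q^{\pm1}]$, so the product lies in $\Z[q^{\pm\frac12}]$, matching \eqref{eqn: refined BPS}.

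There is no genuine obstacle: the corollary is a formal consequence of \Cref{thm: quiver DT to log BPS} together with \Cref{cor: refined}, the latter resting in turn on \Cref{thm: local-log} and \Cref{thm: local open correspondence}. The substantive content — the two surprising features highlighted in the introduction, namely that $M^{\uptheta\text{-ss}}_{d(\upbeta)}(Q)$ is virtually a projective bundle and that $Q$ is not the standard quiver attached to $X^\circ$ — is then read off directly from the shape of the identity, in particular from the presence of the quantum-integer factor $[D\cdot\upbeta]_q$ and from the explicit combinatorial description of $Q$ in \Cref{sec: quiver DT}.
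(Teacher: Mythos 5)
Your proposal is correct and is exactly the paper's argument: the corollary is obtained by composing \Cref{thm: quiver DT to log BPS} (where acyclicity of $Q$ enters, and birational invariance is already absorbed) with \Cref{cor: refined}. Your normalisation check is fine, and you rightly read the $q^{k/2}$ in the statement of \Cref{thm: quiver DT to log BPS} as the intended $q^{1/2}$.
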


Crucially, this corollary implies that $M_{\mathrlap{d(\upbeta)}}{\mathstrut}^{\uptheta-\text{ss}}(Q)$ virtually admits the structure of a projective bundle. This is surprising since, excluding small cases, the moduli space does not admit the structure of a true projective bundle.

\begin{remark} We must restrict to pairs $(S|D+E)$ such that the resulting quiver $Q$ is acyclic. The above results can be extended to cyclic quivers in genus zero by introducing a superpotential, see \cite{ABQuiver}.\end{remark}

\section{Self-nodal plane curves}\label{sec: nodal cubic}

\noindent In this section we focus on an important special case. Fix $r \geq 1$ and consider the toric variety
\[ S_r \colonequals \PP(1,1,r).\footnote{We can also take $r=0$ in which case we have $S_0 \colonequals \PP^1 \times \PP^1$ and $D_0$ the union of a $(1,0)$ curve and a smooth $(1,2)$ curve. In this case, the results of this section follow from a direct calculation, using \cite[Proposition~6.1]{GPS}. Similarly we can take $r=-1$ which results in the local geometry of a $(-1)$-curve in a surface.}\]
Let $D_r \in |\!-\!\!K_{S_r}|$ be an irreducible curve with a single nodal toric singularity at the singular point of $S_r$ (or at one of the torus-fixed points if $r=1$). The pair $(S_r|D_r)$ is logarithmically smooth. By a curve in $S_r$ of degree $d$ we mean a curve whose class is $d$ times the class of the toric hypersurface with self-intersection $r$. Given a curve in $S_r$ of degree $d$, its intersection number with $D_r$ is $d(r+2)$. We consider the genus zero maximal tangency Gromov--Witten invariants:
\[ \GW_{0,(d(r+2)),d}(S_r|D_r) \in \Q.\]
We begin by deriving an explicit formula for these invariants (\Cref{thm: nodal cubic invariants}) and applying it to deduce a formula for the invariants of local $\PP^1$ (\Cref{thm: local P1 invariants}).

We then specialise to $r=1$ and establish a relationship between the invariants of $(\PP^2|D_1)$ and $(\PP^2|E)$ for $E$ a smooth cubic (\Cref{thm: nodal cubic invariants are contributions}). We apply this to prove a conjecture of Barrott and the second-named author (\Cref{thm: BN conjecture holds}).

\subsection{Genus zero invariants} \label{sec: scattering} The main result of this section is:

\begin{theorem}[\Cref{thm: nodal cubic introduction}] \label{thm: nodal cubic invariants}
	We have:
	\[ \GW_{0,(d(r+2)),d}(S_r|D_r) = \dfrac{r+2}{d^2} {(r+1)^2 d-1 \choose d-1}.\] 
\end{theorem}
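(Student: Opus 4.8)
The plan is to reduce the computation of $\GW_{0,(d(r+2)),d}(S_r|D_r)$ to a known formula in the theory of scattering diagrams / quiver Donaldson--Thomas invariants, exploiting the machinery assembled in \Cref{sec: GV BPS DT}. First I would observe that $(S_r|D_r)$ fits the hypotheses of \Cref{sec: toric pairs}: passing to the resolution $\mathbb{F}_r \to S_r = \PP(1,1,r)$ by the Hirzebruch surface, the strict transform of $D_r$ together with the exceptional $(-r)$-curve $E$ forms a bicyclic pair $(\mathbb{F}_r\,|\,\widetilde{D}_r + E)$ with $E \cdot \upbeta = 0$, and by birational invariance of logarithmic Gromov--Witten invariants \cite{AbramovichWiseBirational} the invariants of $(S_r|D_r)$ agree with those of $(\mathbb{F}_r\,|\,\widetilde{D}_r+E)$. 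This places us in the setting where \Cref{cor: refined} and \Cref{thm: quiver DT to log BPS} apply.

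Next I would identify the relevant quiver. Following the toric-model construction of \Cref{sec: quiver DT}, the pair $(\mathbb{F}_r\,|\,\widetilde{D}_r + E)$ has a toric model obtained by blowing up two points on adjacent toric divisors of a toric surface whose rays $\uprho_1,\uprho_2$ satisfy $|\uprho_1 \wedge \uprho_2| = r+2$; the associated quiver $Q$ is therefore the $(r+2)$-Kronecker quiver, and the curve class of degree $d$ corresponds to the dimension vector $(d,d)$. This quiver is acyclic, so \Cref{thm: quiver DT to log BPS} gives $\Omega^{(S_r|D_r)}_{\upbeta}(q^{1/2}) = \Omega^{Q}_{(d,d)}(q^{1/2})$, the shifted Poincaré polynomial of the moduli of $\uptheta$-semistable representations of the Kronecker quiver, which is precisely the wall-crossing function attached to the central ray of the local scattering diagram with incoming directions $\uprho_1,\uprho_2$. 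The key input is then the formula of \cite{GP}, proven by Reineke \cite{Reineke} in the symmetric case $\ell_1 = \ell_2 = r+2$, which evaluates this wall-crossing function explicitly. Specialising \Cref{thm: nodal cubic introduction}'s generating function at the appropriate power of $q$ (equivalently, reading off the genus-zero, i.e. classical, limit $q \to 1$ of the refined invariant, which extracts $\Omega^{(S_r|D_r)}_\upbeta(1)$) and combining with the $d=1$ base case $\GW_{0,(r+2),1}(S_r|D_r) = r+2$, one obtains the multiple-cover-type formula $\dfrac{r+2}{d^2}\binom{(r+1)^2 d - 1}{d-1}$.

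More precisely, I expect the cleanest route is: (i) use \Cref{thm: local open correspondence} and \Cref{thm: local-log} to re-express the genus-zero invariant in terms of the degree-$d$ Gopakumar--Vafa invariant of the open Calabi--Yau threefold $X^\circ = \OO_{\mathbb{F}_r}(-\widetilde{D}_r)|_{\mathbb{F}_r \setminus E}$, which topological-vertex methods compute; alternatively (ii) use the divisibility statement of \Cref{cor: refined}, $\Omega^{(S_r|D_r)}_\upbeta(q^{1/2}) = [d(r+2)]_q \, \Omega^{X^\circ}_\upbeta(q)$, together with Reineke's closed form for the left-hand side, to extract $\Omega^{X^\circ}_\upbeta(q)$ and hence the genus-zero invariant. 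Either way, the numerical identity one needs at the end is a binomial-coefficient manipulation matching Reineke's formula — analogous to the multiple cover formula of \cite[Proposition~6.1]{GPS}, but for a curve of ``virtual tangency order'' $(r+1)^2+1$, which accounts for the shape of the answer.

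The main obstacle, and the step requiring the most care, is matching conventions: one must verify that the quiver $Q$ produced by the toric-model recipe of \Cref{sec: quiver DT} is genuinely the symmetric $(r+2)$-Kronecker quiver with dimension vector $(d,d)$ (and not, say, some twisted or non-symmetric variant), that the genus-zero specialisation of the refined BPS generating function corresponds to the correct evaluation of Reineke's quiver Poincaré polynomial, and that the stability condition $\uptheta$ (generic anti-attractor) is the one for which \cite{Reineke} computes the intersection cohomology. Once these dictionary issues are pinned down — in particular that $\ell_1 = \ell_2$, which is exactly the case Reineke handles — the remaining work is the routine but slightly delicate binomial identity extracting $\dfrac{r+2}{d^2}\binom{(r+1)^2 d-1}{d-1}$ from the known wall-crossing function. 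I would also double-check the edge cases $r=1$ (where the node sits at a torus-fixed point rather than the singular point) and the normalisation $\GW_{0,(r+2),1}(S_r|D_r) = r+2$ directly, since these anchor the recursion.
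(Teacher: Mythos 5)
Your primary route --- pass to the resolution $\mathbb{F}_r$ and use birational invariance, identify the toric model and hence the $(r+2)$-Kronecker quiver with dimension vector $(d,d)$, apply \Cref{thm: quiver DT to log BPS}, specialise the Reineke/Meinhardt--Reineke formula at $q^{\frac{1}{2}}=1$, and finish with the multiple-cover relation, M\"obius inversion and a binomial identity --- is essentially the paper's own proof, which also carries out the blowup/blowdown to $\mathbb{F}_{r+2}$ explicitly to settle the convention-matching issues you flag. The only caveat is that your ``cleanest route (i)'' via \Cref{thm: local open correspondence} and the topological vertex runs opposite to the paper's logic, since there \Cref{thm: local P1 invariants} is deduced from this theorem rather than used to prove it.
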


The following result, already known in the physics literature \cite[Equation~(4.53)]{CaporasoGriguoloMarinoPasquettiSeminara}, is a direct consequence of \Cref{thm: exceptional divisors correspondence} and \Cref{thm: local open correspondence}.

\begin{theorem}[\Cref{thm: local P1 introduction}] \label{thm: local P1 invariants}
	We have
	\[ \GW^T_{0,0,d}\big(\Ocal_{\PP^1}(r)\oplus\Ocal_{\PP^1}(-r-2)\big) = \dfrac{(-1)^{rd - 1}}{d^3} {(r+1)^2 d-1 \choose d-1} \] 
where the left-hand side is defined via localisation with respect to the Calabi--Yau torus, as in Section~\ref{sec: open invariants}.
\end{theorem}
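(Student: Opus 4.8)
The plan is to deduce Theorem~\ref{thm: local P1 invariants} from Theorem~\ref{thm: nodal cubic invariants} by chaining together the correspondences already established in the paper and then performing an explicit algebraic simplification. First I would observe that $(S_r|D_r)$ satisfies the hypotheses of Section~\ref{sec: exceptional divisors}: $S_r = \PP(1,1,r)$, the divisor $D_r$ is an irreducible anticanonical curve, and we take the exceptional curve $E_r$ of the resolution $\FF_r \to S_r$, so that $E_r \cdot \upbeta = 0$ for $\upbeta$ the class pulled back along the contraction. (Strictly, by birational invariance of logarithmic Gromov--Witten invariants \cite{AbramovichWiseBirational}, the invariants of $(S_r|D_r)$ coincide with those of the bicyclic pair $(\FF_r\,|\,\widetilde{D}_r + E_r)$, putting us in the setting of Section~\ref{sec: setup}.) Applying Theorem~\ref{thm: exceptional divisors correspondence} with $g=0$ yields
\[
\GW_{0,(d(r+2)),d}(S_r|D_r) = (-1)^{D\cdot\upbeta+1}(D\cdot\upbeta)\cdot \GW_{0,0,d}\big(\OO_{\FF_r}(-\widetilde{D}_r)\,|\,\hat{E}_r\big),
\]
where $D\cdot\upbeta = d(r+2)$.

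Next I would apply Theorem~\ref{thm: local open correspondence}: since $\FF_r$ is toric, $E_r$ is a toric hypersurface, $\widetilde{D}_r + E_r \in |\!-\!K_{\FF_r}|$, and $E_r \cdot \upbeta = 0$, the theorem identifies $\GW_{0,0,d}(\OO_{\FF_r}(-\widetilde{D}_r)\,|\,\hat{E}_r)$ with the $T$-localised invariant of the open toric Calabi--Yau threefold $\OO_{\FF_r}(-\widetilde{D}_r)|_{\FF_r \setminus E_r}$. The key geometric input here is that removing $E_r$ from $\FF_r$ and taking the total space of $\OO(-\widetilde{D}_r)$ produces the resolved conifold-type geometry over $\PP^1$: concretely, $\FF_r \setminus E_r$ is a line bundle over $\PP^1$ (the fibre direction of the Hirzebruch surface survives, the section $E_r$ is deleted), and a Koszul/adjunction computation of the restricted bundle $\OO_{\FF_r}(-\widetilde{D}_r)|_{\FF_r\setminus E_r}$ identifies the total space with $\Tot\big(\OO_{\PP^1}(r)\oplus\OO_{\PP^1}(-r-2)\big)$ — this is exactly the arrow labelled ``\Cref{sec: nodal cubic}'' in the diagram in the introduction. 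Under this identification the class $\upiota^\star\upbeta$ becomes $d$ times the class of the zero-section $\PP^1$, so the localised invariant is $\GW^T_{0,0,d}(\OO_{\PP^1}(r)\oplus\OO_{\PP^1}(-r-2))$.

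Combining the two displayed identities gives
\[
\GW^T_{0,0,d}\big(\OO_{\PP^1}(r)\oplus\OO_{\PP^1}(-r-2)\big) = \frac{(-1)^{d(r+2)+1}}{d(r+2)}\,\GW_{0,(d(r+2)),d}(S_r|D_r) = \frac{(-1)^{d(r+2)+1}}{d(r+2)}\cdot\frac{r+2}{d^2}\binom{(r+1)^2 d - 1}{d-1},
\]
using Theorem~\ref{thm: nodal cubic invariants} in the last step. It remains to simplify: $(r+2)$ cancels, $(-1)^{d(r+2)+1} = (-1)^{dr+1}$ since $(-1)^{2d}=1$, and we arrive at $\frac{(-1)^{rd-1}}{d^3}\binom{(r+1)^2 d-1}{d-1}$ (noting $(-1)^{dr+1}=(-1)^{rd-1}$), which is the claimed formula.

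I expect the main obstacle to be neither of the correspondence invocations — those are black-box applications of Theorems~\ref{thm: exceptional divisors correspondence} and \ref{thm: local open correspondence} — but rather the careful bookkeeping in the second paragraph: verifying that the toric geometry $\OO_{\FF_r}(-\widetilde{D}_r)|_{\FF_r\setminus E_r}$ really is $\Tot(\OO_{\PP^1}(r)\oplus\OO_{\PP^1}(-r-2))$ with the correct identification of curve classes and of the Calabi--Yau torus $T$, and that the normalisation conventions for the localised invariant match on both sides. Once this identification is pinned down (most cleanly by computing the fan of the toric threefold $\OO_{\FF_r}(-\widetilde{D}_r)$, deleting the rays over $E_r$, and recognising the resulting fan), the remaining sign and factor tracking is routine. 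The footnote cases $r=0,-1$ are handled separately as indicated in the statement of Theorem~\ref{thm: nodal cubic invariants}.
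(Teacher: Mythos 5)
Your proposal is correct and follows essentially the same route as the paper's proof: pass to the resolution $(\FF_r\,|\,D_r+E)$ via birational invariance \cite{AbramovichWiseBirational}, chain Theorems~\ref{thm: exceptional divisors correspondence} and~\ref{thm: local open correspondence} to reach the $T$-localised open invariant, identify $\OO_{\FF_r}(-D_r)|_{\FF_r\setminus E}$ with $\Tot\big(\OO_{\PP^1}(r)\oplus\OO_{\PP^1}(-r-2)\big)$ (the paper does this by noting $\FF_r\setminus E = \Tot(N_{E_\infty|\FF_r})$ and $\deg\OO_{\FF_r}(-D_r)|_{E_\infty}=-r-2$), and then substitute Theorem~\ref{thm: nodal cubic invariants} and track the sign and the factor $d(r+2)$ exactly as you do.
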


\begin{proof}
There is a toric resolution of singularities $\mathbb{F}_r \to \PP(1,1,r)$. Consider $D_r \subseteq \mathbb{F}_r$ the strict transform and $E \subseteq \mathbb{F}_r$ the exceptional divisor.  Realise the Hirzebruch surface as a $\PP^1$-bundle
\[ \mathbb{F}_r \cong \PP_{\PP^1}(\OO_{\PP^1}(r) \oplus \OO_{\PP^1}) \]
with $E \subseteq \mathbb{F}_r$ the zero section and $E_\infty \subseteq \mathbb{F}_r$ the infinity section. By \cite{AbramovichWiseBirational} the Gromov--Witten invariants of $(S_r|D_r)$ are identified with the Gromov--Witten invariants of the bicyclic pair $(\mathbb{F}_r\, | \, D_r + E)$. We then have
\begin{align} \label{eqn: identification of log open geometry for WPS} \GW_{0,(d(r+2)),d}(S_r|D_r) & = \GW_{0,(d(r+2),0),d E_\infty}(\mathbb{F}_r \, | \, D_r + E) \nonumber \\
& = (-1)^{d(r+2)+1} d(r+2) \cdot \GW_{0,0,d\, \upiota^\star E_\infty}^T(\OO_{\mathbb{F}_r}(-D_r)|_{\mathbb{F}_r \setminus E})
\end{align}
where the second equality follows by combining Theorems~\ref{thm: exceptional divisors correspondence}~and~\ref{thm: local open correspondence}. Now note that $\mathbb{F}_r \setminus E$ is the total space of $N_{E_\infty|\mathbb{F}_r}$ which is a degree $r$ line bundle on $E_\infty \cong \mathbb{P}^1$. Similarly, we have $\deg \OO_{\mathbb{F}_r}(-D_r)|_{E_\infty} = -r-2$. This establishes the identity
\[ \GW_{0,0,d\, \upiota^\star E_\infty}^T(\OO_{\mathbb{F}_r}(-D_r)|_{\mathbb{F}_r \setminus E}) = \GW^T_{0,0,d}\big(\Ocal_{\PP^1}(r)\oplus\Ocal_{\PP^1}(-r-2)\big) \]
and the result follows by combining \eqref{eqn: identification of log open geometry for WPS} and \Cref{thm: nodal cubic invariants}.
\end{proof}

It remains to prove \Cref{thm: nodal cubic invariants}. This proceeds via an application of the Gromov--Witten/quiver correspondence of \Cref{sec: quiver DT}. To construct a quiver for the pair $(S_r|D_r)$ we first need to choose a toric model. In the following we will identify a divisor with its strict transform (respectively image) under a blowup (respectively blowdown).

As in the proof of \Cref{thm: local P1 invariants} we consider the resolution of singularities $\mathbb{F}_r \rightarrow S_r$, writing $D_r\subseteq \mathbb{F}_r$ for the strict transform of the self-nodal curve and $E\subseteq \mathbb{F}_r$ for the exceptional divisor. We now further blowup at the two intersection points $\{q_1,q_2\}=D_r \cap E$ and let $F_1,F_2$ denote the resulting exceptional divisors. We write
\[\varphi\,:\,(\widetilde{S}_r \,|\, \widetilde{D}_r)\coloneqq(\mathrm{Bl}_{q_1,q_2}\mathbb{F}_r \, |\, D_r + F_1 + E + F_2) \longrightarrow (S_r | D_r)\]
for the composition of these blowups. Let $L_1,L_2 \subseteq \smash{\widetilde{S}_r}$ denote the strict transforms of the tangent lines at the singularity of $D_r \subseteq S_r$. These are $(-1)$-curves which we blow down to produce:
\[\uppi \,:\, (\widetilde{S}_r \, |\, \widetilde{D}_r) \rightarrow (\ol{S}_r\, |\, D_r + F_1 + E + F_2). \]
A quick calculation shows that the self-intersection numbers of $D_r,F_1,E,F_2\subseteq \ol{S}_r$ are $(r+2),0,-(r+2),0$. By \cite[Lemma~2.10]{FriedmanAnticanonical} this identifies the pair $(\ol{S}_r|D_r + F_1 + E + F_2)$ with $(\mathbb{F}_{r+2}\, |\, \partial \mathbb{F}_{r+2})$ meaning we have indeed found a toric model $\uppi$ for $(S_r \, |\, D_r)$.

Now as $\uppi$ is the blowup of points on $F_1$ and $F_2$ and $|\uprho_{F_1} \wedge \uprho_{F_2}| = r+2$ the quiver $Q_{r+2}$ associated to $(S_r| D_r)$ (respectively $(\mathbb{F}_r | D_r + E)$) is the $(r+2)$-Kronecker quiver:
\begin{equation*}
	\begin{tikzcd}
		Q_{r+2} \, : &[-2.55em] \bullet \arrow[r,bend left=40] \arrow[r,bend left=25] \arrow[r,bend left=-40] \arrow[r,phantom,"\vdots"] &[2em] \bullet
	\end{tikzcd}
\end{equation*}
Since the section $E_\infty$ of $\mathbb{F}_r$ intersects both $L_1$ and $L_2$ in a single point, as an application of \Cref{thm: quiver DT to log BPS} we obtain
\begin{equation}
	\label{eq: Fr GW to Kronecker DT}
	\Omega^{(\mathbb{F}_r | D_r + E)}_{d E_\infty}\big(q^{\frac{1}{2}}\big) = \Omega^{Q_{r+2}}_{(d,d)}\big(q^{\frac{1}{2}}\big)\,.
\end{equation}
The $q^{\frac{1}{2}}=1$ specialisation of the right-hand side Donaldson--Thomas invariant $\smash{\Omega^{Q_{r+2}}_{(d,d)}(1)} \in \Z$ has been calculated in \cite{Reineke,MeinRei} (see also \cite[Section 3 and 4]{Reineke:QuantumDilog}).

\begin{lemma}[\!{\cite[Theorem~5.2]{Reineke} and \cite[Theorem~4.6]{MeinRei}}]	\label{lem: DT Kronecker}
	Let $\upmu$ be the M\"{o}bius function. Then for all $d>0$ we have
	\begin{equation*}
		\Omega^{Q_{r+2}}_{(d,d)}\big(1\big) = \frac{1}{r d^2} \sum_{\ell|d} \upmu(d/\ell) \,(-1)^{(r+2)\ell+1} \,\binom{(r+1)^2 \ell - 1}{\ell}.
	\end{equation*}
\end{lemma}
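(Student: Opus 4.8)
The plan is to obtain \Cref{lem: DT Kronecker} by assembling the two cited computations; set $m\coloneqq r+2$, so that $Q_{r+2}=K_m$ is the $m$-Kronecker quiver and the statement concerns only its representation theory. Recall first that the refined invariant $\Omega^{K_m}_{(d,d)}\big(q^{\frac12}\big)$ is extracted, via Reineke's integration map, from the Kontsevich--Soibelman wall-crossing formula for $K_m$: the product of the two quantum dilogarithms attached to the simple roots factorises as an ordered product over all slopes in $[0,\infty]$, and the factor supported on the central slope $\tfrac12$ --- i.e.\ on the dimension vectors $(\ell,\ell)$ --- is a product of quantum dilogarithms whose exponents are by definition the $\Omega^{K_m}_{(\ell,\ell)}\big(q^{\frac12}\big)$. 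Equivalently, the Harder--Narasimhan recursion relates the normalised stacky point-counts of \emph{all} $(a,b)$-representations of $K_m$ to those of the $\uptheta$-semistable ones, and the central-slope part of the solution of this recursion is exactly the generating function we must identify.

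The first step is to solve this recursion. This is the content of \cite[Theorem~5.2]{Reineke} (see also \cite[Sections~3 and 4]{Reineke:QuantumDilog}): inverting the Harder--Narasimhan recursion produces a closed rational expression for the semistable counts at every slope, hence a formula for $\Omega^{K_m}_{(d,d)}\big(q^{\frac12}\big)$ as a finite, already polynomial, sum over ordered compositions of $d$. Since the dimension vector $(d,d)$ is not coprime for $d>1$, there is no fine moduli space of stable representations, so the second step is to invoke \cite[Theorem~4.6]{MeinRei}, which identifies $\Omega^{K_m}_{(d,d)}\big(q^{\frac12}\big)$ with the intersection Poincar\'e polynomial of the (singular) moduli of $\uptheta$-semistable representations, establishes the integrality \eqref{eqn: refined BPS}, and repackages the composition sum in a M\"obius-inverted form.

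The third step is to specialise at $q^{\frac12}=1$ and simplify. The binomial $\binom{(r+1)^2\ell-1}{\ell}$ is the Euler characteristic of the relevant total moduli of $(\ell,\ell)$-representations, obtained in closed form from the solved recursion by Lagrange inversion of a fixed-point equation whose branching exponent is $(r+1)^2=(m-1)^2$ --- this is the $\ell_1=\ell_2$ specialisation of the exponent $\ell_1\ell_2$ appearing in \cite[Eq.~(1.4)]{GP}. The M\"obius summand $\upmu(d/\ell)$ and the prefactor $\tfrac1{rd^2}$ then arise from inverting Reineke's resolution formula, which stratifies this total count by Harder--Narasimhan type, the $(\ell,\ell)$-semistable stratum contributing with a weight proportional to $r\ell^2$. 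Finally the signs are accounted for by the normalising factor $(-q^{-\frac12})^{\dim}$ in the definition of the refined invariant, with $\dim\big[\mathrm{Rep}_{(\ell,\ell)}(K_m)/(\mathrm{GL}_\ell\times\mathrm{GL}_\ell)\big]=m\ell^2-2\ell^2=r\ell^2$: at $q^{\frac12}=1$ this gives $(-1)^{r\ell^2}=(-1)^{m\ell}$ (since $\ell(\ell-1)$ is even), which together with the overall sign of the plethystic passage from total to primitive invariants yields the $(-1)^{(r+2)\ell+1}$ in the statement. Collecting these identifications reproduces the formula, and the right-hand side is manifestly a rational number.

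The main obstacle is the non-coprimality of $(d,d)$: Reineke's clean Betti-number formula applies to stable moduli of coprime dimension vectors, which $(d,d)$ never is, so $\Omega^{K_m}_{(d,d)}$ cannot be read off as a naive Poincar\'e polynomial and one must genuinely pass through the Donaldson--Thomas / intersection-cohomology description of \cite{MeinRei} together with its no-pole property. A secondary but real difficulty is bookkeeping: matching the sign and normalisation conventions --- quantum dilogarithm versus plethystic exponential, the $(-q^{-\frac12})^{\dim}$ shift, and the scattering-diagram definition of $\Omega^{(S|D+E)}_\upbeta$ from \cite{BousseauQuantumTropicalVertex} used in \eqref{eq: Fr GW to Kronecker DT} --- so that the $q^{\frac12}=1$ specialisation reproduces the sum with exactly the factor $\tfrac1{rd^2}$ and sign $(-1)^{(r+2)\ell+1}$ as written.
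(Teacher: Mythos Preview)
The paper does not prove this lemma: it is stated as a direct citation of \cite[Theorem~5.2]{Reineke} and \cite[Theorem~4.6]{MeinRei}, with no argument beyond the attribution. Your proposal is therefore not competing with a proof in the paper but rather supplying an expository gloss on what those two references contain and how they combine to yield the stated formula.

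As such a gloss, your outline is broadly accurate in spirit --- Reineke's Harder--Narasimhan inversion gives the semistable counts, the non-coprimality of $(d,d)$ forces the passage through the Donaldson--Thomas/intersection-cohomology framework of Meinhardt--Reineke, and the M\"obius inversion and $1/(rd^2)$ prefactor come from the plethystic passage between rational and integer DT invariants. However, what you have written is a narrative of where the ingredients live rather than an actual derivation: you do not carry out the Lagrange inversion, do not verify the specific binomial $\binom{(r+1)^2\ell-1}{\ell}$, and the sign bookkeeping in your third paragraph is asserted rather than checked. This is fine if the intent is simply to unpack the citation, but it would not stand as an independent proof. Given that the paper itself treats the result as a black-box citation, your level of detail is already more than the paper provides; if you want to go further you would need to actually execute the computation in \cite{Reineke} for the central slope of $K_m$ and match conventions line by line.
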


Via the Gromov--Witten/quiver correspondence \eqref{eq: Fr GW to Kronecker DT}, this \namecref{lem: DT Kronecker} determines all genus-zero Gromov--Witten invariants of $(S_r| D_r)$.

\begin{proof}[Proof of \Cref{thm: nodal cubic invariants}]
	Specialising \eqref{eqn: relating Omega and Omegabar} to $q^{\frac{1}{2}}=1$ and using birational invariance of logarithmic Gromov--Witten invariants \cite{AbramovichWiseBirational} we get
	\begin{equation*}
		\GW_{0,(d(r+2)),d}(S_r|D_r) = (-1)^{(r+2)d-1} \sum_{\ell d' = d} \frac{1}{\ell^2}\, \Omega^{(\mathbb{F}_r | D_r + E)}_{d' E_\infty}\big(1\big).
	\end{equation*}
	Combining equation \eqref{eq: Fr GW to Kronecker DT} and \Cref{lem: DT Kronecker} the right-hand side evaluates to
	\begin{align*}
		\GW_{0,(d(r+2)),d}(S_r|D_r) & = (-1)^{(r+2)d-1} \sum_{\ell d' = d} \frac{1}{\ell^2} \frac{1}{r {d'}^2} \sum_{\ell'|d'} \upmu(d'/\ell') \,(-1)^{(r+2)\ell'+1} \,\binom{(r+1)^2 \ell' - 1}{\ell'} \\
		& = \frac{1}{rd^2} \binom{(r+1)^2 d - 1}{d}
	\end{align*}
	where the last identity follows from the M\"obius inversion formula. The statement of \Cref{thm: nodal cubic invariants} now follows from the identity
	\begin{equation}
		\label{eqn: compare binomial coeffs} \binom{(r+1)^2d-1}{d} = r(r+2)\binom{(r+1)^2d-1}{d-1}.\qedhere
	\end{equation}
\end{proof}

\subsection{Nodal cubic versus smooth cubic} \label{sec: degenerating hypersurfaces} In this final section we set $r=1$. We have 
\[ D_1 = D \subseteq \PP^2\]
an irreducible cubic with a single nodal singularity. Let $E \subseteq \PP^2$ be a smooth cubic. For each of the two pairs we consider the moduli space of genus zero stable logarithmic maps, with maximal tangency at a single marking (see Figure~\ref{fig: nodal and smooth cubic}).
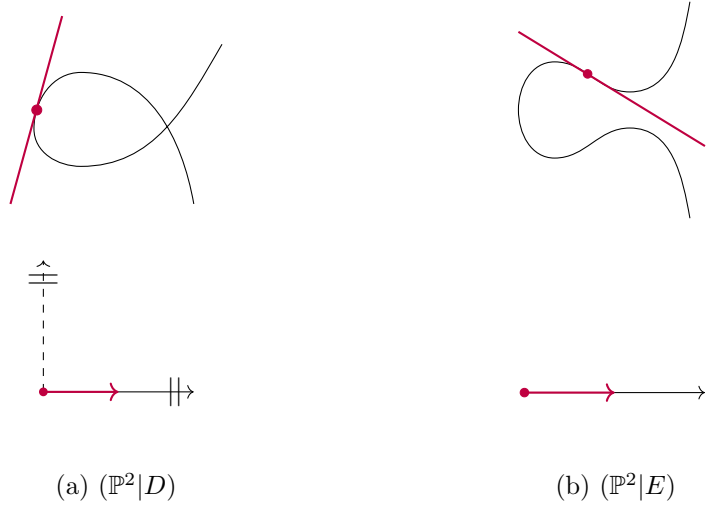
\begin{figure}[h]
\centering
\begin{subfigure}[b]{0.3\textwidth}
\[
\begin{tikzpicture}[scale=2.5]
\draw (0.55,4.35) to [out=240,in=0] (-0.2,3.7) to [out=180, in=270] (-0.45,3.9) to [out=90,in=180] (-0.2,4.2) to [out=0, in=100] (0.4,3.5);	
\draw[purple,thick] (-0.3,4.5) - -(-0.575,3.5);
\draw[purple,fill=purple] (-0.435,4) circle[radius=0.75pt];

\draw[->] (-0.4,2.5) -- (0.4,2.5);
\draw (0.3,2.5) node{$\mid\mid$};
\draw[->,dashed] (-0.4,2.5) -- (-0.4,3.2);
\draw (-0.4,3.1) node[rotate=90]{$\mid\mid$};
\draw[purple,fill=purple,thick] (-0.4,2.5) circle[radius=0.5pt];
\draw[purple,thick,->] (-0.4,2.5) -- (0,2.5);
\end{tikzpicture}
\]
\label{(P2,D) tangent line picture}
\caption{$(\PP^2|D)$}
\end{subfigure}\qquad\qquad
\begin{subfigure}[b]{0.3\textwidth}
\[
\begin{tikzpicture}[scale=0.8]
\draw (0.25,-0.5) to[out=260,in=0] (-0.75,-2) to[out=180,in=0] (-2,-1.5) to[out=180,in=90] (-2.6,-2.3) to[out=270,in=180] (-2,-3.1) to[out=0,in=180] (-0.75,-2.6) to[out=0,in=100] (0.25,-4.1);
\draw[purple,thick] (-2.6,-1) -- (0.5,-2.9);
\draw[purple,fill=purple] (-1.45,-1.7) circle[radius=2pt];
\draw[->] (-2.5,-7) -- (0.5,-7);
\draw[purple,fill=purple] (-2.5,-7) circle[radius=2pt];
\draw[purple,thick,->] (-2.5,-7) -- (-1,-7);
\draw (0.5,-7.8) node{$$};
\end{tikzpicture}
\]
\caption{$(\PP^2|E)$}
\label{(P2,E) tangent line picture}
\end{subfigure}
\caption{Tangent curves to plane cubics, nodal and smooth.}
\label{fig: nodal and smooth cubic}
\end{figure}

In genus zero, each moduli space has virtual dimension zero and produces a system of enumerative invariants indexed by $d$. Both theories are completely solved: the case of $(\PP^2|D)$ is solved in Theorem~\ref{thm: nodal cubic invariants}, while the case of $(\PP^2|E)$ is solved in \cite[Example~2.2]{GathmannMirror} with inspiration from \cite{TakahashiMirror}. The numbers do not agree, as the following table demonstrates:
\[
\begin{tabu}{| c || c | c |}
\hline
d & \GW_{0,(3d),d}(\PP^2|D) & \GW_{0,(3d),d}(\PP^2|E) \\
\hline\hline
1 & 3 & 9 \\
2 & 21/4 & 135/4 \\
3 & 55/3 & 244 \\
4 & 1,365/16 & 36,999/16 \\
5 & 11,628/25 & 635,634/25 \\
6 & 33,649/12 & 307,095 \\
\hline
\end{tabu}
\]
Experimentally, we always have
\begin{equation} \label{eqn: nodal cubic less than smooth cubic invariants} \GW_{0,(3d),d}(\PP^2|D) < \GW_{0,(3d),d}(\PP^2|E).\end{equation}
In this section we provide a conceptual explanation for this defect, via the geometry of degenerating hypersurfaces (Theorem~\ref{thm: nodal cubic invariants are contributions}). We then settle a conjecture posed in \cite{BarrottNabijou} (Theorem~\ref{thm: BN conjecture holds}).

The paper \cite{BarrottNabijou} degenerates a smooth cubic to the toric boundary, and studies the resulting logarithmic Gromov--Witten theory on the central fibre. The following construction is similar, except that our starting point is a nodal cubic. We explain how the arguments adapt to this setting, assuming some familiarity with \cite{BarrottNabijou}. Let $\Delta \subseteq \PP^2$ denote the toric boundary and consider a degeneration $D \rightsquigarrow \Delta$, i.e. a divisor
\[ \scrD \subseteq \PP^2 \times \Aone \]
whose general fibre is an irreducible nodal cubic and whose central fibre is $\Delta$. We can choose $\scrD$ to be irreducible with normal crossings singularities, and such that $\uppi^{-1}(t) \cap \scrD^{\mathrm{sing}}$ is the nodal point of $\scrD_t$ for $t \neq 0$, and
\[ \uppi^{-1}(0) \cap \scrD^{\mathrm{sing}} = p_0\]
where $p_0=[1,0,0]$. Consider the logarithmically regular logarithmic scheme
\[ \scrY = (\PP^2 \times \Aone\,  |\, \scrD).\]
Equip $\Aone$ with the trivial logarithmic structure and consider the logarithmic morphism $\scrY \to \Aone$. This is not logarithmically smooth, but is logarithmically flat; the proof is similar to \cite[Lemma~3.7]{BarrottNabijou}. The general fibre $\scrY_t$ is the logarithmic scheme associated to the smooth pair $(\PP^2|\scrD_t)$. The central fibre, on the other hand, is logarithmically singular. The stalks of the ghost sheaf of $\scrY_0$ are
\[
\begin{tikzpicture}[scale=2.5]
\draw [thick] (4.6,5.2) -- (5.3,6.45);
\draw [thick] (5.15,6.45) -- (5.8,5.2);
\draw [thick] (4.55,5.35) -- (5.85,5.35);

\draw[purple,fill=purple] (5.225,6.3) circle[radius=1pt];
\draw[purple] (5.225,6.3) node[right]{\small$\N^2$};
\draw (5.225,6.3) node[left]{\small$p_0$};

\draw[purple,fill=purple] (4.685,5.35) circle[radius=1pt];
\draw (4.732,5.45) node[left]{\small$p_1$};
\draw[purple] (4.725,5.35) node[below]{\small$\N$};

\draw[purple,fill=purple] (5.725,5.35) circle[radius=1pt];
\draw (5.69,5.45) node[right]{\small$p_2$};
\draw[purple] (5.675,5.35) node[below]{\small$\N$};

\draw[purple,fill=purple] (5.225,5.35) circle[radius=1pt];
\draw[purple] (5.225,5.35) node[below]{\small$\N$};
\draw (5.225,5.35) node[above]{\small$L_0$};

\draw[purple,fill=purple] (4.95,5.8) circle[radius=1pt];
\draw (4.95,5.81) node[right]{\small$L_2$};
\draw[purple] (4.95,5.8) node[left]{\small$\N$};

\draw[purple,fill=purple] (5.485,5.8) circle[radius=1pt];
\draw (5.485,5.81) node[left]{\small$L_1$};
\draw[purple] (5.485,5.8) node[right]{\small$\N$};

\end{tikzpicture}
\]
where $L_0,L_1,L_2 \subseteq \PP^2$ are the coordinate lines and $p_0,p_1,p_2 \in \PP^2$ the coordinate points.

As in \cite[Section~2]{BarrottNabijou} we construct, in genus zero, a virtual fundamental class on the space of stable logarithmic maps to the central fibre $\scrY_0$. Integrals against this class recover the invariants of $(\PP^2|D)$ by the conservation of number principle. We now study the moduli space
\[ \Mbar_{0,\bfc,d}(\scrY_0).\]
As in \cite[Lemma~4.5]{BarrottNabijou} we find that every logarithmic map to $\scrY_0$ must factor through $\Delta$. In fact, in our new setting we obtain a stronger constraint.

\begin{lemma} \label{lem: factors through L0}Given a logarithmic map to $\scrY_0$ the underlying schematic map to $\PP^2$ factors through $L_0$.\end{lemma}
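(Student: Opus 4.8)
The plan is to sharpen the argument of \cite[Lemma~4.5]{BarrottNabijou}, which already yields that $\underline f$ factors through $\Delta = L_0\cup L_1\cup L_2$. Since each component of $C$ is irreducible, it maps into one of $L_0,L_1,L_2$ or to a point of $\Delta$, so it suffices to rule out a component whose image meets $(L_1\cup L_2)\setminus L_0$. The structural input is an asymmetry in the logarithmic structure of $\scrY_0$: the coordinate line $L_0$ is the unique one missing the point $p_0$, and $p_0$ is exactly the point to which the node of the general fibre $\scrD_t$ specialises, which is why $\scrY_0$ looks qualitatively different there than at $p_1,p_2$.

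First I would record the local structure of $\scrY_0$ at the coordinate points. Near $p_0$ the total space $\scrD$ is genuinely nodal, with the section of nodes passing through $p_0$ transversally to $\scrY\to\Aone$; hence $\overline M_{\scrY_0}$ at $p_0$ is the honest rank-two normal-crossings monoid $\N$$^2$, whose two generators pull back to local equations of $L_1$ and $L_2$. Near $p_1$ or $p_2$ the total space $\scrD$ is smooth, so $\overline M_{\scrY_0}$ there is the (logarithmically singular) rank-one monoid $\N$ whose generator pulls back to the \emph{product} of local equations of the two lines through that point. From the $p_0$ picture I would extract the key constraint: if $Z\subseteq C$ is a component with $\underline f(Z)=L_1$ (not a point), then every $z\in Z$ with $\underline f(z)=p_0$ is a node of $C$ or the marked point $x$ — because the equation of $L_2$ pulls back to a nonconstant function on $Z$ vanishing at $z$, which is impossible if the ghost sheaf of $C$ at $z$ is merely pulled back from the base (on whose monomials the structure map is constant). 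The symmetric statement holds for $L_2$. No such conclusion is available at $p_1,p_2$: there the generator pulls back to $0$ on a component contained in one of those lines, and so imposes nothing.

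Next I would argue by contradiction. Suppose some component has image meeting $(L_1\cup L_2)\setminus L_0$, and let $T\subseteq C$ be the maximal connected subcurve all of whose components have image contained in $L_1\cup L_2$. The images of a $T$-component and of an adjacent non-$T$-component can only meet at $p_1$ or $p_2$, so every node joining $T$ to $C\setminus T$ lies over $p_1$ or $p_2$. A component of $T$ dominating $L_1$ or $L_2$ passes through $p_0$ and hence, by the previous step, carries a node over $p_0$ or the marking $x$; following these $p_0$-nodes produces a connected subtree of $T$ meeting $p_0$. Since $C$ is a finite tree of genus $0$ carrying the single marking $x$, this configuration is very rigid: combined with stability it reduces the problem to the extremal situation in which $x$ itself, or a node inherited from the chain, sits over $p_0$ on a component contained in $L_1$ or $L_2$ — most severely, the case $\underline f(C)\subseteq L_1\cup L_2$.

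The step I expect to be the main obstacle is the contact-order count disposing of these extremal cases. Over $p_0$ the contact with $\scrD$ is a pair in $\N$$^2$, the two entries being the contact with the $L_1$-branch and with the $L_2$-branch; for a component contained in (say) $L_1$ these are governed respectively by the degree of that component (via the winding, hence the normal bundle $\calO_{\PP^2}(1)|_{L_1}$) and by its intersection number with $L_2$, each bounded by the portion of $d$ routed through $L_1\cup L_2$. The total contact order transmissible through $p_0$ is therefore strictly less than $3d$, whereas conservation of tangency forces it to account for the full maximal tangency $3d$ of $x$ with $\scrD$ — the deficit being precisely what disappears when the curve is instead routed through $L_0$, whose contact is measured by $\calO_{\PP^2}(3)|_{L_0}$. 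This contradiction, which is exactly the point at which our degeneration departs from \cite{BarrottNabijou}, completes the proof; it is also where the argument demands the most care, since the balance of windings, node contacts, and intersection numbers must be tracked precisely.
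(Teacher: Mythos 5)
Your local description of $\scrY_0$ and the observation that a point of a component inside $L_1$ lying over $p_0$ must be a node or the marking are fine, and consistent with (a non-tropical reading of) the paper's setup. But the proof has two genuine gaps, and the second is fatal. First, the passage "this configuration is very rigid: combined with stability it reduces the problem to the extremal situation \dots most severely, the case $\underline{f}(C)\subseteq L_1\cup L_2$" is not an argument: a putative counterexample in general mixes components in $L_0$ with components in $L_1\cup L_2$, attached over $p_1,p_2$, with the marking possibly on the $L_0$-part, and you give no mechanism for excluding those configurations or for reducing them to your extremal case.

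Second, the step you yourself flag as the main obstacle — "the total contact order transmissible through $p_0$ is strictly less than $3d$" — is not only unproved but appears to be false as a count of totals. For a component $C_v$ of degree $d_v$ mapping into $L_1$ one has $\deg f^\star\OO_{\PP^2\times\Aone}(\scrD)|_{C_v}=3d_v$, and the tangency it must emit splits (this is exactly the balancing condition the paper derives tropically) as $2d_v$ along the divisor direction plus $d_v$ into the corner at $p_0$ (the intersection with the $L_2$-branch); your bound $d_v+d_v$ forgets the contribution of the branch $L_0$ through $p_2$. Summing over a configuration entirely inside $L_1$, e.g.\ a degree-$d$ cover totally ramified over $p_0$ with the marking there, the available contact at $p_0$ is $(2d,d)$, of total exactly $3d$ — so no contradiction comes from counting. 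What actually kills such configurations is directional and combinatorial, and this is the paper's argument, which is tropical: the marking's contact order is $3d$ in the ray direction of $\Sigma\scrY_0$, every component in $L_1$ or $L_2$ is forced by balancing to emit slope $d_v>0$ into the interior of the two-dimensional cone at $p_0$, and after lifting the tropical map to the cover $\RR^2_{\geq 0}$ one cuts the genus-zero dual graph at an edge crossing into that interior; balancing then forces \emph{each} of the two resulting disjoint subtrees to contain the unique marking leg (one absorbing the slope in one axis direction, the other in the other), a contradiction. Your proposal invokes the single marking and the tree structure only in the vague rigidity claim and never supplies a substitute for this mechanism, so the heart of the proof is missing.
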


\begin{proof} Let $C \to \scrY_0$ be a logarithmic map and consider its tropicalisation $\tropf \colon \Sigma C \to \Sigma \scrY_0$. The target $\Sigma \scrY_0$ is the cone complex:
\[
\begin{tikzpicture}[scale=0.7]
			\draw[fill=black] (0,0) circle[radius=2pt];
			
			\draw[->] (0,0) -- (3,0);
			\draw (3,0) node[right]{$\Delta$};
			
			\draw[->] (0,0) -- (0,3);
			\draw (0,3) node[above]{$\Delta$};
			
			\draw (0.4,0) node[above]{$\PP^2$};
			
			\draw (2.5,0) node{$\mid\mid$};
			\draw (0,2.5) node[rotate=90]{$\mid\mid$};
			
			\draw (2,2) node{$p_0$};
\end{tikzpicture}
\]
Following \cite[proof of Lemma~4.5]{BarrottNabijou} it suffices to show that the image of $\tropf$ does not intersect the interior of the maximal cone $p_0$.

We first describe the balancing condition. For $v \in V(\Gamma)$ let $C_v \subseteq C$ denote the corresponding irreducible component. Since $f(C_v) \subseteq \Delta$ we have that $\tropf(v)$ belongs to either $\Delta$ or $p_0$. In the latter case, the balancing condition states that the sum of the outgoing slope vectors is zero. The interesting case is $\tropf(v) \in \Delta$. We distinguish two possibilities:
\begin{itemize}
\item If $f(C_v) \subseteq L_0$ then all outgoing edges from $v$ are contained in $\Delta$ and the sum of their slopes is equal to $3d_v$.
\item If $f(C_v) \subseteq L_1$ or $L_2$ then the slope of each outgoing edge from $v$ can be broken into components tangent to $\Delta$ and normal to $\Delta$. The sum of the slopes tangent to $\Delta$ is equal to $2d_v$, while the sum of the slopes normal to $\Delta$ is equal to $d_v$. Moreover, edges with positive slope normal to $\Delta$ must all enter the same neighbourhood of $v$ in the above chart. Which neighbourhood they enter depends on which of $L_1$ or $L_2$ the component $C_v$ maps to.
\end{itemize}
Due to the balancing condition, every tropical map $\Sigma C \to \Sigma \scrY_0$ admits a lift to the standard cover of the target:
\[
\begin{tikzcd}
\, & \RR^2_{\geq 0} \ar[d] \\
\Sigma C \ar[ru] \ar[r] & \Sigma \scrY_0.
\end{tikzcd}
\]
Choose such a lift, and suppose for a contradiction that the image of $\tropf$ intersects the interior of $p_0$. Then there exists an edge $e \in E(\Gamma)$ such that $\tropf(e)$ intersects the interior of $p_0$ and such that one of the end vertices of $e$ is mapped to $\Delta$. On the lift we have:
\[
\begin{tikzpicture}[scale=0.9]
			\draw[fill=black] (0,0) circle[radius=2pt];
			
			\draw[->] (0,0) -- (3,0);
			\draw (3,0) node[right]{$\Delta_1$};
			
			\draw[->] (0,0) -- (0,3);
			\draw (0,3) node[above]{$\Delta_2$};
			
			
			
			\draw[purple,fill=purple] (1.5,0) circle[radius=2pt];
			\draw[purple] (1.5,0) node[below]{$v_1$};
			\draw[purple,thick] (1.5,0) -- (0.7,1.5);
			\draw[purple] (1,0.85) node[right]{$e$};
			\draw[purple,fill=purple] (0.7,1.5) circle[radius=2pt];
			\draw[purple] (0.7,1.5) node[above]{$v_2$};
\end{tikzpicture}
\]
Define $\Gamma_i \subseteq \Gamma$ by cutting $\Gamma$ at $e$ and taking the subgraph containing $v_i$. Start with $\Gamma_1$. By balancing we have $d_{v_1} > 0$ and so $v_1$ supports an outgoing edge with positive slope in the $\Delta_1$-direction. Traversing along this edge to the next vertex, we again conclude by balancing that there exists an outgoing edge with positive slope in the $\Delta_1$-direction. Continuing in this way, we eventually arrive at the marking leg. It follows that the marking leg $\Gamma_1$.

Now consider $\Gamma_2$. By balancing the vertex $v_2$ supports an outgoing edge with positive slope in the $\Delta_2$-direction (this occurs both if $\ftrop(v_2) \in p_0$ and if $\ftrop(v_2) \in \Delta_2$). As above, we inductively traverse the graph and produce a path consisting of edges with positive slope in the $\Delta_2$-direction. Eventually we arrive at the marking leg. It follows that the marking leg is contained in $\Gamma_2$.

The marking leg is thus contained in both $\Gamma_1$ and $\Gamma_2$. But these subgraphs are disjoint: since $\Gamma$ has genus zero, the edge $e$ is separating.
\end{proof}

Using \Cref{lem: factors through L0} we can show as in \cite[Proposition~4.11]{BarrottNabijou} that
\[ \Mbar_{0,\bfc,d}(\scrY_0) = \Mbar_{0,1,d}(L_0) \cong \Mbar_{0,1,d}(\PP^1). \]
This space carries a virtual fundamental class of dimension zero, arising from logarithmic deformation theory. In \cite[Section~4]{BarrottNabijou} this is expressed as an obstruction bundle integral, and in \cite[Section~5]{BarrottNabijou} it is computed via localisation. These calculations apply verbatim in our new setting, because the logarithmic scheme $\scrY_0$ agrees with the logarithmic scheme $\Xcal_0$ of \cite[Section~3.1]{BarrottNabijou} in a neighbourhood of $L_0$. We conclude:

\begin{theorem}[\Cref{thm: nodal cubic invariants are contributions introduction}] \label{thm: nodal cubic invariants are contributions} The invariant of $(\PP^2|D)$ is precisely the central fibre contribution to the invariant of $(\PP^2|E)$ arising from multiple covers of a single line of $\Delta$. In the notation of \cite[Section~5.5]{BarrottNabijou}:
\[ \GW_{0,(3d),d}(\PP^2|D) = \operatorname{C}_{\operatorname{ord}}(d,0,0).\]
Thus the invariants of $(\PP^2|D)$ constitute one contribution, amongst many, to the invariants of $(\PP^2|E)$. Experimentally, all contributions are positive: this explains the inequality \eqref{eqn: nodal cubic less than smooth cubic invariants}.
\end{theorem}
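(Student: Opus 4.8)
The plan is to run the degeneration-to-the-toric-boundary argument of \cite{BarrottNabijou} with the nodal cubic $D$ in place of the smooth cubic $E$, and to observe that the \emph{only} contribution that survives is the one coming from multiple covers of a single coordinate line. First I would set up, exactly as in \cite[Section~3]{BarrottNabijou}, the family $\scrY = (\PP^2 \times \Aone \, | \, \scrD) \to \Aone$ with $\scrD$ an irreducible normal-crossings degeneration of the nodal cubic to $\Delta$, chosen so that the node tracks a coordinate point $p_0$ on the central fibre; one checks that $\scrY \to \Aone$ is logarithmically flat (not smooth), mimicking \cite[Lemma~3.7]{BarrottNabijou}, so that the conservation-of-number principle identifies integrals against the virtual class on $\Mbar_{0,\bfc,d}(\scrY_0)$ with $\GW_{0,(3d),d}(\PP^2|D)$. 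The ghost-sheaf picture of $\scrY_0$ is as displayed in the excerpt, and is \emph{identical} to that of $\Xcal_0$ in \cite[Section~3.1]{BarrottNabijou} in a neighbourhood of the coordinate line $L_0$.

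The key new input is \Cref{lem: factors through L0}, already proved above, which forces every stable logarithmic map to $\scrY_0$ to factor schematically through $L_0$ — a strictly stronger statement than the ``factors through $\Delta$'' of \cite[Lemma~4.5]{BarrottNabijou}. From here I would argue exactly as in \cite[Proposition~4.11]{BarrottNabijou} that
\[ \Mbar_{0,\bfc,d}(\scrY_0) \;=\; \Mbar_{0,1,d}(L_0) \;\cong\; \Mbar_{0,1,d}(\PP^1), \]
so that the nodal-cubic invariant is computed by a single obstruction-bundle integral over the space of degree-$d$ covers of $\PP^1$ with one maximal-tangency marking. Because the logarithmic scheme $\scrY_0$ agrees with $\Xcal_0$ near $L_0$, the logarithmic deformation theory — and hence the induced virtual class, its expression as an obstruction-bundle integral in \cite[Section~4]{BarrottNabijou}, and its localisation computation in \cite[Section~5]{BarrottNabijou} — applies verbatim. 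The term produced is precisely the contribution that in the smooth-cubic degeneration is labelled $\operatorname{C}_{\operatorname{ord}}(d,0,0)$ in \cite[Section~5.5]{BarrottNabijou}: the contribution of maps that are $d$-fold covers of the single line $L_0$ of $\Delta$, with no ``broken'' or ``transverse'' components distributed among $L_1, L_2$. This yields
\[ \GW_{0,(3d),d}(\PP^2|D) = \operatorname{C}_{\operatorname{ord}}(d,0,0), \]
and since in the $(\PP^2|E)$ degeneration this is one summand among the many indexed by distributions of degree among the three coordinate lines, and all such summands are experimentally positive, the inequality \eqref{eqn: nodal cubic less than smooth cubic invariants} follows.

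The main obstacle I anticipate is not any single computation — those are imported wholesale from \cite{BarrottNabijou} — but rather the bookkeeping needed to be certain that the central fibre $\scrY_0$ really is \emph{isomorphic as a logarithmic scheme} to $\Xcal_0$ in a Zariski (or at least étale) neighbourhood of $L_0$, so that one may genuinely quote the obstruction-theory and localisation computations verbatim rather than merely by analogy. This requires checking that the degeneration $\scrD$ of the nodal cubic can be arranged to have the same local structure along $L_0$ (normal-crossings, correct multiplicities, node escaping to $p_0$ away from the relevant chart) as the Barrott--Nabijou degeneration of the smooth cubic does; the node of $D$ is a global feature but, by \Cref{lem: factors through L0}, it never affects the moduli problem because no stable map can reach it. Once that local comparison is nailed down, the rest of the proof is a direct transcription. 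Finally, the application to the conjecture of \cite{BarrottNabijou} (\Cref{thm: BN conjecture holds}) follows by combining the explicit formula of \Cref{thm: nodal cubic invariants} with the identification $\GW_{0,(3d),d}(\PP^2|D) = \operatorname{C}_{\operatorname{ord}}(d,0,0)$ just established.
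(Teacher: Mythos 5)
Your proposal follows the paper's own argument essentially step for step: the degeneration $\scrD \rightsquigarrow \Delta$ with the logarithmically flat family $\scrY \to \Aone$, the conservation-of-number identification, the key input \Cref{lem: factors through L0} forcing maps to factor through $L_0$, the resulting identification $\Mbar_{0,\bfc,d}(\scrY_0) \cong \Mbar_{0,1,d}(\PP^1)$, and the verbatim transfer of the obstruction-bundle and localisation computations of \cite{BarrottNabijou} because $\scrY_0$ agrees with $\Xcal_0$ near $L_0$. This matches the paper's proof, including the point you flag about checking the local agreement of the two logarithmic central fibres along $L_0$, which the paper likewise handles by observing the two logarithmic schemes coincide in a neighbourhood of $L_0$.
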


\Cref{thm: nodal cubic invariants are contributions} also allows us to compute $\operatorname{C}_{\operatorname{ord}}(d,0,0)$. In \cite[Section~5]{BarrottNabijou} these were computed up to $d=8$ by computer-assisted torus localisation. Based on these numerics, the following formula was proposed:
\begin{conjecture}[\!\!{\cite[Conjecture 5.9(39)]{BarrottNabijou}}] \label{conj: multiple cover contributions} We have the following hypergeometric expression for the contribution of degree $d$ covers of $L_0$:
\[ \operatorname{C}_{\operatorname{ord}}(d,0,0) = \dfrac{1}{d^2\ } {4d-1 \choose d}.\]\end{conjecture}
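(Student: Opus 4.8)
The plan is to deduce the conjectured formula directly by combining the two main results of this section, \Cref{thm: nodal cubic invariants} and \Cref{thm: nodal cubic invariants are contributions}, together with the elementary binomial identity \eqref{eqn: compare binomial coeffs}. First I would recall that \Cref{thm: nodal cubic invariants are contributions} identifies the multiple-cover contribution with a logarithmic Gromov--Witten invariant:
\[ \operatorname{C}_{\operatorname{ord}}(d,0,0) = \GW_{0,(3d),d}(\PP^2|D). \]
This is the crucial geometric input; it reduces the combinatorial conjecture of \cite{BarrottNabijou} to a closed-form evaluation of the invariants of the nodal cubic pair.

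Next I would observe that $(\PP^2 | D) = (S_1 | D_1)$, since $S_1 = \PP(1,1,1) = \PP^2$ and $D_1$ is by definition an irreducible anticanonical curve with a single node, i.e.\ our nodal cubic $D$. Therefore \Cref{thm: nodal cubic invariants}, specialised to $r=1$, applies and gives
\[ \GW_{0,(3d),d}(\PP^2|D) = \GW_{0,(3d),d}(S_1|D_1) = \dfrac{3}{d^2}\binom{4d-1}{d-1}. \]

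Finally I would invoke the identity \eqref{eqn: compare binomial coeffs} with $r=1$, namely $\binom{4d-1}{d} = 3\binom{4d-1}{d-1}$ (which also follows immediately from $\binom{4d-1}{d} = \binom{4d-1}{d-1}\cdot\frac{3d}{d}$). Substituting yields
\[ \operatorname{C}_{\operatorname{ord}}(d,0,0) = \dfrac{3}{d^2}\binom{4d-1}{d-1} = \dfrac{1}{d^2}\binom{4d-1}{d}, \]
which is exactly \Cref{conj: multiple cover contributions}.

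There is essentially no technical obstacle here: all the work has already been done in establishing \Cref{thm: nodal cubic invariants} (via the Gromov--Witten/quiver correspondence and Reineke's formula) and \Cref{thm: nodal cubic invariants are contributions} (via the degeneration analysis adapting \cite{BarrottNabijou}). The only thing to be careful about is bookkeeping — checking that the tangency and degree conventions in \cite[Section~5.5]{BarrottNabijou} match ours, and confirming that the degeneration $\scrY_0$ genuinely agrees with $\Xcal_0$ of \cite{BarrottNabijou} near $L_0$ so that the obstruction-bundle computation of $\operatorname{C}_{\operatorname{ord}}(d,0,0)$ transfers verbatim — but these points are already addressed in the discussion preceding \Cref{thm: nodal cubic invariants are contributions}.
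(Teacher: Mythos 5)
Your proposal is correct and coincides with the paper's own argument: the paper proves this conjecture (as \Cref{thm: BN conjecture holds}) by exactly the same chain, namely \Cref{thm: nodal cubic invariants are contributions} to identify $\operatorname{C}_{\operatorname{ord}}(d,0,0)$ with $\GW_{0,(3d),d}(\PP^2|D)$, then \Cref{thm: nodal cubic invariants} at $r=1$ together with the identity \eqref{eqn: compare binomial coeffs}. No further comment is needed.
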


It was then shown \cite[Proposition~5.13]{BarrottNabijou} that Conjecture~\ref{conj: multiple cover contributions} is equivalent to the following conjecture in pure combinatorics, which is verified by computer up to $d=50$:

\begin{conjecture}[\!{\cite[Conjecture~5.12]{BarrottNabijou}}] \label{conj: combinatorial} Fix an integer $d \geq 1$. Then we have
\[	\sum_{(d_1,\ldots,d_r) \vdash d} \dfrac{2^{r-1} \cdot d^{r-2}}{\#\! \operatorname{Aut}(d_1,\ldots,d_r)} \prod_{i=1}^r \dfrac{(-1)^{d_i-1}}{d_i} {3d_i \choose d_i} = \dfrac{1}{d^2} {4d-1 \choose d} \]
where the sum is over strictly positive unordered partitions of $d$ (of any length).\end{conjecture}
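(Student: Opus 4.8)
The plan is to deduce \Cref{conj: combinatorial} from the geometric results of this section, and separately to record a self-contained combinatorial verification.

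\emph{Primary route.} By \cite[Proposition~5.13]{BarrottNabijou} the statement \Cref{conj: combinatorial} is equivalent to \Cref{conj: multiple cover contributions}, i.e. to the equality $\operatorname{C}_{\operatorname{ord}}(d,0,0) = \tfrac{1}{d^2}\binom{4d-1}{d}$, so it suffices to establish the latter. For this I would invoke \Cref{thm: nodal cubic invariants are contributions}, which identifies $\operatorname{C}_{\operatorname{ord}}(d,0,0)$ with $\GW_{0,(3d),d}(\PP^2|D)$, and then \Cref{thm: nodal cubic invariants} with $r=1$, which evaluates this invariant as $\tfrac{3}{d^2}\binom{4d-1}{d-1}$. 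Since $\binom{4d-1}{d} = \tfrac{3d}{d}\binom{4d-1}{d-1} = 3\binom{4d-1}{d-1}$, the two expressions coincide and \Cref{conj: combinatorial} follows. On this route no combinatorics is required beyond this last binomial identity: all the work is absorbed into \Cref{thm: nodal cubic invariants} (proved via the Gromov--Witten/quiver correspondence and Reineke's Kronecker count) and \Cref{thm: nodal cubic invariants are contributions} (proved by adapting the degeneration-to-the-boundary analysis of \cite{BarrottNabijou} to the self-nodal cubic via \Cref{lem: factors through L0}).

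\emph{Independent verification.} As a cross-check it is worth proving the identity directly. Put $a_n \colonequals \tfrac{(-1)^{n-1}}{n}\binom{3n}{n}$ and $A(z) \colonequals \sum_{n\geq1} a_n z^n$. Since $2^{r-1}d^{r-2} = \tfrac{1}{2d^2}(2d)^r$ and, by the exponential formula, $\sum_{\lambda\vdash d}\tfrac{t^{\ell(\lambda)}}{\#\operatorname{Aut}(\lambda)}\prod_i a_{\lambda_i} = [z^d]\exp(tA(z))$, the left-hand side of \Cref{conj: combinatorial} equals $\tfrac{1}{2d^2}[z^d]\exp(2d\,A(z))$. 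Lagrange inversion applied to $w = z(1+w)^3$ gives $\sum_{n\geq1}\tfrac1n\binom{3n}{n}z^n = 3\log(1+w(z))$, hence $A(z) = -3\log u$ where $u = u(z)$ is the power series with $u = 1 - zu^3$; therefore $\exp(2d\,A(z)) = u^{-6d}$. A second Lagrange inversion, applied to $s = u-1$ (which satisfies $s = -z(1+s)^3$), yields $[z^d]u^{-6d} = 6\binom{4d-1}{d-1}$. Dividing by $2d^2$ and using $3\binom{4d-1}{d-1} = \binom{4d-1}{d}$ recovers the right-hand side $\tfrac{1}{d^2}\binom{4d-1}{d}$.

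\emph{Main obstacle.} Given the theorems already proved, the primary route is pure bookkeeping, so the real difficulty has been relocated into \Cref{thm: nodal cubic invariants} and \Cref{thm: nodal cubic invariants are contributions}. For a genuinely self-contained proof, the key step is the generating-function identification $A(z) = -3\log u$ with $u = 1-zu^3$ (and the consequent $\exp(2d\,A(z)) = u^{-6d}$); once this is in place both Lagrange inversions are routine, though one must track signs carefully, since the two inversions conspire to cancel the $(-1)$-factors and leave a positive binomial coefficient.
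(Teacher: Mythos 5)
Your primary route is exactly the paper's proof of \Cref{thm: BN conjecture holds}: invoke the equivalence of \Cref{conj: combinatorial} with \Cref{conj: multiple cover contributions} via \cite[Proposition~5.13]{BarrottNabijou}, identify $\operatorname{C}_{\operatorname{ord}}(d,0,0)$ with $\GW_{0,(3d),d}(\PP^2|D)$ by \Cref{thm: nodal cubic invariants are contributions}, evaluate the latter by \Cref{thm: nodal cubic invariants} at $r=1$, and finish with the $r=1$ case of \eqref{eqn: compare binomial coeffs}, i.e. $\binom{4d-1}{d}=3\binom{4d-1}{d-1}$. So on that front there is nothing to add. Your supplementary verification, however, goes beyond the paper: the exponential-formula reduction of the left-hand side to $\tfrac{1}{2d^2}[z^d]\exp(2d\,A(z))$, the identification $A(z)=-3\log u$ with $u=1-zu^3$ (which follows from Lagrange inversion for $w=z(1+w)^3$ after the substitution $z\mapsto -z$), and the second Lagrange inversion giving $[z^d]u^{-6d}=6\binom{4d-1}{d-1}$ all check out, so this is a correct, self-contained combinatorial proof of \Cref{conj: combinatorial}. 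That is genuinely different in character: the paper (following \cite{BarrottNabijou}, where the identity was only machine-verified up to $d=50$) obtains the identity as a corollary of the geometric input (the quiver/Reineke computation behind \Cref{thm: nodal cubic invariants} and the degeneration analysis behind \Cref{thm: nodal cubic invariants are contributions}), whereas your Lagrange-inversion argument proves the combinatorial statement with no geometry at all; conversely, the geometric route is what carries the enumerative meaning (it simultaneously proves \Cref{conj: multiple cover contributions}, which your direct argument only reaches back through the equivalence of \cite[Proposition~5.13]{BarrottNabijou}).
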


Using \Cref{thm: nodal cubic invariants are contributions} we can now prove both these conjectures.

\begin{theorem} \label{thm: BN conjecture holds} Conjecture~\ref{conj: multiple cover contributions}, and hence also Conjecture~\ref{conj: combinatorial}, holds.	
\end{theorem}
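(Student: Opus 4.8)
The plan is to compare two closed-form evaluations of the same quantity. By \Cref{thm: nodal cubic invariants are contributions} we have $\GW_{0,(3d),d}(\PP^2|D) = \operatorname{C}_{\operatorname{ord}}(d,0,0)$, so it suffices to evaluate the left-hand side using \Cref{thm: nodal cubic invariants} in the special case $r=1$. Setting $r=1$ in the formula of \Cref{thm: nodal cubic invariants} gives
\[ \GW_{0,(3d),d}(\PP^2|D) = \dfrac{3}{d^2}\binom{4d-1}{d-1}, \]
and so \Cref{conj: multiple cover contributions} reduces to the purely algebraic identity
\[ \dfrac{3}{d^2}\binom{4d-1}{d-1} = \dfrac{1}{d^2}\binom{4d-1}{d}. \]
First I would verify this identity directly: since $\binom{4d-1}{d} = \binom{4d-1}{d-1}\cdot\frac{4d-d+1}{d} = \binom{4d-1}{d-1}\cdot\frac{3d+1}{d}$, we need $3\binom{4d-1}{d-1} = \frac{3d+1}{d}\binom{4d-1}{d-1}$, which is \emph{false} as stated — so I would instead recheck against the special case of \eqref{eqn: compare binomial coeffs}, which for $r=1$ reads $\binom{4d-1}{d} = 3\binom{4d-1}{d-1}$. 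This is the correct identity (it is exactly \eqref{eqn: compare binomial coeffs} specialised), and it yields
\[ \GW_{0,(3d),d}(\PP^2|D) = \dfrac{3}{d^2}\binom{4d-1}{d-1} = \dfrac{1}{d^2}\binom{4d-1}{d} = \operatorname{C}_{\operatorname{ord}}(d,0,0), \]
which is precisely the assertion of \Cref{conj: multiple cover contributions}.

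Once \Cref{conj: multiple cover contributions} is established, \Cref{conj: combinatorial} follows immediately by the equivalence proven in \cite[Proposition~5.13]{BarrottNabijou}: that reference shows \Cref{conj: multiple cover contributions} holds if and only if \Cref{conj: combinatorial} holds, and our argument supplies one direction. I would simply invoke this equivalence to conclude. No new input is needed here beyond citing \cite{BarrottNabijou}.

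The only genuinely load-bearing ingredients are therefore (i) \Cref{thm: nodal cubic invariants are contributions}, which identifies $\GW_{0,(3d),d}(\PP^2|D)$ with $\operatorname{C}_{\operatorname{ord}}(d,0,0)$ — this is where all the geometric work (the tropical analysis of maps to $\scrY_0$ via \Cref{lem: factors through L0}, and the verbatim transport of the obstruction-bundle and localisation calculations of \cite[Sections~4--5]{BarrottNabijou}) is concentrated; and (ii) the $r=1$ case of \Cref{thm: nodal cubic invariants}, itself a consequence of the Gromov--Witten/quiver correspondence and Reineke's formula. I do not expect any obstacle in the present proof: it is a one-line substitution followed by the elementary binomial identity \eqref{eqn: compare binomial coeffs} and a citation. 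If anything, the main point to double-check is the bookkeeping of binomial coefficients — making sure the $\binom{4d-1}{d}$ versus $\binom{4d-1}{d-1}$ shift matches exactly between \Cref{thm: nodal cubic invariants}, \eqref{eqn: compare binomial coeffs}, and \Cref{conj: multiple cover contributions} — but this is routine.
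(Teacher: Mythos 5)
Your proposal is correct and follows exactly the paper's route: combine \Cref{thm: nodal cubic invariants are contributions} with the $r=1$ case of \Cref{thm: nodal cubic invariants}, use the binomial identity \eqref{eqn: compare binomial coeffs}, and invoke \cite[Proposition~5.13]{BarrottNabijou} for \Cref{conj: combinatorial}. One small correction to your side-check: the ratio is $\binom{4d-1}{d}/\binom{4d-1}{d-1} = \frac{(4d-1)-(d-1)}{d} = \frac{3d}{d} = 3$ (you used $4d$ in place of $4d-1$), so the identity $\binom{4d-1}{d}=3\binom{4d-1}{d-1}$ holds by direct computation as well — it is not ``false as stated''.
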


\begin{proof} By \Cref{thm: nodal cubic invariants are contributions} it is equivalent to show that
\[ \GW_{0,(3d),d}(\PP^2|D) = \dfrac{1}{d^2\ } {4d-1 \choose d}. \qquad\] 
This follows from \Cref{thm: nodal cubic invariants} for $r=1$ and the identity \eqref{eqn: compare binomial coeffs}. See also \cite[Remark~5.10]{BarrottNabijou}.
\end{proof}

\footnotesize
\bibliographystyle{alpha}
\bibliography{Bibliography.bib}\medskip

Michel van Garrel. University of Birmingham. \href{mailto:m.vangarrel@bham.ac.uk}{m.vangarrel@bham.ac.uk}

Navid Nabijou. Queen Mary University of London. \href{mailto:n.nabijou@qmul.ac.uk}{n.nabijou@qmul.ac.uk}

Yannik Schuler. University of Sheffield. \href{mailto:yschuler1@sheffield.ac.uk}{yschuler1@sheffield.ac.uk}

\end{document}